\def\0{\hat{0}}
\def\1{\hat{1}}
\def\lin{\mathrm{lin}}
\def\alin{\mathrm{alin}}
\colorlet{linkequation}{blue}
\newcommand*{\SavedEqref}{}
\let\SavedEqref\eqref
\renewcommand*{\eqref}[1]{%
	\begingroup
	\hypersetup{
		linkcolor=blue,
		linkbordercolor=blue,
	}%
	\SavedEqref{#1}%
	\endgroup
}
\DeclareSymbolFont{extraup}{U}{zavm}{m}{n}
\DeclareMathSymbol{\varheart}{\mathalpha}{extraup}{86}
\DeclareMathSymbol{\vardiamond}{\mathalpha}{extraup}{87}
\newtheorem{thm}{Theorem}[section]
\newtheorem{theorem}{Theorem}[section]
\newtheorem{prop}[thm]{Proposition}
\newtheorem{proposition}[thm]{Proposition}
\newtheorem{lemma}[thm]{Lemma}
\newtheorem{cor}[thm]{Corollary}
\theoremstyle{definition}
\newtheorem{definition}[thm]{Definition}
\theoremstyle{remark}
\newtheorem{remark}[thm]{Remark}
\newcommand{\im}{\mathrm{im}}
\newcommand{\End}{\mathrm{End}}
\newcommand{\Aut}{\mathrm{Aut}}
\newcommand{\Mat}{\mathrm{Mat}}
\newcommand{\ev}{\mathrm{even}}
\newcommand{\eqdef}{\stackrel{{\rm def.}}{=}}
\DeclareFontFamily{U}{rsf}{}
\DeclareFontShape{U}{rsf}{m}{n}{<5> <6> rsfs5 <7> <8> <9> rsfs7 <10-> rsfs10}{}
\DeclareMathAlphabet\Scr{U}{rsf}{m}{n}
\def\Z{\mathbb{Z}}
\def\C{\mathbb{C}}
\def\R{\mathbb{R}}
\def\S{\mathbb{S}}
\def\s{\mathbb{s}}
\def\H{\mathbb{H}}
\def\S{\mathbb{S}}
\def\s{\mathbb{s}}
\def\rk{{\rm rk}}
\def\GL{\mathrm{GL}}
\def\Stab{\mathrm{Stab}}
\def\Ad{\mathrm{Ad}}
\def\tmu{\tilde{\mu}}
\def\tF{\tilde{F}}
\def\rGamma{\mathrm{\Gamma}}
\def\i{\mathbf{i}}
\newcommand{\be}{\begin{equation*}}
\newcommand{\ee}{\end{equation*}}
\newcommand{\ben}{\begin{equation}}
\newcommand{\een}{\end{equation}}
\newcommand{\beqa}{\begin{eqnarray*}}
	\newcommand{\eeqa}{\end{eqnarray*}}
\newcommand{\beqan}{\begin{eqnarray}}
\newcommand{\eeqan}{\end{eqnarray}}
\newcommand{\nn}{\nonumber}
\newcommand{\twopartdef}[4]
{
	\left\{
	\begin{array}{ll}
		#1 & \mbox{ if } #2 \\
		#3 & \mbox{ if } #4
	\end{array}
	\right .
}
\newcommand{\id}{\mathrm{id}}
\def\Cl{\mathrm{Cl}}
\def\Gr{\mathrm{Gr}}
\def\odd{\mathrm{odd}}
\def\Spin{\mathrm{Spin}}
\def\Pin{\mathrm{Pin}}
\def\Spin{\mathrm{Spin}}
\def\SO{\mathrm{SO}}
\def\O{\mathrm{O}}
\def\U{\mathrm{U}}
\def\cD{\mathcal{D}}
\def\cE{\mathcal{E}}
\def\cI{\mathcal{I}}
\def\cP{\mathcal{P}}
\def\G_2{\mathrm{G_2}}
\def\mG{\mathbb{G}}
\def\cL{\mathcal{L}}
\def\s{\mathfrak{s}}
\def\P{\mathbb{P}}
\newcommand{\Hom}{{\rm Hom}}
\def\Aut{\mathrm{Aut}}
\def\ClB{\mathrm{ClB}}
\def\Alg{\mathrm{Alg}}
\def\tw{\mathrm{tw}}
\def\ClRep{\mathrm{ClRep}}
\def\tAd{\widetilde{\Ad}}
\def\Re{\mathrm{Re}}
\def\Im{\mathrm{Im}}
\def\tN{\tilde{N}}
\def\ttau{\tilde{\tau}}
\def\G{\mathrm{G}}
\def\T{\mathbb{T}}
\def\R{\mathbb{R}}
\def\Tw{\mathrm{Tw}}
\def\rS{\mathrm{S}}
\def\w{\mathrm{w}}
\def\Pic{\mathrm{Pic}}
\def\cPic{\mathcal{P}ic}
\def\trho{\tilde{\rho}}
\def\tlambda{\tilde{\lambda}}
\def\Prin{\mathrm{Prin}}
\def\hP{\hat{P}}
\def\fD{\mathfrak{D}}
\def\A{\mathbb{A}}
\def\TU{\mathrm{TU}}
\def\rGamma{\mathrm{\Gamma}}
\def\frc{\mathfrak{c}}
\def\frC{\mathfrak{C}}
\def\conj{\mathrm{conj}}
\begin{document}
\title{Dirac operators on real spinor bundles of complex type}
 
\author[C. Lazaroiu]{C. Lazaroiu} \address{Center for Geometry and
  Physics, Institute for Basic Science, Pohang, Republic of Korea}
\email{calin@ibs.re.kr}

\author[C. S. Shahbazi]{C. S. Shahbazi} \address{Department of Mathematics, University of Hamburg, Germany}
\email{carlos.shahbazi@uni-hamburg.de}

\thanks{2010 MSC. Primary: 53C27. Secondary: 53C50.}
\keywords{Spin geometry, Clifford bundles, Lipschitz structures}

\begin{abstract}
Let $(M,g)$ be a pseudo-Riemannian manifold of signature $(p,q)$. We compute the obstruction for a vector bundle $S$ over $(M,g)$ to admit a Dirac operator whose principal symbol induces on $S$ the structure of a bundle of irreducible real Clifford modules of complex type, that is, a real spinor bundle of irreducible complex type. In order to do this, we use the theory of Lipschitz structures in signature $p-q\equiv_8 3,7$ to reformulate the problem as the obstruction problem for $(M,g)$ to admit a $\mathrm{Spin}^{o}_{\alpha}$ structure with $\alpha = -1$ if $p-q \equiv_{8} 3$ or $\alpha = +1$ if $ p-q \equiv_{8} 7$, where $\mathrm{Spin}^o_+(p,q)=\mathrm{Spin}(p,q)\cdot\mathrm{Pin}_{2,0}$ and $\mathrm{Spin}^o_-(p,q)=\mathrm{Spin}(p,q)\cdot \mathrm{Pin}_{0,2}$. This allows computing the obstruction in terms of the Karoubi Stiefel-Whitney classes of $(M,g)$ and the existence of an auxiliary $\O(2)$ bundle with prescribed characteristic classes. Furthermore, we explicitly show how a $\mathrm{Spin}^o_{\alpha}$ structure can be used to construct $S$ and we give geometric characterizations (in terms of associated bundles) of the conditions under which the structure group of $S$ reduces to certain natural subgroups of $\mathrm{Spin}^o_{\alpha}$. Finally, we prove that certain codimension two submanifolds of spin manifolds and certain products of tori with Grassmanians, which were not known to admit irreducible real spinor bundles, do admit $\mathrm{Spin}^{o}_{\alpha}$ structures and therefore do admit real spinor bundles of irreducible complex type.
\end{abstract}

\maketitle

\setcounter{tocdepth}{1} 
\tableofcontents


\section*{Introduction}


Let $S$ be a real vector bundle over a connected pseudo-Riemannian manifold $(M,g)$ of signature $(p,q)$ and dimension $d$. A \emph{Dirac operator} $D\colon C^{\infty}(S)\to C^{\infty}(S)$ on $S$ is by definition a first-order differential operator whose square $D^2\colon  C^{\infty}(S)\to C^{\infty}(S)$ has principal symbol $\sigma$ given by:
\begin{equation*}
	\sigma(m,\xi) = g(\xi,\xi)\, \mathrm{Id}_S\, , \qquad m\in M\, ,\qquad \xi \in T^{\ast}M\, .
\end{equation*}

\noindent
where $\mathrm{Id}_S$ denotes the identity endomorphism of $S$. Since by the previous equation the square of a Dirac operator $D$ satisfies the Clifford relation condition, the symbol of $D$ canonically extends to a morphism of bundles of unital and associative algebras:
\begin{equation*}
	\gamma\colon \Cl(M,g) \to End(S)\, ,
\end{equation*}

\noindent
whence $(S,\gamma)$ becomes a bundle of irreducible real spinors, that is, $\gamma$ defines by restriction an irreducible real representation of each fiber of $\Cl(M,g)$. The equivalence class $[\eta]$ (in the appropriate category, see \cite[\textsection 1]{Lipschitz}) of these fiberwise representations is well defined and is called the \emph{type} of $(S,\gamma)$. Conversely, every bundle of irreducible real spinors of type $[\eta]$ admits an irreducible Dirac operator\footnote{If the type $[\eta]$ of the spinor bundle structure defined by $D$ is given by an irreducible Clifford module we say that $D$ is \emph{irreducible}.}. Aside from the obvious rank restriction on $S$, which we will assume as granted in the following, existence of an irreducible Dirac operator on a given vector bundle $S$ is obstructed. The obstruction was computed in \cite{Lipschitz}, see also \cite{FriedrichTrautman,SpinorNote}, when the signature of $(M,g)$ satisfies $(p-q) \equiv_{8} 0,1,2,4,5,6$. The main goal of this paper is to compute the obstruction in the remaining signatures $(p-q) \equiv_8 3,7$, which correspond to $[\eta]$ being of complex type, and investigate the geometric structure and reductions of the corresponding bundles of irreducible spinors. For this, we will use the theory of Lipschitz structures developed in \cite{Lipschitz}, where an equivalence between the groupoid of weakly faithful spinor bundles of fixed type $[\eta]$ and the groupoid of Lipschitz structures of type $[\eta]$ was established. Applying this correspondence for the irreducible case in signature $(p-q) \equiv_{8} 3,7$ we conclude that $S$ admits an irreducible Dirac operator if and only if $(M,g)$ admits an {\em adapted	$\Spin^o$ structure}. The latter is defined as a principal $\Spin^o_{p,q}$-bundle $Q$ over $M$ endowed with a $\tlambda$-equivariant map to the orthonormal (co)frame bundle of $(M,g)$, where $\Spin^o_{p,q}$ is the group defined as follows:
\be
\Spin^o_{p,q}\eqdef \twopartdef{\Spin^o_-(p,q)\eqdef \Spin_{p,q}\cdot \Pin_{0,2}}{p-q\equiv_8 3}{\Spin^o_+(p,q)\eqdef\Spin_{p,q}\cdot \Pin_{2,0}}{p-q\equiv_8 7}\, ,
\ee
and $\tlambda: \Spin^o_{p,q}\rightarrow \O(p,q)$ is a certain surjective group morphism constructed from the untwisted adjoint representation of $\Spin_{p,q}$ and from the twisted adjoint representation of $\Pin_{0,2}$ or $\Pin_{2,0}$ respectively. The two groups $\Spin^o_{+}(p,q)$ and $\Spin^o_{-}(p,q)$  can in fact be considered in any dimension and signature and the same applies to the corresponding structures, which in full generality we call {\em $\Spin^o_\pm$ structures}. When $p-q\equiv_8 3,7$, an adapted $\Spin^o$ structure is thus a $\Spin^o_{\alpha_{p,q}}$ structure, where:
\be
\alpha_{p,q}\eqdef \twopartdef{-1}{p-q\equiv_8 3}{+1}{p-q\equiv_8 7}~~.  
\ee
Given an adapted $\Spin^o$ structure $Q$ over $M$, a bundle of irreducible real spinors can be constructed as the associated real vector bundle $S=Q\times_{\gamma_o}S_0$, where $\gamma_0:\Spin^o_{p,q}\rightarrow \End_\R(S_0)$ is an {\em elementary
	real pinor representation} of $\Spin^o_{p,q}$. The latter is constructed from an irreducible Clifford representation
$\gamma_0:\Cl_{p,q}\rightarrow \End_\R(S_0)$ in an $\R$-vector space $S_0$ of dimension:
\be
N=\dim_\R S_0=2^{\frac{d+1}{2}}~~.
\ee
It follows that $S$ admits a well-defined Clifford multiplication $TM\otimes S\rightarrow S$, which makes it into a bundle of simple modules over the fibers of $\Cl(M,g)$. When $M$ is orientable, the real vector bundle $S$ admits a complex structure, $Q$ reduces to a $\Spin^c$ structure $Q_0$ and $S$ can be viewed as the ordinary bundle of elementary {\em complex} pinors associated to $Q_0$ (see \cite{ComplexLipschitz}). When $M$ is unorientable, $S$ need {\em not} admit a complex structure and hence its global sections {\em cannot} be interpreted as complex spinors. In this case, $S$ admits a so-called {\em semilinear structure} --- a weakening of the concept of complex structure that still allows one to define the notions of linear and antilinear endomorphisms of $S$. Furthermore, the structure group of $S$ reduces to $\Pin_{p,q}$ or $\Pin_{q,p}$ if and only if $S$ admits a globally-defined {\em conjugation}. When a conjugation exists, the
structure group further reduces to $\Spin_{p,q}$ if and only if $M$ is orientable. The subtle interplay between these conditions leads to a web of possibilities which is considerably richer than what occurs in the ordinary case of $\Pin^\pm$ structures. Our main result is (see Theorem \ref{thm:equiv}):
\begin{theorem}
Let $(M,g)$ be a pseudo-Riemannian manifold of signature $(p,q)$ with $p-q\equiv_8 3,7$. Then $(M,g)$ admits a bundle of irreducible real spinors $(S,\gamma)$ if and only if there exists a principal $\O(2)$-bundle $E$ over $M$ such that the following conditions are satisfied:
\begin{eqnarray*}
&& \w_1^+(M)+\w_1^-(M)=\w_1(E)\\
&& \w_2^+(M)+\w_2^-(M)+\w_1(E)(p\w_1^+(M)+q\w_1^-(M))=\w_2(E) \\ && + \left[\delta(p,q)+\frac{p(p+1)}{2}+\frac{q(q+1)}{2}\right]\w_1(E)^2\, .\nn
\end{eqnarray*}
where: 
\begin{equation*}
\delta(p,q)=\twopartdef{1}{p-q\equiv_8 3}{0}{p-q\equiv_8 7}~~.
\end{equation*}
Let $[\eta]$ be the type of $(S,\gamma)$. In that case, and relative to $[\eta]$, there exists an adapted $\Spin^{o}_{\alpha}(V,h)$ structure	$Q(S,\gamma)$ on $(M,g)$, unique up to isomorphism, such that $(S,\gamma)$ is naturally associated to $Q(S,\gamma)$ as a bundle of irreducible Clifford modules, and Clifford multiplication in $S$ is implemented by the morphism of vector bundles $\mathfrak{C}\colon T^{\ast}M\otimes S\to S$ defined in equation \eqref{eq:frC}.
\end{theorem}

\noindent
The previous theorem encodes the obstruction to the existence of an irreducible Dirac operator in terms of Karoubi Stiefel-Whitney classes $\w_i$ \cite{Karoubi} and characterizes explicitly its associated spinor bundle structure. As a corollary to this theorem, we obtain a large class of generically non-spin manifolds that admit real spinor bundles of irreducible type.
\begin{cor}
Let $X$ be a $(2k+1)$-dimensional manifold which is oriented and spin and let $Y$ be an embedded $(2k-1)$-dimensional submanifold of $X$.
\begin{enumerate}
\item Assume that $2k-1 \equiv_{8} 7$ and that $X$ is endowed with a Riemannian metric $g$. Then $(Y,g|_Y)$ admits a $\Spin^{o}_{+}$ structure whose characteristic $\O(2)$-bundle $E$ is the orthogonal frame bundle of the normal bundle to $Y$ in $X$.
\item Assume that $2k-1 \equiv_{8} 3$ and that $X$ is endowed with a negative Riemannian metric $g$. Then $(Y,g|_Y)$ admits a
$\Spin^{o}_{-}$ structure whose characteristic $\O(2)$-bundle $E$ is the orthogonal frame bundle of the normal bundle to $Y$ in $X$.
\end{enumerate}
\end{cor}
 
\noindent
The reader is referred to Proposition \ref{prop:cod2spino} for the proof of this result. Furthermore, we apply the previous theorem to obtain several families of non-spin products of tori with Grassmanians that admit bundles of irreducible spinors of complex type. 

\begin{thm}
Assume that $n+1\equiv_4 0$. Then $\Gr_{2,n}$ is stably $\Spin^{o}$, namely:
\begin{enumerate}
\item For $j \equiv_{8} 7-2n$, the manifold $\Gr^{j}_{2,n}$ carries a $\Spin^{o}_{+}$ structure of positive-definite signature with characteristic $\O(2)$-bundle given by the orthogonal frame bundle of $\mathcal{L}_{2,n}$.
\item For $j \equiv_{8} -(3+2n)$, the manifold $\Gr^{j}_{2,n}$ carries a $\Spin^{o}_{-}$ structure of negative-definite signature characteristic $\O(2)$-bundle given by the orthogonal frame bundle of $\mathcal{L}_{2,n}$.
\end{enumerate}
\end{thm}

\noindent
The reader is refereed to Theorem \ref{thm:grasspino} for the proof of this result. These results show that non-spin manifolds admitting \emph{irreducible} $\Spin^{o}_{\alpha}$ structures are abundant. It would be interesting to study \emph{\'a la Lichnerowicz} the irreducible Dirac operator on these non-spin manifolds in order to obtain possible constraints on the existence of metrics with prescribed scalar curvature.


\subsection*{Organization of the paper} 


In Section \ref{sec:O}, we discuss the groups $\Pin_{0,2}$ and $\Pin_{2,0}$ as well as their twisted and untwisted adjoint representations, describing their abstract group-theoretical models as well as certain isomorphic realizations as the groups $\Spin_{1,2}$ and $\Spin_{3,0}$. In Section \ref{sec:spinogroups}, we describe the groups $\Spin^o_\pm(p,q)$ and their real irreducible representations. We also give realizations of $\Spin^o_\pm(p,q)$ as subgroups of certain higher dimensional $\Pin$ and $\Spin$ groups, realizations which will be useful later on, and we discuss certain relevant subgroups of these groups. In Section \ref{sec:spinostructures}, we introduce the notion of $\Spin^o_\pm$ structures in arbitrary dimension and signature and extract the topological obstructions to their existence in odd dimension. Section \ref{sec:elementarypinor} considers the case of signatures satisfying the condition $p-q\equiv_8 3,7$, showing how a specific representation of $\Spin^o_{\alpha}$ can be used to construct an irreducible real Clifford module for $\Cl_{p,q}$. We also discuss certain natural subspaces associated to such representations. Section \ref{sec:elementary} discusses elementary real pinor bundles $S$ for $p-q\equiv_8 3,7$, focusing on their explicit realization in terms of adapted $\Spin^{o}$. In the same section, we also discuss certain sub-bundles of the endomorphism bundle of $S$ as well as the conditions under which the structure group of $S$ reduces to various subgroups of the group $\Spin^o_{p,q}$. In Section \ref{sec:examples} we give several examples of manifolds admitting $\Spin^{o}_{p,q}$ structures. Appendix \ref{Spincalpha} briefly discusses $\Spin^c_\pm$ structures in
arbitrary dimension and signature, contrasting them with $\Spin^o_\pm$ structures. Appendix \ref{app:semilinear} summarizes for completeness the theory semilinear structures on a vector bundle, a concept which is useful for understanding the properties of real pinor bundles associated to $\Spin^o$ structures.


\subsection*{Relation to the spinorial structure used by N. Nakamura} 


In arbitrary dimension and arbitrary signature $(p,q)$, the groups $\Spin^o_\pm(p,q)$ are (non-central) extensions of the group $\Z_2$ by the group $\Spin^c_{p,q} = \Spin_{p,q}\cdot \U(1)$. For positive signature $q=0$, the groups $\Spin^o_\pm(p,q)$ have appeared before in \cite{Nakamura1,Nakamura2}, where they were used to define so-called
$\Spin^c_\pm$ structures and where $\Spin^c_-$ structures were employed to define and study a certain variant of monopole equations in four dimensions. A $\Spin^c_\pm$ structure is defined similarly to a $\Spin^o_\pm$ structure, but using a {\em different} representation $\lambda:\Spin^o_{p,q}\rightarrow \SO(p,q)$. The latter is constructed from the adjoint representation of $\Spin_{p,q}$ and from the twisted adjoint representation of $\Pin_{0,2}$ or $\Pin_{2,0}$ and covers only
the {\em special} pseudo-orthogonal group $\SO(p,q)$ --- unlike the representation $\tlambda$ mentioned above, which covers the {\em full} pseudo-orthogonal group. Due to this difference, $\Spin^c_\pm$ structures can exist only when $(M,g)$ is orientable and lead to a notion of ``real pinors'' which {\em differs} from the one considered in the present paper: in signature $p-q\equiv_8 3,7$, a bundle $S$  of {\em simple} modules over $\Cl(M,g)$ is always associated to a $\Spin^o$ structure and not to a $\Spin^c_+$ or $\Spin^c_-$ structure.


\subsection*{Notations and conventions} 


We use the conventions and notations of \cite{Lipschitz}.


\section{Abstract models for certain Clifford and pin groups}
\label{sec:O}


We start by discussing certain groups which will be relevant for the 
description of $\Spin^{o}_\pm(p,q)$.


\subsection{The abstract groups $\O_2(\alpha)$ and $\O_2^\C(\alpha)$}


\noindent Let $\alpha\in\{-1,1\}$ be a sign factor.

\begin{definition}
	Let $\O_2^\C(\alpha)$ be the non-compact non-Abelian Lie group with
	underlying set $\C^\times\times \Z_2$ and composition given by: 
	
	\beqan
	\label{Ualpha}
	&& (z_1,\hat{0})(z_2, \hat{0})=(z_1z_2, \hat{0})\, , \quad (z_1,\hat{0})(z_2,\hat1)=(z_1z_2,\hat1)\, ,\nn\\
	&& (z_1,\hat1)(z_2,\hat{0})=(z_1\bar{z}_2,\hat1)\, , \quad (z_1,\hat1)(z_2,\hat1)=(\alpha z_1\bar{z}_2,\hat{0})\, ,
	\eeqan
	where $\mathbb{Z}_{2} = \left\{\hat{0},\hat{1}\right\}$. The unit of
	$\O_2^\C(\alpha)$ is given by $1\equiv (1,{\hat 0})$. The {\bf square
		norm} is the group morphism $N:\O_2^\C(\alpha)\rightarrow \R_{>0}$
	given by:
	\be
	N(z,t)\eqdef |z|^2~~,~~\forall (z,t)\in \C^\times \times \Z_2~~.
	\ee
	We define $\O_2(\alpha)\eqdef \ker N$ to be the compact non-Abelian
	Lie subgroup of $\O_2^\C(\alpha)$ with underlying set $\U(1)\times
	\Z_2$.
\end{definition}

The group $\C^\times$ embeds into $\O_2^\C(\alpha)$ as the {\em
	non-central} subgroup $\C^\times \times \{{\hat 0}\}$, so we
identify $\i\in \C^\times$ with the element $(\i,{\hat 0})\in
\O_2^\C(\alpha)$ and $-1\in \C^\times$ with the element $(-1,{\hat
	0})$. The {\em conjugation element} $c\eqdef (1,{\hat 1})\in
\O_2(\alpha)$ satisfies $c^2=\alpha 1$ and $c^{-1}=\alpha
c=(\alpha,{\hat 1})$. This element generates the subgroup:
\be
\Gamma_\alpha\simeq \twopartdef{\Z_2}{\alpha=+1}{\Z_4}{\alpha=-1}~~. 
\ee
The element $c$ and the subgroup $\C^\times$ generate $\O_2^\C(\alpha)$,
while $c$ and $\U(1)$ generate $\O_2(\alpha)$. We have:
\be
\Ad(c)(x)=K(x)\, , \quad \forall\, x\in \O_2^\C(\alpha)\, ,
\ee
where $K:\O_2^\C(\alpha)\rightarrow \O_2^\C(\alpha)$ is the {\em
	conjugation automorphism}, given by:
\be
K(z,t)=({\bar z},t)\, , \quad \forall\, z\in \C^\times\, , \quad \forall t\in \Z_2\, .
\ee
Notice that $K^2=\id_{\O_2^\C(\alpha)}$ and $K(c)=c$.  The centers of
$\O_2^\C(\alpha)$ and $\O_2(\alpha)$ are given by
$Z(\O_2^\C(\alpha))=\R^\times 1$ and $Z(\O_2(\alpha))=\{-1,1\}\simeq
\Z_2$, respectively. The fixed point set of the conjugation automorphism in
$\O_2(\alpha)$ is the subgroup $\Gamma'_\alpha$ generated by $-1$ and
$c$, which is given by:
\be
\Gamma'_\alpha=\twopartdef{\mG_2\times \Gamma_+\simeq D_2}{\alpha=+1}{\Gamma_-\simeq \Z_4}{\alpha=-1}~~.
\ee
Here, $\mG_2=\{-1,1\}\simeq \Z_2$ denotes the multiplicative group of second
order roots of unity, while $D_2=\Z_2\times \Z_2$ denotes the Klein
group (the dihedral group with four elements).

\begin{definition}
	The {\bf generalized determinant} is the morphism
	$\eta_\alpha:\O_2(\alpha)\rightarrow \mG_2$ defined through:
	\be
	\eta_\alpha(z,t)\eqdef (-1)^t~~,~~\forall (z,t)\in \O_2(\alpha)~~.
	\ee
\end{definition}

\noindent This induces a $\Z_2$-grading of $\O_2(\alpha)$ whose
homogeneous pieces are the connected components of $\O_2(\alpha)$:
\beqa
&&\O_2^+(\alpha)\eqdef \ker \eta_\alpha =\{(z,{\hat 0})|z\in \U(1)\}= \U(1)1=\SO(2)1~~\nn\\
&&\O_2^-(\alpha)\eqdef \eta_\alpha^{-1}(\{-1\})=\{(z,{\hat 1})|z\in \U(1)\}=\U(1) c=\SO(2)c~~
\eeqa
and gives a short exact sequence: 
\ben
\label{ext}
1\longrightarrow \U(1) \longrightarrow \O_2(\alpha)\stackrel{\eta_\alpha}{\longrightarrow} \mG_2\longrightarrow 1~~.
\een
For any $u=(z,t)\in \O_2(\alpha)$, let $-u\eqdef (-1)u=(-z,t)\in
\O_2(\alpha)$. Notice that $\eta_\alpha(-u)=\eta_\alpha(u)$. 

\begin{prop}
	\label{O2}
	The short exact sequence \eqref{ext} is a non-central extension of
	$\mG_2$ by $\U(1)$.  Moreover, by exactness of \eqref{ext} we obtain:
	\begin{enumerate}[1.]
		\itemsep 0.0em
		\item For $\alpha=+1$, the sequence \eqref{ext} splits and we have
		$\O_2(+)\simeq \O(2)$.
		\item For $\alpha=-1$, the sequence \eqref{ext} presents $\O_2(-)$ as
		a non-split extension of $\mG_2\simeq \Z_2$ by $\U(1)$.  In
		particular, we have $\O_2(-)\not \simeq \O(2)$.
	\end{enumerate}
\end{prop}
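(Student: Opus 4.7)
The plan is to establish the three claims of the proposition by direct manipulation of the multiplication law \eqref{Ualpha}, using the element $c=(1,\hat1)$ and the conjugation automorphism $K$ as the key ingredients.

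First I would verify that the extension is non-central, which is the easy preliminary claim. Pick any non-real $z\in\U(1)$ and compute
\[
(z,\hat0)\cdot(1,\hat1)=(z,\hat1),\qquad (1,\hat1)\cdot(z,\hat0)=(\bar z,\hat1)
\]
from \eqref{Ualpha}; these differ, so the image of $\U(1)\hookrightarrow\O_2(\alpha)$ is not contained in the center (in fact the identity $\Ad(c)=K$ already recorded in the excerpt makes this transparent).

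Next, for item~1 ($\alpha=+1$), I would exhibit an explicit splitting of \eqref{ext}. Define $s:\mG_2\to \O_2(+)$ by $s(\hat0)=(1,\hat0)$ and $s(\hat1)=c=(1,\hat1)$. The relation $c^2=\alpha\cdot 1=1$ (valid precisely because $\alpha=+1$) shows that $s$ is a group homomorphism, and clearly $\eta_{+}\circ s=\id_{\mG_2}$. Hence the sequence splits and yields
\[
\O_2(+)\simeq \U(1)\rtimes_K \mG_2,
\]
where $\mG_2$ acts on $\U(1)$ by complex conjugation (this is the restriction of the conjugation automorphism $K$ to $\U(1)$). Since this semidirect product is the standard presentation of $\O(2)$ (with $c$ playing the role of the reflection across the real axis), one concludes $\O_2(+)\simeq \O(2)$.

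Finally, for item~2 ($\alpha=-1$), I would show that no splitting can exist by an order argument on the nontrivial component. Using \eqref{Ualpha} with $\alpha=-1$, any element of $\O_2^-(-)=\U(1)c$ has the form $(z,\hat1)$ with $|z|=1$ and satisfies
\[
(z,\hat1)^2=(\alpha\, z\bar z,\hat0)=(-1,\hat0)=-1\neq 1.
\]
Thus every element of $\eta_-^{-1}(\{-1\})$ has order $4$, so there is no element of order $\leq 2$ lifting the generator $\hat 1\in\mG_2$, and a group-theoretic section of $\eta_-$ cannot exist. This is consistent with the previously noted identification $\Gamma_-\simeq\Z_4$. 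To deduce $\O_2(-)\not\simeq\O(2)$, I would note that $\O(2)$ contains order-two elements outside its identity component (namely reflections), whereas in $\O_2(-)$ every element outside $\O_2^+(-)$ has order $4$ by the computation just performed; any isomorphism would have to match connected components, producing a contradiction.

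The only mildly delicate step is item~2: one must be careful that non-splitting of the sequence is not automatically equivalent to non-isomorphism with $\O(2)$ (since in principle a different splitting realization could exist). This is why I insist on translating the non-splitting into the intrinsic statement that the non-identity component of $\O_2(-)$ contains no involutions, which is a genuine isomorphism invariant and rules out $\O(2)$ directly.
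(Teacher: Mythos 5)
Your proof is correct and follows essentially the same route as the paper's: the same explicit splitting $\hat 1\mapsto c$ for $\alpha=+1$ (with the same identification $\U(1)\rtimes\mG_2\simeq\O(2)$) and the same obstruction for $\alpha=-1$, namely that any lift of the generator of $\mG_2$ must lie in $\U(1)c$ and hence squares to $-1$. You in fact go slightly beyond the paper in two places --- checking non-centrality explicitly and upgrading the non-splitting to the isomorphism-invariant statement that the non-identity component of $\O_2(-)$ contains no involutions, which the paper leaves implicit when asserting $\O_2(-)\not\simeq\O(2)$ --- and both additions are sound.
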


\begin{proof}
	
	\begin{enumerate}[1.]
		\itemsep 0.0em
		\item For $\alpha=+1$, the element $c$ has order two and the map
		$\psi:\mG_2\rightarrow \O_2(+)$ given by $\psi(1)=1$ and
		$\psi(-1)=c$ is a group morphism which right-splits \eqref{ext}. Thus
		$\O_2(+)\simeq \U(1)\rtimes_{\psi} \mG_2\simeq \O(2)$. Here, the
		last isomorphism follows by noticing that the map $\Phi_0:
		\U(1)\rtimes_{\psi} \mG_2\rightarrow \O(2)$ given by:
		\ben
		\label{Phi0}
		\Phi_0(e^{i\theta},+1)(z)=e^{i\theta}z~~,~~ \Phi_0(e^{i\theta},-1)(z)=e^{i\theta}\bar{z}\, , \quad z\in \C\, ,
		\een
		induces an isomorphism of groups $\U(1)\rtimes_{\psi}\mG_2\simeq
		\O(2)$.
		\item For $\alpha=-1$, the sequence \eqref{ext} does not
		split. Indeed, a right-splitting morphism $\psi:\mG_2\rightarrow
		\O_2(-)$ would give an order two element $x\eqdef \psi(-1)\in
		\O_2(-)$ which must be of the form $x=(z,{\hat 1})$ in order for
		condition $\eta_{-}\circ \psi_{-} = \id_{\mG_{2}}$ to be
		satisfied. The order two condition $x^2=1$ gives then $-|z|^2=1$
		(since $\alpha=-1$), a contradiction. 
	\end{enumerate}
\end{proof}

\begin{remark} 
	Any reflection $C$ of $\R^2$ determines two isomorphisms of groups
	$\Phi_C^{(\pm)}:\O_2(+)\stackrel{\sim}{\rightarrow} \O(2)$ given by:
	\ben
	\label{PhiC}
	\Phi_C^{(\pm)}(e^{i\theta},{\hat 0})=R(\pm \theta)\, , \quad \Phi_C^{(\pm)}(e^{i\theta},{\hat 1})=R(\pm\theta) C\, ,
	\een
	where $\theta\in \R$ and: 
	\ben
	\label{rot}
	R(\theta)=\left[\begin{array}{cc}\cos(\theta) & -\sin(\theta) \\ \sin(\theta) & \cos(\theta) \end{array}\right]\in \SO(2)
	\een
	is the counterclockwise rotation of $\R^2$ by angle $\theta \!\mod 2\pi$
	with respect to its canonical orientation (that orientation in which
	the canonical basis $\epsilon_1=\left[\begin{array}{c} 1 \\0 \end{array}
	\right],\epsilon_2=\left[\begin{array}{c} 0 \\1 \end{array}\right]$ is
	positive):
	\beqa
	&& R(\theta)e_1=~\cos(\theta)e_1+\sin(\theta)e_2~~\\
	&& R(\theta)e_2=-\sin(\theta)e_1+\cos(\theta)e_2~~.
	\eeqa
	Notice that $R(-\theta)$ is the clockwise rotation by the same angle. 
	We have:
	\be
	\Phi_C^{(\pm)}(c)=C~~,~~\Phi_C^{(\pm)}(-1)=-I_2~~.
	\ee
	The isomorphisms $\Phi_C^{(\pm)}$ map the $\Z_2$-grading of $\O_2(+)$
	into the $\Z_2$-grading of $\O(2)$ given by:
	\be
	\O^+(2)\eqdef \SO(2)~~,~~\O^-(2)\eqdef \SO(2)C~~,
	\ee
	hence $\Phi_C^{(\pm)}$ can be viewed as isomorphisms of $\Z_2$-graded
	groups. These two isomorphisms are related by the conjugation
	automorphism of $\O_2(+)$:
	\be
	\Phi_C^{(-)}=\Phi_C^{(+)}\circ K~~
	\ee
	and we have: 
	\be
	\eta_+=\det \circ\, \Phi_C^\pm~~.
	\ee
	In particular, we can consider the reflection:
	\ben
	\label{C0}
	C_0=\left[\begin{array}{cc}1 & 0 \\ 0 & -1 \end{array}\right]\in \O(2)
	\een 
	with respect to the horizontal axis of $\R^2$ (i.e. the real axis of $\C$) to
	define:
	\ben
	\Phi_0^{(\pm)}\eqdef \Phi_{C_0}^{(\pm)}\, .\nn
	\een
	Then the isomorphism $\Phi_0$ of equation \eqref{Phi0} corresponds to:
	\be
	\Phi_0=\Phi_{C_0}^{(+)}\, .\nn
	\ee 
\end{remark}

\begin{definition}
	The {\bf squaring morphism} is the surjective group morphism
	$\sigma_\alpha:\O_2(\alpha)\rightarrow \O_2(+)$ given by:
	\be
	\sigma_\alpha(z,t)\eqdef (z^2,t)\, , \quad \forall (z,t)\in \O_2(\alpha)~~.
	\ee
\end{definition}

\noindent Notice the relations:
\ben
\label{etadet}
\sigma_\alpha(c)=c~~,~~\eta_\alpha=\eta_+\circ \sigma_\alpha=\det \circ\, \Phi_C^{(\pm)}\circ \sigma_\alpha~~.
\een
Since $\ker \sigma_\alpha=\{-1,1\}$, we have a short
exact sequence:
\be
1\longrightarrow \mG_2 \hookrightarrow \O_2(\alpha)\stackrel{\sigma_\alpha}{\longrightarrow} \O_2(+)\longrightarrow 1~~.
\ee
In particular, $\O_2(+)$ and $\O_2(-)$ are inequivalent central
extensions of $\O(2)$ by $\Z_2$. Notice that $\sigma_\alpha$ can be
viewed as a morphism of $\Z_2$-graded groups.

\subsection{Realization of $\O_2^\C(\alpha)$ as Clifford groups}

\label{sec:spinorgroups}

Let:
\be
\Cl_2(\alpha)\eqdef \twopartdef{\Cl_{2,0}}{\alpha=+1}{\Cl_{0,2}}{\alpha=-1}~~.
\ee
Then $\Cl_2(\alpha)$ has generators $e_1,e_2$ with relations:
\be
e_1^2=e_2^2=\alpha~~,~~e_1e_2=-e_2e_1~~.
\ee
Let us define $e_3\eqdef e_1 e_2\in \Cl^+_2(\alpha)$, which satisfies
$e_3^2=-1$. The elements $e_1,e_2$ and $e_3$ mutually anticommute and
span $\Cl_2(\alpha)$ over $\mathbb{R}$. The algebra
$\Cl_2(+)=\Cl_{2,0}$ is isomorphic with the algebra $\P$ of split
quaternions (and hence with the matrix algebra $\Mat(2,\R)$), while
$\Cl_2(-)$ is isomorphic with the quaternion algebra $\H$. Set
$J\eqdef e_3$ and $D\eqdef e_1$. Then $J$ and $D$ satisfy the
relations:
\begin{equation*}
J^2=-1\, , \quad D^2=\alpha\, ,\quad JD=-DJ\, .
\end{equation*}
Notice that $J$ is even while $D$ is odd with respect to the canonical
$\Z_2$-grading of $\Cl_2(\alpha)$. We have $Z(\Cl_2(\alpha))=\R$,
hence the extended Clifford group\footnote{See Definition 1.7 in
	reference \cite{Lipschitz}.} of $\Cl_2(\alpha)$ coincides with its
ordinary Clifford group, which we denote by:
\be
\G_2(\alpha)\eqdef\twopartdef{\G_{2,0}}{\alpha=+1}{\G_{0,2}}{\alpha=-1}~~.
\ee
We have $\Cl_2^+(\alpha)=\R\oplus \R J\simeq \C$ and $\Cl_2^-(\alpha)=\R
e_1\oplus \R e_2=\R D\oplus \R e_2=\Cl_2^+(\alpha)D$. 

\begin{prop}
	There exists an isomorphism of $\Z_2$-graded groups
	$\varphi_\alpha^\C:\O_2^\C(\alpha)\stackrel{\sim}{\rightarrow}
	\G_2(\alpha)$ which satisfies:
	\ben
	\label{Isom}
	\varphi_\alpha^\C(\i)=J\eqdef e_1e_2~~\mathrm{and}~~\varphi^\C_\alpha(c)=D\eqdef e_1~~.
	\een
\end{prop}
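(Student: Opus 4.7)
The plan is to define $\varphi_\alpha^\C$ explicitly and verify the required properties by a short series of routine checks, using only the decomposition $\Cl_2(\alpha) = \Cl_2^+(\alpha) \oplus \Cl_2^-(\alpha)$ with $\Cl_2^+(\alpha) = \R \oplus \R J \simeq \C$ (identifying $J$ with $\i$) and $\Cl_2^-(\alpha) = \Cl_2^+(\alpha)\, D$, which are already established in the text.

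Concretely, I would set
\[
\varphi_\alpha^\C(z,\hat 0) \eqdef z,\qquad \varphi_\alpha^\C(z,\hat 1)\eqdef zD\qquad (z\in\C^\times),
\]
where on the right-hand side $z = a+b\i \in \C^\times$ is identified with $a + bJ \in \Cl_2^+(\alpha)^\times$. The normalizations in \eqref{Isom} are then immediate: $\varphi_\alpha^\C(\i) = J$ and $\varphi_\alpha^\C(c) = \varphi_\alpha^\C(1,\hat 1) = D$.

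Next I would verify that $\varphi_\alpha^\C$ is a group morphism, which reduces to checking the four composition rules \eqref{Ualpha}. The only nontrivial identity needed is
\[
Dz = \bar z D\qquad \forall z\in\Cl_2^+(\alpha)\simeq\C,
\]
which follows from $DJ = -JD$ (i.e.\ from $e_1(e_1e_2) = -(e_1e_2)e_1$). Using this, the rules $(z_1,\hat 0)(z_2,\hat 1)=(z_1z_2,\hat 1)$ and $(z_1,\hat 1)(z_2,\hat 0)=(z_1\bar z_2,\hat 1)$ are checked in one line, while $(z_1,\hat 1)(z_2,\hat 1) = (\alpha z_1\bar z_2,\hat 0)$ follows from the same identity together with $D^2 = \alpha$. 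Preservation of the $\Z_2$-grading is automatic from the construction, since $\varphi_\alpha^\C$ sends the subset labeled by $\hat 0$ into $\Cl_2^+(\alpha)$ and the subset labeled by $\hat 1$ into $\Cl_2^+(\alpha)D = \Cl_2^-(\alpha)$.

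It remains to show that $\varphi_\alpha^\C$ is a bijection onto $\G_2(\alpha)$. Injectivity is clear: within each grade the map restricts to the isomorphism $\C^\times \simeq \Cl_2^\pm(\alpha)^\times$ (obtained from $z\mapsto z$ and $z\mapsto zD$ respectively). For surjectivity I would argue that, by homogeneity, any element $x\in\G_2(\alpha)$ lies either in $\Cl_2^+(\alpha)^\times = \C^\times$ or in $\Cl_2^-(\alpha)^\times = \C^\times D$; the first is the image of $\{(z,\hat 0)\}$, while the second is the image of $\{(z,\hat 1)\}$. The one place to be a little careful is to confirm that every invertible homogeneous element lies in the Clifford group, i.e.\ that its (twisted) adjoint action preserves $V = \R e_1 \oplus \R e_2 = \Cl_2^-(\alpha)$: for even $z$ this is $v\mapsto zvz^{-1}$ and for $zD$ this is $v\mapsto -(zD)v(zD)^{-1}$, both of which land back in $\Cl_2^-(\alpha)$ by the commutation rule $Dw=\bar w D$ together with $D^2=\alpha$. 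This is the only place where a small computation is needed; everything else is bookkeeping with the two identities $DJ = -JD$ and $D^2 = \alpha$.
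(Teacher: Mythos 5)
Your proposal is correct and is essentially the paper's own proof: both define the same map $(z,\hat 0)\mapsto z$, $(z,\hat 1)\mapsto zD$ under the identification $\Cl_2^+(\alpha)\simeq\C$, and conclude it is a graded isomorphism onto $\G_2(\alpha)=\Cl_2^+(\alpha)^\times\sqcup\Cl_2^+(\alpha)^\times D$. The paper simply asserts the verifications that you carry out explicitly (the key identity $Dz=\bar z D$, the four composition rules, and the decomposition of $\G_2(\alpha)$ into its homogeneous parts).
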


\begin{proof}
	It is easy to see that $\G_2(\alpha)=\Cl_2^+(\alpha)^\times \sqcup
	(\Cl_2^+(\alpha)^\times D)$ and $\Cl_2^+(\alpha)^\times \simeq
	\C^\times$. The map $\varphi_\alpha^\C:\O_2^\C(\alpha)\rightarrow
	\G_2(\alpha)$ given by:
	\be
	\varphi_\alpha^\C(x+\i y,{\hat 0})=x+yJ~~,~~\varphi_\alpha^\C(x+\i y,{\hat 1})=(x+yJ)D~~\mathrm{for}z=x+\i y\in \C^\times
	\ee
	is an isomorphism of $\Z_2$-graded groups which satisfies \eqref{Isom}.
	 \end{proof}

\subsection{Realization of $\O_2(\alpha)$ as $\Pin$ groups}

\noindent Let: 
\be
\Pin_2(\alpha)\equiv \twopartdef{\Pin_{2,0}}{\alpha=+1}{\Pin_{0,2}}{\alpha=-1}
\ee
denote the Pin group of $\Cl_2(\alpha)$. This group is $\Z_2$-graded
by the decomposition $\Pin_2(\alpha)=\Spin_2(\alpha)\sqcup
\Spin_2(\alpha)D$, where $\Spin_2(\alpha)=\Spin_{2,0}=\Spin_{0,2}$ and
$\Spin_2(\alpha)D=\Pin_2^-(\alpha)$.

\begin{prop}
	\label{prop:varphi}
	We have $\varphi_\alpha^\C(\O_2(\alpha))=\Pin_2(\alpha)$, hence
	$\varphi_\alpha^\C$ restricts to an isomorphism of $\Z_2$-graded
	groups
	$\varphi_\alpha:\O_2(\alpha)\stackrel{\sim}{\rightarrow}\Pin_2(\alpha)$
	which satisfies:
	\ben
	\label{IsomO2}
	\varphi_\alpha(\i)=J\eqdef e_1e_2~~\mathrm{and}~~\varphi_\alpha(c)=D\eqdef e_1~~.
	\een
\end{prop}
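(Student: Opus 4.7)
The plan is to build directly on the previous proposition, which already gives a $\Z_2$-graded group isomorphism $\varphi_\alpha^\C: \O_2^\C(\alpha) \stackrel{\sim}{\to} \G_2(\alpha)$ sending $\i \mapsto J$ and $c \mapsto D$. Since $\O_2(\alpha) = \ker N$ is the subgroup of $\O_2^\C(\alpha)$ cut out by $|z|=1$, the statement reduces entirely to the set-theoretic identity $\varphi_\alpha^\C(\O_2(\alpha)) = \Pin_2(\alpha)$; the $\Z_2$-graded isomorphism property is then automatic, since both gradings (on $\O_2(\alpha)$ via $\eta_\alpha$, and on $\Pin_2(\alpha) = \Spin_2(\alpha) \sqcup \Spin_2(\alpha)D$) are inherited from the ambient groups, which $\varphi_\alpha^\C$ is already known to grade-preservingly identify.

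I would split the verification by $\Z_2$-degree. For the even component, $\varphi_\alpha^\C(e^{\i\theta}, \hat 0) = \cos\theta + \sin\theta\, J$ with $J = e_1 e_2$ and $J^2 = -1$ in both signatures. To identify this with $\Spin_2(\alpha)$, I would compute the product of two unit vectors $v_{\phi_j} = \cos\phi_j\, e_1 + \sin\phi_j\, e_2$ in $\Cl_2(\alpha)$; using $e_i^2 = \alpha$ and $e_1 e_2 = -e_2 e_1$, a short expansion gives
\be
v_{\phi_1} v_{\phi_2} = \alpha \cos(\phi_2-\phi_1) + \sin(\phi_2-\phi_1)\, J~~,
\ee
which as $\phi_1,\phi_2$ vary ranges over $\{\cos\theta + \sin\theta\, J : \theta \in \R\}$. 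Since $\Spin_2(\alpha)$ is by definition the subgroup generated by products of even numbers of unit vectors, and these two-vector products already form a group isomorphic to $\U(1)$, this exhausts $\Spin_2(\alpha)$ and matches the image of $\U(1) \times \{\hat 0\}$ under $\varphi_\alpha^\C$.

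For the odd component, I observe that $(e^{\i\theta}, \hat 1) = (e^{\i\theta}, \hat 0) \cdot c$ in $\O_2(\alpha)$, so
\be
\varphi_\alpha^\C(e^{\i\theta}, \hat 1) = \varphi_\alpha^\C(e^{\i\theta}, \hat 0)\cdot \varphi_\alpha^\C(c) = (\cos\theta + \sin\theta\, J)\, D~~.
\ee
Since $D = e_1$ is a unit vector, right-multiplication by $D$ sends $\Spin_2(\alpha)$ bijectively onto $\Pin_2^-(\alpha) = \Spin_2(\alpha)D$, so $\varphi_\alpha^\C(\U(1)\times\{\hat 1\}) = \Pin_2^-(\alpha)$. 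Assembling the two parts, $\varphi_\alpha^\C(\O_2(\alpha)) = \Spin_2(\alpha) \sqcup \Pin_2^-(\alpha) = \Pin_2(\alpha)$.

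There is no substantial obstacle here: the only point requiring genuine computation is the description of $\Spin_2(\alpha)$ as $\{\cos\theta + \sin\theta\, J\}$, which must be checked uniformly in $\alpha \in \{+1,-1\}$ because the sign of $v_\phi^2 = \alpha$ enters the product of unit vectors. Once this is in hand, the relations \eqref{IsomO2} are immediate from the analogous relations already established for $\varphi_\alpha^\C$ in the previous proposition.
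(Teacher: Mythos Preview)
Your argument is correct, but it takes a different route from the paper's. The paper's proof works entirely through the twisted Clifford norm: it recalls that $\Pin_2(\alpha)=\ker\bigl(|\tN|:\G_2(\alpha)\to\R_{>0}\bigr)$, then identifies $\Cl_2(+)$ with the split quaternions $\P$ and $\Cl_2(-)$ with $\H$, so that $\ttau$ becomes (split-)quaternion conjugation and $\tN$ becomes the explicit quadratic form $q_0^2+q_3^2-q_1^2-q_2^2$ (resp.\ $q_0^2+q_1^2+q_2^2+q_3^2$). Reading off the $|\tN|=1$ locus inside $\G_2(\alpha)=\Cl_2^+(\alpha)^\times\sqcup\Cl_2^+(\alpha)^\times D$ then gives $\U(1)\sqcup\U(1)D$ directly, which matches $\varphi_\alpha^\C(\O_2(\alpha))$.

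By contrast you bypass the norm entirely and compute $\Spin_2(\alpha)$ from its description as products of (an even number of) normalized vectors, via the explicit two-vector product $v_{\phi_1}v_{\phi_2}=\alpha\cos(\phi_2-\phi_1)+\sin(\phi_2-\phi_1)J$. This is cleaner in that it avoids invoking the (split-)quaternion model and treats both signs of $\alpha$ uniformly; the paper's approach, on the other hand, makes the connection to $\tN$ explicit, which is what the surrounding section is organized around. The one point where your version leans on background is the claim that $\Spin_2(\alpha)$ is generated by even products of unit vectors, which is standard but is not the definition adopted in the paper (the paper defines $\Pin$ via $|\tN|$, following \cite{Lipschitz}); a one-line remark that the two descriptions coincide in rank two would close the gap.
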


\begin{proof} 
	We have:
	\be
	\Pin_2(\alpha)\eqdef \ker (|\tN|:\G_2(\alpha)\longrightarrow \R_{>0})~~,
	\ee
	where $\tN$ is the twisted Clifford norm of $\Cl_2(\alpha)$.  For
	$\alpha=+1$, the twisted reversion $\ttau$ of
	$\Cl_2(+)=\Cl_{2,0}\simeq \P$ coincides with the conjugation of split
	quaternions and hence the twisted Clifford norm $\tN$ coincides with
	the split quaternion modulus:
	\be 
	\tN(q_0 +q_1 e_1+ q_2 e_2+q_3 e_3)=q_0^2+q_3^2-q_1^2-q_2^2~~.  
	\ee
	This implies $\varphi_\alpha(\O_2(+))=\Pin_2(+)=\U(1)\sqcup \U(1) D$.
	Notice that $\tN(\Cl_{2,0})\subset \R$ and that $\tN$ is positive
	definite on $\Cl_{2,0}^+$ and negative definite on $\Cl_{2,0}^-$.

	\noindent For $\alpha=-1$, the twisted reversion $\ttau$ of
	$\Cl_2(-)=\Cl_{0,2}\simeq \H$ coincides with quaternion
	conjugation. Hence the twisted Clifford norm $\tN$ coincides with the
	squared quaternion norm:
	\be
	\tN(q_0 +q_1 e_1+ q_2 e_2+q_3 e_3)=q_0^2+q_1^2+q_2^2+q_3^2
	\ee
	This implies $\varphi_\alpha(\O_2(-))=\Pin_2(-)=\U(1)\sqcup \U(1)D$.
	Notice that $\tN(\Cl_{0,2})\subset \R$, $\tN(J)=1$ and
	$\tN(D)=-\alpha$.
	 \end{proof}

\begin{remark}
	Setting:  
	\ben
	\label{z1}
	z(\theta)\eqdef \varphi_\alpha(e^{\i\theta},{\hat 0})=\cos(\theta)+\sin(\theta)J~~,
	\een
	we have:
	\ben
	\label{z2}
	\varphi_\alpha(e^{\i\theta},{\hat 1})=z(\theta)D
	\een
	and:
	\be
	\Spin_2(\alpha)=\{z(\theta)|\theta\in \R\}\simeq \U(1)\, , \quad \Pin_2^-(\alpha)=\Spin_2(\alpha)D~~.
	\ee 
\end{remark}

\noindent 
Let $\Ad_0^{(2)}:\Pin_2(\alpha)\rightarrow \O(2)$ and
$\tAd_0^{(2)}:\Pin_2(\alpha)\rightarrow \O(2)$ denote the vector and
twisted vector representations of $\Pin_2(\alpha)$, viewed as
morphisms of $\Z_2$-graded groups. Notice that $\ker \Ad_0^{(2)}=\ker
\tAd_0^{(2)}=\{-1,1\}$ and that $\det \circ \Ad_0^{(2)}=\det\circ
\tAd_0^{(2)}$.

\begin{prop}
	The untwisted vector representation of $\Pin_2(\alpha)$ agrees with
	the squaring morphism $\sigma_\alpha$ through the isomorphisms
	$\varphi_\alpha$ and $\Phi_0^{(-\alpha)}$:
	\ben
	\label{murep}
	\Ad_0^{(2)}\circ \varphi_\alpha =\Phi_0^{(-\alpha)}\circ \sigma_\alpha~~.
	\een
	This gives the following commutative diagram of morphisms of
	$\Z_2$-graded groups, where we also indicate the images of $c\in
	\O_2(\alpha)$ through the various maps:
	\be
	\label{diag1}
	\scalebox{1}{
		\xymatrix{
			c\in \O_2(\alpha)\ar[d]_{\sigma_\alpha}~ \ar[r]^{\varphi_\alpha} & \Pin_2(\alpha)\ni D ~ \ar[d]^{\Ad_0^{(2)}} \\
			c\in \O_2(+)~ \ar[r]_{\Phi_0^{(-\alpha)}}~ & \O(2)\ni C_0
	}}
	\ee
	Moreover, the generalized determinant agrees with the grading morphism
	$\det\circ \Ad_0^{(2)}$ of $\Pin_2(\alpha)$:
	\ben
	\label{etarep}
	\det\circ \Ad_0^{(2)}\circ \varphi_\alpha=\eta_\alpha~~,
	\een
	i.e. we have a commutative diagram of morphisms of $\Z_2$-graded groups: 
	\be
	\label{diag2}
	\scalebox{1}{
		\xymatrix{
			c\in \O_2(\alpha)\ar[d]_{\eta_\alpha}~ \ar[r]^{\varphi_\alpha} & \Pin_2(\alpha)\ni D ~ \ar[d]^{\Ad_0^{(2)}} \\
			-1 \in \mG_2 ~  ~& \ar[l]_{\det}\O(2)\ni C_0
	}}
	\ee
\end{prop}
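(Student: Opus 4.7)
The plan is to verify \eqref{murep} on a generating set of $\O_2(\alpha)$ and then deduce \eqref{etarep} as an immediate corollary via \eqref{etadet}. Both sides of \eqref{murep} are group morphisms: for the left-hand side, the untwisted adjoint $\Ad_0^{(2)}(x)(v)=xvx^{-1}$ is manifestly multiplicative in $x$ and $\varphi_\alpha$ is a group isomorphism by Proposition \ref{prop:varphi}; for the right-hand side, $\sigma_\alpha$ is a group morphism by construction and $\Phi_0^{(-\alpha)}$ is a group isomorphism by Proposition \ref{O2}. Since $\O_2(\alpha)$ is generated by the subgroup $\U(1)\times\{\hat{0}\}$ together with $c=(1,\hat{1})$, it suffices to check the identity on the family $(e^{\i\theta},\hat{0})$ and on $c$.

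For the even generators, \eqref{z1} gives $\varphi_\alpha(e^{\i\theta},\hat{0})=z(\theta)=\cos\theta+\sin\theta\,J$ with $J=e_1e_2$, $J^2=-1$ and $z(\theta)^{-1}=z(-\theta)$. Using $e_kJ=-Je_k$ together with the elementary identities $Je_1=-\alpha e_2$ and $Je_2=\alpha e_1$, a direct expansion of $z(\theta)e_kz(-\theta)$ yields
\begin{equation*}
\Ad_0^{(2)}(z(\theta))(e_1)=\cos(2\theta)\,e_1-\alpha\sin(2\theta)\,e_2\, ,\quad \Ad_0^{(2)}(z(\theta))(e_2)=\alpha\sin(2\theta)\,e_1+\cos(2\theta)\,e_2\, ,
\end{equation*}
i.e. $\Ad_0^{(2)}(z(\theta))=R(-2\alpha\theta)$ in the ordered basis $(e_1,e_2)$. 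On the other side, $\sigma_\alpha(e^{\i\theta},\hat{0})=(e^{2\i\theta},\hat{0})$, and \eqref{PhiC} gives $\Phi_0^{(-\alpha)}(e^{2\i\theta},\hat{0})=R(-2\alpha\theta)$. For the odd generator $c$, we have $\varphi_\alpha(c)=D=e_1$; using $e_1^{-1}=\alpha e_1$ one finds $\Ad_0^{(2)}(e_1)(e_1)=e_1$ and $\Ad_0^{(2)}(e_1)(e_2)=-e_2$, whence $\Ad_0^{(2)}(D)=C_0$. Simultaneously, $\sigma_\alpha(c)=c$ and $\Phi_0^{(-\alpha)}(c)=R(0)\,C_0=C_0$. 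This establishes \eqref{murep} and with it the commutativity of the first diagram.

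To deduce \eqref{etarep}, apply $\det$ to both sides of \eqref{murep} and invoke \eqref{etadet} in the form $\eta_\alpha=\det\circ\Phi_0^{(-\alpha)}\circ\sigma_\alpha$ (the instance of \eqref{etadet} corresponding to the reflection $C=C_0$ and the sign choice $-\alpha$); this immediately gives $\det\circ\Ad_0^{(2)}\circ\varphi_\alpha=\eta_\alpha$, which is precisely \eqref{etarep}, together with commutativity of the second diagram. The only genuine obstacle in this argument is the bookkeeping of the sign $\alpha$ in the rotation/reflection conventions of $R(\theta)$ and $\Phi_0^{(\pm)}$: the appearance of the superscript $-\alpha$ (rather than $+\alpha$) in $\Phi_0^{(-\alpha)}$ is forced precisely by the factor $\alpha$ that emerges in the rotation angle $-2\alpha\theta$ coming from the Clifford computation above.
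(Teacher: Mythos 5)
Your proof is correct and follows essentially the same route as the paper's: the identical Clifford computations giving $\Ad_0^{(2)}(z(\theta))=R(-2\alpha\theta)$ and $\Ad_0^{(2)}(D)=C_0$, matched against $\Phi_0^{(-\alpha)}\circ\sigma_\alpha$ via \eqref{PhiC}, with \eqref{etarep} then deduced from \eqref{murep} and \eqref{etadet}. Your framing as a check on the generators $\U(1)$ and $c$ is only cosmetically different from the paper's verification on all elements $z(\theta)$ and $z(\theta)D$, since these exhaust $\Pin_2(\alpha)$.
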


\begin{proof}
	Since $J=e_1e_2$ anticommutes with $e_1$ and $e_2$, we have
	$\Ad_0^{(2)}(z(\theta))(e_k)=z(\theta)e_k z(-\theta)=z(\theta)^2
	e_k=z(2\theta) e_k$ for $k=1,2$. Using the relations $Je_1=-\alpha
	e_2$ and $Je_2=\alpha e_1$, this gives:
	\beqa
	&&\Ad_0^{(2)}(z(\theta))(e_1)=\cos(2\theta)e_1-\alpha\sin(2\theta)e_2=\cos(2\alpha \theta)e_1-\sin(2\alpha \theta)e_2\nn\\
	&&\Ad_0^{(2)}(z(\theta))(e_2)=\alpha\sin(2\theta)e_1+\cos(2\theta)e_2=\sin(2\alpha \theta)e_1+\cos(2\alpha \theta)e_2~~
	\eeqa
	and hence $\Ad_0^{(2)}(z(\theta))=R(-2\alpha \theta)$. Since $D=e_1$,
	we have $\Ad_0(D)(e_1)=e_1$ and $\Ad_0^{(2)}(D)(e_2)=-e_2$,
	i.e. $\Ad_0^{(2)}(D)=C_0$. Thus
	$\Ad_0^{(2)}(z(\theta)D)=R(-2\alpha\theta)\circ C_0$. Relation
	\eqref{murep} now follows from \eqref{PhiC}, \eqref{z1} and
	\eqref{z2}, while relation \eqref{etarep} follows from \eqref{murep}
	and \eqref{etadet}.  
\end{proof}

\subsection{Realization of $\O_2(\alpha)$ as spin groups}

Let:
\be
\Cl_3(\alpha)\eqdef \twopartdef{\Cl_{2,1}}{\alpha=+1}{\Cl_{0,3}}{\alpha=-1}~~,~~\Spin_3(\alpha)\eqdef \twopartdef{\Spin_{2,1}}{\alpha=+1}{\Spin_{0,3}}{\alpha=-1}
\ee
and let $e_1,e_2,e_3$ denote the canonical basis of $\R^3$. 
We have:
\be
e_1^2=\alpha~~,~~e_2^2=\alpha~~,~~e_3^2=-1~~.
\ee
There exists a unique unital monomorphism of $\R$-algebras ${\hat
	s}_\alpha:\Cl_2(\alpha)\rightarrow \Cl_3(\alpha)$ which satisfies
${\hat s}_\alpha(v)=ve_3$ for all $v\in \R^2$. Thus ${\hat
	s}_\alpha(e_1)=e_1e_3$, ${\hat s}_\alpha(e_2)=e_2e_3$ and ${\hat
	s}_\alpha(e_1e_2)=e_1e_2$. We have ${\hat
	s}_\alpha(\Cl_2(\alpha))=\Cl_3^+(\alpha)$ and ${\hat
	s}_\alpha(\Pin_2(\alpha))= \Spin_3(\alpha)$, where $\Cl_3^+(\alpha)$
is generated by $e_1e_3$ and $e_2e_3$ due to the relation $(e_1 e_3)
(e_2 e_3)=e_1 e_2$. The morphism ${\hat s}_\alpha$ takes $J=e_1e_2$
and $D=e_1$ respectively into $J'\eqdef J$ and $D'\eqdef
e_1e_3=De_3$. In particular, we have ${\hat
	s}_\alpha|_{\Spin_2(\alpha)}=\id_{\Spin_2(\alpha)}$. The group
$\O(2)$ embeds into the group:
\be
\SO_3(\alpha)\eqdef \twopartdef{\SO(2,1)}{\alpha=+1}{\SO(0,3)}{\alpha=-1}
\ee
through the injective morphisms:
\ben
\label{Sigma}
\Sigma_\alpha (R)\eqdef \left[\begin{array}{cc} (\det R) R & 0 \\ 0 & \det R \end{array}\right]~~,
\een
whose image equals:
\be
\Sigma_\alpha(\O(2))=\mathrm{S}[\O(2)\times \mG_2]~~.
\ee
We have:
\ben
\Sigma_\alpha(C_0)=C_0'\eqdef \left[\begin{array}{cc} - C_0 & 0 \\ 0 & -1 \end{array}\right]~~,
\een
where $-C_0\in \O^-(2)$. Let $\Ad_0^{(3)}:\Spin_3(\alpha)\rightarrow
\SO_3(\alpha)$ denote the vector representation of $\Spin_3(\alpha)$.
 
\begin{prop}
	The restriction of ${\hat s}_\alpha$ induces an isomorphism of groups
	$s_\alpha:\Pin_2(\alpha)\stackrel{\sim}{\rightarrow} \Spin_3(\alpha)$.
	Moreover, the vector representation of $\Spin_3(\alpha)$ agrees with
	the untwisted vector representation of $\Pin_2(\alpha)$ through the
	morphisms $s_\alpha$ and $\Sigma_\alpha$:
	\be
	\Ad_0^{(3)}\circ s_\alpha =\Sigma_\alpha \circ \Ad_0^{(2)}~~,
	\ee
	giving the commutative diagram:
	\ben
	\label{diag3}
	\scalebox{1}{
		\xymatrix{
			D \in \Pin_2(\alpha) ~ \ar@{->>}[d]^{\Ad_0^{(2)}} \ar[r]^{s_\alpha}_{\sim} & \Spin_3(\alpha)\ni D'\ar@{->>}[d]^{\Ad_0^{(3)}}\\
			C_0 \in \O(2) ~\ar@{{>}->}[r]_{\Sigma_\alpha} & \SO_3(\alpha)\ni C_0'~~,
	}}
	\een
	where we also indicates the images of $D$ through the various maps.
\end{prop}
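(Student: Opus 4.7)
The first claim, that $s_\alpha$ is a group isomorphism, is essentially contained in the discussion preceding the statement: $\hat{s}_\alpha$ is a unital monomorphism of $\R$-algebras (hence injective), while the image identity $\hat{s}_\alpha(\Pin_2(\alpha)) = \Spin_3(\alpha)$ has already been recorded. The substantive content of the proposition is therefore the commutativity of the diagram, i.e.\ the identity $\Ad_0^{(3)} \circ s_\alpha = \Sigma_\alpha \circ \Ad_0^{(2)}$. The plan is to verify this identity on a convenient generating set of $\Pin_2(\alpha)$ and to extend it by multiplicativity of the four maps involved.

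A natural choice of generators is $\{z(\theta) : \theta \in \R\} \cup \{D\}$, since $\Pin_2(\alpha) = \Spin_2(\alpha) \sqcup \Spin_2(\alpha) D$ and $\Spin_2(\alpha) = \{z(\theta) : \theta \in \R\}$. All four arrows in the diagram are group morphisms (the only check needing mention is that $\Sigma_\alpha$ is multiplicative, which reduces to the fact that $\det$ is a character of $\O(2)$), so matching the two compositions on these two types of generators is enough.

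For $x = z(\theta)$, one has $s_\alpha(z(\theta)) = z(\theta)$, because $\hat{s}_\alpha$ restricts to the identity on $\Cl_2^+(\alpha)$. Since $J = e_1 e_2$ commutes with $e_3$ inside $\Cl_3(\alpha)$, conjugation by $z(\theta)$ fixes $e_3$, while its action on $e_1, e_2$ reproduces the rotation $R(-2\alpha\theta)$ already computed in the previous proposition. Hence $\Ad_0^{(3)}(z(\theta)) = \diag(R(-2\alpha\theta), 1)$, which agrees with $\Sigma_\alpha(R(-2\alpha\theta))$ because $\det R(-2\alpha\theta) = 1$. For $x = D = e_1$, one has $s_\alpha(D) = D' = e_1 e_3$. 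A direct calculation, based on $(e_1 e_3)^{-1} = -\alpha\, e_3 e_1$ and repeated use of the anticommutation relations, gives $\Ad_0^{(3)}(e_1 e_3)(e_1) = -e_1$, $\Ad_0^{(3)}(e_1 e_3)(e_2) = e_2$ and $\Ad_0^{(3)}(e_1 e_3)(e_3) = -e_3$. On the other side, $\Sigma_\alpha(\Ad_0^{(2)}(D)) = \Sigma_\alpha(C_0) = C_0'$, which equals the same matrix.

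The only delicate point is the sign bookkeeping in the $D$ case: the twist by $\det R$ built into the definition of $\Sigma_\alpha$ is precisely what is required to reconcile the action of the odd element $D' = e_1 e_3$ on the extra coordinate $e_3$ (which acquires a factor $-1$) with the action of $D = e_1$ on $\R^2$ through the reflection $C_0$ (whose determinant is $-1$). Once both generator cases are verified, commutativity of the diagram on the whole of $\Pin_2(\alpha)$ follows by multiplicativity, completing the argument.
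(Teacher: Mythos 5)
Your proposal is correct and follows essentially the same route as the paper: both arguments rest on the decomposition $\Pin_2(\alpha)=\Spin_2(\alpha)\sqcup\Spin_2(\alpha)D$, compute the action of $\Ad_0^{(3)}$ on the even part and on $D'=e_1e_3$ (in particular the sign flip on $e_3$ that the $\det$-twist in $\Sigma_\alpha$ absorbs), and conclude by multiplicativity. The only cosmetic difference is that the paper checks the identity uniformly on every element $a$ and $aD$ rather than reducing to the generators $z(\theta)$ and $D$; your sign computations for $\Ad_0^{(3)}(e_1e_3)$ are correct.
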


\begin{proof}
	For any $a\in \Spin_2(\alpha)$, we have $s_\alpha(a)={\hat
		s}_\alpha(a)=a$ and
	$s_\alpha(aD)=s_\alpha(a)s_\alpha(D)=ae_1e_3=aDe_3$.  Thus:
	\be
	\Ad_0^{(3)}(s_\alpha(a))=\Ad_0^{(3)}(a)~~,~~\Ad_0^{(3)}(s_\alpha(aD))=\Ad_0^{(3)}(a)\Ad_0^{(3)}(e_1e_3)=\Ad_0^{(3)}(aD)\Ad_0^{(3)}(e_3)
	\ee
	and hence: 
	\be
	\Ad_0^{(3)}(s_\alpha(a))=\left[\begin{array}{cc}\Ad_0^{(2)}(a) & 0\\0 & 1 \end{array}\right]=\Sigma_\alpha(\Ad_0^{(2)}(a))
	\ee
	and: 
	\be
	\Ad_0^{(3)}(s_\alpha(aD))=\left[\begin{array}{cc} -\Ad_0^{(2)}(aD) & 0\\0 & -1 \end{array}\right]=\Sigma_\alpha(\Ad_0^{(2)}(aD))~~,
	\ee
	because $\Ad_0^{(3)}(a)(e_3)=e_3$, $\Ad_0^{(3)}(e_1e_3)(e_3)=-e_3$ and
	$\Ad_0^{(3)}(e_3)(e_k)=-e_k$ for $k=1,2$ while $\det
	\Ad_0^{(2)}(a)=+1$ and $\det \Ad_0^{(2)}(aD)=-1$. Notice that:
	\be
	\Ad_0^{(3)}(D')=\Ad_0^{(3)}(s_\alpha(D))=\left[\begin{array}{cc} -C_0 & 0\\0 & -1 \end{array}\right]=C_0'~~,
	\ee
	since $\Ad_0^{(2)}(D)=C_0$. 
	 \end{proof} 

\noindent Composing $\varphi_\alpha$ and $s_\alpha$, respectively
$\Phi_0^{(-\alpha)}$ and $\Sigma_\alpha$ gives morphisms of groups:
\beqan
&& \psi_\alpha\eqdef s_\alpha\circ \varphi_\alpha:\O_2(\alpha)\stackrel{\sim}{\rightarrow} \Spin_3(\alpha)~~\nn\\
&& \Psi_\alpha\eqdef \Sigma_\alpha\circ \Phi_0^{(-\alpha)}:\O_2(+) \rightarrow \SO_3(\alpha)~~.
\eeqan
The situation is summarized in the commutative diagrams: 
\ben
\label{diag4}
\scalebox{1}{
	\xymatrix{
		\O_2(\alpha)\ar@{->>}[d]_{\sigma_\alpha}~ \ar[r]^{\varphi_\alpha}_{\sim}\ar@/^1.5pc/[rr]^{\psi_\alpha}_\sim & \Pin_2(\alpha) ~ \ar@{->>}[d]^{\Ad_0^{(2)}} \ar[r]^{s_\alpha}_{\sim} & \Spin_3(\alpha)\ar@{->>}[d]^{\Ad_0^{(3)}}\\
		\O_2(+)~ \ar[r]_{\Phi_0^{(-\alpha)}}^{\sim}~\ar@/_1.5pc/[rr]_{\Psi_\alpha} & \O(2) ~\ar@{{>}->}[r]_{\Sigma_\alpha} & \SO_3(\alpha)
}}
\een
and:
\ben
\label{diag5}
\scalebox{1}{
	\xymatrix{
		c\in \O_2(\alpha)\ar[d]_{\sigma_\alpha}~ \ar[r]^{\psi_\alpha}_{\sim} & \Spin_3(\alpha)\ni D' ~ \ar[d]^{\Ad_0^{(3)}} \\
		c \in \O_2(+) ~  ~\ar@{{>}->}[r]_{\Psi_\alpha} & \SO_3(\alpha)\ni C_0'
}}
\een

\section{The groups $\Spin^o_\alpha$}
\label{sec:spinogroups}

Let $(V,h)$ be a real quadratic space of signature $(p,q)$ and
dimension $d=p+q$.  Thus $V$ is an $\R$-vector space of dimension $d$
and $h:V\times V\rightarrow \R$ is an $\R$-bilinear symmetric form
defined on $V$ and having signature $(p,q)$, where $p$ and $q$
respectively count the numbers of positive and negative eigenvalues.

\begin{definition}
	Define: 
	\be
	\Spin^o_\alpha(V,h)\eqdef \Spin(V,h)\cdot \Pin_2(\alpha)\eqdef [\Spin(V,h)\times \Pin_2(\alpha)]/\{-1,1\}~~.
	\ee
\end{definition}

\noindent The unit of $\Spin^o(V,h)$ is given by $1\equiv [1,1]=[-1,-1]$,
while the {\em twisted unit} is the element:
\be
{\tilde 1}\eqdef [1,-1]=[-1,1]~~,
\ee
which satisfies ${\tilde 1}^2=1$ and generates the center: 
\ben
\label{Z}
Z_2\eqdef Z(\Spin^o_{\alpha}(V,h))=\{{\tilde 1}, 1\}\simeq \Z_2\, ,
\een
of $\Spin^o_{\alpha}(V,h)$. The groups $\Spin(V,h)$ and $\Pin_2(\alpha)$ 
identify with the following subgroups of $\Spin^o_{\alpha}(V,h)$: 
\begin{eqnarray*}
	{\widehat \Spin}(V,h) &\eqdef & \{[a,1]\in \Spin^o_{\alpha}(V,h)\,\, |\,\, a\in \Spin(V,h)\}\, ,\\{\widehat \Pin}_2(\alpha) &\eqdef & \{[1,b]\in \Spin^o_{\alpha}(V,h)\,\, |\,\, b\in \Pin_2(\alpha)\}~~.
\end{eqnarray*}
This gives the decomposition: 
\be
\Spin^o_{\alpha}(V,h)={\widehat \Spin}(V,h){\widehat \Pin}_2(\alpha)\, .
\ee
We have ${\widehat \Spin}(V,h)\cap {\widehat \Pin}_2(\alpha)=Z_2$ and
${\tilde 1}\in {\widehat \Spin}(V,h)$.  Using the isomorphism
$\varphi_\alpha$ of Proposition \ref{prop:varphi} to identify
$\O_2(\alpha)$ with $\Pin_2(\alpha)$, we obtain an isomorphism:
\be
\Spin^o_\alpha(V,h)\simeq \underline{\Spin}^o_\alpha(V,h)\eqdef \Spin(V,h)\cdot \O_2(\alpha)\eqdef [\Spin(V,h)\times \O_2(\alpha)]/\{-1,1\}~~.
\ee
Since $\U(1)=\SO(2)$ is a {\em non-central} subgroup of
$\O_2(\alpha)$, it embeds as the non-central subgroup of
$\underline{\Spin}^o_\alpha(V,h)$ consisting of the elements
$[1,z]=[-1,-z]$ with $z\in \U(1)$.

\subsection{Various subgroups}
The element ${\hat D}\eqdef [1,D]\in \Spin^o(V,h)$ satisfies: 
\be
{\hat D}^2=[1,\alpha]=[\alpha,1]=\twopartdef{1}{\alpha=+1}{{\tilde 1}}{\alpha=-1}~~
\ee 
and generates a cyclic subgroup: 
\ben
\label{Gammao}
\Gamma_{o,\alpha}\simeq \twopartdef{\Z_2}{\alpha=+1}{\Z_4}{\alpha=-1}\, ,
\een
which is neither normal nor central. The group $\Spin^c(V,h)\eqdef
\Spin(V,h)\cdot \Spin_{2,0}=\Spin(V,h)\cdot \U(1)$ embeds as the
following normal subgroup of $\Spin^o(V,h)$:
\be
{\widehat \Spin}^c(V,h)=\{[a, z]|a\in \Spin(V,h)\, ,~~z\in \Spin_2\}\, .
\ee 
In particular, $\Spin^o_\alpha(V,h)$ contains the normal $\U(1)$
subgroup:
\ben
\label{U}
\U\eqdef \{z(\theta)=e^{\theta J}|\theta\in \R\}\subset {\widehat \Spin}^c(V,h)\subset \Spin^{o}_{\alpha}(V,h)\, .
\een
Writing $\Pin_2(\alpha)=\Spin_2\sqcup \Spin_2 D=\U(1)\sqcup \U(1)D$
gives the decomposition:
\be
\Spin^o_\alpha(V,h)={\widehat \Spin}^c(V,h)\sqcup {\widehat \Spin}^c(V,h) {\hat D}~~.
\ee
We have a short exact sequence:
\ben
\label{Spincseq}
1\longrightarrow \Spin^c(V,h) \longrightarrow \Spin^o_\alpha(V,h)\stackrel{\tilde{\eta}_\alpha}{\longrightarrow} \Z_2 \longrightarrow 1~~,
\een
where $\tilde{\eta}_\alpha$ is the {\em $\Z_2$-grading morphism} given
by:
\be
\tilde{\eta}_\alpha([a,b])\eqdef \eta_\alpha(b)\, ,\quad\forall\, a\in \Spin(V,h)\, ,\quad\forall\, b\in \Pin_2(\alpha)\, .
\ee

\begin{prop}
	\label{prop:SpincZ2}
	We have $\Spin^o_+(V,h)\simeq \Spin^c(V,h)\rtimes_{\xi} \Z_2$, where
	$\xi:\Z_2\rightarrow \Aut(\Spin^c(V,h))$ is the group morphism given
	by $\xi({\hat 0})=\id_{\Spin^c(V,h)}$ and:
	\be
	\xi({\hat 1})([a,z])=[a,\bar{z}]~~\forall\,\, [a,z]\in \Spin^c(V,h)~~(a\in \Spin(V,h), z\in \U(1))\, .
	\ee
\end{prop}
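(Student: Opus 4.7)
The plan is to exhibit a right-splitting of the short exact sequence \eqref{Spincseq} when $\alpha=+1$, so that the standard theory of semidirect products identifies $\Spin^o_+(V,h)$ with $\Spin^c(V,h)\rtimes_\xi \Z_2$; the conjugation action of the chosen splitting element then yields the explicit formula for $\xi$.

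First, I would propose the map $\psi:\Z_2\rightarrow \Spin^o_+(V,h)$ given by $\psi({\hat 0})=1$ and $\psi({\hat 1})={\hat D}=[1,D]$, and verify that it is a well-defined group morphism. The nontrivial content is the identity $\psi({\hat 1})^2=[1,D^2]=[1,1]=1$, which holds precisely because $D^2=\alpha=+1$; this is the one place where the sign hypothesis enters, since for $\alpha=-1$ one would instead have ${\hat D}^2={\tilde 1}\neq 1$, as recorded in \eqref{Gammao}. Next, I would check that $\psi$ right-splits \eqref{Spincseq} by verifying $\tilde{\eta}_+\circ \psi=\id_{\Z_2}$; this reduces to the relation $\eta_+(D)=-1$, which follows from \eqref{etarep} combined with $\Ad_0^{(2)}(D)=C_0$ and $\det C_0=-1$.

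Granted the existence of $\psi$, the standard theory of split short exact sequences yields an isomorphism $\Spin^o_+(V,h)\simeq \Spin^c(V,h)\rtimes_\xi \Z_2$ in which $\xi({\hat 1})\in \Aut(\Spin^c(V,h))$ is conjugation by ${\hat D}$. The remaining step is to compute this conjugation explicitly. Since elements of the two factors of $\Spin(V,h)\times \Pin_2(+)$ commute inside the quotient, for any $[a,z]\in \Spin^c(V,h)$ with $a\in \Spin(V,h)$ and $z\in \Spin_2=\U(1)$, we have ${\hat D}[a,z]{\hat D}^{-1}=[a,DzD^{-1}]$. Writing $z=z(\theta)=\cos\theta+\sin\theta\, J$ with $J=e_1e_2$ and using that $J$ anticommutes with $D=e_1$ together with $D^{-1}=D$ (valid for $\alpha=+1$), we get $DJD^{-1}=-J$ and hence $DzD^{-1}=z(-\theta)=\bar{z}$ under the identification $z(\theta)\leftrightarrow e^{\i\theta}$ induced by $\varphi_+$. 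This produces the stated formula $\xi({\hat 1})([a,z])=[a,\bar{z}]$; the only mild obstacle is the bookkeeping that $z(-\theta)$ indeed corresponds to $\bar{z}$ through the embedding $\U(1)\hookrightarrow \Pin_2(+)$ coming from \eqref{IsomO2} and \eqref{z1}.
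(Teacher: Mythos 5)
Your proposal is correct and follows essentially the same route as the paper: exhibit the right-splitting of \eqref{Spincseq} by sending $\hat{1}\mapsto[1,D]$ (using $D^2=+1$ when $\alpha=+1$), and identify $\xi(\hat{1})$ with conjugation by $\hat{D}$, which acts as complex conjugation on the $\U(1)$ factor since $D=e_1$ anticommutes with $J=e_1e_2$. You merely fill in the verification that the paper dismisses as "easy to see"; no substantive difference.
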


\begin{proof}
	In order to show that $\Spin^o_+(V,h)\simeq \Spin^c(V,h)\rtimes_{\xi}
	\Z_2$, it suffices to show that the short exact sequence
	\eqref{Spincseq} splits on the right. Such a splitting $R_{\xi}\colon
	\mathbb{Z}_{2}\to \Spin^o_+(V,h)$ is given by:
	\begin{equation*}
	R_{\xi}(0) = [1,1]\, , \qquad R_{\xi}(0) = [1,D]\, .
	\end{equation*}
	The morphism $R_{\xi}$ in turn defines a morphism:
	\begin{equation*}
	\xi\colon\mathbb{Z}_{2}\to\Aut(\Spin^c(V,h))\, ,
	\end{equation*}
	given by the adjoint action\footnote{Recall that $\Spin^{c}$ is a
		normal subgroup of $\Spin^{o}_{\alpha}$.} of the image $\Im R_{\xi}$
	of $R_{\xi}$ in $\Spin^{o}_{\alpha}$. It is easy to see that the
	adjoint action of $[1,D]$ is by complex conjugation of $z$, where
	$[a,z]\in \Spin^{c}(V,h)$.   
\end{proof}

\begin{remark}
	The right-splitting used in the proof of Proposition
	\ref{prop:SpincZ2} does not work for the group $\Spin^{o}_{-}(V,h)$.
	In that case, one has $D^{2} = -1$ and hence $\left\{[1,1],
	[1,D]\right\}$ is not a $\mathbb{Z}_{2}$-subgroup of
	$\Spin^{o}_{-}(V,h)$.
\end{remark}

\noindent In the proof of Proposition \ref{prop:SpincZ2} we used the
fact that the adjoint action of $\hat{D}$ on $\Spin^{c}(V,h)\subset
\Spin^{o}_{\alpha}(V,h)$ is by complex conjugation of $z$. Define the
{\em conjugation automorphism} $K\in \Aut(\Spin^o_\alpha(V,h))$
through:
\be
K\eqdef \Ad({\hat D})~~.
\ee
This satisfies $K({\hat D})={\hat D}$ and $K(z)={\bar z}$ for all $z\in
\U$. The fixed point set of $K$ is the non-normal subgroup:
\ben
\label{Pinalpha}
{\widehat \Pin}_\alpha(V,h)={\widehat \Spin}(V,h)\sqcup {\widehat \Spin}(V,h) {\hat D}~~.
\een
Since ${\hat D}$ commutes with all elements of ${\widehat \Spin}(V,h)$
and ${\hat D}^2=[1,\alpha]=[\alpha,1]\in {\widehat \Spin}(V,h)$, any
choice of an element $v\in V$ such that $h(v,v)= \epsilon_{v}$ with
$\epsilon_{v}\in \left\{-1,1\right\}$, gives an isomorphism of groups:
\ben
{\widehat \Pin}_\alpha(V,h)\stackrel{\sim}{\rightarrow} \Pin(V,\alpha \epsilon_{v} h)~~
\een
which takes any $[a,1]\in {\widehat \Spin}(V,h)$ into $a\in
\Spin(V,\alpha \epsilon_{v} h)=\Spin(V,h)$ and any $[a,1] {\hat D} =
[a,D]\in {\widehat \Spin}(V,h){\hat D}$ 
into $a v\in \Pin^-(V,\alpha \epsilon_{v} h)$. The $\Z_2$-grading
\eqref{Pinalpha} corresponds through this isomorphism to the canonical
$\Z_2$-grading:
\be
\Pin(V,\alpha \epsilon_{v} h)=\Spin(V,\alpha \epsilon_{v} h)\sqcup \Pin^-(V,\alpha \epsilon_{v} h)~~
\ee
of $\Pin(V,\alpha \epsilon_{v} h)$. We have a short exact sequence: 
\ben
\label{Pinseq}
1\longrightarrow \U \longrightarrow \Spin^o_\alpha(V,h)\stackrel{\pi_\alpha}{\longrightarrow} {\widehat \Pin}_\alpha(V,h) \longrightarrow 1~~,
\een
where: 
\ben
\label{Useq}
\pi_\alpha([a,b])=\twopartdef{\left[a,1\right]}{b\in \Spin_2}{\left[a,D\right]}{b\in \Spin_2 D}~~,~~\forall\,\, [a,b] \in \Spin^{o}_{\alpha}(V,h)\, .
\een
We leave the proof of the following proposition to the reader.

\begin{prop}
	\label{prop:PinSubgroup}
	The inclusion morphism $j_\alpha:{\widehat
		\Pin}_\alpha(V,h)\hookrightarrow \Spin^o_\alpha(V,h)$ splits the
	sequence \eqref{Pinseq} from the right. Thus
	$\Spin^o_\alpha(V,h)\simeq \U\rtimes_{\zeta_\alpha} \Pin_\alpha(V,h)$,
	where $\zeta_\alpha:\Pin_\alpha(V,h)\rightarrow \Aut(\U)$ is the group
	morphism given by:
	\be
	\zeta_\alpha(x)(z)=\twopartdef{z}{x = [a,1]\in {\widehat \Spin}(V,h)}{{\bar z}}{x = [a,D]\in {\widehat \Spin}(V,h)\hat{D}}\, .
	\ee
	Here, $\U$ is the normal $\U(1)$ subgroup of $\Spin^o_\alpha(V,h)$
	which was defined in \eqref{U}.
\end{prop}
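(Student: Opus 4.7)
The plan is to directly verify that $\pi_\alpha \circ j_\alpha = \id_{{\widehat \Pin}_\alpha(V,h)}$, which establishes that $j_\alpha$ right-splits the short exact sequence \eqref{Pinseq}. Once the splitting is in place, the normality of $\U$ in $\Spin^o_\alpha(V,h)$ (noted after \eqref{U}), together with the standard splitting lemma for group extensions, immediately yields the semidirect product decomposition $\Spin^o_\alpha(V,h) \simeq \U \rtimes_{\zeta_\alpha} {\widehat \Pin}_\alpha(V,h)$, with $\zeta_\alpha$ defined as the conjugation action of $j_\alpha({\widehat \Pin}_\alpha(V,h))$ on $\U$. The remaining task is then to identify $\zeta_\alpha$ on each component of the decomposition ${\widehat \Pin}_\alpha(V,h) = {\widehat \Spin}(V,h) \sqcup {\widehat \Spin}(V,h){\hat D}$.

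For the splitting itself, I would simply unwind the definition \eqref{Useq}. Given $[a,1] \in {\widehat \Spin}(V,h)$, the second factor $1$ lies in $\Spin_2$, so $\pi_\alpha([a,1]) = [a,1]$; given $[a,D] \in {\widehat \Spin}(V,h){\hat D}$, the second factor $D$ lies in $\Spin_2 D$, so $\pi_\alpha([a,D]) = [a,D]$. Hence $\pi_\alpha\circ j_\alpha = \id_{{\widehat \Pin}_\alpha(V,h)}$, which is the desired right-splitting.

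To compute $\zeta_\alpha$ I would evaluate the conjugation action of a representative in $j_\alpha({\widehat \Pin}_\alpha(V,h))$ on $z(\theta) = [1,\cos(\theta) + \sin(\theta) J] \in \U$. For $[a,1] \in {\widehat \Spin}(V,h)$, the two factors of $\Spin(V,h) \times \Pin_2(\alpha)$ commute at the level of the Cartesian product, so $[a,1]\,[1,z(\theta)]\,[a,1]^{-1} = [1, z(\theta)]$ and $\zeta_\alpha([a,1]) = \id_{\U}$. For ${\hat D} = [1,D]$ with $D = e_1$ and $J = e_1 e_2$, the Clifford relations $DJ = -JD$ and $D^2 = \alpha$ give $D\,z(\theta)\,D^{-1} = \cos(\theta) - \sin(\theta) J = \overline{z(\theta)}$, so that ${\hat D}\,[1,z(\theta)]\,{\hat D}^{-1} = [1, \overline{z(\theta)}]$, matching the claimed formula $\zeta_\alpha([a,D])(z) = \bar z$.

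The calculation is essentially routine; the only point requiring mild care is the bookkeeping in the quotient by $\{-1,1\}$, which ensures that both the splitting $\pi_\alpha\circ j_\alpha = \id$ and the conjugation action $\zeta_\alpha$ are well-defined on equivalence classes and are insensitive to the choice of the $\Spin(V,h)$-factor $a$ (indeed the action depends only on the $\Pin_2(\alpha)$-component). This is why the authors deem the statement immediate enough to leave to the reader.
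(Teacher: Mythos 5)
Your strategy (check $\pi_\alpha\circ j_\alpha=\id$, invoke the splitting lemma, then compute the conjugation action) is the natural one, and your computation of the action of ${\hat D}$ on $\U$ by complex conjugation is correct. But the step you dismiss as routine bookkeeping is exactly where the argument breaks down. The subgroups $\U$ and ${\widehat \Pin}_\alpha(V,h)$ of $\Spin^o_\alpha(V,h)$ do \emph{not} intersect trivially: the twisted unit ${\tilde 1}=[1,-1]=[-1,1]$ lies in $\U$ (as $[1,z(\pi)]$) and also in ${\widehat \Spin}(V,h)\subset{\widehat \Pin}_\alpha(V,h)$ (as the paper notes right after \eqref{Z}), so $\U\cap{\widehat \Pin}_\alpha(V,h)=Z_2$. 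The image of any right splitting of a short exact sequence must meet the kernel only in the identity, so $j_\alpha$ cannot split a sequence with kernel $\U$; concretely, since $\pi_\alpha$ kills $\U$ one has $\pi_\alpha(j_\alpha({\tilde 1}))=1\neq{\tilde 1}$. Relatedly, the formula \eqref{Useq} is not well defined on equivalence classes: $[a,z]=[-a,-z]$ with both $z,-z\in\Spin_2$, and $[a,1]\neq[-a,1]$; your verification of $\pi_\alpha\circ j_\alpha=\id$ silently picks the representative $(a,1)$.

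The conclusion is also quantitatively false: the multiplication map $\U\times{\widehat \Pin}_\alpha(V,h)\to\Spin^o_\alpha(V,h)$ is surjective but two-to-one, so the correct statement is $\Spin^o_\alpha(V,h)\simeq\bigl[\U\rtimes_{\zeta_\alpha}{\widehat \Pin}_\alpha(V,h)\bigr]/Z_2$ with $Z_2$ embedded antidiagonally, not the semidirect product itself. (A sanity check on identity components for $(V,h)=\R^{3,0}$: the identity component of $\Spin^o_\alpha(V,h)$ is $\Spin^c(3)\simeq\U(2)$, whereas that of $\U\rtimes_{\zeta_\alpha}{\widehat \Pin}_\alpha(V,h)$ would be $\U(1)\times\SU(2)$, and these have non-isomorphic centers.) The defect is therefore in the statement itself and in the exactness of \eqref{Pinseq}, which the paper leaves unproved; but a correct treatment must detect this, and what survives of your argument is precisely: $\U$ is normal, $\Spin^o_\alpha(V,h)=\U\cdot{\widehat \Pin}_\alpha(V,h)$, $\U\cap{\widehat \Pin}_\alpha(V,h)=Z_2$, and ${\widehat \Pin}_\alpha(V,h)$ acts on $\U$ through $\zeta_\alpha$ as you computed.
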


\begin{remark}
	The quotient $\Spin^o_\alpha(V,h)/{\widehat \Pin_\alpha(V,h)}$ is a principal 
	$\U(1)$-homogeneous space ($\U(1)$ torsor).
\end{remark}

\subsection{Elementary representations}
\label{sec:elementaryrep}

We introduce several representations of $\Spin^{o}_{\alpha}(V,h)$
which will be relevant later on. Let\footnote{A word of caution about
	notation: the group $\check{\O}(V,h)$ should not be confused with the group ${\hat
		\O}(V,h)$ used in \cite{Lipschitz}, which is given by the {\em
		opposite} dichotomy.}:
\ben
\label{checkO}
{\check \O}(V,h)\eqdef \twopartdef{\SO(V,h)}{d=\ev}{\O(V,h)}{d=\odd}~~
\een
and: 
\ben
\check{\Pin}_2(\alpha)\eqdef \twopartdef{\Pin_2(\alpha)}{d=\ev}{\Spin_2(\alpha)\simeq \U(1)}{d=\odd}~~.
\een
We have $\check{\Pin}_2(\alpha)\simeq \check{\O}_2(\alpha)$, where: 
\ben
\check{\O}_2(\alpha)\eqdef  \twopartdef{\O_2(\alpha)}{d=\ev}{\O_2^+(\alpha)\simeq \U(1)}{d=\odd}~~.
\een

\begin{definition}
	\label{def:elementaryreps}
	
	\begin{enumerate}[1.]
		\itemsep 0.0em
		\item The {\bf characteristic representation} of $\Spin^o_\alpha(V,h)$
		is the group epimorphism
		$\mu_\alpha:\Spin^o_\alpha(V,h)\rightarrow\O(2)$ defined through:
		\be
		\mu_\alpha([a,u])\eqdef \Ad_0^{(2)}(u)~~\forall [a,u]\in \Spin^o_\alpha(V,h)~~,
		\ee
		where $\Ad_0^{(2)}$ is the untwisted vector representation of
		$\Pin_2(\alpha)$.
		\item The {\bf twisted characteristic representation} of
		$\Spin^o_\alpha(V,h)$ is the group epimorphism
		$\tmu_\alpha:\Spin^o_\alpha(V,h)\rightarrow\O(2)$ defined through:
		\be
		\tmu_\alpha([a,u])\eqdef \tAd_0^{(2)}(u)\, , \quad \forall\, [a,u]\in \Spin^o_\alpha(V,h)\, ,
		\ee
		where $\tAd_0^{(2)}$ is the twisted vector representation of
		$\Pin_2(\alpha)$.
		\item The {\bf vector representation} of $\Spin^o_\alpha(V,h)$ is the
		group epimorphism $\lambda_\alpha:\Spin^o_\alpha(V,h)\rightarrow
		\SO(V,h)$ defined through:
		\be
		\lambda_\alpha([a,u])\eqdef \Ad_0(a)~~,~~\forall [a,u]\in \Spin^o_\alpha(V,h)~~,
		\ee
		where $\Ad_0$ is the vector representation of $\Spin(V,h)$.
		\item The {\bf twisted vector representation} of $\Spin^o_\alpha(V,h)$
		is the group epimorphism
		$\tlambda_\alpha:\Spin^o_\alpha(V,h)\rightarrow \check{\O}(V,h)$
		defined through:
		\be
		\tlambda_\alpha([a,u])\eqdef \det(\Ad_0^{(2)}(u))\Ad_0(a)~~\forall [a,u]\in \Spin^o_\alpha(V,h)~~.
		\ee
		\item The {\bf basic representation} is the group epimorphism
		$\rho_\alpha\eqdef \lambda\times \tmu:\Spin^o_\alpha(V,h)\rightarrow
		\SO(V,h)\times \O(2)$:
		\be
		\rho_\alpha([a,u])\eqdef (\Ad_0(a),\tAd_0^{(2)}(u))~~.
		\ee
		\item The {\bf twisted basic representation} is the group epimorphism
		$\trho_\alpha\eqdef \tlambda\times
		\mu:\Spin^o_\alpha(V,h)\rightarrow \O(V,h){\hat \times} \O(2)$:
		\be
		\trho_\alpha([a,u])\eqdef (\det (\Ad_0^{(2)}(u))\Ad_0(a),\Ad_0^{(2)}(u))~~,
		\ee
		where: 
		\ben
		\O(V,h){\hat \times} \O(2)\eqdef \twopartdef{\SO(V,h)\times \O(2)}{d=\ev}{\mathrm{S}[\O(V,h)\times \O(2)]}{d=\odd}~~.
		\een
	\end{enumerate}
\end{definition}

\noindent Notice that $\tAd_0^{(2)}(u)=(\det
\Ad_0^{(2)}(u))\Ad_0^{(2)}(u)$ for all $u\in \Pin_2(\alpha)$. The
various representations introduced above induce the following short
exact sequences:
\beqan
\label{ses}
&& 1\longrightarrow \Pin_2(\alpha) \longrightarrow \Spin^o_\alpha(V,h)\stackrel{\lambda_\alpha}{\longrightarrow} \SO(V,h)\longrightarrow 1\nn\\
&& 1\longrightarrow \check{\Pin}_2(\alpha) \longrightarrow \Spin^o_\alpha(V,h)\stackrel{\tlambda_\alpha}{\longrightarrow} \check{\O}(V,h)\longrightarrow 1\nn\\
&& 1\longrightarrow \Spin(V,h) \longrightarrow \Spin^o_\alpha(V,h)\stackrel{\mu_\alpha}{\longrightarrow}\O(2)\longrightarrow 1\nn\\
&& 1\longrightarrow \Spin(V,h) \longrightarrow \Spin^o_\alpha(V,h)\stackrel{\tmu_\alpha}{\longrightarrow}\O(2)\longrightarrow 1~~\\
&& 1\longrightarrow Z_2 \longrightarrow \Spin^o_\alpha(V,h)\stackrel{\rho_\alpha}{\longrightarrow} \SO(V,h)\times \O(2)\longrightarrow 1~~\nn\\
&& 1\longrightarrow Z_2 \longrightarrow \Spin^o_\alpha(V,h)\stackrel{\trho_\alpha}{\longrightarrow} \O(V,h){\hat \times} \O(2)\longrightarrow 1~~,\nn
\eeqan
where $Z_2$ is the center \eqref{Z} of $\Spin^o(V,h)$. Therefore,
$\Pin_{2}(\alpha)$, $\check{\Pin}_2(\alpha)$ and $\Spin(V,h)$ are
normal subgroups of $\Spin^{o}_{\alpha}(V,h)$. In particular,
${\widehat \Spin}(V,h)$ is a normal subgroup of $\Spin^o(V,h)$ with
corresponding short exact sequence given either by the untwisted or
the twisted vector representations.

\begin{remark}
	When $d$ is odd, the second exact sequence above shows that
	$\Spin^o_\pm(V,h)$ are distinct {\em non-central} extensions of
	$\O(V,h)$ by $\U(1)$, both of which differ from the {\em central}
	extension provided by the group $\Pin^c(V,h)\eqdef \Pin(V,h)\cdot
	\U(1)$. When $d$ is even, $\Spin^o_\pm(V,h)$ are extensions of
	$\SO(V,h)$ by $\O_2(\alpha)$.
\end{remark}

\subsection{Realization of $\Spin_\alpha^o(V,h)$ as a subgroup of a pin group}

Let ${\hat V}\eqdef V\oplus \R^2$ and ${\hat h}_\alpha:{\hat V} \times
{\hat V}\rightarrow \R$ be the non-degenerate bilinear form given by
${\hat h}_\alpha(x\oplus u,y\oplus v)=h(x,y)+\alpha \langle u, v
\rangle$ for all $x,y\in V$ and $u,v\in \R^2$, where
$\langle~,~\rangle$ is the canonical Euclidean scalar product of
$\R^2$.  If $h$ has signature $(p,q)$, then ${\hat h}_\alpha$ has
signature $(p+2,q)$ when $\alpha=+1$ and signature $(p,q+2)$ when
$\alpha=-1$. Let $(e_{d+1},e_{d+2})$ be the canonical basis of
$\R^2$. The following relations hold in $\Cl({\hat V},{\hat
	h}_\alpha)$:
\be
e_{d+1}^2=e_{d+2}^2=\alpha~~.
\ee
Notice that $\Cl(\R^2, \alpha\langle~,~\rangle)=\Cl_2(\alpha)$ embeds
as the subalgebra of $\Cl({\hat V},{\hat h}_\alpha)$ generated by
$e_{d+1}$ and $e_{d+2}$.  Let $G:\O(V,h)\times \O(2)\rightarrow
\O({\hat V},{\hat h}_\alpha)$ be the injective morphism of groups
given by:
\ben
\label{G}
G(A,B)\eqdef A\oplus B~~.
\een
We have $G(\O(V,h){\hat \times} \O(2))\subset {\hat \O}({\hat V},{\hat
	h}_\alpha)$, where:
\be
{\hat \O}({\hat V},{\hat h}_\alpha)\eqdef\twopartdef{\O({\hat V},{\hat h}_\alpha)}{d=\ev}{\SO({\hat V},{\hat h}_\alpha)}{d=\odd}~~.
\ee 

\begin{prop}
	\label{prop:j}
	The map $j:\Spin^o_\alpha(V,h)\rightarrow \Pin({\hat V},{\hat
		h}_\alpha)$ defined through:
	\ben
	\label{j}
	j([a,b])=ab~~\forall~~a\in \Spin(V,h)~~,~~b\in \Pin_2(\alpha)~~.
	\een
	is an injective morphism of groups which satisfies: 
	\ben
	\label{Eq1}
	{\widehat \Ad}_0\circ j=G\circ \trho_\alpha~~,
	\een
	and: 
	\ben
	\label{Eq2}
	{\widehat \tAd}_0\circ j=G\circ\rho_\alpha~~,
	\een
	where ${\widehat \Ad}_0$ and ${\widehat \tAd}_0$ are the untwisted and
	twisted vector representations of $\Pin({\widehat V},{\hat
		h}_\alpha)$, respectively.
\end{prop}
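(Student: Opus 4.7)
The plan is to work inside $\Cl(\hat V,\hat h_\alpha)$ and exploit the fact that the subalgebras $\Cl(V,h)$ (generated by $e_1,\dots,e_d$) and $\Cl_2(\alpha)$ (generated by $e_{d+1},e_{d+2}$) super-commute, since each $e_i$ with $i\le d$ anticommutes with both $e_{d+1}$ and $e_{d+2}$. The decisive structural observation is that any $a\in \Spin(V,h)\subset \Cl^+(V,h)$ is a polynomial of \emph{even} total degree in $e_1,\dots,e_d$, hence commutes with every element of $\Cl_2(\alpha)$ and in particular with every $b\in \Pin_2(\alpha)$; this underlies almost every step of the proof.

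With this commutation in hand, well-definedness of $j$ is immediate since $(-a)(-b)=ab$; the homomorphism property follows from $a_2 b_1 = b_1 a_2$; and the image lies in $\Pin(\hat V,\hat h_\alpha)$ because both $\Spin(V,h)$ and $\Pin_2(\alpha)$ sit inside it (unit vectors in $V$ or in $\R^2$ remain of unit $\hat h_\alpha$-norm). For injectivity, if $ab=1$ then $a=b^{-1}$ must lie in $\Cl(V,h)\cap \Cl_2(\alpha)=\R\cdot 1$, forcing $a,b\in\{\pm 1\}$ with $ab=1$, i.e.\ $[a,b]$ is the identity of $\Spin^o_\alpha(V,h)$.

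For the two intertwining identities I would decompose $y\in \hat V$ as $y=x+v$ with $x\in V$ and $v\in\R^2$ and compute on each summand. For $b\in \Pin_2(\alpha)$ of parity $|b|\in\{0,1\}$, the anticommutation of $e_i$ ($i\le d$) with $e_{d+1},e_{d+2}$ gives $bxb^{-1}=(-1)^{|b|}x$, while $b$ preserves $\R^2$ with $bvb^{-1}=\Ad_0^{(2)}(b)(v)$. Combining with $a$-$b$ commutation yields
\begin{equation*}
\widehat{\Ad}_0(ab)(x+v) = (-1)^{|b|}\Ad_0(a)(x) + \Ad_0^{(2)}(b)(v) .
\end{equation*}
Using the relation $(-1)^{|b|}=\det\Ad_0^{(2)}(b)$ (which is \eqref{etarep} transported through the isomorphism $\varphi_\alpha$ of Proposition \ref{prop:varphi}), the right-hand side becomes $\tlambda_\alpha([a,b])(x)+\mu_\alpha([a,b])(v) = G(\trho_\alpha([a,b]))(y)$, proving \eqref{Eq1}. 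Equation \eqref{Eq2} then follows from the identity $\widehat{\tAd}_0(ab)=(-1)^{|b|}\widehat{\Ad}_0(ab)$, valid because $a$ is even in the total $\Z_2$-grading of $\Cl(\hat V,\hat h_\alpha)$ while $b$ has parity $|b|$: the extra sign cancels the parity factor on the $V$-summand to give $\Ad_0(a)(x)=\lambda_\alpha([a,b])(x)$, and converts $\Ad_0^{(2)}(b)(v)$ into $\tAd_0^{(2)}(b)(v)=\tmu_\alpha([a,b])(v)$, which assemble to $G(\rho_\alpha([a,b]))(y)$.

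The step requiring some care -- but not a serious obstacle -- is the sign bookkeeping: one must simultaneously track the parity of $b$, the determinant $\det\Ad_0^{(2)}(b)$, and the action of the main automorphism of $\Cl(\hat V,\hat h_\alpha)$, and reconcile these with the asymmetric placement of the determinant factor in the definitions $\rho_\alpha=\lambda_\alpha\times\tmu_\alpha$ versus $\trho_\alpha=\tlambda_\alpha\times\mu_\alpha$. Once the identity $(-1)^{|b|}=\det\Ad_0^{(2)}(b)$ from Section \ref{sec:spinorgroups} is invoked, everything else is a direct calculation.
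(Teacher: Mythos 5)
Your proof is correct and follows essentially the same route as the paper's: both rest on the decomposition $\hat V=V\oplus\R^2$, the anticommutation of $e_{d+1},e_{d+2}$ with $V$ (giving $bxb^{-1}=(-1)^{|b|}x=\det(\Ad_0^{(2)}(b))\,x$), the commutation of the even element $a$ with $\Cl_2(\alpha)$, and the relation $\widehat{\tAd}_0(ab)=(\det\Ad_0^{(2)}(b))\,\widehat{\Ad}_0(ab)$. The only cosmetic difference is that you derive injectivity from $\Cl(V,h)\cap\Cl_2(\alpha)=\R\cdot 1$ whereas the paper quotes $\Spin(V,h)\cap\Pin_2(\alpha)=\{-1,1\}$ directly; the sign bookkeeping you flag is handled identically in both arguments.
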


\begin{proof}
	We have $\Pin_2(\alpha)=\Pin(\R^2,\alpha\langle~,~\rangle)\subset
	\Cl({\hat V},{\hat h}_\alpha)$. Since $\Spin(V,h)\cap
	\Pin_2(\R^2,\alpha\langle, \rangle)=\{-1,1\}$, the group morphism
	given by \eqref{j} is injective. The element $D\eqdef e_{d+1}$
	satisfies $Dv=-vD~~\forall v\in V$. We have
	$\Pin_2(\alpha)=\Spin_2(\alpha)\sqcup \Spin_2(\alpha)D$ and:
	\be
	\det (\Ad_0^{(2)}(b))=\twopartdef{+1}{b\in \Spin_2(\alpha)}{-1}{b\in \Spin_2(\alpha)D}~~,
	\ee
	which implies: 
	\be
	{\widehat \Ad}_0(b)(v)=\det(\Ad_0^{(2)}(b)) v~~\forall v\in V~~\mathrm{and}~~b\in \Pin_2(\alpha)~~.
	\ee
	For any $a\in \Spin(V,h)$, $b\in \Pin_2(\alpha)$, $v\in V$ and $w\in
	\R e_{d+1}\oplus \R e_{d+2}$, we compute:
	\beqa
	&&{\widehat \Ad}_0(ab)(v)=({\widehat \Ad}_0(a)\circ {\widehat \Ad}_0(b))(v)=   \det (\Ad_0^{(2)}(b))\Ad_0(a)(v)\\
	&&{\widehat \Ad}_0(ab)(w)=({\widehat \Ad}_0(a)\circ {\widehat \Ad}_0(b))(w)=\Ad^{(2)}_0(b)(w)~~,
	\eeqa
	which gives \eqref{Eq1}. Since $D\in \Cl_-({\hat V},{\hat h}_\alpha)$, we have:
	\be
	{\widehat \tAd}_0(ab)=\twopartdef{~~{\widehat \Ad}_0(ab)}{b\in \Spin_2(\alpha)}{-{\widehat \Ad}_0(ab)}{b\in \Spin_2(\alpha)D}~~(a\in \Spin(V,h))~~,
	\ee
	i.e. ${\widehat \tAd}_0(ab)=(\det \Ad_0^{(2)}(b)){\widehat \Ad}(ab)$. Thus: 
	\beqa
	&&{\widehat \tAd}_0(ab)(v)= \Ad_0(a)(v)\\
	&&{\widehat \tAd}_0(ab)(w)=(\det \Ad_0^{(2)}(b))\Ad^{(2)}_0(b)(w)=\tAd_0^{(2)}(b)~~,
	\eeqa
	which gives \eqref{Eq2}. 
	 \end{proof}

\noindent Equations \eqref{Eq1} and \eqref{Eq2} imply that the
following relations hold for all $g\in \Spin_\alpha^o(V,h)$:
\beqan
\label{PinModel}
&& \tlambda(g)={\widehat \Ad}_0(j(g))|_V~~,~~\lambda(g)={\widehat \tAd}_0(j(g))|_V~~,\nn\\
&& \mu(g)={\widehat \Ad}_0(j(g))|_{\R^2}~~,~~\tmu(g)={\widehat \tAd}_0(j(g))|_{\R^2}~~.
\eeqan
We conclude that the following commutative diagrams with exact rows
(morphisms of short exact sequences) hold:
\begin{equation}
\label{Gdiagram1}
\scalebox{1}{
	\xymatrix{
		1 \ar[r] & ~\Z_2~\ar@{=}[d] \ar[r]~ &~\Spin^o_\alpha(V,h) ~\ar[r]^{\!\!\!\!\!\!\!\trho_\alpha}~~ \ar[d]^{j} & \O(V,h){\hat \times} \O(2) \ar[d]^{G}~\ar[r] &1\\
		1 \ar[r]~ & ~\Z_2 \ar[r]~& ~\Pin({\hat V},{\hat h}_\alpha) ~\ar[r]^{\!\!\!{\widehat \Ad}_0}& {\hat \O}({\hat V},{\hat h}_{\alpha})\ar[r]&1
}}
\end{equation}
and:
\begin{equation}
\label{Gdiagram2}
\scalebox{1}{
	\xymatrix{
		1 \ar[r] & ~\Z_2~\ar@{=}[d] \ar[r]~ &~\Spin^o_\alpha(V,h) ~\ar[r]^{\!\!\!\!\!\!\!\rho_\alpha}~~ \ar[d]^{j} & \SO(V,h)\times \O(2) \ar[d]^{G}~\ar[r] &1\\
		1 \ar[r]~ & ~\Z_2 \ar[r]~& ~\Pin({\hat V},{\hat h}_\alpha) ~\ar[r]^{\!\!\!{\widehat \tAd}_0}& \O({\hat V},{\hat h}_{\alpha})\ar[r]&1
}}
\end{equation}

\subsection{Realization of $\Spin_\alpha^o(V,h)$ as a subgroup of a spin group}

Let $V'\eqdef V\oplus \R^3$ and $h'_\alpha:V'\times V'\rightarrow \R$
be the nondegenerate symmetric bilinear form given by:
\be
h'_\alpha(x+w+t,x'+w'+t')\eqdef h(x,y)+\alpha\langle w,w'\rangle -ss'~~\forall x,x'\in V~~,~~w,w'\in \R^2~~,~~t,t'\in \R~~.
\ee
Consider the injective group morphisms $F,\tF:\O(V,h)\times
\O(2)\rightarrow \O(V',h'_\alpha)$ given by:
\beqan
\label{F}
&& F(A, B)\eqdef (\det B) A\oplus (\det B)B\oplus \det B\nn\\
&& \tF(A, B)\eqdef A\oplus B\oplus \det B~~
\eeqan
Notice that $F(\O(V,h){\hat \times} \O(2))\subset \SO(V',h'_\alpha)$
(for odd dimension) and $\tF(\SO(V,h)\times \O(2))\subset
\SO(V',h'_\alpha)$.  We have $F\circ \trho_\alpha=\tF\circ
\rho_\alpha$, namely:
\ben
\label{Ftrho}
(F\circ \trho_\alpha)([a,u])=(\tF\circ \rho_\alpha)([a,u])=\Ad_0(a)\oplus (\det \Ad_0^{(2)}(u))\Ad_0^{(2)}(u)\oplus \det \Ad_0^{(2)}(u)~~,
\een
where we noticed that $\det\circ\tAd_0^{(2)}=\det \circ \Ad_0^{(2)}$
and $\tAd_0^{(2)}(u)=(\det \Ad_0^{(2)}(u))\Ad_0^2(u)$.

\begin{prop}
	\label{prop:jprime}
	There exists an injective morphism of groups
	$j':\Spin_\alpha^o(V,h)\rightarrow \Spin(V',h'_\alpha)$ such that:
	\ben
	\label{E1}
	\Ad_0'\circ j'=F\circ \trho_\alpha=\tF\circ \rho_\alpha~~,
	\een
	where $\Ad_0'$ is the vector representation of
	$\Spin(V',h'_\alpha)$.
\end{prop}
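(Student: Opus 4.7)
The plan is to construct $j'$ as the composition of $j$ from Proposition \ref{prop:j} with the standard embedding of a pin group into the spin group of a quadratic space adjoined with a one-dimensional negative-definite line. Concretely, write $V' = \hat V \oplus \R$; the last summand is negative-definite, so its basis vector $e_{d+3}$ satisfies $e_{d+3}^2 = -1$ in $\Cl(V', h'_\alpha)$ and anticommutes with every vector of $\hat V$. Using the natural algebra inclusion $\Cl(\hat V, \hat h_\alpha) \hookrightarrow \Cl(V', h'_\alpha)$, I would set
\[
j'([a,b]) \eqdef \begin{cases} j([a,b]) & \text{if } b \in \Spin_2(\alpha), \\ j([a,b])\, e_{d+3} & \text{if } b \in \Spin_2(\alpha)\,D. \end{cases}
\]
This is well-defined on equivalence classes since the case distinction depends only on the $\Cl_2(\alpha)$-parity of $b$, which is invariant under $(a,b) \mapsto (-a,-b)$. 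Since $j([a,b]) = ab \in \Pin(\hat V, \hat h_\alpha)$ by Proposition \ref{prop:j} and inherits its parity from $b$, multiplying by $e_{d+3}$ in the odd case produces an even element of the Clifford group of $V'$; in both branches, $j'([a,b]) \in \Spin(V', h'_\alpha)$.

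I would then verify that $j'$ is an injective group homomorphism by going through the four parity cases for $b_1, b_2$. When at least one of $b_1, b_2$ is even, the homomorphism property reduces to that of $j$ together with the fact that $e_{d+3}$ commutes with even elements of $\Cl(\hat V)$. When both are odd (so that $b_1 b_2$ is even, matching the first branch of the definition), the key computation is
\[
j'(g_1)\, j'(g_2) \;=\; j(g_1)\, e_{d+3}\, j(g_2)\, e_{d+3} \;=\; -\, j(g_1)\, j(g_2)\, e_{d+3}^2 \;=\; j(g_1 g_2) \;=\; j'(g_1 g_2),
\]
using that $e_{d+3}$ anticommutes with the odd element $j(g_2)$ and that $e_{d+3}^2 = -1$. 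Injectivity then follows from injectivity of $j$: if $j'(g) = 1$, a parity argument forces $g$ to lie in the even stratum, whence $j(g) = 1$ and hence $g = 1$.

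For the identity $\Ad_0' \circ j' = F \circ \trho_\alpha$, it suffices to compute the conjugation action of $j'([a,b])$ on each of the three summands of $V' = V \oplus \R^2 \oplus \R$. On $\hat V$, conjugation by $e_{d+3}$ acts as multiplication by $-1$; hence when $b$ is odd, conjugation by $j'([a,b]) = j([a,b])\, e_{d+3}$ produces $-\widehat{\Ad}_0(j([a,b]))$, while when $b$ is even it produces $\widehat{\Ad}_0(j([a,b]))$. Since $j([a,b])$ has the same parity as $b$ and since the twisted and untwisted vector representations of $\Pin(\hat V, \hat h_\alpha)$ differ on vectors by the grading sign, both cases yield $\widehat{\tAd}_0(j([a,b]))$, which by \eqref{Eq2} equals $G(\rho_\alpha([a,b])) = \Ad_0(a) \oplus \tAd_0^{(2)}(b)$. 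On $\R e_{d+3}$, a direct computation using $e_{d+3}^2 = -1$ and the parity of $j([a,b])$ shows that $e_{d+3}$ is sent to $\det(\Ad_0^{(2)}(b))\, e_{d+3}$. Assembling the three pieces gives
\[
\Ad_0'(j'([a,b])) \;=\; \Ad_0(a) \oplus \tAd_0^{(2)}(b) \oplus \det(\Ad_0^{(2)}(b)),
\]
which equals $\tF \circ \rho_\alpha([a,b]) = F \circ \trho_\alpha([a,b])$ by \eqref{Ftrho}.

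The main obstacle I expect is the careful sign bookkeeping in the last step: the signs produced by $e_{d+3}^2 = -1$ and by commuting $e_{d+3}$ past odd elements of $\Cl(\hat V)$ must combine with the standard relation between twisted and untwisted vector representations in order to reproduce precisely the $\det B$ factors appearing in the definitions \eqref{F} of $F$ and $\tF$.
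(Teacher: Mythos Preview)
Your proof is correct and produces exactly the same map $j'$ as the paper, but the packaging differs slightly. The paper defines a unital algebra monomorphism $\hat s':\Cl(\hat V,\hat h_\alpha)\rightarrow \Cl(V',h'_\alpha)$ by $\hat s'(e_i)=e_i$ for $i\le d$ and $\hat s'(e_{d+j})=e_{d+j}e_{d+3}$ for $j=1,2$, and then sets $j'\eqdef s'\circ j$; one checks readily that this agrees with your case-by-case formula (since $s'$ fixes $\Spin(V,h)$ and sends an even, respectively odd, element $b\in\Pin_2(\alpha)$ to $b$, respectively $b\,e_{d+3}$). The advantage of the paper's route is that the group-homomorphism property of $j'$ is automatic, as a composition of algebra and group morphisms, so no parity case analysis is needed. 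Conversely, your route has the advantage that the verification of $\Ad_0'\circ j'=\tF\circ\rho_\alpha$ is cleaner: you reduce it on the $\hat V$ summand to the already-proved relation \eqref{Eq2} rather than recomputing on generators as the paper does. Both arguments are short and the sign bookkeeping you flagged is handled correctly in your sketch.
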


\begin{proof} 
	Let $e_{d+1},e_{d+2},e_{d+3}$ be the canonical basis of $\R^3$.  The
	following relations hold in $\Cl(V',h'_\alpha)$:
	\be
	e_{d+1}^2=e_{d+2}^2=\alpha~~,~~e_{d+3}^2=-1~~.
	\ee
	Let $J'\eqdef e_{d+1}e_{d+2}$, $D'\eqdef e_{d+1}e_{d+3}$. Then:
	\be
	(J')^2=-1~~,~~(D')^2=\alpha~~,~~D'J'=-J'D'~~.
	\ee
	Since $(e_{d+1}e_{d+3})^2=(e_{d+2}e_{d+3})^2=\alpha$, there exists a
	unique unital monomorphism of algebras ${\hat s}':\Cl({\hat V}, {\hat
		h}_\alpha)\rightarrow \Cl(V',h'_\alpha)$ which satisfies:
	\be
	{\hat s}'(e_i)=e_i~~\forall i=1\ldots d~~,~~{\hat s}'(e_{d+1})=e_{d+1}e_{d+3}=D'~~,~~{\hat s}'(e_{d+2})=e_{d+2}e_{d+3}~~.
	\ee
	The composition $j'\eqdef s'\circ j:\Spin_\alpha^o(V,h)\rightarrow
	\Spin(V',h'_\alpha)$ is an injective morphism of groups. Notice that
	$j'|_{\Spin(V,h)}=\id_{\Spin(V,h)}$. For $a\in \Spin(V,h)$ and $v\in
	V$, we have:
	\beqa
	&& \Ad_0'(s'(a))(v) = \Ad_0(a)(v)\, , \qquad \Ad_0'(s'(a))(e_{d+1})=e_{d+1}~,~ \nn\\ && \Ad'_0(s'(a))(e_{d+2}) = e_{d+2}\, , \qquad \Ad'_0(s'(a))(e_{d+3})=e_{d+3}\nn\\
	&& \Ad_0'(s'(e_{d+1}))(v)=v\, , \qquad  \Ad'_0(s'(e_{d+1}))(e_{d+1})=-e_{d+1}\nn\\ && \Ad'_0(s'(e_{d+1}))(e_{d+2})=+e_{d+2}\, , \qquad \Ad'_0(s'(e_{d+1}))(e_{d+3})=-e_{d+3}\nn\\
	&& \Ad_0'(s'(e_{d+2}))(v)=v\, , \qquad \Ad'_0(s'(e_{d+2}))(e_{d+1})=+e_{d+1}\, , \nn\\ && \Ad'_0(s'(e_{d+2}))(e_{d+2})=-e_{d+2}\, , \qquad \Ad'_0(s'(e_{d+2}))(e_{d+3})=-e_{d+3}\nn~~,
	\eeqa
	since $s'(e_{d+1})=e_{d+1}e_{d+3}$ and  $s'(e_{d+2})=e_{d+2}e_{d+3}$. On the other hand, we have: 
	\beqa
	&& \Ad_0^{(2)}(e_{d+1})(e_{d+1})=e_{d+1}~~,~~\Ad_0^{(2)}(e_{d+1})(e_{d+2})=-e_{d+2}~~\nn\\
	&& \Ad_0^{(2)}(e_{d+2})(e_{d+1})=-e_{d+1}~~,~~\Ad_0^{(2)}(e_{d+2})(e_{d+2})=+e_{d+2}~~\nn\\
	&& \det (\Ad_0^{(2)}(e_{d+1}))=\det (\Ad_0^{(2)}(e_{d+2}))=-1~~,
	\eeqa
	so comparison with \eqref{Ftrho} gives: 
	\beqa
	&& (\Ad'_0\circ j')([a,1])=(F\circ \trho_\alpha)([a,1])\\
	&& (\Ad'_0\circ j')([1,e_{d+1}])=(F\circ \trho_\alpha)([1,e_{d+1}])\\
	&& (\Ad'_0\circ j')([1,e_{d+2}])=(F\circ \trho_\alpha)([1,e_{d+2}])~~,
	\eeqa
	which implies $\Ad'_0\circ j'=F\circ \trho_\alpha$ since the group
	$\Spin_\alpha^o(V,h)$ is generated by $\Spin(V,h)$ and by the elements
	$[1,e_{d+1}]$ and $[1,e_{d+2}]$.
	 \end{proof}

\noindent 
We conclude that the following morphisms of short exact sequences hold:

\begin{equation}
\label{Fdiagram1}
\scalebox{1}{
	\xymatrix{
		1 \ar[r] & ~\Z_2~\ar@{=}[d] \ar[r]~ &~\Spin^o_\alpha(V,h) ~\ar[r]^{\!\!\!\!\!\!\!\trho_\alpha}~~ \ar[d]^{j'} & \O(V,h){\hat \times} \O(2) \ar[d]^F~\ar[r] &1\\
		1 \ar[r]~ & ~\Z_2 \ar[r]~& ~\Spin(V',h'_\alpha) ~\ar[r]^{\!\!\!\Ad'_0}& \SO(V',h'_{\alpha})\ar[r]&1
}}
\end{equation}
and:
\begin{equation}
\label{Fdiagram2}
\scalebox{1}{
	\xymatrix{
		1 \ar[r] & ~\Z_2~\ar@{=}[d] \ar[r]~ &~\Spin^o_\alpha(V,h) ~\ar[r]^{\!\!\!\!\!\!\!\rho_\alpha}~~ \ar[d]^{j'} & \SO(V,h)\times \O(2) \ar[d]^{\tF}~\ar[r] &1\\
		1 \ar[r]~ & ~\Z_2 \ar[r]~& ~\Spin(V',h'_\alpha) ~\ar[r]^{\!\!\!\Ad'_0}& \SO(V',h'_{\alpha})\ar[r]&1
}}
\end{equation}

\section{$\Spin_\alpha^o$ structures}
\label{sec:spinostructures}

Let $M$ be a smooth, connected and paracompact manifold of dimension $d$. 

\begin{definition}
	\label{def:spinostructures}
	Let $P_{\check{\O}}$ be a principal $\check{\O}(V,h)$-bundle over
	$M$. A {\bf $\Spin^o_\alpha$ structure} on $P_{\check{\O}}$ is a pair
	$(Q,\tilde{\Lambda}_{\alpha})$ where $Q$ is a principal
	$\Spin^o_\alpha(V,h)$-bundle over $M$ and
	$\tilde{\Lambda}_{\alpha}:Q\rightarrow P_{\check{\O}}$ is a
	$\tlambda_{\alpha}$-equivariant map fitting into the following
	commutative diagram:
	\begin{equation*}
	\scalebox{1.0}{
		\xymatrix{
			Q\times \Spin^{o}_{\alpha}(V,h)\,\ar[d]_{\tilde{\Lambda}_{\alpha}\times\tilde{\lambda}_{\alpha}}\ar[r] ~ &~ Q\ar[d]^{\tilde{\Lambda}_{\alpha}}\\
			P_{\check{\O}}\times \check{\O}(V,h)\ar[r] & ~ P_{\check{\O}}\\
	}}
	\end{equation*}
	Here horizontal arrows denote the right bundle action of the
	corresponding group.
\end{definition}

\begin{remark}
	Notice that $\Spin^{o}_{\alpha}$ structures are defined using the
	{\em twisted} vector representation $\tilde{\lambda}_{\alpha}\colon
	\Spin^{o}_{\alpha}(V,h)\to \check{\O}(V,h)$ instead of the
	usual vector representation $\lambda_{\alpha}\colon
	\Spin^{o}_{\alpha}(V,h)\to \SO(V,h)$. When $d$ is odd, the former
	covers the full orthogonal group $\O(V,h)$ whereas the latter only
	covers the special orthogonal group $\SO(V,h)$. As we shall see
	later, this fact allows us to define $\Spin^{o}_{\alpha}$
	structures on non-orientable odd-dimensional manifolds.
\end{remark}

\begin{remark}
	The notion of $\Spin^{o}_{\alpha}$ structure introduced above may seem
	somewhat artificial. However, as we will explain in Section
	\ref{sec:elementary}, this turns out to be the appropriate spinorial
	structure for describing bundles of simple real Clifford modules in
	signatures $(p,q)$ which satisfy the condition $p-q\equiv_{8} 3, 7$.
\end{remark}

\noindent
Given a $\Spin^{o}_{\alpha}$ structure on a principal bundle
$P_{\check{\O}}$, we can construct certain associated principal
bundles by using the different representations of
$\Spin^{o}_{\alpha}(V,h)$ introduced in Definition
\ref{def:elementaryreps} of Subsection \ref{sec:elementaryrep}:

\begin{definition}
	The {\bf characteristic bundle} $P_{\mu_{\alpha}}$ defined by a
	$\Spin^{o}_{\alpha}$ structure $(Q,\tilde{\Lambda}_{\alpha})$ is the
	principal $\O(2)$-bundle associated to $Q$ through the characteristic
	representation $\mu_{\alpha}\colon \Spin^{o}_{\alpha}(V,h)\to \O(2)$,
	that is:
	\begin{equation*}
	P_{\mu_{\alpha}}\eqdef Q\times_{\mu_{\alpha}} \O(2)\, .
	\end{equation*}
	We denote by $\mathfrak{U}_{\alpha}:Q\rightarrow P_{\mu_{\alpha}}$ the
	corresponding $\mu_{\alpha}$-equivariant bundle map.
\end{definition}

\begin{definition}
	The {\bf twisted basic bundle} $P_{\tilde{\rho}_{\alpha}}$ defined
	by a $\Spin^{o}_{\alpha}$ structure $(Q,\tilde{\Lambda}_{\alpha})$ is
	the principal $\O(V,h)\hat{\times}\O(2)$-bundle associated to $Q$
	through the twisted basic representation $\tilde{\rho}_{\alpha}\colon
	\Spin^{o}_{\alpha}(V,h)\to \O(V,h)\hat{\times}\O(2)$, that is:
	\begin{equation*}
	P_{\tilde{\rho}_{\alpha}}\eqdef Q\times_{\tilde{\rho}_{\alpha}} (\O(V,h)\hat{\times}\O(2))\, .
	\end{equation*}
	We denote by $\mathfrak{R}_{\alpha}:Q\rightarrow
	P_{\tilde{\rho}_{\alpha}}$ the corresponding
	$\tilde{\rho}_{\alpha}$-equivariant bundle map.
\end{definition}

\begin{prop}
	\label{prop:SpinoE}
	Let $P_{\check{\O}}$ be a principal $\check{\O}(V,h)$-bundle over
	$M$. Then the following statements are equivalent:
	\begin{enumerate}[(a)]
		\itemsep 0.0em
		\item $P_{\check{\O}}$ admits a $\Spin^o_\alpha$ structure $(Q,\tilde{\Lambda}_{\alpha})$.
		\item There exists a principal $\O(2)$-bundle $E$ over $M$ such that
		the fibered product $P_{\check{\O}}\times_M E$ admits a $\trho_{\alpha}$ equivariant reduction:
		\begin{equation*}
		\mathfrak{R}^0_{\alpha}\colon Q\rightarrow P_{\check{\O}}\times_M E\, ,
		\end{equation*}
		to a $\Spin_\alpha^o(V,h)$-bundle $Q$. 
	\end{enumerate}
	
	\noindent
	In particular, the image $P_{\tilde{\rho}_{\alpha}} \eqdef \mathfrak{R}^0_{\alpha}(Q)\subset P_{\check{\O}}\times_M E$ of $Q$ by $\mathfrak{R}^0_{\alpha}$ is the twisted basic bundle associated to the $\Spin^{o}_{\alpha}$ structure $(Q,\tilde{\Lambda}_{\alpha})$, and the the corestriction of $\mathfrak{R}^0_{\alpha}$ to its image gives the $\tilde{\rho}_{\alpha}$-equivariant bundle map $\mathfrak{R}_{\alpha}:Q\rightarrow
	P_{\tilde{\rho}_{\alpha}}$.
\end{prop}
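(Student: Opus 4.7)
The plan is to prove the two implications separately and then identify the image of $\mathfrak{R}^0_\alpha$ with the twisted basic bundle. The key structural input is the fifth/sixth exact sequence of \eqref{ses}, which states that $\trho_\alpha = \tlambda_\alpha\times\mu_\alpha$ is a group epimorphism onto $\O(V,h){\hat \times}\O(2)$ with kernel the center $Z_2$.

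For (a) $\Rightarrow$ (b), given a $\Spin^o_\alpha$ structure $(Q,\tilde{\Lambda}_\alpha)$ on $P_{\check{\O}}$, I would take $E := P_{\mu_\alpha} = Q\times_{\mu_\alpha}\O(2)$ to be the characteristic bundle and define
\begin{equation*}
\mathfrak{R}^0_\alpha\colon Q \longrightarrow P_{\check{\O}}\times_M E\, ,\qquad \mathfrak{R}^0_\alpha(q) := (\tilde{\Lambda}_\alpha(q),\mathfrak{U}_\alpha(q))\, .
\end{equation*}
Since both components cover the identity on $M$, the image lies in the fibered product; combining the equivariance properties of $\tilde{\Lambda}_\alpha$ and $\mathfrak{U}_\alpha$ with the identity $\trho_\alpha = \tlambda_\alpha\times\mu_\alpha$ yields the required $\trho_\alpha$-equivariance of $\mathfrak{R}^0_\alpha$.

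For (b) $\Rightarrow$ (a), I would set $\tilde{\Lambda}_\alpha := \pr_1\circ\mathfrak{R}^0_\alpha\colon Q\rightarrow P_{\check{\O}}$, where $\pr_1$ is the first-factor projection from $P_{\check{\O}}\times_M E$. This projection is equivariant with respect to the group projection $\O(V,h){\hat \times}\O(2)\rightarrow\check{\O}(V,h)$, and composing the latter with $\trho_\alpha$ gives back $\tlambda_\alpha$; hence $\tilde{\Lambda}_\alpha$ is $\tlambda_\alpha$-equivariant. Fiberwise surjectivity onto $P_{\check{\O}}$ follows from surjectivity of $\tlambda_\alpha$, so $(Q,\tilde{\Lambda}_\alpha)$ is a $\Spin^o_\alpha$ structure on $P_{\check{\O}}$.

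For the last assertion, the $\trho_\alpha$-equivariant map $\mathfrak{R}^0_\alpha$ factors through the quotient $Q/Z_2 = Q/\ker\trho_\alpha$, which is canonically isomorphic as an $\O(V,h){\hat \times}\O(2)$-bundle to $P_{\trho_\alpha} = Q\times_{\trho_\alpha}(\O(V,h){\hat \times}\O(2))$. This factorization gives an injective bundle map $P_{\trho_\alpha}\hookrightarrow P_{\check{\O}}\times_M E$ whose image is exactly $\mathfrak{R}^0_\alpha(Q)$, and the corestriction of $\mathfrak{R}^0_\alpha$ to this image reproduces the universal $\trho_\alpha$-equivariant map $\mathfrak{R}_\alpha$ defining the twisted basic bundle. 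The main subtlety is the odd-dimensional case, where $\O(V,h){\hat \times}\O(2) = \mathrm{S}[\O(V,h)\times\O(2)]$ is a proper subgroup of $\check{\O}(V,h)\times\O(2) = \O(V,h)\times\O(2)$: in (a)$\Rightarrow$(b) one must verify that the pair $(\tilde{\Lambda}_\alpha(q),\mathfrak{U}_\alpha(q))$ lands in the corresponding subbundle, which follows from $\det\circ\Ad_0 \equiv 1$ on $\Spin(V,h)$ together with the explicit form of $\trho_\alpha$.
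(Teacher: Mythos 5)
Your proposal is correct and follows essentially the same route as the paper: the same choice $E=P_{\mu_\alpha}$, the same map $\mathfrak{R}^0_\alpha(q)=(\tilde{\Lambda}_\alpha(q),\mathfrak{U}_\alpha(q))$ for (a)$\Rightarrow$(b), the first-factor projection for the converse, and an identification of $\mathfrak{R}^0_\alpha(Q)$ with $P_{\trho_\alpha}$ that matches the paper's explicit isomorphism $\cI$ (your quotient $Q/Z_2=Q/\ker\trho_\alpha$ is the same object, since $\trho_\alpha$ is surjective with kernel $Z_2$). Your closing remark on the odd-dimensional case, where the image must lie in the $\mathrm{S}[\O(V,h)\times\O(2)]$-subbundle, is a worthwhile verification that the paper leaves implicit.
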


\begin{proof} 
	Suppose first that $(a)$ holds, i.e. that $P_{\check{\O}}$ admits a
	$\Spin^o_\alpha$ structure $(Q,\tilde{\Lambda}_{\alpha})$. We take $E
	= P_{\mu_{\alpha}}$ to be the characteristic bundle associated to
	$Q$. Using the fact that $(Q,\tilde{\Lambda}_{\alpha})$ is a $\Spin^{o}_{\alpha}$ 
	structure covering $P_{\check{\O}}$, we define $\mathfrak{R}^0_{\alpha}$ as follows:
	\begin{equation*}
	\mathfrak{R}^0_{\alpha}\colon Q\rightarrow P_{\check{\O}}\times_M E\, , \qquad q \mapsto (\tilde{\Lambda}_{\alpha}(q),\mathfrak{U}_{\alpha}(q)) \, .
	\end{equation*}
	
	\noindent
	Direct computation shows that $\mathfrak{R}^0_{\alpha}$ is $\tilde{\rho}_{\alpha}$ equivariant. The image of $\mathfrak{R}^0_{\alpha}$ is isomorphic to the twisted basic bundle associated to $Q$ through the following isomorphism of principal bundles
	\begin{equation*}
	\cI \colon P_{\tilde{\rho}_{\alpha}}\xrightarrow{\simeq} \mathfrak{R}^0_{\alpha}(Q)\, , \qquad [q,(g_0,u_0)]\mapsto (\tilde{\Lambda}_{\alpha}(q) g_0,\mathfrak{U}_{\alpha}(q) u_0)\, .
	\end{equation*}
	The equivariant map $\mathfrak{R}_{\alpha}\colon Q\rightarrow P_{\trho_{\alpha}}$ corresponds, upon use of $\cI$, to the corestriction of $\mathfrak{R}^0_{\alpha}$ to its image, that is:
	\begin{equation*}
	\mathfrak{R}_{\alpha} = \cI^{-1}\circ \mathfrak{R}^0_{\alpha}\colon Q \to P_{\tilde{\rho}_{\alpha}}\, .
	\end{equation*}
	Now suppose $(b)$ holds. Then the principal
	$\Spin^{o}_{\alpha}(V,h)$-bundle $Q$ becomes a $\Spin^{o}_{\alpha}$
	structure on $P_{\check{\O}}$ when endowed with the bundle map
	$\tilde{\Lambda}_{\alpha}\colon Q\to P_{\check{\O}}$ obtained by
	post-composing $\mathfrak{R}_{\alpha}$ with the fiberwise projection
	to the first factor. Indeed, the map $\tilde{\Lambda}_{\alpha}$
	obtained in this manner is equivariant with respect to the twisted
	vector representation $\tilde{\lambda}_{\alpha}\colon
	\Spin^{o}_{\alpha}(V,h) \to \check{\O}(V,h)$ (this follows from the
	fact that $\mathfrak{R}_{\alpha}$ is
	$\tilde{\rho}_{\alpha}$-equivariant). Hence
	$(Q,\tilde{\Lambda}_{\alpha})$ satisfies the conditions required in
	Definition \ref{def:spinostructures}.
	 
\end{proof}

\noindent We next define isomorphisms of $\Spin^{o}_{\alpha}$ structures.

\begin{definition}
	Let $\tilde{\Lambda}^{1}_{\alpha}\colon Q^{1} \rightarrow
	P_{\check{\O}}$ and $\tilde{\Lambda}^{2}_{\alpha}\colon
	Q^{2}\rightarrow P_{\check{\O}}$ be two $\Spin^{o}_{\alpha}$
	structures over $P$. An {\bf isomorphism of $\Spin^{o}_{\alpha}$
		structures} is an isomorphism $F\colon Q^{1}\to Q^{2}$ of principal
	$\Spin^{o}_{\alpha}$-bundles such that
	$\tilde{\Lambda}^{2}_{\alpha}\circ F= \tilde{\Lambda}^{1}_{\alpha}$,
	i.e., such that the following diagram commutes:
	\begin{equation}
	\scalebox{1.0}{
		\xymatrix{
			Q^{1}\,\ar[d]_{\tilde{\Lambda}^{1}_{\alpha}}\ar[r]^{F} ~ & ~ Q^{2}\ar[d]^{\tilde{\Lambda}^{2}_{\alpha}}\\
			P_{\check{\O}} \ar[r]^{\mathrm{id}} ~ & ~ P_{\check{\O}}\\
	}}
	\end{equation}
\end{definition}

\subsection{$\Spin^o_\alpha$ structures on pseudo-Riemannian manifolds}

\noindent Let $(M,g)$ be a connected pseudo-Riemannian manifold of
dimension $d$ and signature $(p,q)$ and let $(V,h)$ be a quadratic
space of the same dimension and signature. Furthermore, assume that
$M$ is oriented when $d$ is even. In this situation, the
pseudo-Riemannian manifold $(M,g)$ carries a natural
principal $\check{\O}(V,h)$-bundle $P_{\check{O}}(M,g)$, which is
defined as follows (cf. Definition \eqref{checkO}):
\be
P_{\check{O}}(M,g)\eqdef \twopartdef{\mathrm{bundle~of~oriented~pseudo-orthonormal~frames~of}~(M,g)}{d=\ev}{\mathrm{bundle~of~all~pseudo-orthonormal~frames~of} (M,g)}{d=\odd}~~. 
\ee

\begin{definition}
	Let $(M,g)$ be as above. A {\em $\Spin^o_\alpha$}-structure on $(M,g)$
	is a $\Spin^o_\alpha$ structure on the principal
	$\check{\O}(V,h)$-bundle $P_{\check{O}}(M,g)$.
\end{definition}

\begin{definition}
	Let $\mathrm{C}^{o}(M,g)$ be the groupoid whose objects are
	$\Spin^{o}_{\alpha}$ structures on $(M,g)$ and whose arrows are
	isomorphisms of $\Spin^{o}_{\alpha}$ structures.
\end{definition}

\begin{remark}
	Notice that two principal $\Spin^{o}_{\alpha}$-bundles on $(M,g)$ may be
	isomorphic as principal $\Spin^{o}_{\alpha}$-bundles without
	being isomorphic as $\Spin^{o}_{\alpha}$ structures.
\end{remark}

\begin{remark} 
	As explained in Section \ref{sec:spinogroups}, we have a short exact
	sequence:
	\begin{equation}
	\label{eq:shortSpinalpha}
	1\to \Spin^{c}(V,h)\xrightarrow{i} \Spin^{o}_\alpha(V,h) \xrightarrow{\tilde{\eta}_{\alpha}} \mathbb{Z}_{2}\to 1\, ,
	\end{equation}
	which induces the following exact sequence of pointed sets in
	$\check{\mathrm{C}}$ech-cohomology:
	\begin{equation}
	\label{eq:shortSpincohomology}
	H^{1}(M,\Spin^{c}(V,h))\xrightarrow{i_{\ast}} H^{1}(M, \Spin^{o}(V,h))\xrightarrow{\tilde{\eta}_{\alpha \ast}} H^{1}(M, \mathbb{Z}_{2})\, .
	\end{equation}
	Hence every principal $\Spin^{o}_{\alpha}(V,h)$-bundle $Q$ defined on
	$M$ determines a real line bundle $L$ defined on $M$ whose isomorphism
	class satisfies:
	\begin{equation}
	[L]=\tilde{\eta}_{\alpha \ast}([Q])\in H^{1}(M, \mathbb{Z}_{2})\, .
	\end{equation}
	Intuitively, we can view a $\Spin^{o}_{\alpha}$ structure on $(M,g)$
	as a $\Spin^{c}(V,h)$ structure ``twisted'' by the real line bundle
	$L$. It is not hard to see that $L$ is isomorphic with the determinant
	line bundle of the characteristic $\O(2)$-bundle $P_{\mu_{\alpha}}$
	associated to $Q$.  Exactness of \eqref{eq:shortSpincohomology} shows
	that a $\Spin^{o}_{\alpha}$ structure $Q$ reduces to a $\Spin^{c}$
	structure iff $L$ is trivial, i.e. iff $P_{\mu_{\alpha}}$ is
	orientable. We will study in detail reductions of
	$\Spin^{o}_{\alpha}$-bundles in Section \ref{sec:elementarypinor}.
\end{remark}

\subsection{Topological obstructions}

\noindent We now turn our attention to the topological obstruction for
existence of a $\Spin^{o}_{\alpha}$ structure on a principal
$\check{\O}(V,h)$-bundle $P_{\check{\O}}$ defined over $M$. We will
need the following result:

\begin{lemma}
	\label{lemma:seq}
	Consider a commutative diagram of morphisms of Lie groups of the following
	form:
	\begin{equation}
	\label{diagram1}
	\scalebox{1}{
		\xymatrix{
			1 \ar[r] & ~\Z_2\ar@{=}[d] \ar[r]~ &~G ~\ar[r]^{p} \ar[d]^f & K \ar[d]^g~\ar[r] &1\\
			1 \ar[r] & ~\Z_2 \ar[r]~& ~G'~\ar[r]_{p'}& K'\ar[r]&1
	}}
	\end{equation}
	Then a principal $K$-bundle $P$ over $M$ admits a $G$-reduction along
	$p$ iff the principal $K'$-bundle $P'\eqdef g_\ast(P)$ admits a
	$G'$-reduction along $p'$.
\end{lemma}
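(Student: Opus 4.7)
The plan is to reduce the equivalence to the naturality of the Čech obstruction class controlling the lift of a principal bundle through a central extension by $\Z_2$.

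First I would observe that the kernel $\Z_2$ is a normal subgroup of $G$ (being the kernel of $p$), and any normal subgroup of order two is automatically central, since $\mathrm{Aut}(\Z_2)$ is trivial and the conjugation action therefore has no choice but to be trivial. The same applies to the bottom row of \eqref{diagram1}, so both rows are central extensions by $\Z_2$. For such central extensions, standard obstruction theory provides a connecting map of pointed sets in (non-abelian) \v{C}ech cohomology:
\be
\delta \colon H^1(M, K) \to H^2(M, \Z_2),
\ee
characterized by the property that a principal $K$-bundle $P$ admits a $G$-reduction along $p$ if and only if $\delta([P]) = 0$; analogously one has $\delta'$ for the bottom row, and $P' = g_{\ast}(P)$ admits a $G'$-reduction along $p'$ iff $\delta'([P']) = 0$.

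The heart of the proof is then to establish that $\delta' \circ g_{\ast} = \delta$. I would verify this at the level of \v{C}ech cocycles. Choose a trivializing open cover $\{U_i\}$ of $M$ for $P$ with transition functions $k_{ij} \colon U_i \cap U_j \to K$; refining the cover if necessary, we may lift them to smooth maps $g_{ij} \colon U_i \cap U_j \to G$ with $p \circ g_{ij} = k_{ij}$, since $p$ is a local $\Z_2$-covering. The class $\delta([P])$ is represented by the \v{C}ech $2$-cocycle
\be
c_{ijk} \eqdef g_{ij}\, g_{jk}\, g_{ik}^{-1} \in \Z_2,
\ee
whose cocycle and $\Z_2$-valuedness conditions use centrality of $\Z_2$. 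Since the transition functions of $P' = g_{\ast}(P)$ are $g \circ k_{ij} = p' \circ (f \circ g_{ij})$ by commutativity of \eqref{diagram1}, the composites $f \circ g_{ij}$ furnish lifts of these transition functions to $G'$, and the corresponding obstruction cocycle is
\be
c'_{ijk} = f(g_{ij})\, f(g_{jk})\, f(g_{ik})^{-1} = f(c_{ijk}) = c_{ijk},
\ee
where the last equality uses the identification $f|_{\Z_2} = \id_{\Z_2}$ imposed by \eqref{diagram1}. Hence $[c] = [c']$ in $H^2(M, \Z_2)$, so $\delta([P]) = \delta'([P'])$ and the stated equivalence follows.

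The main subtlety is the passage to the cocycle description, where one must check that the obstruction class is well-defined independently of the choice of local lifts $g_{ij}$ (a standard verification that relies on centrality of $\Z_2$, which we have already ensured). No deeper obstacle is expected, as the rest is formal naturality of the boundary map in non-abelian \v{C}ech cohomology for central $\Z_2$-extensions.
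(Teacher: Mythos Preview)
Your proof is correct and follows essentially the same approach as the paper: both reduce the statement to naturality of the connecting map $\partial\colon H^1(M,K)\to H^2(M,\Z_2)$ with respect to the morphism of central extensions, so that $\partial([P])=\partial'(g_\ast[P])$ and the lifting obstructions coincide. The paper simply asserts the commutativity of the induced square in \v{C}ech cohomology and concludes by a diagram chase, whereas you verify it explicitly at the level of \v{C}ech cocycles; your remark that $\Z_2$ is automatically central (since $\Aut(\Z_2)$ is trivial) is a useful clarification that the paper leaves implicit.
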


\begin{proof}
	The given morphism of short exact sequences induces a commutative
	diagram with exact columns in the category of pointed sets, where
	$\partial$ and $\partial'$ are the connecting maps:
	\begin{equation}
	\label{diagram2}
	\scalebox{1}{
		\xymatrix{
			H^1(M,G) \ar[d]_{p_\ast}~  \ar[r]^{f_\ast} & ~H^1(M,G') ~ \ar[d]^{p'_\ast} \\
			H^1(M,K)  \ar[d]_{\partial}~  \ar[r]^{g_\ast} & ~H^1(M,K') ~ \ar[d]^{\partial '} \\
			H^2(M,\Z_2)   \ar@{=}[r]& ~H^2(M,\Z_2) 
	}}
	\end{equation}
	The principal $K$-bundle $P$ admits a $G$-reduction along $p$ iff its
	isomorphism class $[P]\in H^1(M,K)$ belongs to the image of $p_\ast$
	while the principal $K'$-bundle $P'$ admits a $G'$-reduction along
	$p'$ iff its isomorphism class $[P']=g_\ast([P])\in H^1(M,K')$ belongs
	to the image of $p'_\ast$. A simple diagram chase using exactness of
	the columns shows that $[P]\in \im p_\ast$ iff $g_\ast([P])\in \im
	p'_\ast$. 
	 \end{proof}

\noindent
We now consider the topological obstructions to existence of a
$\Spin^{o}_{\alpha}$ structure on a principal $\check{\O}(V,h)$-bundle
when $d=\dim V$ is odd.  We focus on the case when $d$ is odd since
this case is relevant for applications to bundles of simple real
Clifford modules of `complex type'.

\begin{lemma}
	Let $P = P_{\check{\O}}$ be a principal $\check{\O}(V,h)$-bundle defined on $M$, 
	such $d=\dim V$ is odd. Then the following statements are equivalent:
	\begin{enumerate}[(a)]
		\itemsep 0.0em
		\item $P$ admits a $\Spin_\alpha^o$ structure.
		\item There exists a principal $\O(2)$-bundle $E$ such that the
		principal $\O(V', h'_\alpha)$-bundle $P'_\alpha \eqdef [(\det
		E)P]\times_M (\det E)E\times_M \det E$ admits a
		$\Spin$ structure.
	\end{enumerate}
\end{lemma}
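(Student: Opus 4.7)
My plan is to combine Proposition \ref{prop:SpinoE}---which recasts (a) as the existence of a principal $\O(2)$-bundle $E$ together with a $\trho_\alpha$-equivariant reduction of $P\times_M E$---with Lemma \ref{lemma:seq} applied to the morphism of short exact sequences \eqref{Fdiagram1}. The bridge between the two is the identification of $P'_\alpha$ with the extension of the structure group of $P\times_M E$ along the injective morphism $F\colon \O(V,h)\times\O(2)\to \O(V',h'_\alpha)$ of \eqref{F}. A direct transition-function computation confirms this: if $(g_{ij},h_{ij})$ are transitions for $P\times_M E$, then $F(g_{ij},h_{ij})=(\det h_{ij})g_{ij}\oplus(\det h_{ij})h_{ij}\oplus \det h_{ij}$ are exactly the transitions of $[(\det E)P]\times_M (\det E)E\times_M \det E$, so $P'_\alpha\simeq F_\ast(P\times_M E)$ as principal $\O(V',h'_\alpha)$-bundles.

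The odd-dimensionality of $V$ enters crucially at the next step. For $d$ odd one computes $\det F(A,B)=\det A\cdot \det B$, so $F$ induces an isomorphism of the $\Z_2$-quotients $(\O(V,h)\times\O(2))/\mathrm{S}[\O(V,h)\times\O(2)]\xrightarrow{\sim}\O(V',h'_\alpha)/\SO(V',h'_\alpha)$. Combined with the injectivity of $F$, this establishes a natural bijection between $\mathrm{S}[\O(V,h)\times\O(2)]=\O(V,h)\hat{\times}\O(2)$-reductions of $P\times_M E$ and $\SO(V',h'_\alpha)$-reductions of $P'_\alpha$, given by $R\mapsto F_\ast(R)$; in particular, both classes of reductions are controlled by the same obstruction $w_1(P)+w_1(E)\in H^1(M,\Z_2)$, namely the requirement $w_1(P)=w_1(E)$.

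It then remains to invoke Lemma \ref{lemma:seq} on the diagram \eqref{Fdiagram1}: an $\O(V,h)\hat{\times}\O(2)$-bundle $R$ admits a $\trho_\alpha$-reduction to a $\Spin^o_\alpha(V,h)$-bundle if and only if the $\SO(V',h'_\alpha)$-bundle $F_\ast(R)$ admits a $\Spin$-reduction along $\Ad'_0$. Chaining the equivalences, (a) holds iff there exist an $E$ and an $\mathrm{S}$-reduction $R$ of $P\times_M E$ admitting a $\trho_\alpha$-reduction (by Proposition \ref{prop:SpinoE}), iff there exist an $E$ and an $\mathrm{S}$-reduction $R$ such that $F_\ast(R)$ admits a $\Spin$-reduction (by Lemma \ref{lemma:seq}), iff there exists $E$ such that $P'_\alpha$ admits a $\Spin$ structure (by the bijective correspondence of the previous paragraph, noting that any $\Spin$ structure on an $\O$-bundle implicitly carries an $\SO$-reduction).

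The main obstacle I anticipate is not any single step---each is routine on its own---but rather the careful bookkeeping required to ensure that the same $E$ threads through both Proposition \ref{prop:SpinoE} and Lemma \ref{lemma:seq}, and that the $\SO$-reduction supplied by a $\Spin$ structure on $P'_\alpha$ genuinely arises from some $\mathrm{S}[\O(V,h)\times\O(2)]$-reduction of $P\times_M E$ rather than from an unrelated cocycle representative. This is precisely what is guaranteed by the injectivity of $F$ together with the fact that the obstructions to both types of reduction coincide with $w_1(P)+w_1(E)$.
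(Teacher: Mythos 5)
Your proposal is correct and follows essentially the same route as the paper: Proposition \ref{prop:SpinoE} to trade (a) for a $\trho_\alpha$-reduction of $P\times_M E$, Lemma \ref{lemma:seq} applied to the morphism of short exact sequences \eqref{Fdiagram1}, and the identification of $F_\ast(P\times_M E)$ with $P'_\alpha$ from the explicit form \eqref{F}. Your extra care about matching $\mathrm{S}[\O(V,h)\times\O(2)]$-reductions of $P\times_M E$ with $\SO(V',h'_\alpha)$-reductions of $P'_\alpha$ (via $\det F(A,B)=\det A\det B$ for $d$ odd) only makes explicit a point the paper leaves implicit.
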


\begin{proof}
	Follows from Proposition \ref{prop:SpinoE} by applying Lemma
	\ref{lemma:seq} to the commutative diagram of short exact sequences
	\eqref{Fdiagram1}, where the morphism $F$ is defined in \eqref{F}, the
	embedding $j^{\prime}$ is defined in Proposition \ref{prop:jprime} and
	$\tilde{\rho}_{\alpha}$ denotes the twisted basic representation of
	$\Spin^{o}_{\alpha}(V,h)$. More explicitly, diagram \eqref{Fdiagram1}
	implies the following commutative diagram with exact columns in the
	category of pointed sets:
	\begin{equation}
	\scalebox{1}{
		\xymatrix{
			H^1(M,\Spin^{o}_{\alpha}(V,h)) \ar[d]_{\tilde{\rho}_{\alpha \ast}}~  \ar[r]^{j^{\prime}_\ast} & ~H^1(M,\Spin(V^{\prime},h^{\prime}_{\alpha})) ~ \ar[d]^{\Ad^{\prime}_{\ast}} \\
			H^1(M,\O(V,h)\hat{\times}\O(2))  \ar[d]_{\partial}~  \ar[r]^{F_\ast} & ~H^1(M,\SO(V^{\prime},h^{\prime}_{\alpha})) ~ \ar[d]^{\partial '} \\
			H^2(M,\Z_2)   \ar@{=}[r]& ~H^2(M,\Z_2) 
	}}
	\end{equation}
	Applying Lemma \ref{lemma:seq} we find that $[P]\in
	H^1(M,\O(V,h)\hat{\times}\O(2))$ admits a $\Spin^{o}_{\alpha}$
	structure iff $F_{\ast}([P])$ admits a $\Spin$
	structure. Using the explicit form of $F$, we deduce that
	$F_{\ast}([P])$ can be represented by a principal bundle of the form
	$P^{\prime}_\alpha \eqdef [(\det E)P]\times_M (\det E)E\times_M \det
	E$. Notice that the connecting map $\partial'$ is
	given by \cite{Karoubi}:
	\be
	\partial'=\w_2^++\w_2^-~~.
	\ee
	\noindent
\end{proof}

\noindent 
Let $L$ be a principal $\Z_2$-bundle and $P$ be a principal
$\O(r)$-bundle over $M$, where $r\geq 1$. Recall that the center
$Z(\O(r))\simeq \Z_2$. Then we have \cite[page 404]{Hausmann}:
\be
\w(L P)=\sum_{k=0}^r (1+\w_1(L))^k\w_{r-k}(P)~~,
\ee
which gives:
\beqan
\label{wkrel}
&& \w_1(L P)= \w_1(P)+r\w_1(L) \\
&& \w_2(L P)= \w_2(P)+(r-1)\w_1(P)\w_1(L)+\frac{r(r-1)}{2}\w_1(L)^2~~.\nn
\eeqan
Also recall that $\w_1(\det P)=\w_1(P)$. The total Stiefel-Whitney
class satisfies $\w(P_1\times_M P_2)=\w(P_1)\w(P_2)$ for any principal
$\O(r_i)$-bundles $P_i$ ($i=1,2$), which gives:
\beqan
\label{wrel}
&& \w_1(P_1\times_M P_2)=\w_1(P_1)+\w_1(P_2)\\
&& \w_2(P_1\times_M P_2)=\w_2(P_1)+\w_2(P_2)+\w_1(P_1)\w_1(P_2)~~.\nn
\eeqan

\begin{thm}
	\label{thm:SpinoObs}
	A principal $\O(V,h)$-bundle $P$ defined on $M$ admits a
	$\Spin_\alpha^o$ structure iff there exists a principal $\O(2)$-bundle
	$E$ such that the following conditions are satisfied:
	\begin{eqnarray}
	\label{obso}
	&& \w_1^+(P)+\w_1^-(P)=d\w_1(E)\\
	&& \w_2^+(P)+\w_2^-(P)+\w_1(E)(p\w_1^+(P)+q\w_1^-(P))=\w_2(E)+\nn\\
	&&+\left[\delta_{\alpha,-1}+\frac{p(p+1)}{2}+\frac{q(q+1)}{2}\right]\w_1(E)^2~~.\nn
	\end{eqnarray}
\end{thm}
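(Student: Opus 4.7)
The plan is to apply the preceding lemma, which translates the question of whether $P$ admits a $\Spin^o_\alpha$ structure into the question of whether one can find an $\O(2)$-bundle $E$ such that the auxiliary $\O(V',h'_\alpha)$-bundle $P'_\alpha = [(\det E)P]\times_M (\det E)E\times_M \det E$ admits a $\Spin$ structure. The Spin-liftability of an $\O(V',h'_\alpha)$-bundle is detected by the vanishing of $\w_1$ together with the class $\w_2^+ + \w_2^-$ (the latter being the connecting map $\partial'$ identified explicitly in the preceding proof). Thus the entire theorem reduces to computing these two classes for $P'_\alpha$ in terms of $\w_i^\pm(P)$ and $\w_1(E),\w_2(E)$, using only the twisting formula \eqref{wkrel} and the product formula \eqref{wrel}.

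The next step is to split $P'_\alpha$ according to the signature of $h'_\alpha$. For $\alpha=+1$ the signature is $(p+2,q+1)$ and $(P'_+)^+ = (\det E)P^+\times_M (\det E)E$, $(P'_+)^- = (\det E)P^-\times_M \det E$. For $\alpha=-1$ the signature is $(p,q+3)$ and $(P'_-)^+ = (\det E)P^+$, $(P'_-)^- = (\det E)P^-\times_M (\det E)E\times_M \det E$. Applying \eqref{wkrel} to each factor with $L=\det E$ (so $\w_1(L)=\w_1(E)$), combined with \eqref{wrel}, and using repeatedly that $2x=0$ in $\Z_2$-cohomology (which yields in particular $\w_1((\det E)E)=\w_1(E)$ and $\w_2((\det E)E)=\w_2(E)$), one obtains explicit expressions for $\w_i^\pm(P'_\alpha)$. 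Summing the $\w_1$-contributions gives $\w_1(P'_\alpha)=\w_1(P)+d\,\w_1(E)$ uniformly in $\alpha$; since $d=p+q$ is odd, demanding this vanish is precisely the first equation of \eqref{obso}.

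A direct expansion of $\w_2^+(P'_\alpha)+\w_2^-(P'_\alpha)$ produces the claimed identity \eqref{obso} with $\delta_{\alpha,-1}=0$ immediately in the case $\alpha=+1$. The main obstacle is the case $\alpha=-1$, where the three-fold product $(\det E)P^-\times_M (\det E)E\times_M \det E$ must be expanded iteratively and yields coefficients that differ from the theorem's formula by $(\w_1^+(P)+\w_1^-(P))\w_1(E) + (p-q)\w_1(E)^2$. The reconciliation uses the first equation: substituting $\w_1^+(P)+\w_1^-(P) = d\,\w_1(E)$ turns the discrepancy into $(d+p-q)\w_1(E)^2 = 2p\,\w_1(E)^2 \equiv 0 \pmod{2}$, so the expressions agree and the extra $\w_1(E)^2$ term accounts precisely for the $\delta_{\alpha,-1}$ correction. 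Once both signs of $\alpha$ are checked in this way, combining the two conditions gives the two equations of \eqref{obso} and the theorem follows.
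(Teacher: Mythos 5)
Your proposal is correct and follows essentially the same route as the paper: reduce via the preceding lemma to the existence of a $\Spin$ structure on $P'_\alpha$, apply the Karoubi criterion $\w_1^+(P')+\w_1^-(P')=\w_2^+(P')+\w_2^-(P')=0$, split $P'_\alpha$ into its positive and negative parts according to the sign of $\alpha$, and expand with \eqref{wkrel} and \eqref{wrel}. In particular you correctly identify the one subtle point, namely that in the case $\alpha=-1$ the raw expansion must be reconciled with \eqref{obso} by substituting the first equation, the discrepancy $(\w_1^+(P)+\w_1^-(P))\w_1(E)+(p-q)\w_1(E)^2$ then reducing to $2p\,\w_1(E)^2=0$ mod $2$, exactly as in the paper's computation.
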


\begin{proof}
	Recall from \cite{Karoubi} that the principal $\O(V',
	h'_\alpha)$-bundle $P':=P'_\alpha$ admits a $\Spin$ structure iff the
	following conditions are satisfied:
	\ben
	\label{Kconds}
	\w_1^+(P')+\w_1^-(P')=\w_2^+(P')+\w_2^-(P')=0~~.
	\een
	Notice that $P^+$ is an $\O(p)$-bundle while $P^-$ is an
	$\O(q)$-bundle. Also notice that $\w_1(\det E)=\w_1(E)$. Using
	\eqref{wkrel} and the fact that $E$ is an $\O(2)$ bundle, we find:
	\beqan
	\label{Erels}
	&& \w_1((\det E) E)=\w_1(E)~~\nn\\
	&& \w_2((\det E) E)=\w_2(E)~~\nn\\
	&& \w_1((\det E)E \times_M \det E)=0\\
	&& \w_2((\det E)E \times_M \det E)=\w_2(E)+\w_1(E)^2\nn~~.
	\eeqan
	We distinguish the cases:
	\begin{enumerate}[1.]
		\itemsep 0.0em
		\item For $\alpha=+1$, we have $(P')^+=[(\det E)P^+]\times_M (\det
		E)E$ and $(P')^-=[(\det E)P^-]\times_M \det E$. Using \eqref{wkrel},
		\eqref{wrel} and \eqref{Erels}, we compute:
		\beqa
		&& \w_1^+(P')= \w_1^+(P)+(p+1)\w_1(E)~~\nn\\
		&&\w_1^-(P')=\w_1^-(P)+(q+1)\w_1(E)\nn\\
		&&\w_2^+(P')= \w_2^+(P)+(p-1)\w_1^+(P)\w_1(E)+\frac{p(p-1)}{2}\w_1(E)^2+\w_2(E)+\nn\\
		&&+\w_1(E)(\w_1^+(P)+p\w_1(E))=\w_2^+(P)+\w_2(E)+p\w_1^+(P)\w_1(E)+\frac{p(p+1)}{2}\w_1(E)^2\nn\\
		&& \w_2^-(P')=\w_2^-(P)+(q-1)\w_1^-(P)\w_1(E)+\frac{q(q-1)}{2}\w_1(E)^2+\nn\\
		&&+\w_1(E)(\w_1^-(P)+q\w_1(E))=\w_2^-(P)+q\w_1^-(P)\w_1(E)+\frac{q(q+1)}{2}\w_1(E)^2~~.
		\eeqa
		Thus equations \eqref{Kconds} reduce to \eqref{obso}.  
		
		\
		
		\item For $\alpha=-1$, we have $(P')^+=[(\det E)P^+]$ and
		$(P')^-=[(\det E)P^-]\times_M (\det E)E \times_M \det E$. Thus:
		\beqa
		&& \w_1^+(P')= \w_1^+(P)+p\w_1(E) ~~\nn\\
		&& \w_1^-(P')= \w_1^-(P)+q\w_1(E)~~\nn\\
		&& \w_2^+(P')=\w_2^+(P)+(p-1)\w_1(E)\w_1^+(P)+\frac{p(p-1)}{2}\w_1(E)^2 \nn\\
		&& \w_2^-(P')=\w_2^-(P)+(q-1)\w_1(E)\w_1^-(P)+\frac{q(q-1)}{2}\w_1(E)^2+\w_2(E)+\nn\\
		&&+\w_1(E)^2=\w_2^-(P)+\w_2(E)+(q-1)\w_1(E)\w_1^-(P)+[1+\frac{q(q-1)}{2}]\w_1(E)^2~~.
		\eeqa
		Thus equations \eqref{Kconds} reduce to \eqref{obso}, where to obtain the second
		relation in \eqref{obso} we used the first relation and noticed that:
		\be
		1+d+\frac{p(p-1)}{2}+\frac{q(q-1)}{2}=1+\frac{p(p+1)}{2}+\frac{q(q+1)}{2}~~
		\ee
		since $d=p+q$. 
	\end{enumerate}
\end{proof}

\subsection{Application to pseudo-Riemannian manifolds of positive and negative sigature}

\noindent Let us consider Theorem \ref{thm:SpinoObs} in the particular
case of odd-dimensional pseudo-Riemannian manifolds $(M,g)$ of
positive and negative signatures. In this case, we have
$\check{\O}(V,h)\simeq \O(d)$ and $pq=0$, where $d$ is the dimension
of $M$. We focus exclusively on $\Spin^{o}_{-}$ structures in
dimension $d\equiv_{8} 3$ and $\Spin^{o}_{+}$ structures in dimension
$d\equiv_{8} 7$ --- since, as we shall see in Section
\ref{sec:elementarypinor}, these are the cases most relevant for
applications to spinorial geometry. For ease of notation, let $P$
denote the principal bundle of pseudo-orthonormal frames of $(M,g)$.

\paragraph{Positive signature in dimension $d \equiv_{8} 3$}

The topological obstruction to existence of a $\Spin^{o}_{-}$
structure on a Riemannian manifold of dimension $d \equiv_{8} 3$ is
given by the condition:
\begin{equation}
\w_{1}(P) = \w_{1}(E)\qquad \w_{2}(P) = \w_{2}(E)\, .
\end{equation}

\paragraph{Positive signature in dimension $d \equiv_{8} 7$}

The topological obstruction to existence of a
$\Spin^{o}_{+}$ structure on a Riemannian manifold of dimension $d
\equiv_{8} 7$ is given by the condition:
\begin{equation}
\label{eq:topobsR7}
\w_{1}(P) = \w_{1}(E)\qquad \w_{2}(P) + \w_{1}(P)^{2} + \w_{2}(E) = 0\, .
\end{equation}
In this particular case, condition \eqref{eq:topobsR7} also implies
existence of a complex Lipschitz structure \cite{FriedrichTrautman,ComplexLipschitz},
which in turn amounts to existence of a bundle of \emph{faithful
	complex} Clifford modules over the real Clifford bundle of $(M,g)$.

\paragraph{Negative signature in dimension $d \equiv_{8} 3$}

The topological obstruction to existence of a $\Spin^{o}_{-}$
structure on a 'negative Riemannian' manifold of dimension $d \equiv_{8}
3$ is given by:
\begin{equation}
\w_{1}(P) = \w_{1}(E)\qquad \w_{2}(P) + \w_{1}(P)^{2} + \w_{2}(E) = 0\, .
\end{equation}

\paragraph{Negative signature in dimension $d \equiv_{8} 7$}

The topological obstruction to existence of a $\Spin^{o}_{+}$
structure on a negative Riemannian manifold of dimension $d \equiv_{8}
7$ is given by:
\begin{equation}
\w_{1}(P) = \w_{1}(E)\qquad \w_{2}(P) = \w_{2}(E)\, .
\end{equation}

\noindent
In dimension $d\equiv_{8} 3$ with positive signature (respectively in
dimension $d\equiv_{8} 7$ with negative signature), the results above
show that a $\Spin^{o}_{-}$ structure (respectively a $\Spin^{o}_{+}$
structure) exists on $(M,g)$ iff there is there exists an
$\O(2)$-bundle $E$ on $(M,g)$ whose first and second Stiefel-Whitney
classes equal those of the (co)frame bundle. This implies, for example,
that any Riemannian three-manifold of the form:
\begin{equation}
M_{3} = M_{2}\times\mathbb{R}~~,
\end{equation}
where $M_{2}$ is a smooth surface, admits a $\Spin^{o}_{-}$
structure. Indeed, one can take $E$ to be the pull-back of the coframe
bundle of $M_{2}$ through the canonical projection and use stability
of Stiefel-Whitney classes. We will discuss other examples of
manifolds admitting $\Spin^{o}_{\alpha}$ structures in Section
\ref{sec:examples}.

\section{Elementary real pinor representations for $p-q\equiv_8 3,7$}
\label{sec:elementarypinor}

\noindent Let $(V,h)$ be a quadratic space of signature $(p,q)$ and
dimension $d=p+q$.  Throughout this section, we assume that
$p-q\equiv_8 3,7$, i.e. that we are in the so-called {\em almost
	complex case} according to the terminology used in
\cite{Lipschitz}. Notice that $d$ is necessarily odd in this case.

\begin{definition}
	Let:
	\be
	\Spin^o(V,h)\eqdef \Spin^o_{\alpha_{p,q}}(V,h)~~.
	\ee
	where: 
	\ben
	\label{alpha}
	\alpha:=\alpha_{p,q}\eqdef (-1)^{\frac{p-q+1}{4}}=\twopartdef{-1}{p-q\equiv_8 3}{+1}{p-q\equiv_8 7}~~.
	\een
\end{definition}

\begin{definition} An {\bf elementary real pinor representation of
		$\Cl(V,h)$} is an $\R$-irreducible representation
	$\gamma_0:\Cl(V,h)\rightarrow \End_\R(S_0)$ of the unital $\R$-algebra
	$\Cl(V,h)$ in a finite-dimensional $\R$-vector space $S_0$.
\end{definition}

\noindent 
It is well-known that all such representations are equivalent to each
other. Moreover, the finite-dimensional $\R$-vector space $S_0$ has
dimension given by:
\ben
\label{N}
N\eqdef \dim_\R S_0=2^{\frac{d+1}{2}}~~.
\een
For the following, we fix an elementary real pinor representation
$\gamma_0$ of $\Cl(V,h)$.

\subsection{Natural subspaces of $\End_\R(S_0)$}

\begin{definition}
	The {\bf natural subspaces} of $\End_\R(S_0)$ are defined as follows:
	\begin{enumerate}[1.]
		\item The {\bf Schur algebra} $\S\eqdef\S(\gamma_0)$ is the unital
		subalgebra of $\End_\R(S_{0})$ defined as the centralizer of
		$\gamma_{0}$ inside $\End_\R(S_0)$:
		\ben
		\label{Schur}
		\S=\{T\in \End_\R(S_{0}) \,\,|\,\, T\gamma_{0}(v)=\gamma_{0}(v)T~~\forall v\in V\}
		\een
		\item The {\bf anticommutant subspace} $\A\eqdef \A(\gamma_0)$ is the
		subspace of $\End_\R(S_0)$ defined through:
		\ben
		\label{Anticommutant}
		\A=\{T\in \End_\R(S_{0}) \,\,|\,\, T\gamma_{0}(v)=-\gamma_{0}(v)T~~\forall v\in V\}
		\een
		\item The {\bf twist algebra} $\T\eqdef \T(\gamma_0)$ is the unital
		subalgebra of $\End_\R(S_0)$ defined through:
		\ben
		\label{Twist}
		\T=\S + \A\, .
		\een
	\end{enumerate}
\end{definition}

\noindent We have $\S\cap \A=\{0\}$ and $\S \A=\A\S=\A$, so $\A$ is an
$\S$-bimodule. Thus \eqref{Twist} is a direct sum decomposition
$\T=\S\oplus \A$ which gives a $\Z_2$-grading of $\T$ with components:
\be
\T^+=\S~~,~~\T^-=\A~~.
\ee
Since we are in the almost complex case $p-q\equiv_8 3,7$, the Schur algebra
$\S$ is isomorphic with $\C$ (see \cite{Lipschitz}) and can be
described as follows. Recall that any orientation of $V$ determines a
Clifford volume element $\nu\in \Cl(V,h)$, which satisfies:
\be
\nu^2=-1~~.
\ee
The Clifford volume element determined by the opposite orientation of
$V$ equals $-\nu$ and the unordered set $\s_V=\{\nu,-\nu\}$ is
unambiguously determined by $(V,h)$; notice that $\s_V$ is a
semilinear structure on $V$. Setting $J\eqdef \gamma_{0}(\nu)$, we
have $J^2=-\id_{S_0}$ and $\gamma_{0}(-\nu)=-J$. Hence $S_0$ carries a
natural semilinear structure $\s:=\s(\gamma_0)=\{J,-J\}$ (in the sense
of Appendix \ref{app:semilinear}), which is independent of the
orientation of $V$. The unital subalgebra of $\End_\R(S_0)$ determined
by this semilinear structure as in Proposition \ref{prop:S} coincides
with the Schur algebra $\S$, which therefore is given by:
\be
\S=\R\oplus \R J=\R\oplus \R(-J)\, .
\ee

\begin{prop}
	\label{prop:D}
	There exists an $\S$-antilinear automorphism $D\in \Aut_\S^a(S_0)$ such that: 
	\begin{enumerate}
		\itemsep 0.0em
		\item $D\,\gamma_0(v)=-\gamma_0(v)\, D$ for all $v\in V$.
		\item $D^2 = \alpha_{p,q}\id_S$. 
	\end{enumerate}
	Moreover, any $\S$-antilinear operator $D'$ on $S_0$ which satisfies
	these two conditions has the form:
	\be
	D'=e^{\theta J}D\, ,
	\ee
	for some $\theta\in \R$.
\end{prop}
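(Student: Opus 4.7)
The plan is to prove existence of $D$ by extending $\gamma_0$ to an irreducible real representation of a slightly enlarged Clifford algebra, then establish uniqueness by a short algebraic computation exploiting the $\S$-antilinearity.

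For existence, I would consider the quadratic space $(\hat{V}_\alpha,\hat{h}_\alpha)\eqdef (V\oplus \R e_{d+1}, h\oplus \alpha\langle\cdot,\cdot\rangle)$ of dimension $d+1$, where $e_{d+1}\perp V$ and $\hat{h}_\alpha(e_{d+1},e_{d+1})=\alpha$. Its signature is $(p,q+1)$ when $\alpha=-1$ and $(p+1,q)$ when $\alpha=+1$; in either case it satisfies $p-q\equiv_8 0,2$, so by the Bott periodicity table $\Cl(\hat{V}_\alpha,\hat{h}_\alpha)$ is a real matrix algebra whose irreducible real representation has dimension exactly $N=2^{\frac{d+1}{2}}$. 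Restricting such a representation to the subalgebra $\Cl(V,h)\subset\Cl(\hat{V}_\alpha,\hat{h}_\alpha)$ yields, by semisimplicity and a dimension count, an irreducible real representation of $\Cl(V,h)$ of dimension $N$, hence equivalent to $\gamma_0$ by uniqueness (up to isomorphism) of the irreducible real representation in the almost complex case. Transporting structure, the image of $e_{d+1}$ furnishes an operator $D\in\End_\R(S_0)$ satisfying $D^2=\alpha\,\id_{S_0}$ and $D\gamma_0(v)=-\gamma_0(v)D$ for every $v\in V$, i.e., properties (1) and (2). Since $d$ is odd, the Clifford volume element $\nu$ is an odd-degree monomial in $V$, so $D$ anticommutes with $J=\gamma_0(\nu)$; equivalently, $D\in\Aut_\S^a(S_0)$.

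For uniqueness, let $D'$ be another $\S$-antilinear operator satisfying (1) and (2). Since $\alpha\in\{-1,+1\}$, $D$ is invertible with $D^{-1}=\alpha D$. Set $T\eqdef D'D^{-1}$. Because both $D'$ and $D$ anticommute with $\gamma_0(v)$ for every $v\in V$, $T$ commutes with $\gamma_0(V)$ and therefore lies in the Schur algebra $\S$. Write $T=a+bJ$ with $a,b\in\R$. The $\S$-antilinearity relation $DJ=-JD$ yields $DT=c(T)D$, where $c\colon a+bJ\mapsto a-bJ$ is the unique nontrivial $\R$-algebra automorphism of $\S\simeq\C$. Consequently
\begin{equation*}
(D')^2 = (TD)(TD) = T\cdot(DT)\cdot D = T\cdot c(T)\cdot D^2 = (a^2+b^2)\,\alpha\,\id_{S_0}\, .
\end{equation*}
The condition $(D')^2=\alpha\,\id_{S_0}$ forces $a^2+b^2=1$, so $T=\cos(\theta)+\sin(\theta)J=e^{\theta J}$ for some $\theta\in\R$, which gives $D'=e^{\theta J}D$ as claimed.

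The main obstacle lies in the existence argument, specifically in verifying that the extension to $\Cl(\hat{V}_\alpha,\hat{h}_\alpha)$ is compatible with the fixed representation $\gamma_0$. This reduces to Bott-periodicity computations showing that the almost complex condition $p-q\equiv_8 3,7$ is exactly what is required for the irreducible real representation of the enlarged Clifford algebra to have dimension $N$ rather than $2N$, so that its restriction to $\Cl(V,h)$ recovers $\gamma_0$ up to isomorphism. Once existence is secured, uniqueness follows from the straightforward algebraic manipulation above.
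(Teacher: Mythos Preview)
Your proof is correct and follows essentially the same approach as the paper: extend $\gamma_0$ to an irreducible representation of the Clifford algebra in one more dimension (with the sign of the extra generator chosen according to $\alpha_{p,q}$) and take $D$ to be the image of that extra generator; then deduce uniqueness from the fact that $D'D^{-1}$ lies in the Schur algebra $\S\simeq\C$. Your write-up is in fact more careful than the paper's in two places: you explain via Bott periodicity why the enlarged algebra has an irreducible real module of dimension exactly $N$ (so that its restriction recovers $\gamma_0$), and your uniqueness computation via $T=D'D^{-1}$ and $(D')^2=Tc(T)D^2$ is cleaner than the paper's terse assertion that $(DD')^2=1$.
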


\begin{proof}
	Let us denote by $\Cl(p,q)$ the real Clifford algebra associated to a real pseudo-Riemannian vector space $\mathbb{R}^{p,q}$ of dimension $d = p + q$ and signature $(p,q)$. We first assume that $p-q\equiv_{8} 3$ and consider the Clifford algebra $\Cl(p,q+1)$ associated to $\mathbb{R}^{p,q+1}$. From the
	representation theory of Clifford algebras we know that there is an
	elementary real Clifford representation:
	\begin{equation}
	\gamma^{\prime}_{0}\colon \Cl(p,q+1)\to \End_{\mathbb{R}}(S_{0})\, ,
	\end{equation}
	and that $\Cl(p,q+1)$ is generated by the canonical orthonormal basis
	of $\mathbb{R}^{p,q}$ together with the linearly independent unit
	element $x\in \mathbb{R}^{p,q+1}$ that completes it to the canonical
	basis of $\mathbb{R}^{p,q+1}$. We set then $D \eqdef
	\gamma^{\prime}_{0}(x)\subset \End_{\mathbb{R}}(S_{0})$, which is
	indeed $\mathbb{S}$-antilinear and satisfies $D^{2} = -1$ as well as
	$D\,\gamma_0(v)=-\gamma_0(v)\, D$ for all $v\in V$. The proof for the
	case $p-q\equiv_{8} 7$ is similar but considering $\Cl(p+1,q)$.
	
	Given two $\mathbb{S}$-antilinear endomorphisms $D$ and $D^{\prime}$
	satisfying conditions 1. and 2., we have $D D^{\prime}\in \mathbb{S}$
	and $D^{\prime} D \in \mathbb{S}$. Moreover, we have $(D
	D^{\prime})^{2} = 1$, which implies $D^{\prime} = e^{\theta J} D$ with
	$\theta\in\mathbb{R}$. 
\end{proof}

\noindent 
Let $\fD:=\fD(\gamma_0) \subset \Aut_\R(S_{0})$ denote the
$\U(1)$-torsor consisting of all elements $D\in \Aut_\S^a(S_0)$ which
satisfy the conditions of Proposition \ref{prop:D}. Then the following
result (whose proof we leave to the reader) holds:

\begin{prop}
	$\A$ is a free $\S$-bimodule of rank one and any $D\in \fD$ is a basis
	of this bimodule, hence satisfies $\A=\S D=D\S$.
\end{prop}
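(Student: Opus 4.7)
The plan is to reduce everything to checking that $D$ generates $\A$ both from the left and from the right, using the anticommutation of $D$ with $\gamma_0(v)$ and the fact that $D$ is invertible (since $D^2=\alpha \id_{S_0}$ with $\alpha\in\{-1,+1\}$ gives $D^{-1}=\alpha D$). First I would verify the easy inclusions $\S D\subset \A$ and $D\S\subset \A$: for any $s\in\S$ and $v\in V$, the element $sD$ satisfies $(sD)\gamma_0(v)=s(D\gamma_0(v))=-s\gamma_0(v)D=-\gamma_0(v)(sD)$, where I used that $s$ commutes with every $\gamma_0(v)$; the argument for $D\S$ is symmetric.

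Next I would establish the reverse inclusion $\A\subset \S D\cap D\S$. Given $T\in\A$, I consider the two operators $TD^{-1}$ and $D^{-1}T$. Since $D\gamma_0(v)=-\gamma_0(v)D$ implies $D^{-1}\gamma_0(v)=-\gamma_0(v)D^{-1}$, a direct two-line calculation shows that both $TD^{-1}$ and $D^{-1}T$ commute with every $\gamma_0(v)$, i.e.\ lie in $\S$. Writing $T=(TD^{-1})D\in \S D$ and $T=D(D^{-1}T)\in D\S$ then gives $\A=\S D=D\S$.

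To identify these two presentations at the level of bimodule structure, I would invoke Proposition \ref{prop:D} together with the $\S$-antilinearity of $D$: as operators on $S_0$ one has $Ds=c(s)D$ for every $s\in\S$, where $c$ is the conjugation automorphism of $\S\simeq\C$. Therefore $sD=Dc(s)$, which shows that the left and right presentations of $\A$ coincide as subsets of $\End_\R(S_0)$ and exhibits the bimodule structure explicitly. Finally, freeness of rank one on each side is immediate from invertibility of $D$: the relation $sD=0$ (resp.\ $Ds=0$) with $s\in\S$ forces $s=0$, so $D$ is a basis of $\A$ both as a left and as a right $\S$-module.

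The only step that requires a moment of care is the passage from the one-sided inclusions to the bimodule statement, since the left and right $\S$-actions on $\A$ are not identical but are intertwined by the conjugation automorphism $c$ of $\S$; this is precisely the content of the $\S$-antilinearity of $D$, so no genuine obstacle arises once Proposition \ref{prop:D} is in hand.
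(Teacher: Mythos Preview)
Your argument is correct. The paper does not actually supply a proof of this proposition --- it explicitly leaves it to the reader --- so there is nothing to compare against; your write-up is precisely the kind of verification the authors had in mind, using invertibility of $D$ to pass back and forth between $\A$ and $\S$ and invoking the $\S$-antilinearity $Ds=c(s)D$ from Proposition~\ref{prop:D} to reconcile the left and right module structures.
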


\noindent 
The proposition implies that the elements $D$ and $JD$ (where $D\in
\fD$ is arbitrary) form a basis of the $\R$-vector space $\A$:
\be
\A=\R D\oplus \R J D~~.
\ee
Let $e_1,e_2$ be the canonical basis of $\R^2$ and consider the
elements $D_0\eqdef e_1$, $J_0\eqdef e_1e_2$ of $\Cl_2(\alpha_{p,q})$. We
have $e_1^2=e_2^2=\alpha_{p,q}$ and hence:
\be
J_0^2=-1~~,~~D_0^2=\alpha_{p,q}~~,~~J_0D_0=-\alpha_{p,q} e_2~~.
\ee

\begin{prop}
	For any $(\nu,D)\in \s_V\times \fD$, there exists a unique unital
	isomorphism of algebras
	$\gamma^{(2)}_{\nu,D}:\Cl_2(\alpha_{p,q})\stackrel{\sim}\rightarrow
	\T\subset \End_\R(S_0)$ which satisfies the conditions:
	\be
	\gamma^{(2)}_{\nu,D}(e_1)=\gamma^{(2)}_{\nu,D}(D_0)=D~~,~~\gamma^{(2)}_{\nu,D}(e_2)=-\alpha_{p,q} J D~~,
	\ee
	where $J\eqdef \gamma(\nu)$. Moreover, $\gamma^{(2)}_{\nu,D}$ is an
	isomorphism of $\Z_2$-graded algebras and we have
	$\gamma^{(2)}_{\nu,D}(J_0)=J$.
\end{prop}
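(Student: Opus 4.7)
The plan is to use the universal property of the Clifford algebra $\Cl_2(\alpha_{p,q})$. Concretely, I will exhibit two elements $D', D'' \in \End_\R(S_0)$ whose squares are $\alpha_{p,q}\id_{S_0}$ and which anticommute, then invoke universality to get a unital $\R$-algebra morphism out of $\Cl_2(\alpha_{p,q})$, and finally check it is an isomorphism onto $\T$ respecting the $\Z_2$-gradings.

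First, I would set $D' \eqdef D$ and $D'' \eqdef -\alpha_{p,q} JD$, where $J = \gamma_0(\nu)$, and verify the Clifford relations. Since $D \in \fD$, we have $D^2 = \alpha_{p,q}\id_{S_0}$ directly. Because $D$ is $\S$-antilinear and $J \in \S$ corresponds to the imaginary unit (and so is sent to $-J$ by the conjugation automorphism of $\S$), we get $DJ = -JD$, hence
\be
(D'')^2 = \alpha_{p,q}^2\, JDJD = -JJDD = -J^2 D^2 = -(-1)\alpha_{p,q} = \alpha_{p,q}.
\ee
The anticommutation $D'D'' + D''D' = 0$ likewise follows from $DJ=-JD$ and $D^2 = \alpha_{p,q}$ by a direct computation. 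By the universal property of $\Cl_2(\alpha_{p,q})$, these assignments extend uniquely to a unital $\R$-algebra morphism $\gamma^{(2)}_{\nu,D}\colon \Cl_2(\alpha_{p,q}) \to \End_\R(S_0)$.

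Next I would identify the image. Note that $D \in \A$ (by Proposition~\ref{prop:D} and the characterization $\A = \S D$), so $JD \in \A$ as well, while $1,J \in \S$. Using the previous proposition which established $\T = \S \oplus \A = \R\oplus \R J \oplus \R D \oplus \R JD$, the image of $\gamma^{(2)}_{\nu,D}$ contains the generators $1$, $D = \gamma^{(2)}_{\nu,D}(e_1)$, $JD = -\alpha_{p,q}\gamma^{(2)}_{\nu,D}(e_2)$, and (by the same computation as above) $\gamma^{(2)}_{\nu,D}(e_1 e_2) = D(-\alpha_{p,q}JD) = \alpha_{p,q}^2 J = J$. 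Thus the image equals $\T$. Since $\dim_\R\Cl_2(\alpha_{p,q}) = 4 = \dim_\R \T$, surjectivity forces bijectivity, so $\gamma^{(2)}_{\nu,D}$ is an isomorphism. Uniqueness is automatic from the universal property.

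Finally, for the $\Z_2$-grading statement, the computation $\gamma^{(2)}_{\nu,D}(J_0) = \gamma^{(2)}_{\nu,D}(e_1 e_2) = J$ was already made in identifying the image. For the gradings, $e_1$ and $e_2$ are odd in $\Cl_2(\alpha_{p,q})$ while $D, JD \in \A = \T^-$ are odd in $\T$, so $\gamma^{(2)}_{\nu,D}$ is a morphism of $\Z_2$-graded algebras. There is no real obstacle here; the only thing one has to be careful about is keeping track of the sign conventions, in particular the consequence $DJ = -JD$ of $\S$-antilinearity of $D$, which drives every sign in the verification.
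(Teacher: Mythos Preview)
Your proof is correct and follows essentially the same route as the paper: verify that $D$ and $-\alpha_{p,q}JD$ satisfy the Clifford relations for $\Cl_2(\alpha_{p,q})$ (via $D^2=\alpha_{p,q}$, $DJ=-JD$), invoke the universal property, and then compute $\gamma^{(2)}_{\nu,D}(e_1e_2)=J$. You are in fact more explicit than the paper, which leaves the anticommutation check, the identification of the image with $\T$, and the $\Z_2$-grading compatibility to the reader.
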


\begin{proof} 
	We have $J^2=-\id_{S_{0}}$, $D^2=\alpha_{p,q}\id_{S_0}$ and
	$(-\alpha_{p,q} JD)^2=JDJD=-J^2 D^2=D^2=\alpha_{p,q} \id_{S_0}$ in
	$\T$. This implies existence and uniqueness of
	$\gamma^{(2)}_{\nu,D}$. Moreover,
	$\gamma^{(2)}_{\nu,D}(J_0)=\gamma^{(2)}_{\nu,D}(e_1e_2)=\gamma^{(2)}_{\nu,D}(e_1)\gamma^{(2)}_{\nu,D}(e_2)=-\alpha_{p,q}
	DJD=\alpha_{p,q} JD^2=J$.  
\end{proof}

\noindent 
When no confusion is possible, in the following we will write $\alpha$
instead of $\alpha_{p,q}$. Since $\gamma^{(2)}_{\nu,D}$ is an
isomorphism of $\Z_2$-graded algebras, we have:
\ben
\label{gammaAS}
\gamma^{(2)}_{\nu,D}(\R^2)=\A~~,~~\gamma^{(2)}_{\nu,D}(\Cl_2^+(\alpha))=\S~~.
\een
Notice the commutative diagram:
\ben
\label{diaggamma}
\scalebox{1}{
	\xymatrix{
		\Cl_2(\alpha)^\times \ar[d]_{\Ad}~ \ar[r]^{\gamma^{(2)}_{\nu,D}}_{\sim} & ~~\T^\times ~ \ar[d]^{\Ad} \\
		\Aut_\Alg(\Cl_2(\alpha)) \ar[r]^{~~~\sim}_{~~~\Ad(\gamma^{(2)}_{\nu,D})}~ & ~\Aut_\Alg(\T)
}}
\een
Also notice that the untwisted adjoint representation of $\Pin_2(\alpha)$
preserves the subalgebra $\Cl_2^+(\alpha)$. Let
$\Ad^{(2)}_+:\Pin_2(\alpha)\rightarrow \Aut_\Alg(\Cl_2^+(\alpha))$ be
the group morphism given by:
\be
\Ad^{(2)}_+(a)\eqdef \Ad(a)|_{\Cl_2^+(\alpha)}~~\forall a\in \Pin_2(\alpha)~~.
\ee
Recall that $\Pin_2(\alpha)=\Spin_2(\alpha)\sqcup \Spin_2(\alpha)D$
and that $\Spin_2(\alpha)=\{z(\theta)=e^{\theta J_0}|\theta\in
\R\}\simeq \U(1)$ acts trivially in the adjoint representation on
$\Cl_2^+(\alpha)=\R\oplus \R J_0\simeq \C$, while $D_0$ acts in the
adjoint representation by taking $J_0$ into $-J_0$.  Hence:
\ben
\label{AdPlus}
\Ad^{(2)}_+(a)=\id_{\Cl_2^+(\alpha)}~~,~~\Ad^{(2)}_+(aD_0)=c_+~~,~~\forall a\in \Spin_2(\alpha)~~,
\een
where $c_+\in \Aut_\Alg(\Cl_2^+(\alpha))$ denotes the conjugation
automorphism:
\be
c_+(x+y J_0)=x-y J_0~~\forall x, y\in \R~~.
\ee
Since $\Cl_2^+(\alpha)\simeq_\Alg \C$, we have
$\Aut_\Alg(\Cl_2^+(\alpha))=\{\id_{\Cl_2^+(\alpha)},c_+\}\simeq \Z_2$
and \eqref{AdPlus} shows that the group morphism $\Ad^{(2)}_+$ can be
identified with the $\Z_2$-grading morphism of $\Pin_2(\alpha)$.

\begin{prop}
	\label{prop:basicreps}
	We have $(\Ad\circ \gamma_{\nu,D}^{(2)})(\Pin_2(\alpha))(\S)= \S$,
	$(\Ad\circ \gamma_{\nu,D}^{(2)})(\Pin_2(\alpha))(\A)= \A$ and $(\Ad\circ
	\gamma_{\nu,D}^{(2)})(\Pin_2(\alpha))(\T)=\T$. Moreover:
	\begin{enumerate}[1.]
		\itemsep 0.0em
		\item The representation $\Ad^{(2)}_\T: \Pin_2(\alpha)\rightarrow
		\Aut_\Alg(\T)$ given by $\Ad^{(2)}_\T(b)\eqdef
		\Ad\circ\gamma_{\nu,D}^{(2)}(b)|_\T$ is equivalent with the full
		untwisted adjoint representation of $\Pin_2(\alpha)$ on
		$\Cl_2(\alpha)$.
		\item The representation $\Ad^{(2)}_\A: \Pin_2(\alpha)\rightarrow
		\Aut_\R(\A)$ given by $\Ad^{(2)}_\A(b)\eqdef
		\Ad\circ\gamma_{\nu,D}^{(2)}(b)|_\A\in \Aut_\R(\S)$ is equivalent
		with the untwisted vector representation $\Ad^{(2)}$ of
		$\Pin_2(\alpha)$.
		\item The representation $\Ad^{(2)}_\S:\Pin_2(\alpha)\rightarrow
		\Aut_\Alg(\S)\simeq \Z_2$ given by $\Ad^{(2)}_\S(b)\eqdef
		\Ad\circ\gamma_{\nu,D}^{(2)}(b)|_\S$ is equivalent with the
		representation: 
		\begin{equation*}
		\Ad^{(2)}_+:\Pin_2(\alpha)\rightarrow \Aut_\Alg(\Cl_2^+(\alpha))\, ,
		\end{equation*}
		being given by:
		\be
		\Ad^{(2)}_\S(a)=\id_\S~~,~~\Ad^{(2)}_\S(a D)=c~~\forall a\in \Spin_2(\alpha)~~,
		\ee
		where $c\in \Aut_\Alg(\S)$ is the conjugation automorphism of $\S$. 
	\end{enumerate}
\end{prop}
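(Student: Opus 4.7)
The plan is to deduce everything by transporting the already-known structure on $\Cl_2(\alpha)$ to $\T$ via the $\Z_2$-graded algebra isomorphism $\gamma^{(2)}_{\nu,D}$. The key preliminary observation is that the untwisted adjoint action of $\Pin_2(\alpha)$ on $\Cl_2(\alpha)$ preserves the $\Z_2$-grading. This is immediate from parity bookkeeping: for any $a\in \Pin_2(\alpha)$ of parity $|a|\in\Z_2$ and any homogeneous $x\in \Cl_2(\alpha)$, the element $axa^{-1}$ has parity $|a|+|x|+|a|=|x|$, so $\Ad(a)$ preserves both $\Cl_2^+(\alpha)$ and the generating subspace $\Cl_2^-(\alpha)=\R^2\subset \Cl_2(\alpha)$.

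Next, by commutativity of diagram \eqref{diaggamma} together with \eqref{gammaAS}, conjugation by $\gamma^{(2)}_{\nu,D}(b)\in \T^\times$ corresponds, through $\gamma^{(2)}_{\nu,D}$, to conjugation by $b$ on $\Cl_2(\alpha)$. Hence $\Ad(\gamma^{(2)}_{\nu,D}(b))$ preserves the images under $\gamma^{(2)}_{\nu,D}$ of $\Cl_2^+(\alpha)$, of $\R^2$ and of $\Cl_2(\alpha)$, which by \eqref{gammaAS} are precisely $\S$, $\A$ and $\T$. This establishes the stability claims that precede the enumerated statements.

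With stability in place, each equivalence in the list is witnessed by an appropriate restriction of $\gamma^{(2)}_{\nu,D}$, used as the intertwining $\R$-linear isomorphism. For item $1.$, the full isomorphism $\gamma^{(2)}_{\nu,D}\colon \Cl_2(\alpha)\stackrel{\sim}{\to}\T$ intertwines the untwisted adjoint representation on $\Cl_2(\alpha)$ with $\Ad^{(2)}_\T$, by definition of the latter and by diagram \eqref{diaggamma}. For item $2.$, the restriction $\gamma^{(2)}_{\nu,D}|_{\R^2}\colon \R^2=\Cl_2^-(\alpha)\stackrel{\sim}{\to}\A$ intertwines the untwisted vector representation $\Ad_0^{(2)}$ of $\Pin_2(\alpha)$ (which is the adjoint action on $\R^2\subset \Cl_2(\alpha)$) with $\Ad^{(2)}_\A$. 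For item $3.$, the restriction $\gamma^{(2)}_{\nu,D}|_{\Cl_2^+(\alpha)}\colon \Cl_2^+(\alpha)\stackrel{\sim}{\to}\S$ is a unital algebra isomorphism intertwining $\Ad^{(2)}_+$ with $\Ad^{(2)}_\S$.

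Finally, the explicit formulas in item $3.$ are obtained by combining this intertwining with the computation \eqref{AdPlus} carried out just before the statement: $\Ad^{(2)}_+(a)=\id$ for $a\in \Spin_2(\alpha)$ and $\Ad^{(2)}_+(aD_0)=c_+$, where $c_+$ is the conjugation of $\Cl_2^+(\alpha)\simeq \C$. Transporting $c_+$ through $\gamma^{(2)}_{\nu,D}$ yields the conjugation automorphism $c$ of $\S=\R\oplus \R J$ (sending $J\mapsto -J$), and $\gamma^{(2)}_{\nu,D}$ maps $D_0$ to $D$, which gives the stated formulas $\Ad^{(2)}_\S(a)=\id_\S$ and $\Ad^{(2)}_\S(aD)=c$. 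The only mildly subtle point, and the main thing to verify carefully rather than simply invoke, is the grading-preservation of the adjoint action used at the outset; everything else is a mechanical application of the $\Z_2$-graded isomorphism $\gamma^{(2)}_{\nu,D}$.
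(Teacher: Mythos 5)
Your proposal is correct and follows essentially the same route as the paper: both arguments transport the adjoint action through the $\Z_2$-graded isomorphism $\gamma^{(2)}_{\nu,D}$ using the commutativity of diagram \eqref{diaggamma} together with \eqref{gammaAS}, the invariance of $\R^2=\Cl_2^-(\alpha)$ and $\Cl_2^+(\alpha)$ under the untwisted adjoint action, and then read off the three equivalences from the restricted intertwiners, with \eqref{AdPlus} supplying the explicit formulas in item 3. Your explicit parity-bookkeeping verification of grading preservation is a point the paper leaves implicit, but it changes nothing in substance.
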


\begin{proof}
	The commutative diagram \eqref{diaggamma} gives $\Ad\circ
	\gamma^{(2)}_{\nu,D}=\Ad(\gamma^{(2)}_{\nu,D})\circ \Ad$. This implies:
	\begin{equation*}
	(\Ad\circ \gamma_{\nu,D}^{(2)})(\Cl_2(\alpha)^\times)(\T)=\T\, ,
	\end{equation*}
	which by restriction gives $(\Ad\circ
	\gamma_{\nu,D}^{(2)})(\Pin_2(\alpha))(\T)=\T$. On the other hand, we
	have:
	\beqa
	&& (\Ad\circ \gamma^{(2)}_{\nu,D})(\Pin_2(\alpha))(\A)=\Ad(\gamma^{(2)}_{\nu,D})(\Ad(\Pin_2(\alpha))(\gamma^{(2)}_{\nu,D}(\R^2))=\\
	&&=\gamma^{(2)}_{\nu,D}(\Ad(\Pin_2(\alpha))(\R^2))=\gamma^{(2)}_{\nu,D}(\R^2)=\A~~.
	\eeqa
	where we used \eqref{gammaAS} and the relation: 
	\ben
	\label{AdRel}
	\Ad(\gamma^{(2)}_{\nu,D})(\Phi)(\gamma^{(2)}_{\nu,D}(x))=\gamma^{(2)}_{\nu,D}(\Phi(x))~~,~~\forall \Phi\in \Aut_\Alg(\Cl_2(\alpha))~~,~~\forall x\in \Cl_2(\alpha)~~,
	\een
	together with the fact that the untwisted adjoint representation of
	$\Pin_2(\alpha)$ preserves the subspace $\R^2\subset
	\Cl_2(\alpha)$. Similarly, we have:
	\beqa
	&& (\Ad\circ \gamma^{(2)}_{\nu,D})(\Pin_2(\alpha))(\S)=\Ad(\gamma^{(2)}_{\nu,D})(\Ad(\Pin_2(\alpha))(\gamma^{(2)}_{\nu,D}(\Cl^+_2(\alpha)))=\\
	&&=\gamma^{(2)}_{\nu,D}(\Ad(\Pin_2(\alpha))(\Cl_2^+(\alpha)))=\gamma^{(2)}_{\nu,D}(\Cl_2^+(\alpha))=\S~~,
	\eeqa
	where we used \eqref{gammaAS} and \eqref{AdRel} and that fact that
	untwisted adjoint representation of $\Pin_2(\alpha)$ preserves the
	subalgebra $\Cl_2^+(\alpha)\subset \Cl_2(\alpha)$.  By restriction,
	diagram \eqref{diaggamma} induces commutative diagrams:
	\be
	\label{diagAS}
	\scalebox{0.8}{
		\xymatrix{
			\Pin_2(\alpha)\ar[d]_{\Ad} \ar[rd]^{\Ad_\T^{(2)}} \\
			\Aut_\Alg(\Cl(V,h)) \ar[r]^{~~~\sim}_{~~~\Ad(\gamma^{(2)}_{\nu,D})}~ & ~\Aut_\Alg(\T)
	}}~~
	\scalebox{0.8}{
		\xymatrix{
			\Pin_2(\alpha)\ar[d]_{\Ad^{(2)}} \ar[rd]^{\Ad_\A^{(2)}} \\
			\Aut_\R(\R^2) \ar[r]^{~~~\sim}_{~~~\Ad(\gamma^{(2)}_{\nu,D})}~ & ~\Aut_\R(\A)
	}}~~\scalebox{0.8}{
		\xymatrix{
			\Pin_2(\alpha)\ar[d]_{\Ad_+^{(2)}} \ar[rd]^{\Ad_\S^{(2)}} \\
			\Aut_\Alg(\Cl_2^+(\alpha)) \ar[r]^{~~~\sim}_{~~~\Ad(\gamma^{(2)}_{\nu,D})}~ & ~\Aut_\Alg(\S)
	}}
	\ee
	which give the equivalences of representations listed in the
	proposition.   \end{proof}

\noindent Let ${\widehat \Pin}(V,h)\eqdef {\widehat
	\Pin}_{\alpha_{p,q}}(V,h)$.  Notice that any $D\in \fD$ gives an
isomorphism of torsors $\tau_D:\Spin^o(V,h)/{\widehat
	\Pin}(V,h)\stackrel{\sim}{\rightarrow} \fD$ such that
$\tau([\hat{D}])=D$, where $[\hat{D}]$ is the class of $\hat{D}$ in
$\Spin^o(V,h)/{\widehat \Pin}(V,h)$.

\begin{proposition}
	\label{prop:equiv}
	Let $\nu,\nu'\in \s_V$ and $D,D'\in \fD$. Then the representations
	$\gamma^{(2)}_{\nu,D}$ and $\gamma^{(2)}_{\nu',D'}$ of $\Cl_2(\alpha)$
	are equivalent.
\end{proposition}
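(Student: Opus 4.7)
The plan is to construct, for any two choices of data, an invertible $\R$-linear operator $T \in \Aut_\R(S_0)$ implementing the equivalence $T\, \gamma^{(2)}_{\nu,D}(x)\, T^{-1} = \gamma^{(2)}_{\nu',D'}(x)$ for all $x \in \Cl_2(\alpha)$. Since $\Cl_2(\alpha)$ is generated by $e_1, e_2$, it suffices to match the images of these two generators. Writing $J \eqdef \gamma_0(\nu)$ and $J' \eqdef \gamma_0(\nu')$, the condition for $e_1$ reads $T D T^{-1} = D'$; combining this with the condition for $e_2$ (using $\gamma^{(2)}_{\nu,D}(e_2) = -\alpha J D$) forces $T J T^{-1} = J'$. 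So the task reduces to finding an invertible $T$ that conjugates the pair $(J,D)$ to the pair $(J',D')$.

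Next, I will use the structure of the allowed targets. Since $\s_V = \{\nu, -\nu\}$ we have $J' = \epsilon J$ for some $\epsilon \in \{+1, -1\}$, and by Proposition \ref{prop:D} the $\U(1)$-torsor property of $\fD$ yields $D' = e^{\theta J} D$ for some $\theta \in \R$. I propose the explicit intertwiners
\[
T \eqdef \begin{cases} e^{\theta J/2} & \text{if } \epsilon = +1,\\ e^{\theta J/2}\, D & \text{if } \epsilon = -1.\end{cases}
\]
Both are invertible in $\End_\R(S_0)$: in the first case $T$ is a unit of $\S \simeq \C$; in the second case $T$ is a product of such a unit with $D$, which is invertible because $D^2 = \alpha\, \id_{S_0}$ implies $D^{-1} = \alpha D$.

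The verification hinges on the anticommutation $D J = -J D$ of Proposition \ref{prop:D}, which by power-series expansion gives $D\, e^{-\theta J/2} = e^{\theta J/2}\, D$. For $\epsilon = +1$, the operator $T$ commutes with $J$, so $T J T^{-1} = J = J'$ automatically, while $T D T^{-1} = e^{\theta J/2} D\, e^{-\theta J/2} = e^{\theta J} D = D'$. For $\epsilon = -1$, the identity $D J D^{-1} = -J$ yields $T J T^{-1} = e^{\theta J/2}(-J) e^{-\theta J/2} = -J = J'$, and a parallel computation gives $T D T^{-1} = e^{\theta J/2}(D D D^{-1}) e^{-\theta J/2} = e^{\theta J/2} D\, e^{-\theta J/2} = e^{\theta J} D = D'$.

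The argument is mechanical once the right $T$ is identified. The only mildly delicate point is guessing the correct intertwiner when $\epsilon = -1$, where one must simultaneously flip the sign of $J$ and absorb the $\U(1)$-phase relating $D$ and $D'$; beyond this there is no real obstruction, the whole proof being driven by the single anticommutation relation $D J = -J D$.
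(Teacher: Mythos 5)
Your proof is correct and follows essentially the same route as the paper: the paper likewise writes $D'=e^{\theta J}D$ and uses the intertwiners $e^{\frac{\theta}{2}J}$ (for $\nu'=\nu$) and $e^{\frac{\theta}{2}J}D$ (for $\nu'=-\nu$), verified via the anticommutation $DJ=-JD$. No issues.
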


\begin{proof} We have $D'=e^{\theta J} D$ for some $\theta\in \R$. Thus
	$\Ad(e^{\frac{\theta}{2}J})(D)=\Ad(e^{\frac{\theta}{2}JD})(D)=D'$ and
	$\Ad(e^{\frac{\theta}{2}J})(J)=-\Ad(e^{\frac{\theta}{2}JD})(J)=J$.
	Since $\Cl_2(\alpha)$ is generated by $J_0$ and $D_0$, this implies:
	\be
	\Ad(e^{\frac{\theta}{2}J})\circ \gamma_{\nu,D}=\gamma_{\nu,D'}~~,~~\Ad(e^{\frac{\theta}{2}JD})\circ \gamma_{\nu,D}=\gamma_{-\nu,D'}~~.
	\ee
	Since $\nu'$ equals either $\nu$ or $-\nu$, this gives the conclusion.
	 \end{proof}

\begin{definition}
	A {\bf $\gamma$-compatible representation of $\Cl_2(\alpha_{p,q})$} is an
	$\R$-linear representation
	$\gamma^{(2)}:\Cl_2(\alpha_{p,q})\rightarrow \End_\R(S_0)$ which equals
	$\gamma^{(2)}_{\nu,D}$ for some pair $(\nu,D)\in \s_V\times \fD$.
\end{definition}

\noindent By the proposition above, all such representations of
$\Cl_2(\alpha_{p,q})$ are equivalent. In the following, we fix a
$\gamma$-compatible representation $\gamma^{(2)}$ of
$\Cl_2(\alpha_{p,q})$ parameterized by the pair $(\nu,D)\in \s_V\times
\fD$.

\subsection{Twisted elementary real representations} 

\begin{definition}
	The {\bf twisted elementary real representation} of
	$\Spin^o_{\alpha_{p,q}}(V,h)$ induced by $\gamma^{(2)}$ is the group
	morphism $\gamma_o:\Spin^o_{\alpha_{p,q}}(V,h)\rightarrow
	\Aut_\R(S_0)$ given by:
	\be
	\gamma_o([a,b])=\gamma_0(a)\gamma^{(2)}(b)~~\forall a\in \Spin(V,h)~~,~~b\in \Pin_2(\alpha_{p,q})~~.
	\ee
\end{definition}

\noindent By Proposition \ref{prop:equiv}, twisted elementary
representations of $\Spin^o_{\alpha_{p,q}}(V,h)$ are uniquely
determined up to equivalence. The representation
$\Ad^{(2)}_+:\Pin_2(\alpha)\rightarrow \Aut_\Alg(\Cl_2^+(\alpha))$
induces a representation
$\Ad_+:\Spin^o_{\alpha_{p,q}}(\alpha)\rightarrow
\Aut_\Alg(\Cl_2^+(\alpha))$ given by:
\be
\Ad_+([a,b])\eqdef \Ad^{(2)}_+(b)~~\forall a\in \Spin(V,h)~~,~~\forall b\in \Pin_2(\alpha)~~.
\ee
Together with the characteristic representation
\begin{equation*}
\mu:\Spin^o_{\alpha_{p,q}}(V,h)\rightarrow \O(\R^2)=\Aut_\R(\Cl_2^-(\alpha))
\end{equation*}
this gives a representation
\begin{equation*}
\Ad_+\oplus \mu:\Spin^o_{\alpha_{p,q}}(V,h)\rightarrow
\Aut_\Alg(\Cl_2(\alpha))\, .
\end{equation*}

\begin{prop}
	\label{prop:basicreps}
	We have $(\Ad\circ \gamma_o)(\Spin^o_{\alpha_{p,q}}(V,h))(\S)= \S$,
	$(\Ad\circ \gamma_o)(\Spin^o_{\alpha_{p,q}}(V,h))(\A)= \A$ and
	$(\Ad\circ \gamma_o)(\Spin^o_{\alpha_{p,q}}(V,h))(\T)= \T$. Moreover:
	\begin{enumerate}[1.]
		\itemsep 0.0em
		\item The representation $\Ad_\A:
		\Spin^o_{\alpha_{p,q}}(V,h)\rightarrow \Aut_\R(\A)$ given by
		$\Ad_\A(g)\eqdef (\Ad\circ\gamma_o)(g)|_\A$, i.e.:
		\be
		\Ad_\A([a,b])=\Ad_\A^{(2)}(b)~~\forall a\in \Spin(V,h)~~,~~\forall b\in \Pin_2(\alpha_{p,q})
		\ee
		is equivalent with the untwisted characteristic representation
		$\mu:\Spin^o_{\alpha_{p,q}}(V,h)\rightarrow \O(\R^2)=\O(2)$.
		\item The representation
		$\Ad_\S:\Spin^o_{\alpha_{p,q}}(V,h)\rightarrow \Aut_\Alg(\S)\simeq
		\Z_2$ given by $\Ad_\S(a)\eqdef \Ad\circ\gamma_o(a)|_\S$, i.e.:
		\be
		\Ad_\S([a,b])=\Ad_\S^{(2)}(b)~~\forall a\in \Spin(V,h)~~,~~\forall b\in \Pin_2(\alpha_{p,q})~~
		\ee
		is equivalent with the representation
		$\Ad_+:\Spin^o_{\alpha_{p,q}}(V,h)\rightarrow
		\Aut_\Alg(\Cl_2^+(\alpha))$.
		\item The representation $\Ad_\T:
		\Spin^o_{\alpha_{p,q}}(V,h)\rightarrow \Aut_\Alg(\T)$ defined
		through $\Ad_\T(g)\eqdef (\Ad\circ\gamma_o)(g)|_\T$, i.e.:
		\be
		\Ad_\T([a,b])=\Ad_\T^{(2)}(b)~~\forall a\in \Spin(V,h)~~,~~\forall b\in \Pin_2(\alpha_{p,q})\, ,
		\ee
		is equivalent with the representation $\Ad_+\oplus
		\mu:\Spin^o_{\alpha_{p,q}}(V,h)\rightarrow \Aut_\Alg(\Cl_2(\alpha_{p,q}))$.
	\end{enumerate}
\end{prop}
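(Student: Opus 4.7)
The plan is to reduce the three claims to the already-established first version of Proposition \ref{prop:basicreps} (for the $\Cl_2(\alpha_{p,q})$-representation $\gamma^{(2)}$), by exploiting the factorization $\gamma_o([a,b]) = \gamma_0(a)\gamma^{(2)}(b)$. The key intermediate step will be to show that for every $a \in \Spin(V,h)$ the inner automorphism $\Ad(\gamma_0(a))$ restricts to the identity on $\T$. For $T \in \S$ this is immediate, since $\S$ is the centralizer of $\gamma_0(V) \subset \End_\R(S_0)$ and $V$ generates $\Cl(V,h)$, so $T$ commutes with all of $\gamma_0(\Cl(V,h))$ and in particular with $\gamma_0(a)$. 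For $T \in \A$ the argument is a parity computation: $T$ anticommutes with each $\gamma_0(v)$, hence commutes with every even product $\gamma_0(v_1)\gamma_0(v_2)$ and therefore with all of $\gamma_0(\Cl^+(V,h))$; since $\Spin(V,h) \subset \Cl^+(V,h)^\times$, one concludes $[\gamma_0(a), T] = 0$.

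Once this is in hand, for any $g = [a,b] \in \Spin^o_{\alpha_{p,q}}(V,h)$ one computes
\[
\Ad(\gamma_o(g))|_\T \;=\; \Ad(\gamma_0(a))|_\T \circ \Ad(\gamma^{(2)}(b))|_\T \;=\; \Ad(\gamma^{(2)}(b))|_\T \;=\; \Ad_\T^{(2)}(b),
\]
where preservation of $\T$, $\S$, $\A$ under $\Ad\circ\gamma^{(2)}$ is supplied by the earlier Proposition \ref{prop:basicreps}. This simultaneously yields the three invariance statements and the identifications $\Ad_\T([a,b]) = \Ad_\T^{(2)}(b)$, $\Ad_\S([a,b]) = \Ad_\S^{(2)}(b)$, $\Ad_\A([a,b]) = \Ad_\A^{(2)}(b)$. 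As a bonus, this same commutation argument is what makes $\gamma_o$ a well-defined group morphism in the first place, since it guarantees $[\gamma_0(a), \gamma^{(2)}(b)]=0$ for all $a\in\Spin(V,h)$ and $b\in\Pin_2(\alpha_{p,q})$.

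To conclude the claimed representation equivalences I would invoke the previously established equivalences $\Ad_\A^{(2)} \sim \Ad_0^{(2)}$ and $\Ad_\S^{(2)} \sim \Ad_+^{(2)}$. Composing with $[a,b] \mapsto b$ and recalling that $\mu([a,b]) = \Ad_0^{(2)}(b)$ and $\Ad_+([a,b]) = \Ad_+^{(2)}(b)$ yields parts 1 and 2. For part 3 it suffices to observe that the untwisted adjoint of $\Pin_2(\alpha_{p,q})$ on $\Cl_2(\alpha_{p,q}) = \Cl_2^+(\alpha_{p,q}) \oplus \R^2$ preserves this canonical $\Z_2$-grading (since conjugation by a homogeneous element respects parity in a $\Z_2$-graded algebra) and hence splits as $\Ad_+^{(2)} \oplus \Ad_0^{(2)}$; transferring across $\gamma^{(2)}$ then identifies $\Ad_\T$ with $\Ad_+ \oplus \mu$. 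The main obstacle is the triviality lemma on $\A$: once the parity argument is secured, the remainder of the proof is a mechanical transport through the algebra isomorphism $\gamma^{(2)}$ followed by an appeal to the earlier Proposition \ref{prop:basicreps}.
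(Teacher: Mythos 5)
Your proof is correct and follows essentially the same route as the paper, whose entire proof is a one-line appeal to the earlier Proposition on $\Ad\circ\gamma^{(2)}_{\nu,D}$ together with the definitions of $\mu$ and $\Ad_+$. The only content you add is to make explicit the commutation lemma the paper leaves implicit — that $\Ad(\gamma_0(a))$ acts trivially on $\T=\S\oplus\A$ for all $a\in\Spin(V,h)$, proved by your parity argument on $\A$ and the centralizer property of $\S$ — which is precisely the right detail to supply.
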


\begin{proof} Follows from Proposition \ref{prop:basicreps} and
	Definition \ref{def:elementaryreps}.   
\end{proof}

Let $i\colon \Spin^o_{\alpha_{p,q}}(V,h)\rightarrow \Aut_\R(V \otimes
S_0)\simeq \Aut_\R(V)\otimes \Aut_\R(S_0)$ denote the inner tensor
product of the twisted vector representation
$\tlambda_{\alpha}:\Spin^o_{\alpha}(V,h)\rightarrow \Aut_\R(V)$ with the
twisted elementary representation
$\gamma_o:\Spin^o_{\alpha}(V,h)\rightarrow \Aut_\R(S_0)$:
\be
i (g)\eqdef \tlambda_{\alpha}(g)\otimes \gamma_o(g)~~\forall g\in \Spin^o_{\alpha}(V,h)\, .
\ee

\begin{lemma}
	\label{lemma:commutingo}
	For all $g\in \Spin^o_{\alpha_{p,q}}(V,h)$ and all $v\in V$, we have:
	\ben
	\label{CMcond}
	\gamma_0(\tlambda(g)v)\circ \gamma_o(g)=\gamma_o(g)\circ \gamma_0(v)\, .
	\een
\end{lemma}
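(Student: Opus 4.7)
The plan is to write $g=[a,b]$ with $a\in\Spin(V,h)$ and $b\in\Pin_2(\alpha)$, expand each side using the definitions
\[
\gamma_o([a,b])=\gamma_0(a)\,\gamma^{(2)}(b),\qquad \tlambda_\alpha([a,b])=\det(\Ad_0^{(2)}(b))\,\Ad_0(a),
\]
and reduce the identity to a statement purely about the interaction of $\gamma^{(2)}(b)$ with $\gamma_0(v)$. Concretely, since $\gamma_0\colon\Cl(V,h)\to\End_\R(S_0)$ is an algebra morphism, $\gamma_0(\Ad_0(a)v)=\gamma_0(ava^{-1})=\gamma_0(a)\gamma_0(v)\gamma_0(a)^{-1}$, and substituting this into the left-hand side of \eqref{CMcond} collapses the outer factors of $\gamma_0(a)$, leaving the equivalent condition
\[
\det(\Ad_0^{(2)}(b))\,\gamma_0(v)\,\gamma^{(2)}(b)=\gamma^{(2)}(b)\,\gamma_0(v)\qquad\forall v\in V,\ \forall b\in\Pin_2(\alpha).
\]

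The second step is to verify this reduced identity using the $\Z_2$-grading of $\Pin_2(\alpha)=\Spin_2(\alpha)\sqcup\Spin_2(\alpha)D_0$ together with the characterization \eqref{gammaAS}, which says that $\gamma^{(2)}$ maps $\Cl_2^+(\alpha)$ into $\S$ and $\R^2=\Cl_2^-(\alpha)$ into $\A$. If $b\in\Spin_2(\alpha)$, then $\gamma^{(2)}(b)\in\S$, which by the defining property \eqref{Schur} of the Schur algebra commutes with every $\gamma_0(v)$; on the other hand $\det(\Ad_0^{(2)}(b))=+1$ since $\Ad_0^{(2)}(b)\in\SO(2)$. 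If instead $b\in\Spin_2(\alpha)D_0$, then $\gamma^{(2)}(b)\in\A$, which by \eqref{Anticommutant} anticommutes with every $\gamma_0(v)$, and simultaneously $\det(\Ad_0^{(2)}(b))=-1$. The two signs therefore cancel in each case and the reduced identity holds.

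Putting the two steps together establishes \eqref{CMcond}. The main conceptual point—and the only thing that requires care—is the matching of signs: the twisted vector representation $\tlambda_\alpha$ differs from $\lambda_\alpha$ precisely by the factor $\det\Ad_0^{(2)}(b)$, and this is exactly the sign produced by moving $\gamma^{(2)}(b)$ past $\gamma_0(v)$ thanks to the $\S\oplus\A=\T$ decomposition. There are no further subtleties: the fact that $[a,b]$ is a class (not an individual pair) is harmless because the representatives $(a,b)$ and $(-a,-b)$ both give the same $\gamma_0(a)\gamma^{(2)}(b)$, and $\tlambda_\alpha$ is well-defined on $\Spin^o_\alpha(V,h)$ for the same reason.
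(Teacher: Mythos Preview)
Your proof is correct and follows essentially the same approach as the paper: both write $g=[a,b]$, use that $\gamma_0$ is an algebra morphism to handle the $\Spin(V,h)$ factor via $\gamma_0(\Ad_0(a)v)=\gamma_0(a)\gamma_0(v)\gamma_0(a)^{-1}$, and then reduce to the sign identity $\gamma^{(2)}(b)\gamma_0(v)=\det(\Ad_0^{(2)}(b))\,\gamma_0(v)\gamma^{(2)}(b)$. Your justification of this last identity via the $\S\oplus\A$ decomposition and the defining properties \eqref{Schur}, \eqref{Anticommutant} is slightly more explicit than the paper's, which simply invokes the anticommutation of $D$ with $\gamma_0(v)$, but the content is identical.
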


\begin{proof}
	For any $g=[a,b]\in \Spin^o_{\alpha_{p,q}}(V,h)$ with $a\in \Spin(V,h)$ and $b\in \Pin_2(\alpha)$, we have:
	\beqan
	\label{2exp}
	&&\gamma_0(\tlambda(g)v)\circ \gamma_o(g)=(\det \Ad_0^{(2)}(b)) \gamma_0(\Ad_0(a)v)\circ \gamma_0(a)\circ \gamma^{(2)}(b)\, ,\\
	&& \gamma_o(g)\circ \gamma_0(v)=\gamma_0(a)\circ \gamma^{(2)}(b)\circ \gamma_0(v)=(\det \Ad_0^{(2)}(b)) \gamma_0(\Ad_0(a)v)\circ \gamma_0(a)\circ \gamma^{(2)}(b)~~,\nn
	\eeqan
	where in the second row we used the relation: 
	\be
	\gamma^{(2)}(b)\circ \gamma_0(v)=(\det \Ad_0^{(2)}(b))\gamma_0(v) \circ \gamma^{(2)}(b)~~,
	\ee
	which follows from the fact that $D$ and $\gamma_0(v)$ anticommute and
	the relation:
	\be
	\gamma_0(a)\circ \gamma_0(v)=\gamma_0(\Ad_0(a)v)\circ \gamma_0(a)\, ,
	\ee
	which in turn follows from $\Ad_0(a)(v)=ava^{-1}$ and from the fact that
	$\gamma_0$ is a morphism of algebras. Comparing the two rows in
	\eqref{2exp} gives the conclusion.   
\end{proof}

\begin{lemma}
	\label{lemma:cliffordmeq}
	The linear map $\frc\colon V\otimes S_0\rightarrow S_0$ given by
	$\frc(v,x)\eqdef\gamma_0(v)x$ is a morphism of representations between
	$i$ and $\gamma_{o}$, i.e. it satisfies:
	\begin{equation*}
	\frc\circ i(g)=\gamma_o (g)\circ\frc\, ,
	\end{equation*}
	for all $g\in \Spin^o(V,h)$. 
\end{lemma}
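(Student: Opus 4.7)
The plan is essentially to observe that this lemma is a direct reformulation of Lemma \ref{lemma:commutingo} in the tensor-product language. First I would unpack the definitions: for an elementary tensor $v\otimes x \in V\otimes S_0$ and $g\in \Spin^o_{\alpha_{p,q}}(V,h)$, one has
\begin{equation*}
\frc\bigl(i(g)(v\otimes x)\bigr)=\frc\bigl(\tlambda_{\alpha}(g)v\otimes \gamma_o(g)x\bigr)=\gamma_0(\tlambda_{\alpha}(g)v)\bigl(\gamma_o(g)x\bigr),
\end{equation*}
while on the other side
\begin{equation*}
\gamma_o(g)\bigl(\frc(v\otimes x)\bigr)=\gamma_o(g)\bigl(\gamma_0(v)x\bigr).
\end{equation*}
Comparing these two expressions, the asserted equality $\frc\circ i(g)=\gamma_o(g)\circ \frc$ on the elementary tensor $v\otimes x$ is equivalent to the operator identity
\begin{equation*}
\gamma_0(\tlambda_{\alpha}(g)v)\circ \gamma_o(g)=\gamma_o(g)\circ \gamma_0(v),
\end{equation*}
evaluated at $x$.

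Next I would invoke Lemma \ref{lemma:commutingo}, which is exactly this operator identity (valid for all $g\in \Spin^o_{\alpha_{p,q}}(V,h)$ and all $v\in V$). This establishes the conclusion on all elementary tensors. Since $V\otimes S_0$ is spanned over $\R$ by such elementary tensors and all maps involved ($\frc$, $i(g)$, $\gamma_o(g)$) are $\R$-linear, the identity $\frc\circ i(g)=\gamma_o(g)\circ \frc$ extends by linearity to all of $V\otimes S_0$, giving the result.

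There is no real obstacle here: the lemma is a repackaging of Lemma \ref{lemma:commutingo} as the statement that $\frc$ is a $\Spin^o_{\alpha_{p,q}}(V,h)$-equivariant map between the inner tensor product representation $i=\tlambda_{\alpha}\otimes \gamma_o$ on $V\otimes S_0$ and the representation $\gamma_o$ on $S_0$. The only point requiring (a trivial amount of) care is keeping track of the placement of $\tlambda_{\alpha}$ (the \emph{twisted} vector representation) on the $V$-factor, which is precisely the one that appears in Lemma \ref{lemma:commutingo}; this is why $i$ is defined using $\tlambda_{\alpha}$ rather than $\lambda_{\alpha}$.
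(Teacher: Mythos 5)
Your proposal is correct and follows the same route as the paper: evaluate both sides on an elementary tensor $v\otimes x$, reduce the claim to the operator identity $\gamma_0(\tlambda(g)v)\circ\gamma_o(g)=\gamma_o(g)\circ\gamma_0(v)$ of Lemma \ref{lemma:commutingo}, and extend by $\R$-linearity. Nothing is missing.
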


\begin{proof}
	For all $v\in V$, $s\in S_0$ and $g\in \Spin^o(V,h)$, we have: 
	\be
	(\frc\circ i(g))(v\otimes s)=(\gamma_0(\tlambda(g)v)\circ \gamma_o(g))(s)=(\gamma_o(g)\circ \gamma_0(v))(s)=(\gamma_o(g)\circ \frc)(v\otimes s)\, ,
	\ee
	where we used \eqref{CMcond}. Hence $\frc\circ i(g)=\gamma_o
	(g)\circ\frc$ for all $g\in \Spin^o(V,h)$.
	 \end{proof}

\begin{cor}
	The linear map $\frc\colon V\otimes S_{0} \to S_{0}$ defines a
	$\gamma_{o}$-equivariant Clifford multiplication map on $S_{0}$, which
	canonically becomes an elementary module over $\Cl(V,h)$ when
	equipped with the unital morphism of algebras $\gamma_{0}\colon
	\Cl(V,h)\to \End_{\mathbb{R}}(S_{0})$ obtained from $\frc$ by
	extension with respect to the algebra structure of $\Cl(V,h)$.
\end{cor}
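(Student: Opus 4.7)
The proof is essentially a packaging of the two preceding lemmas together with the universal property of the Clifford algebra, so my plan is to deduce each clause of the statement directly from what has already been established, without any new computation.

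First I would observe that the map $\frc\colon V\otimes S_0\to S_0$ satisfies the Clifford relation. Indeed, by the very definition $\frc(v\otimes s)=\gamma_0(v)(s)$, and since $\gamma_0\colon\Cl(V,h)\to \End_\R(S_0)$ was chosen at the beginning of the section as an elementary real pinor representation (hence a unital morphism of $\R$-algebras), we have $\gamma_0(v)\circ\gamma_0(v)=\gamma_0(v\cdot v)=h(v,v)\id_{S_0}$. Thus the quadratic map $v\mapsto \frc(v\otimes\,\cdot\,)\in\End_\R(S_0)$ satisfies the universal property of the Clifford algebra, and extends uniquely to a unital morphism of $\R$-algebras $\Cl(V,h)\to\End_\R(S_0)$ which, by uniqueness of the Clifford extension, coincides with the originally fixed $\gamma_0$. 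This already makes $S_0$ a module over $\Cl(V,h)$, and since $\gamma_0$ was taken to be an $\R$-irreducible representation, $S_0$ is an elementary (i.e.\ simple) such module.

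Next I would interpret the $\gamma_o$-equivariance. Lemma \ref{lemma:cliffordmeq} states precisely that $\frc\circ i(g)=\gamma_o(g)\circ \frc$ for all $g\in\Spin^o(V,h)$, where $i=\tlambda_\alpha\otimes\gamma_o$. Rewriting this fiber-by-fiber, for any $v\in V$ and $s\in S_0$ it reads
\begin{equation*}
\gamma_0(\tlambda_\alpha(g)v)\circ\gamma_o(g)(s)=\gamma_o(g)\circ\gamma_0(v)(s),
\end{equation*}
which is exactly Lemma \ref{lemma:commutingo}. Thus the $\gamma_o$-action intertwines the Clifford multiplication through the twisted vector representation $\tlambda_\alpha$; this is the $\gamma_o$-equivariance claim.

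Finally, I would note that once $\frc$ is shown to factor through the Clifford relation, its extension to $\Cl(V,h)$ via the universal property automatically preserves the equivariance property, since the $\Spin^o_\alpha(V,h)$-action on $\Cl(V,h)$ via $\tlambda_\alpha$ is by algebra automorphisms, while $\gamma_o$ acts on $\End_\R(S_0)$ by algebra automorphisms via $\Ad\circ\gamma_o$. There is no real obstacle here: the one place where care is needed is to check that the extension from generators to the full Clifford algebra does preserve equivariance, but this is immediate because both sides of the intertwining identity are multiplicative in $v$ (on $\Cl(V,h)$) and $\tlambda_\alpha(g)$ is an algebra automorphism of $\Cl(V,h)$. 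All remaining assertions of the corollary are then formal consequences.
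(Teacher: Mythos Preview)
Your proposal is correct and follows essentially the same approach as the paper: both invoke Lemma~\ref{lemma:cliffordmeq} (itself a restatement of Lemma~\ref{lemma:commutingo}) for the $\gamma_o$-equivariance, then extend $\frc$ from $V$ to $\Cl(V,h)$ using that $V$ generates the Clifford algebra. Your write-up is in fact slightly more careful than the paper's in making explicit that the extension recovers the already-fixed $\gamma_0$ by uniqueness of the Clifford extension, and in remarking why the equivariance survives the passage from generators to the full algebra.
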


\begin{proof}
	Let $(\gamma_{o},S_{0})$ be a twisted elementary representation of
	$\Spin^{o}_{\alpha_{p,q}}(V,h)$. The linear map $\frc\colon V\otimes
	S_{0}\to S_{0}$ defines by evaluation $v\mapsto\frc(v,-)$ on its first
	argument a Clifford multiplication map $\gamma_{V}\colon
	V\to \End_{\mathbb{R}}(S_{0})$. The fact that $\gamma_{V}$ is
	$\gamma_{o}$-equivariant follows from Lemma
	\ref{lemma:cliffordmeq}. The Clifford map $\gamma_{V}$ is in
	particular an injective morphism from $V\subset \Cl(V,h)$ to
	$\End_{\mathbb{R}}(S_{0})$. Since $V$ generates $\Cl(V,h)$, we extend 
	this map to all of $\Cl(V,h)$, thus obtaining the elementary
	Clifford representation $\gamma_{0}\colon
	\Cl(V,h)\to \End_{\mathbb{R}}(S_{0})$.
	 
\end{proof}

\section{Elementary real pinor bundles for $p-q\equiv_8 3,7$}
\label{sec:elementary}

\noindent Throughout this section, we assume that $(M,g)$ is a smooth,
connected and paracompact pseudo-Riemannian manifold of signature
$(p,q)$ with $p-q\equiv_8 3,7$ and dimension $d=p+q$. We define:
\begin{equation*}
\w_1^\pm(M)\eqdef \w_1^\pm(P_{\O(V,h)}(M,g))\, ,
\end{equation*}
where $P_{\O(V,h)}(M,g)$ denotes the orthonormal coframe bundle of
$(M,g)$. Let $\Cl(M,g)$ denote the Clifford bundle of the
pseudo-Euclidean {\em cotangent}\footnote{We use the Clifford bundle
	of the {\em cotangent} (rather than of the tangent) bundle of $M$ in order
	to avoid using musical isomorphisms in certain formulas later on.}
bundle $(T^\ast M, g^\ast)$, where $g^\ast$ is the metric induced by
$g$ on $T^\ast M$.

\begin{definition}
	An {\bf elementary real pinor bundle} is a bundle of
	finite-dimensional simple modules over the Clifford bundle $\Cl(M,g)$,
	i.e. a pair $(S,\gamma)$ where $S$ is a real vector bundle over
	$(M,g)$ and $\gamma:\Cl(M,g)\rightarrow End_{\mathbb{R}}(S)$ is a
	morphism of bundles of unital associative $\R$-algebras that restricts
	to an elementary real pinor representation $\gamma_{m}\colon
	\Cl(T^{\ast}_{m}M,g_{m})\rightarrow \End_{\mathbb{R}}(S_{m})$ at every
	point $m\in M$.
\end{definition}

Given an elementary real pinor bundle $(S,\gamma)$, we define its {\bf
	type } $[\eta]$ as the isomorphism class of $\gamma_{m}\colon
\Cl(T^{\ast}_{m}M,g^{\ast}_{m})\rightarrow \End_{\mathbb{R}}(S_{m})$
in the category $\ClRep$ introduced in \cite{Lipschitz}, where $m\in
M$ denotes any point of $M$. It can be easily seen that the type does
not depend on the point chosen since $M$ is connected. Here $\ClRep$
denotes the category of real Clifford representations 
and unbased morphisms, see op. cit. for details. For ease of notation, in the following we will sometimes denote $\Spin^o_{\alpha_{p,q}}(V,h)$ simply by $\Spin^o$.

\begin{definition}
	An {\bf adapted $\Spin^o$ structure} $(Q,\tilde{\Lambda})$ on $(M,g)$
	is a $\Spin^o_{\alpha_{p,q}}$ structure
	$(Q,\tilde{\Lambda}_{\alpha_{p,q}})$ on $(M,g)$, where $\alpha_{p,q}$
	was defined in \eqref{alpha}.
\end{definition}

\noindent
Assume that $(M,g)$ admits an adapted $\Spin^o$ structure and let $(V,h)$ be a 
quadratic vector space which is isometric with $(T_m^\ast, g_m^\ast)$, where 
$m$ is any point of $M$. 

\begin{definition} A {\bf twisted $\Spin^{o}$ vector bundle} over
	$(M,g)$ is a real vector bundle $S_{o} = Q\times_{\gamma_o}S_0$
	associated to the principal $\Spin^o(V,h)$-bundle $Q$ of an adapted
	$\Spin^o$ structure $(Q,\tilde{\Lambda})$ on $M$ through a twisted
	elementary representation $\gamma_o:\Spin^o(V,h)\rightarrow
	\Aut_\R(S_0)$ of $\Spin^{o}_{\alpha_{p,q}}(V,h)$.
\end{definition}

When necessary we will denote by $S_{o}(Q,\gamma_{o})$ the twisted
$\Spin^{o}$ vector bundle associated to $Q$ through
$\gamma_{o}$. Notice that a twisted $\Spin^{o}$ vector bundle is a
real vector bundle of rank $\rk S=N=\dim S_0=2^{\frac{d+1}{2}}$ (see
equation \eqref{N}). We will see in a moment that such a bundle admits
a well-defined Clifford multiplication, even though $(M,g)$ need not
be orientable. In particular, the existence of a $\Spin^{o}$ structure
allows for a useful notion of ``irreducible real pinors'' on {\em
	non-orientable}\footnote{This is in marked contrast with the kind of
	pinors considered in \cite{Nakamura1,Nakamura2}, which require $(M,g)$
	to admit a $\Spin^c_\alpha$ structure (see Appendix \ref{Spincalpha})
	and hence require that $M$ be orientable.}  pseudo-Riemannian
manifolds of signature $p-q\equiv_8 3,7$.

\begin{proposition}
	\label{prop:Clifformult}
	Let $S_{o}$ be the twisted $\Spin^{o}$ vector bundle
	associated to an adapted $\Spin^{o}$ structure
	$(Q,\tilde{\Lambda})$ through the  twisted
	elementary representation $\gamma_o:\Spin^o(V,h)\rightarrow
	\Aut_\R(S_0)$. Then the based bundle map $\frC\colon
	T^{\ast}M\otimes S_{o} \to S_{o}$ given by:
	\begin{eqnarray}
	\label{eq:frC}
	\frC\colon T^{\ast}M\otimes S_{o} &\to & S_{o}\, ,\nn\\
	\left[p,\xi \right]\otimes [p,s] &\mapsto &  [p, \gamma_{0}(\xi) s]\, , \qquad \forall p\in M\, ,
	\end{eqnarray}
	defines a Clifford multiplication on $S_o$. This makes $S_o$ into an
	elementary real pinor bundle when the latter is equipped with the unital
	morphism of bundles of algebras $\gamma\colon
	\Cl(M,g)\to \End_{\mathbb{R}}(S)$ obtained from $\frC$ by extension.
\end{proposition}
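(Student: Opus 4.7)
The plan is to verify three things: (a) the formula \eqref{eq:frC} descends to a well-defined bundle map $\frC$, (b) $\frC$ satisfies the Clifford relation and hence extends to a morphism of bundles of unital $\R$-algebras $\gamma\colon \Cl(M,g)\to End_\R(S_o)$, and (c) each fiber $\gamma_m$ is an elementary real pinor representation. Of these, only (a) is delicate; (b) and (c) will follow by routine checks.

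For (a), I would first use the fact that $\tilde{\Lambda}\colon Q\to P_{\check{\O}}(M,g)$ is $\tlambda$-equivariant to present the cotangent bundle as the associated bundle $T^\ast M\simeq Q\times_{\tlambda}V$. Under this presentation, $[p\cdot g,\xi]=[p,\tlambda(g)\xi]$ in $T^\ast M$ and $[p\cdot g,s]=[p,\gamma_o(g)s]$ in $S_o$ for every $g\in\Spin^o_{\alpha_{p,q}}(V,h)$. Comparing the values of the right-hand side of \eqref{eq:frC} at the two representatives $p$ and $p\cdot g$ of a single point then reduces well-definedness of $\frC$ to the intertwining identity
\[
\gamma_0(\tlambda(g)\xi)\circ\gamma_o(g)=\gamma_o(g)\circ\gamma_0(\xi)\qquad \forall\,g\in\Spin^o_{\alpha_{p,q}}(V,h),\ \xi\in V,
\]
which is precisely the content of Lemma \ref{lemma:commutingo}. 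This intertwining property is the technical heart of the argument and has already been established; it is also the main obstacle, in the sense that once it is in hand the proposition will reduce to formal verifications.

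For (b), $\R$-bilinearity and smoothness of $\frC$ are immediate from the defining formula on $Q\times V\times S_0$ together with the quotient construction. Trivializing at $m\in M$ through any $q\in Q_m$ and writing $\xi=[q,\tilde\xi]$ with $\tilde\xi\in V$, one has $g^\ast_m(\xi,\xi)=h(\tilde\xi,\tilde\xi)$ and $\frC(\xi,\frC(\xi,-))$ acts on $S_{o,m}$ as $\gamma_0(\tilde\xi)^2=h(\tilde\xi,\tilde\xi)\id$, since $\gamma_0$ is a Clifford representation of $\Cl(V,h)$. The universal property of $\Cl(V,h)$ applied fiberwise then extends $\frC$ to the desired smooth morphism of bundles of unital $\R$-algebras $\gamma\colon \Cl(M,g)\to End_\R(S_o)$. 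Finally, (c) follows because the local trivialization by any $q\in Q_m$ identifies $\gamma_m\colon \Cl(T^\ast_m M,g^\ast_m)\to \End_\R(S_{o,m})$ with the elementary pinor representation $\gamma_0\colon \Cl(V,h)\to \End_\R(S_0)$, so $S_{o,m}$ is a simple $\Cl(T^\ast_m M,g^\ast_m)$-module at every $m\in M$; this is exactly the statement that $(S_o,\gamma)$ is an elementary real pinor bundle.
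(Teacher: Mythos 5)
Your proposal is correct and follows essentially the same route as the paper's proof: present $T^\ast M$ as $Q\times_{\tilde\lambda}V$, reduce well-definedness of $\frC$ to the intertwining identity of Lemma \ref{lemma:commutingo}, and then extend fiberwise via the Clifford relation $\gamma_{T^\ast M}(\xi)^2=g^\ast(\xi,\xi)\id_{S_o}$ to obtain the unital algebra bundle morphism whose fibers are elementary. No gaps.
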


\begin{proof}
	Recall that an elementary real pinor bundle is a bundle of simple
	modules over $\Cl(M,g)$. In signature $p-q\equiv_{8}3,7$ such a bundle
	is of complex type. For convenience, we consider the cotangent bundle
	$T^{\ast}M$ as an associated bundle to $Q$ through the twisted vector
	representation $\tilde{\lambda}\colon\Spin^{o}(V,h)\to \O(V,h)$, that
	is, $T^{\ast}M = Q\times_{\tilde{\lambda}} V$. The fact that $\frC$ is
	well defined follows from the following computation:
	\begin{eqnarray*}
		\frC([pg,\tilde{\lambda}(g^{-1}) \xi],[pg,\gamma_{o}(g^{-1})s]) = [pg,\gamma_{0}(\tilde{\lambda}(g^{-1})\xi)\circ\gamma_{o}(g^{-1})s] \\
		= [pg,\gamma_{o}(g^{-1}) \circ\gamma_{0}(\xi) s] = \frC([p,\xi],[p,s])\, ,
	\end{eqnarray*}
	where we have used Lemma \eqref{lemma:commutingo}. Using $\frC$ we pointwise
	define an injective morphism $\gamma_{T^{\ast}M}\colon
	T^{\ast}M\to End_{\mathbb{R}}(S_{o})$ as follows:
	\begin{eqnarray}
	\gamma_{T^{\ast}M}\colon T^{\ast}M &\to & End_{\mathbb{R}}(S_{o})\, ,\nn\\
	\left[p,\xi\right]  &\mapsto &  \frC([p,\xi],-)\, , 
	\end{eqnarray}
	Note that $\gamma_{T^{\ast}M}(\xi)^{2} = g^{\ast}(\xi,\xi)
	\id_{S_{o}}$, and hence $\gamma_{T^{\ast}M}$ extends to a
	unique unital morphism of bundles of algebras:
	\begin{equation*}
	\gamma:\Cl(M,g)\rightarrow End_{\mathbb{R}}(S_{o})\, ,
	\end{equation*}
	whose fiber at any $p\in M$ is equivalent with the unique, up to
	unbased isomorphism, elementary Clifford representation
	$\gamma_0:\Cl(V,h)\rightarrow \End(S_0)$ through an unbased
	isomorphism of Clifford representations.   
\end{proof}

\noindent
The following result unifies Theorem \ref{thm:SpinoObs} with the
previous discussion.

\begin{theorem}
	\label{thm:equiv}
	Let $(M,g)$ be a pseudo-Riemannian manifold of signature $(p,q)$ with
	$p-q\equiv_8 3,7$. Then $(M,g)$ admits an elementary real pinor bundle
	$(S,\gamma)$ if and only if there exists a principal $\O(2)$-bundle
	$E$ over $M$ such that the following conditions are satisfied:
	\begin{eqnarray}
	\label{obsoadapted}
	&& \w_1^+(M)+\w_1^-(M)=\w_1(E)\\
	&& \w_2^+(M)+\w_2^-(M)+\w_1(E)(p\w_1^+(M)+q\w_1^-(M))=\w_2(E) \\ && + \left[\delta(p,q)+\frac{p(p+1)}{2}+\frac{q(q+1)}{2}\right]\w_1(E)^2\, .\nn
	\end{eqnarray}
	where: 
	\begin{equation*}
	\delta(p,q)=\twopartdef{1}{p-q\equiv_8 3}{0}{p-q\equiv_8 7}~~.
	\end{equation*}
	Let $[\eta]$ be the type of $(S,\gamma)$. In that case, and relative
	to $[\eta]$, there exists an adapted $\Spin^{o}_{\alpha}(V,h)$ structure
	$Q(S,\gamma)$ on $(M,g)$, unique up to isomorphism, such that
	$(S,\gamma)$ is isomorphic to $(S_{o}(Q(S,\gamma)),\gamma_{o})$ as a bundle of
	irreducible Clifford modules, and Clifford multiplication in $S$ is
	implemented by the morphism of vector bundles $\mathfrak{C}\colon
	T^{\ast}M\otimes S\to S$ defined in equation
	\eqref{eq:frC}.
\end{theorem}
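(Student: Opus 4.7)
The forward (``if'') direction is immediate from earlier results. Theorem~\ref{thm:SpinoObs}, applied to the orthonormal coframe bundle $P_{\O(V,h)}(M,g)$, shows that conditions \eqref{obsoadapted} are exactly equivalent to the existence of an adapted $\Spin^o_{\alpha_{p,q}}$-structure $(Q,\tilde{\Lambda})$ on $(M,g)$. Given any such structure and any twisted elementary representation $\gamma_o\colon\Spin^o(V,h)\to\Aut_\R(S_0)$, Proposition~\ref{prop:Clifformult} exhibits $S_o(Q,\gamma_o) = Q\times_{\gamma_o}S_0$ as an elementary real pinor bundle of type $[\eta]$ whose Clifford multiplication is given by \eqref{eq:frC}. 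The substance of the theorem therefore lies in the converse direction and the uniqueness assertion.

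For the converse, start with an elementary real pinor bundle $(S,\gamma)$ of type $[\eta]$ and fix a representative $\gamma_0\colon\Cl(V,h)\to\End_\R(S_0)$ of $[\eta]$. Let $F(S,\gamma)\to M$ be the fiber bundle whose fiber at $m\in M$ is the set of pairs $(e,\phi)$, where $e\colon V\xrightarrow{\sim}T^*_m M$ is an isometry and $\phi\colon S_0\xrightarrow{\sim}S_m$ is an $\R$-linear isomorphism satisfying the intertwining relation $\phi\circ\gamma_0(v) = \gamma_m(e(v))\circ\phi$ for all $v\in V$. By the fundamental theorem of Clifford algebras, $F(S,\gamma)$ is a principal $G$-bundle, where $G$ is the Lie group of pairs $(g,\psi)\in\O(V,h)\times\Aut_\R(S_0)$ with $\psi\gamma_0(v)\psi^{-1} = \gamma_0(gv)$; this group fits into a short exact sequence $1\to\S^\times\to G\to \O(V,h)\to 1$, with $\S^\times\simeq\C^\times$ the group of units of the Schur algebra of $\gamma_0$. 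By Lemma~\ref{lemma:commutingo}, the assignment $\iota\colon\Spin^o(V,h)\to G$, $[a,b]\mapsto(\tlambda([a,b]),\gamma_o([a,b]))$, defines a closed embedding whose image meets $\S^\times$ in the subgroup $\U(1)\subset\C^\times$ coming from $\ker\tlambda = \check{\Pin}_2(\alpha_{p,q}) = \Spin_2(\alpha_{p,q})$ (see the exact sequences of Section~\ref{sec:spinogroups}). Since both $G$ and $\iota(\Spin^o(V,h))$ surject onto $\O(V,h)$, the quotient $G/\iota(\Spin^o(V,h))\cong\C^\times/\U(1)\cong\R_{>0}$ is contractible.

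Contractibility of the quotient guarantees the existence of a global section of $F(S,\gamma)/\iota(\Spin^o(V,h))\to M$, hence a reduction of structure group of $F(S,\gamma)$ to a principal $\Spin^o(V,h)$-bundle $Q(S,\gamma)\subset F(S,\gamma)$. Concretely, one can realize this reduction by fixing a Clifford-compatible Euclidean metric on $S$ (obtained by a partition of unity) together with a reference compatible metric on $S_0$, and taking $Q(S,\gamma)$ to consist of those pairs $(e,\phi)$ for which $\phi$ is isometric. The space of compatible metrics is contractible, so $Q(S,\gamma)$ is determined uniquely up to isomorphism of $\Spin^o$-structures. The projection $\tilde{\Lambda}\colon Q(S,\gamma)\to P_{\O(V,h)}(M,g)$, $(e,\phi)\mapsto e$, is $\tlambda$-equivariant by construction, so $(Q(S,\gamma),\tilde{\Lambda})$ is an adapted $\Spin^o$-structure. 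The evaluation map $[(e,\phi),s]\mapsto\phi(s)$ yields an isomorphism $S_o(Q(S,\gamma),\gamma_o)\xrightarrow{\sim}S$ of real vector bundles which intertwines Clifford multiplication by Lemma~\ref{lemma:cliffordmeq} and Proposition~\ref{prop:Clifformult}, so Clifford multiplication in $S$ is realized by \eqref{eq:frC}. Theorem~\ref{thm:SpinoObs} applied to $Q(S,\gamma)$ then yields the topological conditions \eqref{obsoadapted}. Uniqueness of $Q(S,\gamma)$ follows from the universal property of $F(S,\gamma)$: any adapted $\Spin^o$-structure $Q'$ whose associated pinor bundle is isomorphic to $(S,\gamma)$ embeds $\Spin^o$-equivariantly into $F(S,\gamma)$, and any two such reductions are isomorphic by the same contractibility argument.

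The principal technical obstacle is the explicit identification of $\iota(\Spin^o(V,h))\cap\S^\times$ with the unit circle $\U(1)\subset\C^\times$ of the Schur algebra. This requires combining the intertwining identity of Lemma~\ref{lemma:commutingo} with the exact sequences of Section~\ref{sec:spinogroups} to verify that $\ker\tlambda = \Spin_2(\alpha_{p,q})$ acts on $S_0$ through unit-modulus Schur scalars, so that the complementary factor in $G$ is precisely the group of positive rescalings. Once this group-theoretic picture is in place, the remaining steps---$\tlambda$-equivariance of $\tilde{\Lambda}$, compatibility of Clifford multiplications, and uniqueness up to isomorphism of $\Spin^o$-structures---are essentially formal verifications.
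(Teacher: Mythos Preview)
Your argument is correct and essentially self-contained, but it takes a genuinely different route from the paper's proof. The paper dispatches the converse direction in one line by invoking Theorem~6.2 of \cite{Lipschitz}, which asserts (in the relevant signature) that a bundle of irreducible real Clifford modules exists iff $(M,g)$ admits a reduced Lipschitz structure, and that such structures coincide with adapted $\Spin^o$ structures; the topological conditions then follow directly from Theorem~\ref{thm:SpinoObs}. You instead reconstruct the Lipschitz-structure machinery from scratch: your frame bundle $F(S,\gamma)$ is precisely the principal bundle for the (unreduced) Lipschitz group $G$, your identification $G/\iota(\Spin^o(V,h))\cong\R_{>0}$ recovers the fact that the reduced Lipschitz group in this signature is $\Spin^o(V,h)$, and the contractibility reduction is the standard passage from unreduced to reduced Lipschitz structures. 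What you gain is a proof that does not leave the paper; what the paper's approach gains is brevity and a clean separation of concerns, since the general Lipschitz theory is developed once in \cite{Lipschitz} and then specialized here. Your verification that $\ker\tlambda$ maps to the unit circle in $\S^\times$ is the one genuinely signature-dependent step, and it is handled correctly via the exact sequences of Section~\ref{sec:spinogroups} together with $\gamma^{(2)}(\Spin_2(\alpha))=\{e^{\theta J}\}$.
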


\begin{remark}
	Notice from conditions \eqref{obsoadapted} that existence of a
	bundle of elementary real pinors on $(M,g)$ does not necessarily imply
	that $M$ is orientable. We will present examples of such situation in
	section \eqref{sec:examples}.
\end{remark}

\begin{proof}	
	Theorem 6.2 of \cite{Lipschitz} implies that $(M,g)$ admits a bundle
	of irreducible real Clifford modules $(S,\gamma)$ if and only if
	admits a reduced Lipschitz structure, which in signature
	$p-q\equiv_{8} 3, 7$ corresponds to a $\Spin^o$ structure
	$Q(S,\gamma)$ as defined in Section
	\ref{sec:spinostructures}. Furthermore, $(S,\gamma)$ is associated to
	$Q$ through the tautological representation $\gamma_{0}\colon
	\Spin^{o}(V,h)\to \Aut_{\mathbb{R}}(S_{0})$. From this we conclude
	that $(M,g)$ admits a bundle of irreducible real Clifford modules if
	and only if conditions \eqref{obso} of Theorem \ref{thm:SpinoObs} are
	satisfies for $\alpha=\alpha_{p,q}$. Since $d$ is odd, we have
	$d\equiv_2 1$. Moreover, we have
	$\delta_{\alpha_{p,q},-1}=\delta(p,q)$, so conditions \eqref{obso}
	reduce to \eqref{obsoadapted}.
	
	Let $(S,\gamma)$ be an elementary pinor bundle of type $[\eta]$ and
	let $Q(S,\gamma)$ the unique (up to isomorphism) associated adapted
	$\Spin^{o}$ structure relative to $[\eta]$. Let $(S_{o},\gamma_{o})$
	denote the corresponding twisted $\Spin^{o}$ vector bundle. The fact
	that $(S,\gamma)$ is isomorphic to $(S_{o},\gamma_{o})$ follows from
	the equivalence between:
	\begin{equation*}
	\gamma_{0}\colon \Spin^{o}(V,h)\to \Aut_{\mathbb{R}}(S_{0})\, ,
	\end{equation*}
	
	\noindent
	and $\gamma_{o}\colon \Spin^{o}(V,h)\to \Aut_{\mathbb{R}}(S_{0})$
	together with the equivalence of the associated Clifford
	multiplications upon use of Corollary \ref{prop:Clifformult}.
	 
\end{proof}

\begin{remark}
	Theorem \ref{thm:equiv} implies that twisted $\Spin^{o}$ vector
	bundles $(S_{o},\gamma_{o})$ over $(M,g)$ are essentially equivalent
	to bundles of irreducible real Clifford modules over $\Cl(M,g)$ in
	signature $p-q\equiv_{8} 3,7$. The classification of $\Spin^o$ structures is surprisingly subtle more and deserves a separate study.
\end{remark}

\noindent
A based isomorphism of elementary real pinor bundles
$f:(S,\gamma)\rightarrow (S^{\prime},\gamma^{\prime})$ induces an
isomorphism of $\Spin^{o}_{\alpha}$ structures relative to $\eta\colon
\Cl(V,h)\to\End_{\mathbb{R}}(S_{0})$:
\begin{equation}
P_{\eta}(f)\colon Q(S,\gamma)\rightarrow Q(S^{\prime},\gamma^{\prime})\, ,
\end{equation}
where $Q(S,\gamma_{o})$ (respectively
$Q_{\alpha}(S^{\prime},\gamma^{\prime}_{o})$) denotes the unique
adapted $\Spin^{o}$ structure corresponding to $(S,\gamma_{o})$
(respectively $(S^{\prime},\gamma^{\prime}_{o})$) relative to
$\eta$. Given an elementary real pinor representation $\eta \colon
\Cl(V,h)\to\End_{\mathbb{R}}(S_{0})$, an isomorphism of $\Spin^{o}$
structures $f: Q\rightarrow Q^{\prime}$ induces a based isomorphism:
\begin{equation}
S_{\eta}(f)=S(Q,\eta)\rightarrow S(Q^{\prime},\eta)
\end{equation}
of elementary real pinor bundles, where $S(Q,\eta)$ (respectively
$S(Q^{\prime},\eta)$) denotes the elementary real pinor bundle
associated to $Q$ (respectively $Q^{\prime}$) through the
representation $\eta$. 

Relative to $[\eta]$, the correspondence defined above gives mutually
quasi-inverse functors:
\begin{equation}
P_{\eta}\colon\ClB^{\times}_{\eta}(M,g)\rightarrow \mathrm{C}^{o}_{\eta}(M,g)\, ,
\end{equation}
and 
\begin{equation}
S_{\eta}:\mathrm{C}^{o}_{\eta}(M,g)\rightarrow \ClB^{\times}_{\eta}(M,g)\, ,
\end{equation}
between the grupoid $\mathrm{C}^{o}_{\eta}(M,g)$ of adapted
$\Spin^{o}_{\alpha}$ structures on $(M,g)$ of type $[\eta]$ and the
grupoid $\ClB^{\times}_{\eta}(M,g)$ of elementary real pinor bundles
of type $[\eta]$ over $(M,g)$ and based pinor bundle
isomorphisms. This is a particular case of the general correspondence
between Lipschitz structures and bundles of irreducible real Clifford
modules presented in reference \cite{Lipschitz}.

\subsection{Natural sub-bundles of $End_{\mathbb{R}}(S)$}

By the associated bundle construction, the subspaces $\S,
\T,\A\subset \End_\R(S_0)$ of Section \ref{sec:elementarypinor}
globalize to linear sub-bundles $\Sigma, T$ and $A$ of
$End_{\mathbb{R}}(S)$ which can be described intrinsically as follows:

\begin{definition}
	The {\bf natural fiber sub-bundles of $End_{\mathbb{R}}(S)$} are
	defined as follows:
	\begin{enumerate}
		\itemsep 0.0em
		\item The {\bf Schur bundle} $\Sigma$ has fibers: 
		\be
		\Sigma_p\eqdef \{T\in \End_\R(S_p)|T\gamma_p(v)=\gamma_p(v)T~~\forall v\in T_p^\ast M\}
		\ee
		\item The {\bf anticommutant bundle} $A$ has fibers: 
		\be
		A_p\eqdef \{T\in \End_\R(S_p)|T\gamma_p(v)=-\gamma_p(v)T~~\forall v\in T_p^\ast M\}~~.
		\ee
		\item The {\bf twist bundle} $T$ is the direct sum: 
		\be
		T=\Sigma\oplus A~~.
		\ee
		\item The {\bf conjugation bundle} $\cD$ has fibers: 
		\be
		\cD_p\eqdef \{D\in \End_{\Sigma_p}^a(S_p)|D\gamma_p(v)=-D\gamma_p(v)~~\forall v\in T_p^\ast M~~\&~~D^2=\alpha_{p,q} \id_{\Sigma_p}\}~~.
		\ee
	\end{enumerate}
\end{definition}

\noindent Notice that $\Sigma$ and $T$ are bundles of unital
subalgebras of $End_\R(S)$ while $A$ is only a vector bundle. On the
other hand, $\cD$ is a principal $\U(1)$-bundle. We have
$\rk\Sigma=2$, $\rk A=2$ and $\rk T=4$. Moreover, $\Sigma$ is a
$\C$-bundle while $T$ is a bundle of unital algebras isomorphic with
$\Cl_2(\alpha_{p,q})$. We have $A_p\cap \Sigma_p=\{0\}$ for all $p\in
M$, hence $T=\Sigma \oplus A$. We also have $\Sigma A=A\Sigma=A$,
hence $A$ is a bundle of bimodules over $\Sigma$. The vector bundles
$\Sigma, T, A$ are associated to $Q$ through the representations
$\Ad_\S$, $\Ad_\T$ and $\Ad_\A$ of $\Spin^o(V,h)$ respectively. The
fiber bundle $\cD$ is associated to $Q$ through the left action of
$\Spin^o(V,h)$ on the homogeneous space $\Spin^o(V,h)/{\widehat
	\Pin}(V,h)$. Thus $\fD$ is isomorphic with the fiber bundle
$Q/\Pin(V,h)$.

\subsection{Certain reductions of structure group}

Let $L=\det T^\ast M=\wedge^d T^\ast M$ denote the orientation line
bundle of $M$.  The metric $g$ of $M$ determines a principal
$\Z_2$-sub-bundle $P_{\Z_2}(M)$ of $L$ whose fiber at $p\in M$
consists of the normalized volume forms $\nu_p,-\nu_p$ of $(T_pM,g_p)$
determined by the two orientations of $T_pM$. Since
$\nu_p^2=(-\nu_p)^2=-1$ in $\Cl(T_p^\ast M,g_p^\ast)$, the space
$\R\oplus L_p$ is a subalgebra of $\Cl(T_p^\ast M, g_p^\ast)$ and each
choice of orientation of $T_pM$ determines an isomorphism from this
subalgebra to $\C$.

\begin{definition}
	The {\bf complex orientation bundle} $L^c$ of $(M,g)$ is the bundle of
	unital subalgebras of $\Cl(T^\ast M, g^\ast)$ whose fiber at $p\in M$
	equals $L^c_{p}\eqdef \R \oplus L_p$.
\end{definition}

\noindent It is clear from the above that $L^c$ is a $\C$-bundle whose
imaginary line sub-bundle equals $L$. Thus $\w_c(L^c)=\w_1(L)$ and
hence $L^c$ is trivial iff $M$ is orientable. Notice that $L^c$ is
orientable (i.e. the structure group of $L^c$ reduces from $\O(2)$ to
$\SO(2)$) iff $L^c$ is trivial.

\begin{prop}
	$\gamma$ restricts to an isomorphism from $L^c$ to $\Sigma$. Moreover,
	the following statements are equivalent:
	\begin{enumerate}[(a)]
		\itemsep 0.0em
		\item $M$ is orientable.
		\item The Schur bundle $\Sigma\subset \End_{\mathbb{R}}(S)$ is
		trivial.
		\item The structure group of $S$ reduces from $\Spin^o(V,h)$ to
		$\Spin^c(V,h)$.
	\end{enumerate}
\end{prop}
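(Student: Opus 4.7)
The plan is to anchor the entire argument on the identification $\gamma|_{L^c}\colon L^c\stackrel{\sim}{\to}\Sigma$, from which the equivalence (a)$\Leftrightarrow$(b) is essentially automatic, and then to treat (a)$\Leftrightarrow$(c) via the short exact sequence \eqref{Spincseq} combined with equivariance of the adapted $\Spin^{o}$ structure.

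First I would verify the isomorphism $\gamma|_{L^c}\simeq \Sigma$. Since $p-q\equiv_8 3,7$ forces $d$ to be odd, at every $p\in M$ the Clifford volume element $\nu_p$ is central in $\Cl(T^\ast_pM,g^\ast_p)$, so $\gamma_p(\nu_p)$ commutes with every $\gamma_p(v)$ and thereby lies in $\Sigma_p$. The relation $\nu_p^2=-1$ gives $\gamma_p(\nu_p)^2=-\id_{S_p}$, so $\gamma_p(\nu_p)$ is one of the two conjugate complex structures forming the natural semilinear structure of $S_p$ discussed in Section \ref{sec:elementarypinor}. Consequently $\gamma_p$ sends $L^c_p=\R\oplus L_p$ isomorphically (as $\R$-algebras, hence as $\C$-bundles) onto $\R\id_{S_p}\oplus\R\gamma_p(\nu_p)=\Sigma_p$, and smoothness in $p$ upgrades this to a bundle isomorphism of $\C$-bundles.

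For (a)$\Leftrightarrow$(b), recall that the imaginary sub-bundle of $L^c$ is precisely $L=\det T^\ast M$. Since a $\C$-bundle is trivial iff its imaginary sub-bundle is trivial (equivalently, has vanishing characteristic class $\w_c$, which equals $\w_1$ of the imaginary line bundle), the isomorphism from the previous step yields $\Sigma\simeq \R_M\oplus \R_M$ iff $\det T^\ast M$ is trivial, iff $M$ is orientable.

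For (a)$\Leftrightarrow$(c), I would use the exact sequence of pointed sets induced in $\check{\text{C}}$ech cohomology by \eqref{Spincseq}: the $\Spin^o_\alpha$-bundle $Q$ reduces to a $\Spin^c$ structure iff $\tilde{\eta}_{\alpha\ast}([Q])\in H^1(M,\Z_2)$ is trivial, i.e., iff the associated principal $\Z_2$-bundle is trivial. The task is then to identify this $\Z_2$-bundle with the orientation double cover of $(M,g)$. From Definition \ref{def:elementaryreps} one has $\tlambda_\alpha(g)=\tilde{\eta}_\alpha(g)\lambda_\alpha(g)$ with $\lambda_\alpha(g)\in \SO(V,h)$; since $d$ is odd, $\det\tlambda_\alpha(g)=\tilde{\eta}_\alpha(g)^d=\tilde{\eta}_\alpha(g)$. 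The $\tlambda_\alpha$-equivariant map $\tilde{\Lambda}\colon Q\to P_{\check{\O}}(M,g)$ therefore descends, upon quotienting by $\Spin^c(V,h)\subset\Spin^o_\alpha(V,h)$ on the source and by $\SO(V,h)\subset \O(V,h)$ on the target, to a principal $\Z_2$-bundle map identifying $\tilde{\eta}_{\alpha\ast}(Q)$ with the orientation double cover of $(M,g)$. The triviality of the latter is equivalent to orientability of $M$, closing the loop.

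The only point needing genuine care is the identification of the $\Z_2$-bundle $\tilde{\eta}_{\alpha\ast}(Q)$ with the orientation double cover of $(M,g)$; everything else is a routine application of results already established in the paper. Once the relation $\det\tlambda_\alpha=\tilde{\eta}_\alpha$ (valid in odd dimension) is in hand, the equivariance of $\tilde{\Lambda}$ makes the identification functorial and immediate.
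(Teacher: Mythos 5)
Your proposal is correct. The first half (the isomorphism $\gamma|_{L^c}\colon L^c\stackrel{\sim}{\to}\Sigma$ and the equivalence (a)$\Leftrightarrow$(b)) coincides with the paper's argument, except that you justify surjectivity onto $\Sigma$ via centrality of the volume element and the computation $\S=\R\,\id\oplus\R J$ from Section \ref{sec:elementarypinor}, whereas the paper simply invokes faithfulness of $\gamma_p$ together with $\gamma(L^c)=\Sigma$; both are fine. For the last equivalence the two arguments share the same backbone --- the exact sequence of pointed sets induced by \eqref{Spincseq}, which says that $Q$ reduces to $\Spin^c(V,h)$ iff the class $\tilde{\eta}_{\alpha\ast}([Q])\in H^1(M,\Z_2)$ vanishes --- but differ in how that class is computed. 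The paper identifies $\tilde{\eta}_{\alpha\ast}([Q])$ with $\w_c(\Sigma)$, using Proposition \ref{prop:basicreps} to recognize the representation $\Ad_\S$ defining $\Sigma$ as the grading morphism, and thus proves (b)$\Leftrightarrow$(c) directly. You instead identify $\tilde{\eta}_{\alpha\ast}([Q])$ with the class of the orientation double cover, using the odd-dimensional relation $\det\circ\tlambda_\alpha=\tilde{\eta}_\alpha$ and the descent of the equivariant map $\tilde{\Lambda}$ to the quotients $Q/\Spin^c(V,h)\to P_{\O}(M,g)/\SO(V,h)$ (which works because $\tlambda_\alpha^{-1}(\SO(V,h))=\widehat{\Spin}^c(V,h)$), thereby proving (a)$\Leftrightarrow$(c) directly. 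Your route has the minor advantage of not relying on Proposition \ref{prop:basicreps}, at the cost of the small check that the descended map is an isomorphism of principal $\Z_2$-bundles; the paper's route keeps the Schur bundle at the center of the argument, which matches the structure of the subsequent propositions on reductions of the structure group. Either way the logical loop closes, since (a)$\Leftrightarrow$(b) is already in hand.
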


\begin{proof}
	Since $\gamma_p:\Cl(T_p^\ast M, g_p^\ast)\rightarrow \End_\R(S)$ is
	faithful for all $p\in M$ and $\gamma(L)=\Sigma$, it is clear that
	$\gamma$ restricts to an isomorphism from $L_c$ to
	$\Sigma$. Equivalence of (a) and (b) is now immediate.  To prove
	equivalence of (b) and (c), recall that $\Sigma$ is associated to $Q$
	through the representation $\Ad_\S$, which, by Proposition
	\ref{prop:basicreps}, is equivalent with the representation
	$\Ad_+:\Spin^o(V,h)\rightarrow \Aut_\Alg(\Cl_2^+(\alpha))\simeq \Z_2$.
	The later can be identified with the grading morphism
	$\eta':\Spin^o(V,h)\rightarrow \Z_2$. The exact sequence
	\eqref{Spincseq} induces an exact sequence of pointed sets:
	\be
	H^1(M,\Spin^c(V,h))\stackrel{j_\ast}\longrightarrow H^1(M,\Spin^o(V,h))\stackrel{\eta'_\ast}{\longrightarrow} H^1(M,\Z_2)~~,
	\ee
	where $j_\ast$ is the map induced by the inclusion of $\Spin^c(V,h)$
	into $\Spin^o(V,h)$ and $\eta'_\ast([Q])=\w_c(\Sigma)$. The
	$\C$-bundle $\Sigma$ is trivial iff $\w_c(\Sigma)=0$, which amounts to
	$[Q]\in \ker \eta'_\ast$ i.e. $[Q]\in \im j_\ast$. The conclusion
	follows since the condition $[Q]\in \im j_\ast$ is equivalent with the
	reduction of structure group stated at (c).  
\end{proof}

\begin{prop}
	The following statements are equivalent:
	\begin{enumerate}[(a)]
		\itemsep 0.0em
		\item The anticommutant bundle $A\subset \End_{\mathbb{R}}(S)$ is
		trivial.
		\item The structure group of $S$ reduces from $\Spin^o(V,h)$ to
		$\Spin(V,h)$.
	\end{enumerate}
	In this case, $M$ is orientable and both the Schur bundle $\Sigma$ and
	the twist bundle $T$ are also trivial.
\end{prop}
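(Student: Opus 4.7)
The plan is to mirror the cohomological argument of the previous proposition, but applied to the anticommutant bundle. By Proposition \ref{prop:basicreps}, $A$ is associated to the principal $\Spin^o(V,h)$-bundle $Q$ through the representation $\Ad_\A$, which is equivalent to the characteristic representation $\mu:\Spin^o(V,h)\to \O(2)$. Since $\O(2)$ acts faithfully on $\mathbb{R}^2$, the rank-two vector bundle $A$ is trivial if and only if the associated principal $\O(2)$-bundle $P_\mu \eqdef Q\times_\mu \O(2)$ is trivial.

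Next, the equivalence (a) $\Leftrightarrow$ (b) follows from the third short exact sequence in \eqref{ses},
\[
1\to \Spin(V,h)\xrightarrow{i} \Spin^o(V,h)\xrightarrow{\mu}\O(2)\to 1,
\]
which induces the exact sequence of pointed sets
\[
H^1(M,\Spin(V,h))\xrightarrow{i_\ast} H^1(M,\Spin^o(V,h))\xrightarrow{\mu_\ast} H^1(M,\O(2)).
\]
Then $[P_\mu]=\mu_\ast([Q])$ is trivial iff $[Q]\in\ker\mu_\ast=\operatorname{im} i_\ast$, i.e.\ iff the structure group of $Q$ reduces along $i$ to $\Spin(V,h)$.

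For the remaining assertions, I would use that the $\Z_2$-grading morphism $\tilde\eta_\alpha$ of $\Spin^o(V,h)$ factors through $\mu$ via the determinant, in view of \eqref{etarep} (equivalently $\det\circ\Ad_0^{(2)}=\eta_\alpha$). Hence, if $P_\mu$ is trivial, so is its associated real determinant line bundle. But this determinant line bundle is naturally isomorphic to the imaginary line sub-bundle $\mI(\Sigma)$ of the Schur $\C$-bundle $\Sigma$: both are associated to $Q$ via the grading representation $\tilde\eta_\alpha$ (the latter by the equivalence $\Ad_\S\simeq \Ad_+$ from Proposition \ref{prop:basicreps} and the identification of $\Ad_+$ with the grading morphism recorded after \eqref{AdPlus}). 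Triviality of $\mI(\Sigma)$ forces $\Sigma\simeq \mathbb{R}_M\oplus \mI(\Sigma)$ to be trivial as a $\C$-bundle, and then the previous proposition yields orientability of $M$. Finally $T=\Sigma\oplus A$ is trivial as a direct sum of trivial bundles.

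The main bookkeeping obstacle is to keep track, across the equivalences of representations in Proposition \ref{prop:basicreps}, of the identification of $\det\circ\mu_\alpha$ with the grading morphism $\tilde\eta_\alpha$, and of the associated line bundle with $\mI(\Sigma)$; once these identifications are in hand, the cohomological step is routine.
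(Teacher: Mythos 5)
Your proof is correct and follows the paper's argument essentially verbatim for the equivalence (a)$\Leftrightarrow$(b): identify $A$ with the bundle associated to $Q$ via $\Ad_\A\simeq\mu$, then use the exact sequence of pointed sets induced by the third sequence in \eqref{ses}. For the final assertions the paper simply observes that (b) makes everything clear (all of $\Sigma$, $A$, $T$ are associated through representations that are trivial on $\Spin(V,h)$); your alternative derivation from (a) via $\det P_\mu\simeq \mI(\Sigma)$ is also valid, just slightly more roundabout.
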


\begin{proof}
	The vector bundle $A$ is associated to $Q$ through the representation
	$\Ad_\A$ which, by Proposition \ref{prop:basicreps}, is equivalent
	with the untwisted characteristic representation
	$\mu:\Spin^o(V,h)\rightarrow \O(2)$. The third exact sequence in
	\eqref{ses} induces an exact sequence of pointed sets:
	\be
	H^1(M,\Spin(V,h)\stackrel{j_\ast}\longrightarrow H^1(M,\Spin^o(V,h))\stackrel{\mu_\ast}{\longrightarrow} H^1(M,\O(2))
	\ee
	where $j_\ast$ is the map induced by the inclusion of $\Spin(V,h)$
	into $\Spin^o(V,h)$ and $\mu_\ast([Q])$ equals the class of the
	principal $\O(2)$-bundle corresponding to $A$.  This shows that $A$ is
	trivial iff $Q\in \ker\mu_\ast=\im j_\ast$, i.e. iff the reduction of
	structure group stated at (b) holds. It is clear that (b) implies the
	remaining statements.   
\end{proof}

\begin{prop}
	The following statements are equivalent:
	\begin{enumerate}[(a)]
		\itemsep 0.0em
		\item The principal $\U(1)$-bundle $\cD$ is trivial.
		\item The structure group of $S$ reduces from $\Spin^o(V,h)$ to
		${\widehat \Pin}(V,h)\simeq \Pin(V,\alpha_{p,q} h)$.
	\end{enumerate}
	In this case, the structure group of $S$ reduces further from
	${\widehat \Pin}(V,h)$ to $\Spin(V,h)$ iff the Schur bundle $\Sigma$
	is also trivial.
\end{prop}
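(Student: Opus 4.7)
The plan is to mimic the strategy of the two preceding propositions: recast each reduction-of-structure-group question as the existence of a global section of an appropriate quotient bundle, and then identify that quotient bundle with a natural object already discussed in the paper.

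For the equivalence (a) $\iff$ (b), I would use the identification ${\cD}\simeq Q/{\widehat \Pin}(V,h)$ recorded in the paragraph preceding the proposition, together with the remark after Proposition \ref{prop:PinSubgroup} that $\Spin^o_\alpha(V,h)/{\widehat \Pin}_\alpha(V,h)$ is a $\U(1)$-torsor. This makes ${\cD}$ a principal $\U(1)$-bundle whose global smooth sections are in bijection with reductions of the structure group of $Q$ from $\Spin^o(V,h)$ to the closed subgroup ${\widehat \Pin}(V,h)$. Since a principal $\U(1)$-bundle is trivial iff it admits a global section, (a) and (b) are equivalent.

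For the ``in this case'' claim, I would proceed as follows. Assuming (a)/(b) holds, fix a ${\widehat \Pin}(V,h)$-reduction $Q'\subset Q$. Since ${\widehat \Spin}(V,h)$ has index two in ${\widehat \Pin}(V,h)$, a further reduction of $Q'$ from ${\widehat \Pin}(V,h)$ to ${\widehat \Spin}(V,h)\simeq \Spin(V,h)$ corresponds to a global section of the principal $\Z_2$-bundle $P\eqdef Q'/{\widehat \Spin}(V,h)$, and such a section exists iff $P$ is trivial. The key step I envision is then to identify $P$ with the principal $\Z_2$-bundle $P_{\Z_2}(\Sigma)$ of imaginary units of the Schur bundle $\Sigma$. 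For this I would combine item 2 of Proposition \ref{prop:basicreps} with formula \eqref{AdPlus}: they jointly imply that the restriction of $\Ad_\S\colon \Spin^o(V,h)\to \Aut_\Alg(\S)\simeq \Z_2$ to ${\widehat \Pin}(V,h)$ factors as the canonical quotient map ${\widehat \Pin}(V,h)\to {\widehat \Pin}(V,h)/{\widehat \Spin}(V,h)\simeq \Z_2$ followed by the isomorphism sending the nontrivial class to the conjugation automorphism of $\S \simeq \C$. Hence $\Sigma\simeq P\times_{\mathrm{conj}}\C$ as $\C$-bundles, and the classification of $\C$-bundles recalled after Definition \ref{def:int_coeff_bundle} yields $\Sigma$ trivial iff $P$ trivial.

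The main technical point is the identification $\Sigma\simeq P\times_{\mathrm{conj}}\C$ as $\C$-bundles; however, this is essentially bookkeeping using the restrictions of $\Ad_\S^{(2)}$ already computed in Proposition \ref{prop:basicreps} and equation \eqref{AdPlus}, so no substantial new calculation should be needed.
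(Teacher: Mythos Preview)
Your argument for (a)$\iff$(b) is exactly the paper's: identify $\cD$ with $Q/{\widehat \Pin}(V,h)$, use that reductions to a closed subgroup correspond to sections of the quotient, and that a principal $\U(1)$-bundle admits a section iff it is trivial.

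For the ``in this case'' clause your route differs slightly from the paper's. The paper argues via orientability of $M$: once the structure group has been reduced to ${\widehat \Pin}(V,h)\simeq \Pin(V,\alpha_{p,q}h)$, further reduction to $\Spin(V,h)$ is the standard passage from a $\Pin$ structure to a $\Spin$ structure, which happens iff $M$ is orientable; then one invokes the earlier proposition ($M$ orientable $\iff$ $\Sigma$ trivial, since $\Sigma\simeq L^c$). You instead work internally with the $\Z_2$-quotient $P=Q'/{\widehat \Spin}(V,h)$ and identify $\Sigma\simeq P\times_{\conj}\C$ directly from the restriction of $\Ad_\S$ to ${\widehat \Pin}(V,h)$, using Proposition~\ref{prop:basicreps} and \eqref{AdPlus}. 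Both arguments are correct; yours is a bit more self-contained (it does not pass through orientability of $M$ or through $L^c$), while the paper's is shorter because it recycles the previously established equivalence $\Sigma$ trivial $\iff$ $M$ orientable.
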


\begin{proof} The reduction stated at (b) takes place iff the fiber bundle
	$Q/{\widehat \Pin}(V,h)$ (which is isomorphic with $\fD$) admits a
	section. Since this is a principal $\U(1)$-bundle, it admits a
	section iff it is trivial i.e. iff (b) holds. In this case, further
	reduction to $\Spin(V,h)$ occurs iff $M$ is orientable. This amounts
	to triviality of the complex orientation bundle of $M$, which is
	isomorphic with the Schur bundle.   
\end{proof}

\begin{remark}
	Recall that ${\hat D}\in {\widehat \Pin}(V,h)$ generates the cyclic
	group \eqref{Gammao}. This gives rise to a short exact sequence:
	\be
	1\longrightarrow \Gamma_{o,\alpha} \longrightarrow {\widehat \Pin}(V,h)\longrightarrow G\rightarrow 1~~,
	\ee
	where: 
	\begin{equation*}
	G=\twopartdef{\Spin(V,h)/\Z_2=\SO(V,h)}{p-q\equiv_8 3}{\Spin(V,h)}{p-q\equiv_8 7}\, .
	\end{equation*}
	The twisted vector representation $\tlambda: \Spin^o(V,h)\rightarrow
	\O(V,h)$ restricts to the twisted vector representation\footnote{We
		assume that we have chosen a vector $v\in V$ such that $\epsilon_{v}
		= 1$ to define the isomorphism ${\widehat
			\Pin}_{\alpha_{p,q}}(V,h)\simeq \Pin(V,\alpha_{p,q} h)$. We leave
		to the reader the details of the case $\epsilon_{v} = -1$.}
	$\tAd_0:{\widehat \Pin}(V,h)\simeq \Pin(V,\alpha_{p,q} h)\rightarrow
	\O(V,h)$, which descends to a surjective group morphism from
	$s:G\rightarrow \O(V,h)/\{-\id_V,\id_V\}\simeq \SO(V,h)$.
	Accordingly, the principal $G$-bundle $Q'\eqdef Q/\Gamma_{o,\alpha}$
	covers the principal $\SO(V,h)$-bundle $P'_{\SO(V,h)}(M,g)\eqdef
	P_{\O(V,h)}(M,g)/\Z_2$.  When $p-q\equiv_8 7$, we have
	$\alpha_{p,q}=+1$ and $s$ coincides with the double cover
	$\Ad_0:G=\Spin(V,h)\rightarrow \SO(V,h)$ given by the vector
	representation of $\Spin(V,h)$. In this case, the principal
	$\Spin(V,h)$-bundle $Q'$ covers the bundle $P'_{\SO(V,h)}(M,g)$. When
	$M$ is oriented, $P'_{\SO(V,h)}(M,g)$ is isomorphic with the special
	pseudo-orthogonal coframe bundle $P_{\SO(V,h)}$ and $Q'$ becomes a
	spin structure on $(M,h)$. When $p-q\equiv_8 3$, we have
	$\alpha_{p,q}=-1$ and $s$ is an isomorphism of groups
	$s:G=\SO(V,h)\rightarrow \SO(V,h)$. In this case, the principal
	$\SO(V,h)$-bundle $Q$ is isomorphic with $P'_{\SO(V,h)}(M,g)$.
\end{remark}

\subsection{Majorana spinor bundles and Majorana spinor fields when $p-q\equiv_8 7$}

Let us assume that $p-q\equiv_8 7$ (thus $\alpha_{p,q}=+1)$ and that the
bundle $\cD$ is trivial, i.e. that the structure group reduces to
${\widehat \Pin}(V,h)=\Pin(V,h)$. Then any global section $D\in
\Gamma(M,\cD)$ satisfies $D^2=+\id_S$ and hence gives an almost
product structure on the vector bundle $S$. This allows one to define
{\em spin projectors} $\cP_\pm\eqdef \frac{1}{2}(\id_S\mp D)\in
\Gamma(M,\End(S))$, which satisfy $\cP_\pm^2=\cP_\pm$, $\cP_+\oplus
\cP_-=\id_S$ and $\cP_\pm \cP_\mp=0$. The vector sub-bundles
$S^\pm\eqdef \cP_\pm(S)$ have fibers at $p\in M$ given by:
\be
S^\pm_p=\{x\in S_p|D_px=\pm x\}\, ,
\ee
and give a direct sum decomposition $S=S^+\oplus S^-$. Since $D_p$
anticommutes with $\gamma_p(\xi)$ for all $\xi\in T^{\ast}_p M$, we
have $\gamma_p(\xi)(S^\pm)= S^\mp$ and
$\gamma_p(\Cl^+(M,g))(S^\pm)=S^\pm$. Hence the bundle morphisms
$\gamma_\ev^\pm:\Cl^+(M,g)\rightarrow End_{\mathbb{R}}(S^\pm)$
defined through:
\be
\gamma^\pm_{\ev,p}(w)\eqdef \gamma_p(w)|_{S^\pm}\, , \quad \forall w\in \Cl^+(M,g)_{p}\, ,
\ee
are unital morphisms of bundles of algebras, making $S^\pm$ into
elementary real {\em spinor} bundles i.e. bundles of simple 
modules over the {\em even} sub-bundle $\Cl^+(M,g)$ of $\Cl(M,g)$. This
allows us to define ``Majorana spinors'' when $(M,g)$ admits a $\Pin(V,h)$
structure, even though $M$ may be unorientable.

\begin{definition}
	The real vector bundle $S^+$ is called the {\bf elementary bundle of
		Majorana spinors} defined by the {\bf global conjugation} $D\in
	\Gamma(M,\fD)$ and by the elementary real pinor bundle $S$. A global
	section $\epsilon\in \Gamma(M,S^+)$ is called an {\bf ordinary
		Majorana spinor field} while a global section $\epsilon\in
	\Gamma(M,S^-)$ is called an {\bf imaginary Majorana spinor field}.
\end{definition}

\noindent 
Any global section $\epsilon\in \Gamma(M,S)$ decomposes uniquely as
$\epsilon = \epsilon^+\oplus \epsilon^-$ with $\epsilon^\pm\eqdef
\cP_\pm(\epsilon)\in \Gamma(M,S^\pm)$ and hence $\epsilon$ can be
identified with a pair consisting of one ordinary and one imaginary
Majorana spinor field.

\begin{prop}
	There exists a natural isomorphism of semilinear vector bundles: 
	\be
	f:L^c\otimes S^+\stackrel{\sim}{\rightarrow} S\, .
	\ee
\end{prop}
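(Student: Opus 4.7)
The plan is to define $f$ fiberwise by Clifford multiplication: for $\ell \in L^c_p$ and $s \in S^+_p$, set
\[
f_p(\ell \otimes s) \eqdef \gamma_p(\ell)(s)\in S_p.
\]
Since $\gamma$ is a morphism of bundles of algebras and $L^c \subset \Cl(M,g)$, this expression is manifestly $\R$-bilinear in $(\ell,s)$, so it descends to a well-defined morphism of real vector bundles $f:L^c \otimes_\R S^+ \to S$. The construction makes no choices beyond the data $(S,\gamma,D)$, so naturality in the pinor bundle is automatic.

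To show that $f$ is a fiberwise isomorphism I would use the orthogonal decomposition $L^c = \mR(L^c) \oplus \mI(L^c) \simeq \R_M \oplus L$, which turns $L^c \otimes_\R S^+$ into $S^+ \oplus (L \otimes_\R S^+)$. On the first summand, $f$ restricts to the canonical inclusion $S^+ \hookrightarrow S$. On the second summand, pick locally a normalized volume form $\nu \in L_p$; then $\nu^2 = -1$ in $\Cl(T_p^\ast M, g_p^\ast)$, and since $d$ is odd $\nu$ anticommutes with every element of $T_p^\ast M$, hence $\gamma_p(\nu)$ anticommutes with $D_p$. Therefore $\gamma_p(\nu)$ maps $S^+_p$ into $S^-_p$ and, because $\gamma_p(\nu)^2 = -\id_{S_p}$, the restriction $\gamma_p(\nu)|_{S^+_p}:S^+_p \to S^-_p$ is an $\R$-linear isomorphism. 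The map $\nu \otimes s \mapsto \gamma_p(\nu)(s)$ thus gives an isomorphism $L_p \otimes_\R S^+_p \stackrel{\sim}{\to} S^-_p$ that is independent of the choice of sign of $\nu$ (since $(-\nu)\otimes s$ is sent to $-\gamma_p(\nu)(s)$). Combining, $f_p$ realizes the canonical splitting $S_p = S^+_p \oplus S^-_p$ and is therefore bijective.

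It remains to check that $f$ respects the semilinear structures. The semilinear structure on $S$ is by definition the one determined by the Schur bundle $\Sigma \subset End_\R(S)$, and we already observed that $\gamma$ restricts to an isomorphism of $\C$-bundles $L^c \stackrel{\sim}{\to} \Sigma$. On the other hand, $L^c$ acts on $L^c \otimes_\R S^+$ by left multiplication on the first factor, which embeds $L^c$ as a sub-bundle of unital subalgebras of $End_\R(L^c \otimes_\R S^+)$ isomorphic to $\C$; this is precisely the $\C$-bundle determining the semilinear structure on $L^c \otimes_\R S^+$. A direct computation gives
\[
f_p\bigl(\ell'(\ell \otimes s)\bigr) = f_p\bigl((\ell'\ell)\otimes s\bigr) = \gamma_p(\ell'\ell)(s) = \gamma_p(\ell')\bigl(f_p(\ell \otimes s)\bigr),
\]
so $f$ intertwines the $L^c$-module structure on the source with the $\Sigma$-module structure on the target through the isomorphism $\gamma|_{L^c}:L^c \stackrel{\sim}{\to} \Sigma$. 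This identifies the $\C$-subalgebras of $End_\R$ defining the two semilinear structures, whence $f$ is an isomorphism of semilinear vector bundles.

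The only mild subtlety, which I expect to be the ``hard point'' requiring a careful sign check, is the parity statement that $\gamma_p(\nu)$ anticommutes with $D_p$ in odd dimension: one must verify that the product of $d$ cotangent vectors (each anticommuting with $D$) anticommutes with $D$ precisely when $d$ is odd, and then that the resulting map $L \otimes_\R S^+ \to S^-$ is surjective. Everything else is a bookkeeping exercise in the splitting $L^c = \R \oplus L$ and in the compatibility of $\gamma$ with algebra multiplication.
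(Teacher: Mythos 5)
Your proposal is correct and follows essentially the same route as the paper: define $f$ fiberwise by Clifford multiplication of $L^c\subset \Cl(M,g)$ on $S^+$, use $J_p(S^\pm)=S^\mp$ to see that $f_p$ realizes the splitting $S_p=S^+_p\oplus S^-_p$, and match the two $\C$-subalgebras of $End_\R$ to handle the semilinear structures. One sentence is wrong as stated, though: in odd dimension the volume element $\nu$ \emph{commutes} with every element of $T_p^\ast M$ (that is exactly why $J_p=\gamma_p(\nu_p)$ lies in the Schur bundle $\Sigma_p$), so the claim that ``$\nu$ anticommutes with every element of $T_p^\ast M$, hence $\gamma_p(\nu)$ anticommutes with $D_p$'' is both false and a non sequitur; the correct reason $D_pJ_p=-J_pD_p$ is that $D_p$ anticommutes with each $\gamma_p(e_i)$ by definition of $\cD$ and $J_p$ is a product of an odd number of such factors (equivalently, $D_p$ is $\Sigma_p$-antilinear), which is precisely the argument you do give correctly in your closing paragraph.
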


\begin{proof} 
	For any $p\in M$, $z\in L^c_p=\alpha+\beta \nu_p\in \R\oplus \R \nu_p$
	(where $\alpha,\beta\in \R$) and $x\in S_p^+$, let $f_p:L_p^c\otimes
	S^+_p\rightarrow S_p$ be the linear map defined through:
	\ben
	\label{fp}
	f_p(z\otimes x)\eqdef (\alpha \id_S+\beta J_p)x~~,
	\een
	where $J_p\eqdef \gamma_p(\nu_p)$ and $\nu_p\in P_{\Z_2}(M,g)_p$ is
	one of the two Clifford volume forms of $(T_p M, g_p)$. This is
	well-defined since changing $\nu_p$ in $-\nu_p$ takes $\beta$ to
	$-\beta$ and $J_p$ to $-J_p$, leaving the operator $\beta
	J_p\in \End_\R(S_p)$ unchanged. Since $D$ is $\Sigma$-antilinear, we
	have $D_pJ_p=-J_pD_p$, hence $J_p(S^\pm)=S^\mp$. Thus $J_p(x)\in S^-$
	and \eqref{fp} gives:
	\be
	\cP_+(f_p(z\otimes x))=\alpha x~~,~~\cP_-(f_p(z\otimes x))=\beta J_p x~~.
	\ee
	Thus $f_p=(\cP_+\circ f_p)\oplus (\cP_-\circ f_p)$ (where
	$\cP_\pm\circ f_p:L^c_p\otimes S^+_p\rightarrow S_p^\pm$).  Since
	$J_p$ gives a bijection from $S^+_p$ to $S^-_p$, this implies that
	$f_p$ is bijective and hence that $f$ is an isomorphism of vector
	bundles. It is clear by construction that $f$ takes the semilinear
	structure of $L^c\otimes S^+$ into that of $S$.  \end{proof}

\begin{remark} 
	Recall that the complexification of the real vector bundle $S^+$ is
	the complex vector bundle $(S^+)^\C\eqdef \C_M\otimes_\R S^+$, where
	$\C_M$ is the trivial complex line bundle over $M$. The semilinear
	vector bundle $L^c\otimes_\C S$ is an analogue of this construction
	where $\C_M$ has been replaced by the complex orientation line bundle
	$L^c$ of $M$. The proposition above shows that the semilinear vector
	bundle $S$ is isomorphic with the ``twisted complexification'' $L_c\otimes_\R
	S^+$ of $S^+$. When $M$ is oriented, we can set $\epsilon_R\eqdef
	\epsilon^+$ and define $\epsilon_I\eqdef -J\epsilon^- \in
	\Gamma(M,S^+)$, where $J=\gamma(\nu)$ and $\nu$ is the normalized
	volume form of $(M,g)$. In that case, we have $L^c\simeq \C_M$,
	$S\simeq \C_M\otimes S^+=(S^+)^\C$ and $\epsilon =
	\epsilon_R+J\epsilon_I$, which means that we can identify a global
	section of $S$ with a pair of two ordinary Majorana spinor
	fields. Such an identification is {\em not} possible when $M$ is
	unorientable. The fact that (when $p-q\equiv_8 7$) one can define
	Majorana spinor fields when $(M,g)$ is unorientable but admits a
	$\Pin$ structure is important when considering global aspects of
	certain supergravity theories, such as eleven-dimensional supergravity
	(including its Euclidean version). Table \ref{table:sugra} lists some
	dimensions and signatures which are of interest in that context.
\end{remark}

\begin{table}[H]
	\centering
	\resizebox{1.3\textwidth}{!}{
		\hskip 3em \begin{minipage}{\textwidth}
			\begin{tabular}{|c|c|c|c|c|c|}
				\hline
				Type&$d$&$p$&$q$&$\begin{array}{c} p-q\\ {\rm mod}~8 \end{array}$&$\alpha_{p,q}$  \\
				\hline
				\hline
				\multirow{3}{*}{Riemannian}&\cellcolor{Cyan}$3$& \cellcolor{Cyan}$3$&\cellcolor{Cyan}$0$&\cellcolor{Cyan}$3$&\cellcolor{Cyan}$-1$\\
				\hhline{~-----}
				&$7$&$7$&$0$&$7$&$+1$\\
				\hhline{~-----}
				&\cellcolor{Cyan}$11$ &\cellcolor{Cyan}$11$&\cellcolor{Cyan}$0$&\cellcolor{Cyan}$3$&\cellcolor{Cyan}$-1$  \\
				\hline 
				\multirow{3}{*}{Lorentzian mostly minus}&$3$&$1$&$2$&$7$&$+1$\\
				\hhline{~-----}
				&\cellcolor{Cyan}$7$&\cellcolor{Cyan}$1$&\cellcolor{Cyan}$6$&\cellcolor{Cyan}$3$&\cellcolor{Cyan}$-1$ \\
				\hhline{~-----}
				&$11$&$1$&$10$&$7$&$+1$\\
				\hline 
				\multirow{2}{*}{Lorentzian mostly plus} &\cellcolor{Cyan}$5$&\cellcolor{Cyan}$4$&$1$\cellcolor{Cyan}&\cellcolor{Cyan}$3$ &\cellcolor{Cyan}$-1$ \\
				\hhline{~-----}
				&$9$&$8$&$1$&$7$&$+1$\\
				\hline 
			\end{tabular}
	\end{minipage}}
	\vskip 0.2in
	\caption{Dimensions $d\leq 11$ and Riemannian or Lorentzian signatures
		(mostly minus or mostly plus) which belong to the ``complex case''
		$p-q\equiv_8 3,7$. Majorana spinor fields can be defined when
		$\alpha_{p,q}=+1$ and when $(M,g)$ admits a $\Pin^+$ structure. Cases
		with $\alpha_{p,q}=-1$ are displayed in blue for clarity}
	\label{table:sugra}
\end{table}

\section{Some examples of manifolds admitting adapted $\Spin^{o}$ structures}
\label{sec:examples}

In this section we construct several examples of manifolds that admit
adapted $\Spin^{o}$ structures. We start with a simple result on the
existence of $\Spin^{o}$ structures induced by a number of
classical spinorial structures.

\begin{prop} Every pseudo-Riemannian manifold of signature $(p,q)$
	satisfying the condition $p-q \equiv_{8}3,7$ and admitting a $\Spin$,
	$\Pin$ or $\Spin^{c}$ structure also admits an adapted $\Spin^{o}$
	structure.
\end{prop}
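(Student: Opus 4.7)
The plan is to invoke Theorem~\ref{thm:equiv}, which states that in signature $p-q\equiv_8 3,7$, a pseudo-Riemannian manifold admits an adapted $\Spin^o$ structure if and only if it carries an elementary real pinor bundle. It therefore suffices to produce such an elementary real pinor bundle from each of the three given spinorial structures, after which the conclusion is immediate.

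Fix an elementary real pinor representation $\gamma_0:\Cl(V,h)\to\End_\R(S_0)$, with associated Schur algebra $\Sigma=\R\id_{S_0}\oplus\R J\subset \End_\R(S_0)$, where $J=\gamma_0(\nu)$ for a Clifford volume form $\nu$. Given a principal $G$-bundle $P_G$ realizing a Spin, Pin, or Spin$^c$ structure on $(M,g)$, I would form the associated real vector bundle $S:=P_G\times_{\rho_G}S_0$, where $\rho_G:G\to\Aut_\R(S_0)$ is defined as follows: for $G=\Spin(V,h)$ or $\Pin(V,h)$, take $\rho_G:=\gamma_0|_G$ (well-defined since $\Spin\subset\Pin\subset\Cl^\times$); for $G=\Spin^c(V,h)=\Spin(V,h)\cdot\U(1)$, set $\rho_G([a,z]):=\gamma_0(a)\,(\cos\theta\,\id_{S_0}+\sin\theta\,J)$ with $z=e^{i\theta}$, using the natural embedding $\U(1)\hookrightarrow\Sigma^\times$ together with the fact that $\Sigma$ centralizes $\gamma_0(\Cl(V,h))$, which ensures compatibility with the identification $\Spin\cap\U(1)=\{\pm 1\}$. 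I would then equip $S$ with the Clifford multiplication $\frC:T^*M\otimes S\to S$ given on representatives by $[p,\xi]\otimes[p,x]\mapsto[p,\gamma_0(\xi)x]$.

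The main remaining task is to verify well-definedness of $\frC$. The Spin case is immediate from the equivariance $\gamma_0(\Ad_0(g)\xi)=\gamma_0(g)\gamma_0(\xi)\gamma_0(g)^{-1}$ for $g\in\Spin$. The Spin$^c$ case additionally uses that the $\U(1)$-factor in $\rho_G$ acts through $\Sigma$, which commutes with $\gamma_0(V)$; this commutation lets the $\U(1)$-factor pass freely through the Clifford product and cancel against its inverse from the opposite representative (so that only the $\Spin$-equivariance remains, which handles the calculation as in the Spin case). The hard part will be the Pin case, where one must reconcile the standard $\tilde\Ad$-equivariant covering $\pi:P_\Pin\to P_\O(M,g)$ with the conjugation action of $\gamma_0|_\Pin$ on $\End_\R(S_0)$ (which intertwines $\Ad$ rather than $\tilde\Ad$ on odd Pin elements); this is resolved by invoking the standard fact that $\gamma_0$, viewed as a morphism of bundles of algebras after passage to the associated bundle $\Cl(M,g)=P_\Pin\times_\Pin\Cl(V,h)$ (with $\Pin$ acting by Clifford-algebra automorphisms extending $\tilde\Ad$ on $V$), is $\Pin$-equivariant, so that the naive formula for $\frC$ descends correctly. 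With well-definedness established in all three cases, $(S,\frC)$ is an elementary real pinor bundle on $(M,g)$, and Theorem~\ref{thm:equiv} then produces the adapted $\Spin^o$ structure.
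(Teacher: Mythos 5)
Your route is genuinely different from the paper's: the paper disposes of all three cases in one line by extending the structure group along the natural embeddings of $\Spin(V,h)$, ${\widehat\Pin}_{\alpha_{p,q}}(V,h)$ and $\Spin^c(V,h)$ into $\Spin^o_{\alpha_{p,q}}(V,h)$ (compatibly with the maps to $\check{\O}(V,h)$), whereas you first build an elementary real pinor bundle and then invoke Theorem \ref{thm:equiv}. That detour is legitimate in principle, and your $\Spin$ and $\Spin^c$ constructions are correct: there the frame bundle is covered via the untwisted vector representation $\Ad_0$, and the extra $\U(1)$-factor in the $\Spin^c$ case acts through the Schur algebra, hence commutes with $\gamma_0(V)$ and drops out of the well-definedness check for $\frC$.

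The $\Pin$ case, however, has a genuine gap: the ``standard fact'' you invoke is false. For an odd element $g\in\Pin^-(V,h)$ and $\xi\in V$ one has
\begin{equation*}
\gamma_0\bigl(\tAd_0(g)\xi\bigr)=\gamma_0\bigl(-g\xi g^{-1}\bigr)=-\,\gamma_0(g)\,\gamma_0(\xi)\,\gamma_0(g)^{-1}\, ,
\end{equation*}
so $\gamma_0$ intertwines the algebra-automorphism extension of $\tAd_0$ with $\Ad(\gamma_0(g))$ only up to the parity sign of $g$. This sign does not disappear after passing to associated bundles: your formula for $\frC$ changes sign under a change of local representative by an odd transition function, hence fails to descend exactly when the $\Pin$ cocycle cannot be pushed into the even part --- i.e.\ when $M$ is non-orientable, which is the only situation in which the $\Pin$ hypothesis adds anything to the $\Spin$ case. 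This sign is precisely what forces the paper to introduce the operator $D$ of Proposition \ref{prop:D}, which is $\S$-antilinear and anticommutes with \emph{all} of $\gamma_0(V)$ (unlike $\gamma_0(v)$ for a single vector $v$), so that in Lemma \ref{lemma:commutingo} the determinant sign coming from the twisted vector representation is cancelled. The representation you need on $S_0$ is therefore not $\gamma_0|_{\Pin}$ but the restriction of the twisted elementary representation $\gamma_o$ to ${\widehat\Pin}_{\alpha_{p,q}}(V,h)\simeq\Pin(V,\alpha_{p,q}\epsilon_v h)$, under which an odd element $av$ acts as $\gamma_0(a)D$ rather than $\gamma_0(a)\gamma_0(v)$. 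Once that replacement is made, the $\Pin$ case is exactly the paper's embedding argument, and the detour through Theorem \ref{thm:equiv} becomes unnecessary.
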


\begin{proof}
	Follows from the natural embeddings of the groups $\Spin$, $\Pin$ and
	$\Spin^{c}$ into $\Spin^{o}_\pm$.  
\end{proof}

\noindent The orthonormal frame bundle of the normal bundle of a
submanifold of codimension two is a principal $\O(2)$-bundle which is
a natural candidate for the characteristic bundle $E$ of a
$\Spin^o_\pm$ structure. This observation leads to the following
result.

\begin{prop}
	\label{prop:cod2spino}
	Let $X$ be a $(2k+1)$-dimensional manifold which is oriented and spin
	and let $Y$ be an embedded $(2k-1)$-dimensional submanifold of $X$.
	
	\begin{enumerate}
		\item Assume that $2k-1 \equiv_{8} 7$ and that $X$ is endowed with a
		Riemannian metric $g$. Then $(Y,g|_Y)$ admits a
		$\Spin^{o}_{+}$ structure whose characteristic $\O(2)$-bundle $E$ is
		the orthogonal frame bundle of the normal bundle to $Y$ in $X$.
		\item Assume that $2k-1 \equiv_{8} 3$ and that $X$ is endowed with a
		negative Riemannian metric $g$. Then $(Y,g|_Y)$ admits a
		$\Spin^{o}_{-}$ structure whose characteristic $\O(2)$-bundle $E$ is
		the orthogonal frame bundle of the normal bundle to $Y$ in $X$.
	\end{enumerate}
\end{prop}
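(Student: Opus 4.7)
The plan is to verify the topological obstructions of Theorem \ref{thm:SpinoObs}, as specialized in the paragraphs on ``positive signature in dimension $d\equiv_{8}7$'' and ``negative signature in dimension $d\equiv_{8}3$'', for the particular choice of $E$ equal to the orthonormal frame bundle of the normal bundle $\nu$ of $Y$ in $X$. I would first note that $\nu$ is of rank $2$ (since $\dim X-\dim Y=2$) and, using that $g$ is Riemannian in case $1$ and negative Riemannian in case $2$, observe that the decomposition $TX|_Y=TY\oplus \nu$ is $g$-orthogonal; this makes $E$ into a principal $\O(2)$-bundle over $Y$ in both cases. Let $P$ denote the principal bundle of pseudo-orthonormal (co)frames of $(Y,g|_Y)$.

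The next step is to invoke the Whitney sum formula $w(TX|_Y)=w(TY)\cdot w(\nu)$ in conjunction with the hypotheses that $X$ is oriented and spin, which force $\w_1(TX|_Y)=0$ and $\w_2(TX|_Y)=0$. Reading off degrees one and two, this yields
\begin{equation*}
\w_1(TY)+\w_1(\nu)=0\, ,\qquad \w_2(TY)+\w_1(TY)\,\w_1(\nu)+\w_2(\nu)=0\, .
\end{equation*}
Since we work over $\Z_2$, the first equation is the identity $\w_1(TY)=\w_1(\nu)$, which translates into $\w_1(P)=\w_1(E)$. Substituting this into the second relation gives $\w_2(TY)+\w_1(TY)^2+\w_2(\nu)=0$, i.e., $\w_2(P)+\w_1(P)^2+\w_2(E)=0$.

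Finally, I would match these two identities with the specializations of Theorem \ref{thm:SpinoObs} listed in Section \ref{sec:spinostructures}. In case $1$ ($d=2k-1\equiv_{8}7$, positive signature) the obstruction reads
\begin{equation*}
\w_1(P)=\w_1(E)\, ,\qquad \w_2(P)+\w_1(P)^2+\w_2(E)=0\, ,
\end{equation*}
which is precisely what we just established, yielding a $\Spin^o_+$ structure on $(Y,g|_Y)$ with characteristic bundle $E$. In case $2$ ($d=2k-1\equiv_{8}3$, negative signature) the obstruction for existence of a $\Spin^o_-$ structure takes exactly the same form, so the same conclusion follows. I expect no serious technical obstacle: the argument is essentially a bookkeeping of Stiefel--Whitney classes, and the only point requiring care is to confirm that the two specializations of Theorem \ref{thm:SpinoObs} in the relevant signatures coincide verbatim with the identity produced by the Whitney sum formula, which is indeed the case.
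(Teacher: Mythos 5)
Your proposal is correct and follows essentially the same route as the paper's proof: both apply the Whitney sum formula to $TX|_Y = TY \oplus NY$, use $\w_1(TX)=\w_2(TX)=0$ to derive $\w_1(TY)=\w_1(NY)$ and $\w_2(TY)+\w_1(TY)^2+\w_2(NY)=0$, and then invoke the signature-specific specializations of Theorem \ref{thm:SpinoObs} with $E$ the orthonormal frame bundle of the normal bundle. Your write-up is if anything slightly more explicit than the paper's (which treats only the case $2k-1\equiv_8 7$ and declares the other analogous), but the mathematical content is identical.
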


\begin{proof}
	We give the proof for $2k-1 \equiv_{8} 7$ since the other case is
	analogous. Let $TY$ and $NY$ denote the tangent and normal bundles to
	$Y$, thus $TX= TY \oplus NY $. Since by assumption we have $\w_{1}(TX)
	= \w_{2}(TX) = 0$, we obtain:
	\begin{equation}
	\w_{1}(TY) = \w_{1}(NY)\, , \qquad \w_{2}(TY) + \w_{1}(NY)^2 + \w_{2}(NY) = 0\, ,
	\end{equation}
	and the Proposition follows from Theorem \ref{thm:SpinoObs}.
	 \end{proof}

\begin{remark}
	Note that the $\Spin^o$ manifolds characterized in the previous
	Proposition will not, in general, admit any $\Spin$, $\Pin$ or
	$\Pin^c$ structure.
\end{remark}

\noindent Another construction related to the previous one is provided
by the following result.

\begin{prop}
	Let $(X,g)$ be a pseudo-Riemannian manifold of dimension $d\equiv_8
	2$, which is orientable and spin and $Y\subset X$ be an embedded
	submanifold of dimension $k$.
	\begin{enumerate}
		\item Assume that $k\equiv_{8} 3$ and that $g$ is
		positive-definite. Then $(Y,g|_Y)$ admits a
		$\Spin^{o}_{-}$ structure iff its orthogonal frame bundle admits a
		$\Spin^{o}_{+}$ structure (when $NY$ is endowed with the metric
		induced from $X$).
		\item Assume that $k\equiv_{8} 7$ and that $g$ is negative-definite.
		Then $(Y,g|_Y)$ admits a $\Spin^{o}_{+}$ structure iff its
		orthogonal frame bundle admits a $\Spin^{o}_{-}$ structure (when
		$NY$ is endowed with the metric induced from $X$).
	\end{enumerate}
\end{prop}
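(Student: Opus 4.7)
The plan is to apply Theorem \ref{thm:SpinoObs} to both $(Y, g|_Y)$ and to the orthonormal frame bundle of $NY$, reducing each side of the equivalence to the existence of a principal $\O(2)$-bundle over $Y$ satisfying an explicit pair of Stiefel-Whitney class identities. I will then use the Whitney sum formula together with the hypothesis that $X$ is orientable and spin to show that the same $\O(2)$-bundle solves both obstruction systems simultaneously.

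First I would check that Theorem \ref{thm:SpinoObs} applies on both sides: since $d \equiv_{8} 2$, both $k$ and $d-k$ are odd, with $d - k \equiv_{8} 7$ in Case 1 and $d - k \equiv_{8} 3$ in Case 2, so the odd-dimensional obstruction theorem is the relevant one. Specializing \eqref{obso} to the four subcases at hand (as already done explicitly in the ``Application to pseudo-Riemannian manifolds'' subsection), the existence of a $\Spin^{o}_{-}$ structure on $(Y,g|_Y)$ in Case 1 (respectively of a $\Spin^{o}_{+}$ structure in Case 2) becomes the statement that there exists a principal $\O(2)$-bundle $E$ over $Y$ with
\[
\w_{1}(TY) = \w_{1}(E), \qquad \w_{2}(TY) = \w_{2}(E),
\]
while the existence of a $\Spin^{o}_{+}$ structure on the orthonormal frame bundle of $NY$ in Case 1 (respectively of a $\Spin^{o}_{-}$ structure in Case 2) becomes the statement that there exists a principal $\O(2)$-bundle $E'$ over $Y$ with
\[
\w_{1}(NY) = \w_{1}(E'), \qquad \w_{2}(NY) + \w_{1}(NY)^{2} + \w_{2}(E') = 0.
\]

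Next, using $TX|_{Y} = TY \oplus NY$ together with the Whitney formula \eqref{wrel} and the spin hypothesis $\w_{1}(TX) = \w_{2}(TX) = 0$, I obtain (mod $2$) the identities
\[
\w_{1}(TY) = \w_{1}(NY), \qquad \w_{2}(TY) = \w_{2}(NY) + \w_{1}(NY)^{2},
\]
which are exactly what is needed to convert the first system of equations into the second by taking $E' = E$, and conversely. This closes the equivalence in both cases.

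The main obstacle is really only bookkeeping: correctly identifying which of the four specializations of Theorem \ref{thm:SpinoObs} applies in each subcase, and verifying that the signature that $g$ induces on the normal bundle $NY$ matches the signature used in the relevant specialization (positive in Case 1, negative in Case 2). Once the obstruction formulas are aligned and the two Whitney identities are in hand, the argument reduces to a one-line comparison of Stiefel-Whitney classes modulo $2$.
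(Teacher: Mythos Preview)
Your proposal is correct and follows essentially the same approach as the paper: both arguments specialize the obstruction conditions of Theorem \ref{thm:SpinoObs} to $TY$ and $NY$, use the Whitney sum formula together with $\w_1(TX)=\w_2(TX)=0$ to derive $\w_1(TY)=\w_1(NY)$ and $\w_2(TY)=\w_2(NY)+\w_1(NY)^2$, and then observe that the same $\O(2)$-bundle $E$ serves as characteristic bundle on both sides. Your write-up is in fact slightly more explicit than the paper's in checking the mod $8$ rank of $NY$ and matching the signature of the induced metric on $NY$ to the correct specialization of \eqref{obso}.
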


\begin{proof}
	We prove the first case, the other being similar. Since $Y$ is
	$k\equiv_{8} 3$-dimensional and $X$ is $d\equiv_{8}2$-dimensional, we
	have $\rk N Y\equiv_8 7$. Assume that $Y$ admits a
	$\Spin^{o}_{-}$ structure, so there exists a rank two vector bundle
	$E$ over $Y$ such that:
	\begin{equation}
	\label{eq:spino3}
	\w_{1}(M) = \w_{1}(E)\, , \qquad \w_{2}(M) = \w_{2}(E)\, .
	\end{equation}
	Since $TX = T Y\oplus T N$ while $X$ is orientable and spin, we have:
	\begin{equation}
	\label{eq:Xspin}
	\w_{1}(TY) = \w_{1}(NY) \, , \quad \w_{2}(TY) + \w_{2}(NY) + \w_{1}(TY)^2 = 0\, .
	\end{equation}
	Using \eqref{eq:spino3}, this gives:
	\begin{equation}
	\label{eq:TNspino}
	\w_{1}(NY) = \w_{1}(E) \, , \quad \w_{2}(NY) + \w_{2}(E) + \w_{1}(NY)^2 = 0\, ,
	\end{equation}
	so $NY$ admits an adapted $\Spin^{o}_{+}$ structure with
	characteristic bundle $E$. Conversely, assume that $NY$ admits a
	$\Spin^{o}_{+}$ structure. Then there exists a rank-two vector bundle
	$E$ such that \eqref{eq:TNspino} holds. Using \eqref{eq:Xspin} in
	\eqref{eq:TNspino} gives:
	\begin{equation}
	\w_{1}(T Y) = \w_{1}(E)\, , \qquad \w_{2}(TY) = \w_{2}(E)\, ,
	\end{equation}
	which implies that $(Y,g|_Y)$ admits a $\Spin^o_-$ structure.
	 \end{proof}

Let us present an explicit family of manifolds admitting adapted $\Spin^{o}$ structures but not admitting $\Pin^c$ structures. 

\begin{definition}
	A pseudo-Riemannian manifold $(M,g)$ is said to be {\bf stably
		$\Spin^{o}$} if $M\times\mathbb{R}^{j}$ admits an adapted
	$\Spin^{o}$ structure for some $j\geq 0$.
\end{definition}
Let:
\begin{equation}
\Gr_{k,n} \eqdef \frac{\O(n)}{\O(k)\times\O(n-k)}
\end{equation}
denote the real Grassmann variety of unoriented $k$-planes in
$\mathbb{R}^{n}$ and $\cL_{k,n}$ denote its tautological vector
bundle. We endow $\Gr_{k,n}$ with a (positive or negative) invariant metric. 
Define:
\begin{equation*}
\Gr^{j}_{k,n} \eqdef \Gr_{k,n}\times \mathbb{R}^{j}\, .
\end{equation*}

\begin{thm}
	\label{thm:grasspino}
	Assume that $n+1\equiv_4 0$. Then $\Gr_{2,n}$ is stably $\Spin^{o}$,
	namely:
	\begin{enumerate}
		\item For $j \equiv_{8} 7-2n$, the manifold $\Gr^{j}_{2,n}$ carries a
		$\Spin^{o}_{+}$ structure of positive-definite signature with
		characteristic $\O(2)$-bundle given by the orthogonal frame bundle of
		$\mathcal{L}_{2,n}$.
		\item For $j \equiv_{8} -(3+2n)$, the manifold $\Gr^{j}_{2,n}$ carries
		a $\Spin^{o}_{-}$ structure of negative-definite signature
		characteristic $\O(2)$-bundle given by the orthogonal frame bundle
		of $\mathcal{L}_{2,n}$.
	\end{enumerate}
\end{thm}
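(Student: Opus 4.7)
The plan is to apply Theorem \ref{thm:SpinoObs} with the characteristic $\O(2)$-bundle $E$ taken to be the orthogonal frame bundle of the tautological bundle $\mathcal{L}_{2,n}$ (pulled back to $\Gr^{j}_{2,n}$ via the projection onto the first factor). Since $\R^{j}$ is contractible and parallelizable, the first and second Stiefel-Whitney classes of the orthonormal coframe bundle of $\Gr^{j}_{2,n}$ coincide with those of $T\Gr_{2,n}$, and similarly $\w_{i}(E)=\w_{i}(\mathcal{L}_{2,n})$ for $i=1,2$. The parameter $j$ only serves to shift the dimension $d=2(n-2)+j$ modulo $8$ into the appropriate residue class so that $\alpha_{p,q}$ takes the value desired in each of the two listed cases.

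The core computation is a short Stiefel-Whitney calculation based on the canonical isomorphism $T\Gr_{2,n}\cong\mathcal{L}_{2,n}^{\vee}\otimes\mathcal{L}^{\perp}_{2,n}$ together with the stable triviality $\mathcal{L}_{2,n}\oplus\mathcal{L}^{\perp}_{2,n}\cong\R^{n}$. Setting $w_{i}\eqdef\w_{i}(\mathcal{L}_{2,n})$, inversion of the total Stiefel-Whitney class gives $\w_{1}(\mathcal{L}^{\perp}_{2,n})=w_{1}$ and $\w_{2}(\mathcal{L}^{\perp}_{2,n})=w_{1}^{2}+w_{2}$ modulo $2$. Expanding the tensor product via the splitting principle (or via the standard tensor-product formulas with ranks $v=2$ and $w=n-2$) and simplifying modulo $2$, I expect to obtain:
\begin{equation*}
\w_{1}(T\Gr_{2,n})=(n-2)\,w_{1}\, ,\qquad \w_{2}(T\Gr_{2,n})=\binom{n-2}{2}w_{1}^{2}+(n-2)\,w_{2}\pmod{2}\, .
\end{equation*}

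The hypothesis $n+1\equiv_{4}0$, i.e.\ $n-2\equiv_{4}1$, makes $n-2$ odd and $\binom{n-2}{2}\equiv 0\pmod{2}$, so the above collapses to the clean identities $\w_{1}(T\Gr_{2,n})=\w_{1}(\mathcal{L}_{2,n})$ and $\w_{2}(T\Gr_{2,n})=\w_{2}(\mathcal{L}_{2,n})$. These are precisely the conditions $\w_{1}(P)=\w_{1}(E)$ and $\w_{2}(P)=\w_{2}(E)$ which, by the specializations of Theorem \ref{thm:SpinoObs} recorded in the subsection on positive and negative signatures (where the $\w_{1}(P)^{2}$ contribution to the second condition is absent), suffice for existence of the desired $\Spin^{o}_{\pm}$ structure in each of the two dimension regimes considered. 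It then only remains to match the two stated congruences of $j$ modulo $8$ to the corresponding pair $(p,q)$ and to read off the sign $\alpha_{p,q}$ prescribed by the paper's conventions.

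The main technical obstacle I anticipate is bookkeeping in the intermediate step: carrying the arithmetic modulo $2$ through the tensor-product formula without error, and isolating the coefficient $\binom{n-2}{2}$ whose vanishing modulo $2$ is controlled precisely by the congruence $n\equiv_{4}3$ (a condition strictly stronger than the parity of $n-2$, which alone would not suppress the $w_{1}^{2}$ term in $\w_{2}(T\Gr_{2,n})$). Matching the resulting residue $d\pmod{8}$ to the correct flavour of $\Spin^{o}$ structure in each signature requires only a direct substitution of the given values of $j$.
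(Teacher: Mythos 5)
Your overall strategy coincides with the paper's: identify $T\Gr_{2,n}$ via the tautological sequence, compute $\w_1,\w_2$ of the tangent bundle in terms of $\w_i(\mathcal{L}_{2,n})$, use $n\equiv_4 3$ to kill the binomial coefficient, and then invoke Theorem \ref{thm:SpinoObs} together with stability of Stiefel--Whitney classes after crossing with $\R^j$. Your intermediate formulas $\w_1(T\Gr_{2,n})=(n-2)w_1$ and $\w_2(T\Gr_{2,n})=\binom{n-2}{2}w_1^2+(n-2)w_2$ are correct \emph{for the Grassmannian of $2$-planes in $\R^n$}, and your observation that $n\equiv_4 3$ (not mere parity of $n-2$) is what kills $\binom{n-2}{2}$ mod $2$ is also correct.

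The genuine gap is in the final matching step, which you defer to "direct substitution" — and it is precisely there that your normalization fails to reproduce the theorem. With $\dim\Gr_{2,n}=2(n-2)$ and $j\equiv_8 7-2n$ you get $d=2(n-2)+j\equiv_8 3$, for which the \emph{adapted} structure in positive signature is $\Spin^o_-$ with obstruction $\w_1(P)=\w_1(E)$, $\w_2(P)=\w_2(E)$; your computation verifies exactly that, so you would prove a $\Spin^o_-$ structure in part 1, whereas the theorem asserts $\Spin^o_+$. The congruences on $j$ in the statement are calibrated to $\dim\Gr_{2,n}=2n$, i.e.\ to $2$-planes in $\R^{n+2}$, and the paper's proof accordingly uses the identity $T_{2,n}\oplus\mathcal{L}_{2,n}\otimes\mathcal{L}^{\ast}_{2,n}\cong(n+2)\,\mathcal{L}_{2,n}$, whence $\w(T_{2,n})\,\w(\mathcal{L}\otimes\mathcal{L}^{\ast})=\w(\mathcal{L})^{n+2}$ and, after using $n\equiv_4 3$, the conclusion $\w_1(T_{2,n})=w_1$ but $\w_2(T_{2,n})=w_2+w_1^2$. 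That extra $w_1^2$ is exactly what is needed to satisfy the $d\equiv_8 7$, positive-signature, $\Spin^o_+$ obstruction $\w_2(P)+\w_1(P)^2+\w_2(E)=0$ — so your blanket claim that "the $\w_1(P)^2$ contribution to the second condition is absent" in the relevant specializations is false for the case the theorem actually asserts in part 1 (and likewise for part 2 in negative signature). In fairness, the paper's displayed definition $\Gr_{k,n}=\O(n)/(\O(k)\times\O(n-k))$ supports your reading and is inconsistent with its own proof and statement; but as written, your argument establishes a variant of the theorem with the flavours $\Spin^o_\pm$ interchanged relative to the stated congruences on $j$, and the step you postpone is the one where this discrepancy surfaces. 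To repair it you must either adopt the convention $\dim\Gr_{2,n}=2n$ and redo the Stiefel--Whitney computation with $\mathcal{L}^\perp$ of rank $n$, or keep your convention and correct the congruences on $j$ (and the signs $\alpha$) accordingly.
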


\begin{proof}
	We prove the statement for stable $\Spin^{o}_{+}$ structures, the other being similar. 
	Let $T_{2,n}$ denote the tangent bundle of $\Gr_{2,n}$. Then we have:
	\begin{equation}
	\label{eq:tangentgrass}
	T_{2,n}\oplus \mathcal{L}_{2,n}\otimes\mathcal{L}^{\ast}_{2,n} = (n + 2)\, T_{2,n}\, ,
	\end{equation}
	where $\mathcal{L}^{\ast}_{2,n}$ denotes the dual of the rank two vector bundle 
	$\mathcal{L}_{2,n}$. Equation \eqref{eq:tangentgrass} gives:
	\begin{equation}
	\label{eq:SWgrass}
	\w(T_{2,n})\, \w(\mathcal{L}_{2,n}\otimes\mathcal{L}_{2,n}^{\ast}) = \w(\mathcal{L}_{2,n})^{2+n}\, ,
	\end{equation}
	where $\w$ denotes the total Stiefel-Whitney class. Using
	$\w(\mathcal{L}\otimes\mathcal{L}^{\ast}) = 1
	+\w_{1}(\mathcal{L})^2+\ldots$, relation \eqref{eq:SWgrass} implies:
	\begin{equation*}
	\w_{1}(T_{2,n}) = (2 + n)\,\w_{1}(\mathcal{L}_{2,n})\, ,
	\end{equation*}
	as well as:
	\begin{equation*}
	(n+2)\w_{2}(\mathcal{L}_{2,n}) + \w_{2}(T_{2,n}) +(1+\frac{(n+2)(n+1)}{2})\w_{1}(\mathcal{L}_{2,n})^{2} = 0\, .
	\end{equation*}
	Since $n+1 \equiv_4 0$, the last two relations simplify to:
	\begin{equation*}
	\w_{1}(T_{2,n}) = \w_{1}(\mathcal{L}_{2,n})\, ,
	\end{equation*}
	and:
	\begin{equation*}
	\w_{2}(\mathcal{L}) + \w_{2}(T_{2,n}) +\w_{1}(\mathcal{L}_{2,n})^{2} = 0\, ,
	\end{equation*}
	which shows that $\Gr_{2,n}$ satisfies the topological obstruction to
	carry a Riemannian $\Spin^{o}_{+}$ structure of positive-definite
	type. However, $\Gr_{2,n}$ has dimension $2n$ and therefore it cannot
	carry a adapted $\Spin^{o}_{+}$ structure. Taking the direct product
	with $\mathbb{R}^{j}$ where $j \equiv_{8} (7-2n)$ gives the manifold
	$\Gr^{j}_{2,n}$, which has dimension:
	\begin{equation*}
	\dim\, \Gr^{j}_{2,n} = 2n + j =8 t + 7
	\end{equation*}
	for some positive integer $t$. Thus $\dim\, \Gr^{j}_{2,n} \equiv_8 7$.
	Using stability of Stiefel-Whitney classes, we conclude that
	$\Gr^{j}_{2,n}$ carries an adapted $\Spin^{o}_{+}$ structure. 
	 \end{proof}

\begin{remark}
	The odd-dimensional manifolds $\Gr^{j}_{2,n}$ appearing in the
	proposition above can be equipped with a bundle of irreducible real
	Clifford modules, which allows one to use the tools of spinorial geometry (such as
	the corresponding Dirac operator) to study them. These manifolds do
	not admit other classical spinorial structures, such as $\Spin$,
	$\Pin$ or $\Pin^c$ structures.
\end{remark}

\noindent
The simplest case occurring in Theorem \ref{thm:grasspino} is
$\Gr^{1}_{2,3}$.  The theorem shows that the seven-dimensional
Riemannian manifold $\Gr^{1}_{2,3}$ admits an elementary real pinor
bundle which has rank $16$ and is associated to a
$\Spin^{o}_{+}$ structure.


\section*{Acknowledgements}
The work of C. I. L. was supported by grant IBS-R003-S1. The work of C.S.S. is supported by the Humboldt foundation through the Humboldt grant ESP 1186058 HFST-P.


\appendix

\section{$\Spin^c_\alpha$ structures}
\label{Spincalpha}

In this appendix, we discuss $\Spin^c_\alpha$ structures (which for
definite signature were considered in \cite{Nakamura1, Nakamura2}) in
order to clarify the difference between them and the $\Spin^o_\alpha$
structures considered in this paper. Unlike the $\Spin^o_\alpha$
structures considered in the present paper, $\Spin^c_\alpha$ structures can
exist only for {\em orientable} principal pseudo-orthogonal bundles
(i.e. only for $\SO(V,h)$ bundles). In particular, a pseudo-Riemannian
manifold must be orientable in order to admit a $\Spin^c_\alpha$
structure.

\begin{definition}
	Let $P$ be a principal $\SO(V,h)$-bundle over $M$. A $\Spin^c_\alpha$
	structure on $P$ is a triplet $(E,P,f)$, where $E$ is a principal
	$\O(2)$-bundle over $M$ and $(P,f)$ is a $\rho_\alpha$-reduction of
	$P\times_M E$ to $\Spin^o_\alpha(V,h)$.
\end{definition}

Notice that the structure group of $P\times_M E$ need {\em not} reduce
to $\SO(V',h'_\alpha)$, because the image of $\rho_\alpha$ equals
$\SO(V,h)\times \O(2)$, which contains elements of negative
determinant. Also notice that a $\Spin^c_\alpha$ structure is defined
using the morphism $\rho_\alpha:\Spin_\alpha^o(V,h)\rightarrow
\SO(V,h)\times \O(2)$, while a $\Spin^o_\alpha$ structure is defined
using the morphism $\trho_\alpha:\Spin_\alpha^o(V,h)\rightarrow
\O(V,h)\hat{\times} \O(2)$. The case of $\Spin^c_\pm$ structures in
definite signature were considered in \cite{Nakamura1, Nakamura2}.

\begin{thm}
	Let $P$ be a principal $\SO(V,h)$-bundle (in particular, $P$ is
	orientable and hence we have $w_1(P)=w_1^+(P)=w_1^-(P)=0$). Then the
	following statements are equivalent:
	\begin{enumerate}[(a)]
		\itemsep 0.0em
		\item $P$ admits a $\Spin^c_\alpha$ structure.
		\item There exists a principal $\O(2)$-bundle $E$ such that the
		principal $\O({\hat V},{\hat h}_\alpha)$-bundle $\hP\eqdef P\times_M
		E$ admits a $\Pin$ structure.
	\end{enumerate}
\end{thm}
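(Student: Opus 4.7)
The plan is to apply Lemma \ref{lemma:seq} directly to the commutative diagram of short exact sequences \eqref{Gdiagram2} supplied by Proposition \ref{prop:j}. By definition, $P$ carries a $\Spin^c_\alpha$ structure with characteristic bundle $E$ exactly when the principal $\SO(V,h)\times\O(2)$-bundle $P\times_M E$ admits a $\rho_\alpha$-reduction to $\Spin^o_\alpha(V,h)$, while $\hat{P}$ carries a $\Pin$ structure exactly when it admits a ${\widehat \tAd}_0$-reduction to $\Pin(\hat{V},\hat{h}_\alpha)$. The preparatory observation bridging the two notions is that the pushforward of $P\times_M E$ along the block-diagonal embedding $G:(A,B)\mapsto A\oplus B$ of \eqref{G} is canonically isomorphic to $\hat{P}=P\times_M E$ viewed as a principal $\O(\hat{V},\hat{h}_\alpha)$-bundle, since a direct sum of pseudo-orthonormal frames of $V$ and $\R^2$ is a pseudo-orthonormal frame of $\hat{V}=V\oplus\R^2$.

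With this identification in hand, I would invoke Lemma \ref{lemma:seq}, taking the lemma's $(G,K,G',K',p,p',f,g)$ to correspond respectively to $(\Spin^o_\alpha(V,h),\,\SO(V,h)\times\O(2),\,\Pin(\hat{V},\hat{h}_\alpha),\,\O(\hat{V},\hat{h}_\alpha),\,\rho_\alpha,\,{\widehat \tAd}_0,\,j,\,G)$. The hypothesis of the lemma is precisely the commuting morphism of short exact sequences \eqref{Gdiagram2}, so its conclusion yields that $P\times_M E$ admits a $\rho_\alpha$-reduction to $\Spin^o_\alpha(V,h)$ if and only if $G_\ast(P\times_M E)\simeq \hat{P}$ admits a ${\widehat \tAd}_0$-reduction to $\Pin(\hat{V},\hat{h}_\alpha)$. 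Quantifying existentially over $E$ on both sides yields the equivalence (a) $\Leftrightarrow$ (b).

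I do not anticipate any significant obstacle. All the algebraic content has been concentrated upstream: Proposition \ref{prop:j} provides the commuting diagram with exact rows, and Lemma \ref{lemma:seq} packages the $\check{\mathrm{C}}$ech cohomology diagram chase. The only mild care required is to keep the two uses of the symbol ``$G$'' (the group variable appearing in the statement of Lemma \ref{lemma:seq} versus the morphism defined in \eqref{G}) notationally distinct, and to ensure that the \emph{same} principal $\O(2)$-bundle $E$ appears on both sides of the final equivalence --- which is automatic, since $E$ determines $P\times_M E$ and hence $\hat{P}$ uniquely.
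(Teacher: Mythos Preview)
Your proposal is correct and follows exactly the paper's approach: the paper's proof is the single line ``Follows by applying Lemma \ref{lemma:seq} to the morphism of exact sequences given in diagram \eqref{Gdiagram2}.'' Your write-up merely unpacks this application (identifying $G_\ast(P\times_M E)$ with $\hat P$ and tracking the existential over $E$), which is entirely appropriate.
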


\begin{proof} 
	Follows by applying Lemma \ref{lemma:seq} to the morphism of exact
	sequences given in diagram \eqref{Gdiagram2}.  \end{proof} 

\begin{thm}
	The following statements hold:
	\begin{enumerate}[1.]
		\itemsep 0.0em
		\item A principal $\SO(V,h)$-bundle $P$ admits a $\Spin^c_+(p,q)$
		structure iff there exists a principal $\O(2)$-bundle $E$ such that
		the following condition is satisfied:
		\ben
		\label{obsplus}
		\w_2^+(P)+\w_2^-(P)=\w_2(E)
		\een
		\item A principal $\SO(V,h)$-bundle $P$ admits a $\Spin^c_-(p,q)$
		structure iff there exists a principal $\O(2)$-bundle $E$ such that
		the following condition is satisfied:
		\ben
		\label{obsminus}
		\w_2^+(P)+\w_2^-(P)=\w_2(E)+\w_1(E)^2~~.
		\een
	\end{enumerate}
\end{thm}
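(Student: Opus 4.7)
My plan is to parallel the proof of Theorem \ref{thm:SpinoObs}, using the theorem immediately above which reduces existence of a $\Spin^c_\alpha$ structure on $P$ to existence of a principal $\O(2)$-bundle $E$ such that $\hat P \eqdef P \times_M E$ admits a $\Pin(\hat V, \hat h_\alpha)$ structure. The key simplification compared with the $\Spin^o$ case is that $P$ is now an $\SO(V,h)$-bundle, so by hypothesis $\w_1^+(P) = \w_1^-(P) = 0$; this eliminates many cross-terms in the Stiefel--Whitney calculus and is the reason why the final conditions \eqref{obsplus}--\eqref{obsminus} are much simpler than the $\Spin^o$ obstruction \eqref{obso}.

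The first step is to identify the $\pm$-decomposition of $\hat P$ as an $\O(\hat V,\hat h_\alpha)$-bundle. Since the embedding $G\colon \O(V,h)\times \O(2)\hookrightarrow \O(\hat V,\hat h_\alpha)$ of \eqref{G} is block-diagonal, the structure group of $\hat P$ acts on $V$ through $P$ and on $\R^2$ through $E$. The sign $\alpha$ determines on which definite side the extra $\R^2$ sits: for $\alpha=+1$ the signature is $(p+2,q)$ and the extra $\R^2$ is positive-definite, hence $\hat P^+ = P^+\times_M E$ and $\hat P^- = P^-$; for $\alpha=-1$ the signature is $(p,q+2)$ and $\hat P^+=P^+$, $\hat P^- = P^-\times_M E$. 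Applying the product formulas \eqref{wrel}--\eqref{wkrel} together with $\w_1^\pm(P)=0$ yields, in the first case,
\begin{align*}
\w_1^+(\hat P) &= \w_1(E), & \w_1^-(\hat P) &= 0, \\
\w_2^+(\hat P) &= \w_2^+(P)+\w_2(E), & \w_2^-(\hat P) &= \w_2^-(P),
\end{align*}
and in the second case,
\begin{align*}
\w_1^+(\hat P) &= 0, & \w_1^-(\hat P) &= \w_1(E), \\
\w_2^+(\hat P) &= \w_2^+(P), & \w_2^-(\hat P) &= \w_2^-(P)+\w_2(E).
\end{align*}

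The second step is to invoke the obstruction to a $\Pin$ structure in indefinite signature (the same \cite{Karoubi}-type ingredient used in the proof of Theorem \ref{thm:SpinoObs}, but applied to $\Pin$ rather than $\Spin$). Because $\Cl(\hat V,\hat h_\alpha)$ has negative-definite directions squaring to $-1$ and thus carries Pin$^-$ character on that side, the obstruction on an $\O(\hat V,\hat h_\alpha)$-bundle takes the form $\w_2^+(\hat P)+\w_2^-(\hat P)+\w_1^-(\hat P)^2=0$, with no separate $\w_1$ condition (consistent with the fact that $\Pin$ structures, unlike $\Spin$ structures, exist on non-orientable bundles). This formula specializes correctly to the Pin$^+$ obstruction $\w_2=0$ when the negative-definite rank vanishes and to the Pin$^-$ obstruction $\w_2+\w_1^2=0$ when the positive-definite rank vanishes. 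Substituting the tables from the previous step gives $\w_2^+(P)+\w_2^-(P)+\w_2(E)=0$ for $\alpha=+1$, which is \eqref{obsplus}, and $\w_2^+(P)+\w_2^-(P)+\w_2(E)+\w_1(E)^2=0$ for $\alpha=-1$, which is \eqref{obsminus}.

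The main obstacle will be pinning down the precise form of the Pin obstruction in indefinite signature and locating the $\w_1^2$ correction on the negative-definite side rather than the positive-definite side; the two choices give genuinely different conditions and the wrong one would misplace the $\w_1(E)^2$ between the $\alpha=+1$ and $\alpha=-1$ cases. A safer route that bypasses this issue is to apply Lemma \ref{lemma:seq} directly to the commutative diagram \eqref{Gdiagram2} involving $j\colon \Spin^o_\alpha(V,h)\hookrightarrow \Pin(\hat V,\hat h_\alpha)$ and $G$, and identify the connecting map $H^1(M,\O(\hat V,\hat h_\alpha))\to H^2(M,\Z_2)$ from first principles using the Clifford-algebraic description of $\Pin(\hat V,\hat h_\alpha)$; once this is in hand the argument reduces to the routine Stiefel--Whitney bookkeeping recorded above.
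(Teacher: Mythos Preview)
Your approach is essentially the same as the paper's: reduce to existence of a $\Pin(\hat V,\hat h_\alpha)$ structure on $\hat P=P\times_M E$ via diagram \eqref{Gdiagram2}, split $\hat P^\pm$ according to the sign of $\alpha$, apply the Karoubi $\Pin$ obstruction, and simplify using orientability of $P$. One small correction: the Karoubi formula the paper invokes is
\[
\w_2^+(\hat P)+\w_2^-(\hat P)+\w_1^-(\hat P)^2+\w_1^-(\hat P)\,\w_1^+(\hat P)=0,
\]
i.e.\ your candidate is missing the cross-term $\w_1^-(\hat P)\w_1^+(\hat P)$; however, in each case one of $\w_1^\pm(\hat P)$ vanishes (since you set $\w_1^\pm(P)=0$), so this term drops out and your computation goes through unchanged.
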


\begin{proof}
	It was shown in \cite{Karoubi} that the principal $\O({\hat V},{\hat
		h}_\alpha)$-bundle $\hP:=\hP_\alpha$ admits a $\Pin$ structure iff:
	\ben
	\label{Kpincond}
	\w_2^+(\hP)+\w_2^-(\hP)+\w_1^-(\hP)^2+\w_1^-(\hP)\w_1^+(\hP)=0~~.
	\een
	Distinguish the cases:
	\begin{enumerate}[1.]
		\itemsep 0.0em
		\item When $\alpha=+1$, we have $\hP^+=P^+\times E$ and
		$\hP^-=P^-$. Thus:
		\beqa
		&&\w_1^+(\hP)=\w_1^+(P)+\w_1(E)~~,~~\w_1^-(\hP)=\w_1^-(P)~~\nn\\
		&&\w_2^+(\hP)=\w_2^+(P)+\w_2(E)+\w_1^+(P)\w_1(E)~~,~~\w_2^-(\hP)=\w_2^-(P)~~\nn
		\eeqa
		Hence \eqref{Kpincond} becomes: 
		\ben
		\label{Kpinplus}
		\w_2^+(P)+\w_2^-(P)+\w_2(E)+(\w_1^+(P)+\w_1^-(P))(\w_1^-(P)+\w_1(E))=0
		\een
		Since $P$ is an $\SO(V,h)$ bundle, we have: 
		\be
		\w_1(P)=\w_1^+(P)+\w_1^-(P)=0~~,
		\ee
		which shows that \eqref{Kpinplus} reduces to \eqref{obsplus}.
		\item When $\alpha=-1$, we have $\hP^+=P^+$ and $\hP^-=P^-\times E$. Thus:
		\beqa
		&&\w_1^+(\hP)=\w_1^+(P)~~,~~\w_1^-(\hP)=\w_1^-(P)+\w_1(E)~~\nn\\
		&&\w_2^+(\hP)=\w_2^+(P)~~,~~\w_2^-(\hP)=\w_2^-(P)+\w_2(E)+\w_1^-(P)\w_1(E)~~\nn
		\eeqa
		and \eqref{Kpincond} becomes: 
		\be
		\w_2^+(P)+\w_2^-(P)+\w_2(E)+(\w_1^+(P)+\w_1^-(P))(\w_1^-(P)+\w_1(E))+\w_1(E)^2=0~~, 
		\ee
		which reduces to \eqref{obsminus} upon using $\w_1(P)=0$. 
	\end{enumerate}
	 \end{proof}

\begin{remark} 
	For positive signature with $\alpha=-1$ and $P=TM$, the statement of
	the Theorem recovers \cite[Proposition 3.4]{Nakamura1}. Notice that
	the proof in op. cit. is based on applying Lemma \ref{lemma:seq} to
	diagram \ref{Fdiagram2}.
\end{remark}

\begin{definition}
	Let $(M,g)$ be an orientable pseudo-Riemannian manifold. A
	$\Spin^c_\alpha$ structure on $(M,g)$ is a $\Spin^c_\alpha$ structure
	for the principal orthonormal frame bundle of $(M,g)$.
\end{definition}

\noindent When $(M,g)$ admits a $\Spin^c_\alpha$ structure, one can
introduce a corresponding notion of spinor bundles admitting Clifford
multiplication as in \cite{Nakamura1, Nakamura2}. However, that
construction requires that $(M,g)$ be orientable and hence it is quite
different from the construction considered in this paper since, unlike
$\Spin^c_\alpha$ structures, $\Spin^o_\alpha$ structures can exist on
{\em non-orientable} pseudo-Riemannian manifolds.

\section{Semilinear structures}
\label{app:semilinear}

In this section we discuss semilinear structures on real vector
bundles. A semilinear structure on a real vector bundle $S_0$ of even
rank $2r$ is an unordered pair of mutually conjugate complex
structures on $S_0$; equivalently, it is a reduction of the structure
group of $S_0$ from $\GL(2r,\R)$ to the general semilinear group
$\rGamma(r)$. Semilinear structures appear in Mathematical Physics when
studying discrete symmetries such as charge conjugation and time
reversal, through generally their theory is not clearly formalized in
that context. In the differential geometry literature, such structures
have also appeared in \cite[Section 5]{Crabb}, where they were called
``twisted complex structures'' and were considered from a point of
view different from ours. Semilinear structures will be
useful in Section \ref{sec:elementary} when studying elementary real
pinor bundles of ``complex type''. We start with a discussion of
semilinear structures on finite-dimensional real vector spaces.

\subsection{Semilinear structures on a real vector space}
\label{subsec:semilinear_vs} 

Let $S_0$ be an $\R$-vector space of finite even dimension. The {\em
	twistor set} of $S_0$ is defined as the set of all complex structures 
on $S_0$:
\be
\Tw(S_0)\eqdef \{J\in \End_\R(S_0)|J^2=-\id_{S_0}\}~~.
\ee
This set is stabilized by the involution $J\rightarrow -J$, which
generates a fixed-point free action of $\Z_2$ on $\Tw(S_0)$. Define the 
{\em reduced twistor set} of $S_0$ through:
\be
\Tw_0(S_0)\eqdef \Tw(S_0)/\Z_2=\{\{J,-J\} \,|\, J\in \Tw(S_0)\}\, .
\ee

\begin{definition}
	A {\bf semilinear structure} on $S_0$ is an element $\s \in
	\Tw_0(S_0)$, i.e. an {\em unordered} pair of mutually conjugate
	complex structures on $S_0$. A {\bf semilinear space} is a pair
	$(S_0,\s)$, where $S_0$ is an even-dimensional $\R$-vector space and
	$\s$ is a semilinear structure on $S_0$.
\end{definition}

\noindent Semilinear structures on $S_0$ can be identified with
certain subalgebras of the endomorphism algebra of $S_0$, as clarified
by the following:

\begin{prop}
	\label{prop:S}
	There exists a bijection between semilinear structures on $S_0$ and
	those unital subalgebras $\S$ of the $\R$-algebra $\End_\R(S)$ which
	have the property that the isomorphism type of $\S$ in the category
	$\Alg$ of unital associative $\R$-algebras equals the isomorphism type
	of the $\R$-algebra $\C$ of complex numbers.
\end{prop}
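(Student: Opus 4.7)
The plan is to construct mutually inverse maps between $\Tw_0(S_0)$ and the set of unital $\R$-subalgebras of $\End_\R(S_0)$ isomorphic to $\C$, and check they are well-defined.

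First I would define the forward map $\Phi:\Tw_0(S_0)\to\{\text{such subalgebras}\}$ by sending a semilinear structure $\s=\{J,-J\}$ to the $\R$-span $\S_J\eqdef \R\cdot\id_{S_0}+\R\cdot J\subset\End_\R(S_0)$. The first thing to check is that $\S_J$ is indeed a unital $\R$-subalgebra: closure under multiplication follows from $J^2=-\id_{S_0}$, and $\id_{S_0}\in\S_J$ by construction. The assignment is well-defined on the $\Z_2$-quotient because $\S_J=\S_{-J}$ as subsets of $\End_\R(S_0)$. To see that $\S_J\cong\C$ as unital $\R$-algebras, I would exhibit the explicit morphism $a+b\i\mapsto a\,\id_{S_0}+bJ$; this is unital, $\R$-linear, and respects products thanks to $J^2=-\id_{S_0}$, and it is injective because $\id_{S_0}$ and $J$ are linearly independent in $\End_\R(S_0)$ (here I use that $J$ has no real eigenvalues, hence is not a real multiple of the identity).

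Next I would construct the inverse map $\Psi$. Given a unital subalgebra $\S\subset\End_\R(S_0)$ with $\S\simeq_{\Alg}\C$, I would consider the set
\[
R(\S)\eqdef\{A\in\S\,|\,A^2=-\id_{S_0}\}.
\]
Any unital $\R$-algebra isomorphism $\chi:\S\stackrel{\sim}{\to}\C$ sends $\id_{S_0}$ to $1$, so $A\in R(\S)$ iff $\chi(A)^2=-1$ in $\C$, iff $\chi(A)\in\{+\i,-\i\}$. Hence $R(\S)$ consists of exactly two elements $\{J,-J\}$ with $J\in\Tw(S_0)$, and this pair is intrinsic to $\S$ (independent of the choice of $\chi$, since $R(\S)$ was defined without reference to $\chi$). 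I set $\Psi(\S)\eqdef\{J,-J\}\in\Tw_0(S_0)$.

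Finally I would verify $\Psi\circ\Phi=\id$ and $\Phi\circ\Psi=\id$. For $\Psi\circ\Phi$: elements of $\S_J$ have the form $a\,\id_{S_0}+bJ$, and $(a\,\id_{S_0}+bJ)^2=(a^2-b^2)\id_{S_0}+2ab\,J$, which equals $-\id_{S_0}$ iff $a=0$ and $b^2=1$, giving exactly $\{J,-J\}$. For $\Phi\circ\Psi$: if $\Psi(\S)=\{J,-J\}$, then $\R\cdot\id_{S_0}+\R\cdot J\subset\S$, and since both are two-dimensional $\R$-subspaces of $\End_\R(S_0)$, equality follows.

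The only real subtlety is ensuring that the two square roots of $-\id_{S_0}$ inside $\S$ really are negatives of each other rather than some more exotic pair; this is handled by transporting the problem to $\C$ via any $\R$-algebra isomorphism $\chi$, where $z^2=-1$ is solved uniquely by $z=\pm\i$. Everything else is direct manipulation, so no serious obstacle arises.
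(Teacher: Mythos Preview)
Your proof is correct and follows essentially the same approach as the paper's: both identify a subalgebra $\S\simeq\C$ with the span $\R\id_{S_0}\oplus\R J$ for some $J\in\Tw(S_0)$ determined up to sign, and conversely send $\{J,-J\}$ to this span. You have simply filled in more of the routine verifications (linear independence of $\id_{S_0}$ and $J$, the explicit computation that the only square roots of $-\id_{S_0}$ in $\S_J$ are $\pm J$) that the paper leaves as ``clear'' or ``obvious.''
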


\begin{proof}
	It is clear that a unital subalgebra $\S$ of $\End_\R(S_0)$ is unitally
	isomorphic with $\C$ iff there exists $J\in \Tw(S_0)$ such that
	$\S=\R\id_S\oplus \R J$. In this case, $J$ is determined by $\S$ up to
	a sign change\footnote{This operation induces a unital $\R$-algebra
		automorphism of $\S$ corresponding to the unique nontrivial unital
		$\R$-algebra automorphism of $\C$ (which is the complex
		conjugation). Since $\Aut_\Alg(\C)\simeq \Z_2$, there are {\em two}
		distinct isomorphisms of unital $\R$-algebras from $\S$ to $\C$.}
	$J\rightarrow -J$. Hence any such subalgebra of $\End_\R(S_0)$
	determines a semilinear structure on $S_0$. Conversely, any semilinear
	structure $\s\in \Tw_0(S_0)$ determines a unital subalgebra $\S\eqdef
	\R\id_S\oplus \R J=\R\id_S\oplus \R(-J)$ of $\End_\R(S)$, where $J\in
	\s$ is any of the two conjugate complex structures belonging to $\s$. It
	is obvious that $\S$ is unitally isomorphic with
	$\C$.  
\end{proof}

\noindent Given a semilinear structure $\s\in\Tw_{0}(S_{0})$ we denote
by $\S \subset \End_{\mathbb{R}}(S_{0})$ the subalgebra associated to
$\s$ as in Proposition \ref{prop:S}. Any complex structure $J\in
\Tw(S_0)$ determines semilinear structure $[J]\eqdef \{J,-J\}$ (which
is the preimage of $J$ through the natural surjection
$\pi:\Tw(S_0)\rightarrow \Tw_0(S_0)$), but a semilinear structure only
determines a complex structure {\em up to sign}. Consider a semilinear
structure $\s$ on $S_0$ with corresponding subalgebra
$\S\subset \End_\R(S_0)$. Since $\Aut_{\Alg}(\S)\simeq
\Aut_{\Alg}(\C)\simeq \Z_2$, $\S$ has a unique non-trivial unital
$\R$-algebra automorphism which we denote by $c$; this corresponds to
complex conjugation through any of the two algebra automorphisms which
map $\S$ to $\C$. Accordingly, we can endow $S_0$ with two left
$\S$-module structures upon defining external multiplication through:
\be
sx\eqdef s(x)~~\forall s\in \S~~,~~\forall x\in S_0\, ,
\ee
or through:
\be
sx\eqdef c(s)(x)~~\forall s\in \S~~,~~\forall x\in S_0~~.
\ee
We usually consider only the first of these $\S$-module structures and
reserve the notation ${\bar S}_0$ to refer to the $\S$-module obtained
by using the second of these external multiplications.

\paragraph{Semilinear automorphisms}

Let $(S_0,\s)$ be a semilinear vector space. 

\begin{definition}
	An endomorphism $T\in \End_\R(S_{0})$ is called:
	\begin{enumerate}[1.]
		\itemsep 0.0em
		\item $\s$-linear, if $T\in \End_\S(S_0)$, i.e. if $T(sx)=sT(x)$
		for all $s\in \S$ and $x\in S_{0}$.
		\item $\s$-antilinear, if $T\in \Hom_\S(S_0,{\bar S}_0)$, i.e. if
		$T(sx)=c(s)T(x)$ for all $s\in \S$ and $x\in S_{0}$.
		\item $\s$-semilinear, if it is either $\s$-linear or
		$\s$-antilinear.
	\end{enumerate}
\end{definition}

\noindent The composition of two $\s$-semilinear maps is
$\s$-semilinear. For any invertible $\R$-linear operator $T\in
\Aut_\R(S_0)$, we have $\Ad(T)(\Tw(S_0))=\Tw(S_0)$ and $T$ is
$\s$-semilinear iff $\Ad(T)(\s)=\s$ (where we view $\s$ as a
two-element subset of $\Tw(S_0)$). In this case, $T$ is $\s$-linear
iff $\Ad(T)|_{\s}=\id_\s$ and $\s$-antilinear iff $\Ad(T)|_\s$ is the
non-trivial permutation of the two-element set $\s$. 

Invertible $\s$-semilinear maps form the {\em group of automorphisms}
of the semilinear space $(S_0,\s)$. This group is denoted by
$\Aut(S_0,\s)$ or by $\Aut_\S^\tw(S_0)$ and admits the $\Z_2$-grading:
\be
\Aut_\S^\tw(S_0)=\Aut_\S(S_0)\sqcup \Aut_\S^a(S_0)~~,
\ee
where $\Aut_\S(S_0)$ denotes the group of invertible $\s$-linear
transformations of $S_{0}$ and $\Aut^a_\S(S_0)$ denotes the {\em set}
of invertible $\s$-antilinear transformations of $S_0$. The
homogeneous components of this grading coincide with the two connected
components of $\Aut_\S^\tw(S_0)$, where $\Aut_\S(S_0)$ is the
connected component of the identity. The grading morphism
$f:\Aut_\S^\tw(S_0)\rightarrow \Z_2$ of this $\Z_2$-grading gives a
short exact sequence:
\be 
1\longrightarrow \Aut_\S(S_0)\hookrightarrow \Aut_S^\tw(S_0)\stackrel{f}{\longrightarrow} \Z_2\longrightarrow 1~~. 
\ee
Any choice of $\s$-antilinear operator $B\in \Aut_\S^a(S_0)$ such that
$B^2 = \id_{S_0}$ (existence can be easily proven) induces a right-splitting
morphism $g_B:\Z_2\rightarrow \Aut_\S^\tw(S_0)$ given by $g_B({\hat
	0})=\id_{S_0}$ and $g_B({\hat 1})=B$. Hence $\Aut_\S^\tw(S_0)$ is
isomorphic with the semidirect product $\Aut_\S(S_0)\rtimes_{\Ad\circ
	g_B} \Z_2$. 

\paragraph{The general semilinear group}

The space $\R^{2r}$ has a {\em canonical semilinear structure}
$\s=[\mathbf{J}_r]=\{\mathbf{J}_r,-\mathbf{J}_r\}$, where
$\mathbf{J}_r$ is the canonical complex structure of $\C^r$ under the
identification $\C^r\equiv \R^{2r}=\R^r\times \R^r$:
\be
\mathbf{J}_r(x,y)=(-y, x)~~\forall x,y\in \R^r\, .
\ee

\begin{definition} The {\bf general semilinear group}
	$\rGamma(r)\subset \GL(2r,\R)$ is the group of automorphisms of the
	semilinear space $(\R^{2r},[\mathbf{J}_r])$.  The {\bf semilinear
		orthogonal group} is the group $\TU(r)\eqdef \rGamma(r)\cap
	\O(2r,\R)$.
\end{definition}

\noindent We have $\rGamma(r)\simeq \GL(r,\C)\rtimes \Z_2$. One can
describe $\rGamma(r)$ explicitly as the set of all pairs of the form
$(A,\0)$ and $(A,\1)$, where $A\in \GL(r,\C)$ and $\Z_2=\{\0,\1\}$,
with group multiplication given by:
\beqa
& (A,\0)(B,\0)=(AB,\0)~~,~~(A,\0)(B,\1)=(AB,\1)~~\\
& (A,\1)(B,\0)=(A\bar{B},\1)~~,~~(A,\1)(B,\1)=(A\bar{B},\0)~~.
\eeqa
In this presentation, the group $\GL(r,\C)$ of complex-linear
transformations of $\C^r$ identifies with the subgroup of $\rGamma(r)$
consisting of elements of the form $(A,\0)$, while the set of
non-degenerate complex-antilinear transformations of $\C^r$ identifies
with the subset of $\rGamma(r)$ consisting of elements of the form
$(A,\1)$. Using the cannical real basis $e_1,\ldots, e_r, \i
e_1,\ldots, \i e_r$ of $\C^r$ (where $e_1,\ldots, e_r$ is the
canonical complex basis of $\C^r$), an element $A\in \GL(r,\C)$ can be
represented as an element of $\GL(2r,\R)$ having the special form:
\be
A_\lin=\left[\begin{array}{cc} A^R & -A^I \\ A^I & A^R\end{array}\right]~~,
\ee
where $A^R$ and $A^I$ are the real and imaginary parts of the non-degenerate complex matrix $A$. 
Similarly, a complex-antilinear tranformation of $\C^r$ with matrix $A\in \GL(r,\C)$ in the complex 
basis $e_1,\ldots, e_r$ can be represented as an element of 
$\GL(2r,\R)$ having the special form: 
\be
A_\alin=\left[\begin{array}{cc} A^R & A^I \\ A^I & -A^R\end{array}\right]=\left[\begin{array}{cc} I_r & 0 \\0 & -I_r\end{array}\right] (A_\lin)^T~~.
\ee
Using these forms, it is easy to see that we have: 
\be
\det (A,\0)=\det A_\lin=|\det A|^2~~,~~\det (A,\1)=\det A_\alin=(-1)^r|\det A|^2~~.
\ee
In particular, the determinant of any element of $\rGamma(r)$ is
positive when $r$ is even and in this case $\rGamma(r)$ is a subgroup
of the group $\GL_+(2r,\R)$ of general linear transformations with
positive determinant. Finally, notice that $\TU(r)$ is a maximal
compact form of $\rGamma(r)$. When $r$ is even, the discussion above
shows that $\TU(r)$ is a subgroup of $\SO(2r,\R)$.

\paragraph{Description of $\Tw(S_0)$ as a homogeneous space}

Recall that $\Aut_\R(S_0)$ acts transitively on $\Tw(S_0)$ through the
adjoint representation:
\be
\Ad(a)(J)= aJa^{-1}~~\forall a\in \Aut_\R(S_0)~~,~~\forall J\in \Tw(S_0)~~
\ee
and that the stabilizer of $J\in \Tw(S_0)$ in $\Aut_\R(S_0)$ equals the
group $\Aut_J(S_0)$ of those transformations which are complex-linear
with respect to $J$. Hence $\Tw(S_0)$ is the homogeneous space
$\Aut_\R(S_0)/\Aut_J(S_0)\simeq \GL(2r,\R)/\GL(r,\C)$. The adjoint
action of $\Aut_\R(S_0)$ on $\Tw(S_0)$ commutes with the $\Z_2$
action generated by $J\rightarrow -J$ and descends to a transitive
action on $\Tw_0(S_0)$. The stabilizer of a semilinear structure
$\s=\{J,-J\}\in \Tw_0(S_0)$ under this action consists of all
transformations which are semilinear with respect to $\s$:
\be
\Stab(\s)=\Aut(S_0,\s)=\Aut_\S^\tw(S_{0})\, ,
\ee
where $\S$ is the subalgebra of $\End_\R(S_{0})$ determined by
$\s$. Hence the set $\Tw_0(S_0)$ is the homogeneous space:
\begin{equation*}
	\Aut_\R(S_0)/\Aut_\S^\tw(S_0)\simeq \GL(2r,\R)/\rGamma(r)\simeq_{\mathrm{hty}}
	\O(2r,\R)/\TU(r)\, .
\end{equation*}

\paragraph{s-Hermitian metrics}

Let $(S_0,\s)$ be a semilinear vector space. 

\begin{definition}
	A $\R$-bilinear map $b:S_{0}\times S_{0} \rightarrow \R$ is called
	{\bf $\s$-compatible} if any $J\in \s$ satisfies $b(J x,J y)=b(x,y)$
	for all $x,y\in S_{0}$.
\end{definition}

\begin{definition}
	\label{def:s-Hermitian}
	Let $\s$ be a semilinear structure on $S_0$ with associated subalgebra
	$\S \subset \End_{\mathbb{R}}(S_{0})$. An $\S$-valued $\R$-bilinear
	form $h:S_0\times S_0\rightarrow \S$ is called an {\bf $\s$-Hermitian
		metric} on $S_0$ if it satisfies the following conditions:
	\begin{enumerate}[1.]
		\itemsep 0.0em
		\item It is conjugate-symmetric, i.e. $h(x, y)=c(h(y, x))$ for all
		$x,y\in S_0$.
		\item It is $\s$-sesquilinear, i.e. $h(sx,y)=c(s)h(x,y)$ and
		$h(x,sy)=sh(x,y)$ for all $x,y\in S_0$ and any $s\in \S$.
		\item It is positive-definite, i.e. $h(x,x)\in \R_{\geq 0}\id_{S_0}$
		for all $x\in S_0$ and $h(x,x)=0$ iff $x=0$.
	\end{enumerate}
\end{definition}

\noindent Recall that any complex structure $J\in \s$ belonging to the
semilinear structure $\s$ determines a direct sum decomposition
$\S=\R\id_{S_0}\oplus \R J$ of $\S$ as an $\R$-vector space. Thus any
element $s\in \S$ can be written uniquely as $s=s_0+s_1 J$, where
$s_0,s_1\in \R$. Moreover, changing $J$ into $-J$ does not change
$s_0$ but changes the sign of $s_1$. In particular, the quantity: 
\be
|s|_\S\eqdef \sqrt{s_0^2+s_1^2}\in \R_{\geq 0}~~\forall s\in \S
\ee
depends only on $\s$ and vanishes iff $s\in S$ does. Let $\Re_\S:\S
\rightarrow \R$ be the $\R$-linear map defined through:
\be
\Re_\S(s)=s_0~~\forall s\in \S
\ee
and $\Im_J:\S\rightarrow \R$  be the $\R$-linear map defined through: 
\be
\Im_J(s)=s_1~~\forall s\in \S~~.
\ee
Notice that $\Re_\S$ depends only on the semilinear structure $\s$,
while $\Im_J$ depends on the choice of a complex structure $J\in
\s$. In fact, we have $\Im_{-J}=-\Im_J$. We also have: 
\be
|s|_{\S}=\sqrt{(\Re_\S s)^2+(\Im_J s)^2}~~\forall s\in \S~~.
\ee
If $h$ is an $\s$-Hermitian metric on $S_0$, then the $\R$-valued map:
\begin{equation*}
	h_R\eqdef \Re h \colon S_0\times S_0\rightarrow \R\, ,
\end{equation*}
is an $\R$-valued {\bf $\s$-compatible Euclidean metric} on
$S_{0}$, i.e. it is an $\R$-bilinear map which satisfies the
conditions:
\begin{enumerate}[1.]
	\itemsep 0.0em
	\item It is symmetric, i.e. $h_R(x, y)=h_R(y, x)$ for all
	$x,y\in S_0$.
	\item It is $\s$-modular, i.e. $h_R(sx,sy)=|s|_\S^2 h_R(x,y)$ 
	for all $x,y\in S_0$ and any $s\in \S$.
	\item It is positive-definite, i.e. $h(x,x)\geq 0$
	for all $x\in S_0$, with eqality iff $x=0$.
\end{enumerate}
Similarly, the $\R$-valued map:
\begin{equation*}
	h_I^J\eqdef \Im_J h \colon S_0\times S_0\rightarrow \mathbb{R} \, 
\end{equation*}
is an {\bf $\s$-compatible symplectic pairing} on $S_{0}$, i.e. it is
an $\R$-bilinear map which satisfies the conditions:
\begin{enumerate}[1.]
	\itemsep 0.0em
	\item It is antisymmetric, i.e. $h_I^J(x, y)=-h_I^J(y, x)$ for all
	$x,y\in S_0$.
	\item It is $\s$-modular, i.e. $h_I^J(sx,sy)=|s|_\S^2 h_I^J(x,y)$ for
	all $x,y\in S_0$ and any $s\in \S$.
	\item It is non-degenerate, i.e. $h_I^J(x,y)=0$ for all $y\in S_0$ implies
	$x=0$.
\end{enumerate}

\noindent Moreover, we have $h_{I}^J(x,y)= h_{R}(J x,y)$ for all $x,y\in
S_{0}$. This relation defines a bijection between $\s$-compatible
Euclidean metrics on $S_{0}$ and $\s$-compatible non-degenerate
symplectic pairings on $S_{0}$. Moreover, any such $h_R$ or $h_I$
determines the other and determines a unique $\s$-Hermitian metric $h$
on $S_{0}$ such that $h_R=\Re h$ and $h_I=\Im_J h$, namely
$h(x,y)=h_{R}(x,y)+h_{I}(x,y) J$ for all $x,y\in S_0$. Given any
$\s$-Hermitian metric $h$ on $S_0$, the Euclidean metric $h_R=\Re_\S
h$ determines a Euclidean metric on the real determinant line $\det_\R
S_{0}\eqdef \wedge^{2r} S_0$, where $\dim_\R S_0 = 2r$.  Let
$\rS^0(\det_\R S_0,h)$ be the two-element set consisting of those
elements of $\det_\R S_0$ which have unit norm with respect to this
induced metric. Notice that $\rS^0(\det_\R S_0,h)$ can be identified
with the set of orientations of $S_0$.

\begin{prop}
	\label{prop:Lis}
	Let $(S_0,\s)$ be a seminilear vector space with $\dim_\R S_0=2r$. Then
	any $\s$-Hermitian metric $h$ on $S_0$ determines a map
	$f_h:\s\stackrel{\sim}{\rightarrow} \rS^0(\det_\R S_0,h)$. Moreover:
	\begin{enumerate}[1.]
		\itemsep 0.0em
		\item We have: 
		\ben
		\label{fhrel}
		f_h(-J)=(-1)^r f_h(J)~~\forall J\in \s~~.
		\een
		\item The map $f_h$ is bijective when $r$ is odd.
		\item When $r$ is even, the image $f_h(\s)$ coincides with one of the
		two elements of the set $\rS^0(\det_\R S_0,h)$, so in this case $\s$
		determines an orientation of the $\R$-vector space $S_0$.
	\end{enumerate}
\end{prop}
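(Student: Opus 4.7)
The plan is to define $f_h(J)$ as the unit-norm top form obtained from any $h$-unitary complex basis of $(S_0,J)$, check well-definedness, and then simply read off how this form transforms under $J\mapsto -J$.

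Concretely, pick $J\in \s$ and identify $\S$ with $\C$ by sending $J$ to $\mathbf i$, so that $(S_0,J)$ becomes an $r$-dimensional complex vector space equipped with the Hermitian form $h$ (in the sense of Definition \ref{def:s-Hermitian}). Choose an $h$-unitary complex basis $e_1,\dots,e_r$ of $(S_0,J)$ and set
\be
f_h(J)\eqdef e_1\wedge Je_1\wedge e_2\wedge Je_2\wedge\cdots\wedge e_r\wedge Je_r\in \det_\R S_0~~.
\ee
The conditions of Definition \ref{def:s-Hermitian} imply that $e_1,Je_1,\dots,e_r,Je_r$ is a real $h_R$-orthonormal basis of $S_0$, so $f_h(J)$ indeed lies in $\rS^0(\det_\R S_0,h)$. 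To check independence of the chosen unitary basis, note that any two $h$-unitary complex bases of $(S_0,J)$ are related by a matrix $A\in U(r)$; its image $A_\lin\in \GL(2r,\R)$ under the embedding described in Subsection \ref{subsec:semilinear_vs} satisfies $\det A_\lin=|\det_\C A|^2=1$, so the wedge product is unchanged. This gives a well-defined map $f_h:\s\to\rS^0(\det_\R S_0,h)$.

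For part (1), compute $f_h(-J)$ by using the same identification of $\S$ with $\C$ but now via $-J\mapsto \mathbf i$. A complex basis of $(S_0,-J)$ can be taken to be the same $r$-tuple $e_1,\dots,e_r$, and unitarity with respect to $h$ is preserved because conjugation on $\S$ corresponds to $J\mapsto -J$. The associated real ordered basis is now $e_1,-Je_1,\dots,e_r,-Je_r$, so
\be
f_h(-J)=e_1\wedge (-Je_1)\wedge\cdots\wedge e_r\wedge(-Je_r)=(-1)^r f_h(J)~~,
\ee
which is \eqref{fhrel}.

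Parts (2) and (3) follow immediately: since $\rS^0(\det_\R S_0,h)$ has exactly two elements which differ by a sign and $\s=\{J,-J\}$, if $r$ is odd then $f_h(-J)=-f_h(J)\ne f_h(J)$, so $f_h$ is injective and hence bijective; if $r$ is even then $f_h(-J)=f_h(J)$, so $f_h(\s)$ is a single element of $\rS^0(\det_\R S_0,h)$, which under the canonical identification of $\rS^0(\det_\R S_0,h)$ with the set of orientations of $S_0$ singles out one orientation. The only real subtlety is the well-definedness of $f_h(J)$, which hinges on the identity $\det_\R A_\lin=|\det_\C A|^2$ for $A\in \GL(r,\C)$; once this is in hand, everything else is a short computation.
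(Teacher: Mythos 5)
Your proof is correct and follows essentially the same route as the paper: define $f_h(J)$ as the unit top form built from an $h$-unitary complex basis, verify independence of the basis via $\det_\R A_\lin=|\det_\C A|^2=1$ for $A\in\U(r)$, and read off the $(-1)^r$ sign under $J\mapsto -J$. The only (harmless) deviation is that you order the real basis as $e_1,Je_1,\dots,e_r,Je_r$ while the paper uses $e_1,\dots,e_r,Je_1,\dots,Je_r$, so your $f_h$ differs from theirs by the fixed sign $(-1)^{r(r-1)/2}$, which affects none of the stated properties.
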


\begin{proof} 
	Let $J\in \s$ be any of the two elements of $\s$. Then the $\R$-linear
	map $\alpha:\S=\R\id_{S_0}\oplus \R J\rightarrow \C$ which sends
	$\id_{S_0}$ to $1$ and $J$ to the imaginary unit $\i$ is a unital
	isomorphism of $\R$-algebras from $\S$ to $\C$. Consider the
	$\R$-bilinear map $h_0\eqdef \alpha\circ h:S_0\times S_0\rightarrow
	\C$. Then Definition \ref{def:s-Hermitian} implies that $h_0$ is
	Hermitian with respect to the complex structure $J$ on $S_0$. Notice
	that we have $\Re h_0=\Re_\S h$. Moreover, identifying $\S$ with $\C$
	through the map $\alpha$ allows us to view the left $\S$-module $S_0$
	as a $\C$-vector space of dimension $r$. Let $e_1,\ldots, e_r$ be any
	$h_0$-orthonormal basis of this $\C$-vector space. Setting
	$e_{r+j}\eqdef J(e_j)$ for all $j=1,\ldots, r$, the collection
	$e_1,\ldots, e_{2r}$ is a basis of the $\R$-vector space $S_0$ which
	is orthonormal with respect to the Euclidean scalar product $\Re
	h_0=\Re_\S h$. Setting $f_h(J)\eqdef e_1\wedge_\R \ldots \wedge_\R
	e_{2r}\in \det_\R S_0$, this implies $||f_h(J)||_{\Re_\S
		h}=||f_h(J)||_{\Re h_0} =1$, hence $f_h(J)\in \rS^0(\det_\R
	S_0,h)$. It is easy to see that $f_h(J)$ is independent of the choice
	of the $h_0$-orthonormal basis $e_1,\ldots, e_{r}$ of the $\C$-vector
	space $S_0$. Indeed, any other $h_0$-orthonormal basis has the form
	$e'_k=\sum_{j=1}^r A_{jk}e_j$, where $A_{jk}\in \C$ are the
	coefficients of a unitary matrix $A\eqdef (A_{ij})_{i,j=1,\ldots,
		r}\in \U(r,\C)$. Decomposing $A_{jk}=A^R_{jk}+\i A^R_{jk}$
	with $A^R_{jk}, A^I_{jk}\in \R$ and setting $A^R\eqdef
	(A^R_{ij})_{i,j=1,\ldots. r}$, $A^I\eqdef
	(A^I_{ij})_{i,j=1,\ldots, r}$, we find that the real basis
	$e'_1,\ldots, e'_{2r}$ (where $e'_{r+k}\eqdef J(e'_k)$ for
	$k=1,\ldots, r$) is related to the real basis $e_1,\ldots, e_{2r}$
	through $e'_a=\sum_{b=1}^{2r} B_{ba}e_b$, where $B$ is the matrix
	given by:
	\be
	B=\left[\begin{array}{cc} A^R & -A^I \\ A^I & A^R \end{array} \right]~~.
	\ee
	Thus $e'_1\wedge_\R \ldots \wedge_\R e'_{2r}=(\det_\R B) e_1\wedge_\R
	\ldots \wedge_\R e_{2r}=e_1\wedge_\R \ldots \wedge_\R e_{2r}$, where
	we used the relation $\det_\R B=|\det_\C A|^2=1$ (the last equality
	follows from the fact that $A$ is a unitary matrix). Since $J\in \s$
	was chosen arbitrarily, we thus have a well-defined map
	$f_h:\s\rightarrow \rS^0(\det_\R S_0,h)$. The replacement
	$J\rightarrow -J$ induces the changes $\alpha\rightarrow -\alpha$ and
	$h_0\rightarrow \overline{h_0}$, but does not change the set of
	$h_0$-orthonormal bases of $S_0$ over $\C$. Hence replacing $J$ by its
	conjugate can be implemented by replacing $e_{r+j}\rightarrow
	-e_{r+j}$ for all $j=1,\ldots, r$, which shows that \eqref{fhrel}
	holds. This relation immediately implies the remaining statements of
	the proposition.
	
\end{proof}

\subsection{Semilinear structures on real vector bundles}

Let $S$ be a smooth real vector bundle of even rank $N=2r$ over a
connected, smooth and paracomoact manifold $M$ and let
$P_{\GL(N,\R)}(S)$ be the principal bundle of linear frames of $S$.
Let $End_\R(S)$ denote the bundle of endomorphisms of $S$. The latter
is a bundle of unital associative $\R$-algebras of type
$\End_\R(S_0)\simeq \Mat(N,\R)$, where $S_0$ is the typical fiber of
$S$.

\begin{definition}
	The {\bf twistor bundle} $\Tw(S)$ of $S$ is the fiber sub-bundle of
	$End_\R(S)$ whose fiber at $p\in M$ is the set:
	\be
	\Tw_p(S)=\{J\in \End_\R(S_p)|J^2=-\id_{S_p}\}
	\ee
	of all complex structures on the $2r$-dimensional real vector space
	$S_p$.
\end{definition}

\noindent Notice that $\Tw(S)$ is a bundle of homogeneous spaces whose
fibers are isomorphic with $\GL(2 r,\R)/\GL(r,\C)$. The {\em sign
	automorphism} is the involutive vector bundle automorphism
$\sigma:End_\R(S)\rightarrow End_\R(S)$ which acts on each fiber of
$End_\R(S)$ as $\End_\R(S_p)\ni T\rightarrow
-T\in \End_\R(S_p)$. Notice that $\sigma$ is {\em not} an automorphism
of $End_\R(S)$ as a bundle of algebras. We have
$\sigma(\Tw(S))=\Tw(S)$ and $\sigma|_{\Tw(S)}$ acts freely on each
fiber of $\Tw(S)$. 

\begin{definition}
	The {\bf reduced twistor bundle} $\Tw_0(S)$ is the fiber bundle
	defined as quotient $\Tw_0(S)\eqdef \Tw(S)/\Z_2$.
\end{definition}

\noindent Notice that $\Tw_0(S)$ is a bundle of homogeneous spaces
whose fibers $\Tw_0(S_p)$ are isomorphic with $\GL(2
r,\R)/\rGamma(r)$. The points of the fiber $\Tw_0(S)_p=\Tw_0(S_p)$ of
$\Tw_0(S)$ at a point $p\in M$ correspond to the semilinear structures
on the $\R$-vector space $S_p$. Hence $\Tw_0(S)$ is the bundle of
semilinear structures on the fibers of $S$.

\begin{definition}
	A semilinear structure on $S$ is a smooth global section $s\in
	\Gamma(M,\Tw_0(S))$. A {\em semilinear vector bundle} on $M$ is a pair
	$(S,s)$, where $S$ is a real vector bundle of even rank defined on $M$
	and $s$ is a semilinear structure on $S$.
\end{definition}

\begin{prop}
	There exists a bijection between semilinear structures on $S$ and
	reductions of structure group $P_{\rGamma(r)}(S)\rightarrow P_{\GL(2
		r,\R)}(S)$ of the principal bundle of linear frames of $S$ from
	$\GL(2r,\R)$ to $\rGamma(r)$.
\end{prop}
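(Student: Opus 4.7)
The plan is to identify the reduced twistor bundle $\Tw_0(S)$ with an associated bundle of $P_{\GL(2r,\R)}(S)$ having typical fiber $\GL(2r,\R)/\rGamma(r)$, and then invoke the standard correspondence between smooth sections of an associated homogeneous-space bundle and reductions of structure group.

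First I would record the fiberwise model. By the discussion of $\Tw_0(S_0)$ as a homogeneous space in Subsection \ref{subsec:semilinear_vs}, the adjoint action of $\GL(2r,\R)=\Aut_\R(\R^{2r})$ on $\Tw(\R^{2r})$ commutes with the involution $J\mapsto -J$ and descends to a transitive action on $\Tw_0(\R^{2r})$. The stabilizer of the canonical semilinear structure $[\mathbf{J}_r]$ is $\Aut(\R^{2r},[\mathbf{J}_r])=\rGamma(r)$ by the very definition of the general semilinear group. Hence there is a canonical $\GL(2r,\R)$-equivariant diffeomorphism $\Tw_0(\R^{2r})\simeq \GL(2r,\R)/\rGamma(r)$ pointed at $[\mathbf{J}_r]$.

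Next I would globalize this. Using the tautological isomorphism of bundles of unital $\R$-algebras $End_\R(S)\simeq P_{\GL(2r,\R)}(S)\times_{\GL(2r,\R)}\End_\R(\R^{2r})$ (with $\GL(2r,\R)$ acting by conjugation), the subset of complex structures on each fiber is preserved and one obtains a canonical isomorphism of fiber bundles
\begin{equation*}
\Tw_0(S)\;\simeq\;P_{\GL(2r,\R)}(S)\times_{\GL(2r,\R)}\Tw_0(\R^{2r})\;\simeq\;P_{\GL(2r,\R)}(S)\times_{\GL(2r,\R)}\GL(2r,\R)/\rGamma(r).
\end{equation*}
In particular, the evaluation at $[\mathbf{J}_r]$ defines a smooth surjective submersion
\begin{equation*}
\mathrm{ev}_{[\mathbf{J}_r]}\colon P_{\GL(2r,\R)}(S)\longrightarrow \Tw_0(S),\qquad f\longmapsto [f,[\mathbf{J}_r]],
\end{equation*}
which is itself a principal $\rGamma(r)$-bundle.

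From here the correspondence is formal. Given a smooth reduction $\iota\colon P_{\rGamma(r)}\hookrightarrow P_{\GL(2r,\R)}(S)$, I define the associated semilinear structure by $s_\iota(p)\eqdef [\iota(f),[\mathbf{J}_r]]$ for any $f\in (P_{\rGamma(r)})_p$; well-definedness follows because $\rGamma(r)$ stabilizes $[\mathbf{J}_r]$, and smoothness from smoothness of $\iota$ and $\mathrm{ev}_{[\mathbf{J}_r]}$. Conversely, given $s\in \Gamma(M,\Tw_0(S))$, I define
\begin{equation*}
P_{\rGamma(r)}(S,s)\eqdef \mathrm{ev}_{[\mathbf{J}_r]}^{-1}(s(M))\;=\;\bigl\{f\in P_{\GL(2r,\R)}(S)\;:\;[f,[\mathbf{J}_r]]=s(\pi(f))\bigr\},
\end{equation*}
where $\pi$ is the bundle projection. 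This is the pullback of the principal $\rGamma(r)$-bundle $\mathrm{ev}_{[\mathbf{J}_r]}$ along $s$, hence inherits a smooth principal $\rGamma(r)$-bundle structure and comes equipped with a $\rGamma(r)$-equivariant inclusion into $P_{\GL(2r,\R)}(S)$.

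The two assignments $\iota\mapsto s_\iota$ and $s\mapsto P_{\rGamma(r)}(S,s)$ are mutually inverse by a direct check on fibers: transitivity of $\GL(2r,\R)$ on $\Tw_0(\R^{2r})$ shows that the fiber of $\mathrm{ev}_{[\mathbf{J}_r]}$ over $s(p)$ is exactly the $\rGamma(r)$-orbit of any frame mapping $[\mathbf{J}_r]$ to $s(p)$, and stability of $[\mathbf{J}_r]$ under $\rGamma(r)$ gives $s_{\iota}=s$ when $\iota$ arises from $s$. The only real content in the argument is the identification of $\Tw_0(S)$ as an associated bundle with fiber $\GL(2r,\R)/\rGamma(r)$; I expect the main point requiring some care to be the verification that $P_{\rGamma(r)}(S,s)$ is a smooth (not merely set-theoretic) subbundle, which is resolved by realizing it as the pullback of the smooth principal $\rGamma(r)$-fibration $\mathrm{ev}_{[\mathbf{J}_r]}$.
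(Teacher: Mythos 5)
Your argument is correct and is essentially the paper's proof written out in full: the paper simply notes that, since $\rGamma(r)$ is closed in $\GL(2r,\R)$, reductions correspond to sections of $P_{\GL(2r,\R)}(S)/\rGamma(r)\simeq \Tw_0(S)$, which is exactly the identification you establish via the stabilizer computation and the evaluation map at $[\mathbf{J}_r]$. Your additional care about smoothness of $P_{\rGamma(r)}(S,s)$ as a pullback is a worthwhile elaboration of a step the paper leaves implicit, but it is not a different route.
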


\begin{proof} 
	Since $\rGamma(r)$ is a closed subgroup of $\GL(2 r,\R)$, reductions
	of structure group of $P_{\GL(2r,\R)}(S)$ from $\GL(2r,\R)$ to
	$\rGamma(r)$ are in bijection with smooth sections of the fiber bundle
	$P_{\GL(2r,\R)}(S)/\rGamma(r)\simeq \Tw_0(S)$.  
\end{proof}

\noindent Given a $\C$-bundle $\Sigma$ over $M$, its bundle of imaginary units
$P_{\Z_2}(\Sigma)$ is the principal $\Z_2$-bundle whose fiber at $p\in
M$ equals the natural semilinear structure of $\Sigma_p$. If $\Sigma$
is a $\C$-bundle which is a sub-bundle of unital $\R$-subalgebras of
$End_\R(S)$, then $P_{\Z_2}(\Sigma)$ is a fiber sub-bundle of $\Tw(S)$
which can also be viewed as a section of $\Tw_0(S)$, i.e. as a
semilinear structure on $S$.
 
\begin{prop}
	The map: 
	\be
	\Sigma\rightarrow s_\Sigma \eqdef P_{\Z_2}(\Sigma)
	\ee 
	gives a bijection between sub-bundles of unital $\R$-subalgebras
	$\Sigma\subset End_\R(S)$ such $\Sigma$ is a $\C$-bundle and
	semilinear structures on $S$, whose inverse is given by:
	\be
	s\rightarrow \Sigma_s \eqdef \R_M\oplus L_s~~,
	\ee
	where $L_s$ is the real line sub-bundle of $End_\R(S)$ which is
	generated by $s$. 
\end{prop}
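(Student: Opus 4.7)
The plan is to reduce the bundle statement to its pointwise counterpart, Proposition \ref{prop:S}, and then verify that the fiberwise constructions assemble into smooth sub-bundles. Both assignments $\Sigma\mapsto s_\Sigma$ and $s\mapsto \Sigma_s$ are fiberwise applications of the bijection in Proposition \ref{prop:S}, so the real content is to check smoothness and that the two operations are mutually inverse. Concretely, I would first check that $\Sigma\mapsto s_\Sigma$ is well-defined. Given $\Sigma\subset End_\R(S)$ as in the statement, Proposition \ref{prop:S} produces, for each $p\in M$, a semilinear structure $\s_p\subset \Tw_p(S)$ consisting of the two imaginary units of $\Sigma_p$. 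By Definition \ref{def:ImUnits}, these are precisely the fibers of $P_{\Z_2}(\Sigma)$. The inclusion $P_{\Z_2}(\Sigma)\hookrightarrow \Tw(S)$ is smooth (it factors through the smooth inclusion $\Sigma\hookrightarrow End_\R(S)$), and postcomposition with the smooth quotient $\Tw(S)\to \Tw_0(S)$ exhibits $s_\Sigma$ as a smooth section of $\Tw_0(S)$.

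Next I would show that $s\mapsto \Sigma_s$ is well-defined. On any open $U\subset M$ over which the principal $\Z_2$-bundle $\Tw(S)\to \Tw_0(S)$ trivializes, the section $s$ admits a smooth local lift to a section $J$ of $\Tw(S)$, unique up to a global sign. Define $L_s|_U$ as the real line sub-bundle of $End_\R(S)|_U$ whose fiber at $p$ is $\R J_p$; the ambiguity $J\mapsto -J$ leaves this line unchanged, so these local pieces glue to a smooth global real line sub-bundle $L_s\subset End_\R(S)$. Setting $\Sigma_s\eqdef \R_M\oplus L_s$ with $\R_M$ the trivial line sub-bundle spanned by $\id_S$, the identity $J_p^2=-\id_{S_p}$ ensures that each fiber $(\Sigma_s)_p=\R\,\id_{S_p}\oplus \R J_p$ is closed under composition and unitally isomorphic to $\C$, with local unital algebra trivializations provided by the local lifts $J$.

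Finally, the mutual inverse property follows fiberwise from Proposition \ref{prop:S}: for the composition $\Sigma\mapsto s_\Sigma\mapsto \Sigma_{s_\Sigma}$, one has $(\Sigma_{s_\Sigma})_p=\R\,\id_{S_p}\oplus \R J_p=\Sigma_p$ for any $J_p\in (s_\Sigma)_p$; and for the composition $s\mapsto \Sigma_s\mapsto s_{\Sigma_s}$, the imaginary units of $(\Sigma_s)_p$ are exactly $\pm J_p$, hence $P_{\Z_2}(\Sigma_s)_p=s_p$. I expect the only, mild, obstacle to be the smoothness argument for $L_s$, which hinges on the existence of smooth local lifts of sections of $\Tw_0(S)$ to $\Tw(S)$; this is automatic because $\Tw(S)\to \Tw_0(S)$ is a principal $\Z_2$-bundle and hence a local diffeomorphism.
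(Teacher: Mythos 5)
Your proof is correct and follows the same route as the paper, which simply states that the result follows from Proposition \ref{prop:S}; you have merely made explicit the fiberwise reduction and the (routine) smoothness checks, in particular the local lifting of $s$ through the $\Z_2$-covering $\Tw(S)\rightarrow\Tw_0(S)$. Nothing further is needed.
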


\begin{proof} Follows from Proposition \ref{prop:S}
\end{proof}
 
\noindent Given a semilinear line bundle $(S,s)$ on $M$, the
sub-bundle of unital algebras $\Sigma_{s}\subset End(S)$ endows $S$
with the structure of a bundle of left $\Sigma_s$-modules in the sense
that the fiber $S_p$ of $S$ at every point $p\in M$ becomes a left
module over the unital associative sub-algebra
$\Sigma_s(p)\subset \End_\R(S_p)$ upon defining exterior
multiplication through:
\be
\sigma x \eqdef \sigma(x)~~\forall \sigma \in \Sigma_s(p)~~\forall x\in S_p~~.
\ee
This left module is free and of rank equal to $\frac{1}{2}\rk_\R S$.

Notice that any complex structure
$J\in \End_\R(S)=\Gamma(M,End_\R(S))$ on $S$ induces a semilinear
structure $s_J$ on $S$ given by $s_J(p)=[J_p]\eqdef \{J_p,-J_p\}\in
\Tw_0(S_p)$ for all $p\in M$. This semilinear structure corresponds to
the rank two sub-bundle $\Sigma_{s_J}=L_{\id_S}\oplus L_{J}\subset
End_{\R}(S)$.  The latter is a trivial $\C$-bundle, being trivialized
by the linearly independent global sections $\id_S$ and $J$. We have
$s_{-J}=s_{J}$.
 
\begin{definition}
	A semilinear structure $s$ on $S$ is called {\bf trivial} if there exists
	a complex structure $J\in \Gamma(M,\Tw(S))$ on $S$ such that $s=s_J$.
\end{definition}
 
\noindent The following proposition follows immediately from the
previous discussion:
 
\begin{prop}
	A semilinear structure $s$ on $S$ is trivial if and only if $\Sigma_s$ is a
	trivial $\C$-bundle.
\end{prop}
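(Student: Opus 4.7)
The plan is to chain together three equivalences that have already been set up in the preceding pages.

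First, recall that the functor $\Sigma\mapsto P_{\Z_2}(\Sigma)$ sets up a bijection between isomorphism classes of $\C$-bundles on $M$ and isomorphism classes of principal $\Z_2$-bundles on $M$, under which the trivial $\C$-bundle $\R_M\oplus\R_M$ corresponds to the trivial principal $\Z_2$-bundle $M\times\Z_2$. Hence $\Sigma_s$ is a trivial $\C$-bundle if and only if $P_{\Z_2}(\Sigma_s)$ is a trivial principal $\Z_2$-bundle, which in turn holds if and only if $P_{\Z_2}(\Sigma_s)$ admits a smooth global section.

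Second, I would identify $P_{\Z_2}(\Sigma_s)$ explicitly as a fiber sub-bundle of $\Tw(S)$. By construction, the fiber of $\Sigma_s$ at $p\in M$ is $\Sigma_s(p)=\R\,\id_{S_p}\oplus\R J$ for any $J\in s(p)$; the elements of this fiber squaring to $-\id_{S_p}$ are exactly the two complex structures $\{J,-J\}=s(p)\subset\Tw_p(S)$. Hence the natural semilinear structure of $\Sigma_s(p)$ (in the sense of Definition \ref{def:ImUnits}) coincides with $s(p)$, so $P_{\Z_2}(\Sigma_s)$ is precisely the principal $\Z_2$-subbundle of $\Tw(S)$ whose fiber over $p$ is the two-element set $s(p)$.

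Third, under this identification, a smooth global section of $P_{\Z_2}(\Sigma_s)$ is exactly a smooth choice of one element $J_p\in s(p)$ for each $p\in M$, i.e.\ a smooth global section $J\in\Gamma(M,\Tw(S))$ with $J_p\in s(p)$ for all $p$. Since $s(p)=[J_p]=\{J_p,-J_p\}$, this is the condition $s=s_J$, i.e.\ the definition of triviality of $s$. Combining the three equivalences gives the result. There is no real obstacle: the only point requiring any care is the explicit identification of $P_{\Z_2}(\Sigma_s)$ with the $\Z_2$-subbundle of $\Tw(S)$ cut out by $s$, which is immediate from the definition of $\Sigma_s$.
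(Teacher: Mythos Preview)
Your proof is correct and follows essentially the same reasoning the paper has in mind: the paper simply records that the result ``follows immediately from the previous discussion,'' and your three-step chain (trivial $\C$-bundle $\Leftrightarrow$ trivial $P_{\Z_2}(\Sigma_s)$ $\Leftrightarrow$ global section picking $J_p\in s(p)$ $\Leftrightarrow$ $s=s_J$) is exactly that discussion made explicit.
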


\noindent In particular, $s$ is trivial iff $\w_c(\Sigma_s)=0$.

\paragraph{$s$-Hermitian metrics}

\begin{definition}
	Let $(S,s)$ be a semilinear vector bundle on $M$. A global section
	$h\in \Gamma(M,Hom_\R(S\times S,\Sigma_s))$ is called an $s$-Hermitian
	metric on $S$ if its value $h_p\in \Hom_\R(S_p\times S_p,\Sigma_s(p))$
	at any point $p\in M$ is an $s_p$-Hermitian metric on the vector space
	$S_p$ in the sense of Definition \ref{def:s-Hermitian}.
\end{definition}

\noindent Let $(S,s)$ be a semilinear vector bundle on $M$ and $h$ be
an $s$-Hermitian metric on $S$. Applying fiberwisely the construction
of Subsection \ref{subsec:semilinear_vs}, we define the real part
$h_R\in \Gamma(M,Hom_\R(S\times S,\Sigma_s,\R_M))\simeq
\Gamma(M,S^\vee\otimes_\R S^\vee)$ of $h$ (which is a Euclidean scalar
product on $S$). Let $P_{\Z_2}(S, h)$ denote normalized orientation
bundle of the Euclidean vector bundle $(S,h_R)$, i.e. the principal
$\Z_2$-bundle whose fiber at $p\in M$ equals the two-element set
$\rS^0(\det_\R S_p,h_p)$. Notice that the real determinant line bundle
$\det_\R S\eqdef \wedge^{\rk S} S$ is associated to the principal
$\Z_2$-bundle $P_{\Z_2}(S,h)$ through the sign representation of
$\Z_2$ on $\R$. Finally, let
$P_s(S)$ denote the principal $\Z_2$-bundle whose fiber at $p\in M$ is
given by $s(p)\subset \End_{\mathbb{R}}(S_p)$. The $\C$-bundle
$\Sigma_s$ is associated to $P_s(S)$ through the conjugation representation 
of $\Z_2$: 
\ben
\label{Sigma_assoc}
\Sigma_s=P_s(S)\times_\conj \C~~.
\een

\begin{prop}
	\label{prop:Lisom}
	Let $(S,s)$ be a semilinear vector bundle on $M$ with $\rk S=2r$. Then
	any $s$-Hermitian metric $h$ on $S$ determines a fiber map
	$f_h:P_s(S)\rightarrow P_{\Z_2}(S,h)$. Moreover:
	\begin{enumerate}[1.]
		\itemsep 0.0em
		\item If $r$ is odd, then the map $f_h$ is an isomorphism of
		principal $\Z_2$-bundles.
		\item If $r$ is even, then $S$ is orientable and $f_h(s)$ coincides
		with a global section of the determinant line bundle $\det_\R S$
		which has unit norm with respect to $h_R$, thus determining an
		orientation of $S$. In this case, the bundles $\det_\R S$ and
		$P_{\Z_2}(S,h)$ are topologically
		trivial.
	\end{enumerate}
\end{prop}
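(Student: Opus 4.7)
The plan is to promote Proposition \ref{prop:Lis} from vector spaces to vector bundles by applying it pointwise and then checking that the resulting pointwise construction assembles into a smooth bundle map with the required equivariance properties.

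First I would define $f_h$ fiberwise: over $p\in M$, apply Proposition \ref{prop:Lis} to the semilinear vector space $(S_p,s(p))$ equipped with the $s(p)$-Hermitian metric $h_p$ to obtain $f_{h,p}\colon s(p)\to \rS^0(\det_\R S_p,h_p)$, and set $f_h|_p\eqdef f_{h,p}$. Smoothness is checked locally: over an open $U\subset M$ that trivialises the double cover $P_s(S)\to M$, pick a smooth complex structure $J\in \Gamma(U,\Tw(S))$ representing the chosen sheet and a smooth local frame $(e_1,\ldots,e_r)$ of $S|_U$ that is unitary for the Hermitian inner product on the $\Sigma_s$-module $S|_U$ induced by $h$ and $J$ (obtained from $h_R$ by a fiberwise Gram--Schmidt process). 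Then the formula $f_h(J)=e_1\wedge_\R Je_1\wedge_\R\cdots\wedge_\R e_r\wedge_\R Je_r$ manifestly produces a smooth section, and the basis-independence established inside the proof of Proposition \ref{prop:Lis} ensures that the various local formulas patch into a globally smooth bundle map over all of $M$.

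For part $1$ I would invoke relation \eqref{fhrel}, which for odd $r$ specialises to $f_{h,p}(-J_p)=-f_{h,p}(J_p)$. This exhibits $f_h$ as equivariant between the natural $\Z_2$-actions on $P_s(S)$ (sign involution of complex structures) and on $P_{\Z_2}(S,h)$ (antipodal involution on unit-norm determinants). Combined with the fiberwise bijectivity granted by part $2$ of Proposition \ref{prop:Lis}, this upgrades $f_h$ to an isomorphism of principal $\Z_2$-bundles.

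For part $2$, relation \eqref{fhrel} instead gives $f_{h,p}(-J_p)=f_{h,p}(J_p)$, so $f_{h,p}$ is constant on the two-element set $s(p)$ and selects a single element of $\rS^0(\det_\R S_p,h_p)$ at each $p\in M$. The resulting global smooth section $\sigma\colon M\to P_{\Z_2}(S,h)$ trivialises the principal $\Z_2$-bundle $P_{\Z_2}(S,h)$; since $\det_\R S$ is the real line bundle associated to $P_{\Z_2}(S,h)$ through the sign representation $\sigma_\R$, it too is trivial, and $\sigma$ is itself a unit-norm nowhere-vanishing section of $\det_\R S$, yielding the asserted orientation of $S$. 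The main obstacle is the smoothness of $f_h$, since the construction of $f_{h,p}$ in Proposition \ref{prop:Lis} rests on a non-canonical choice of unitary frame; the local-frame patching argument sketched above handles this using paracompactness of $M$ together with the basis-independence verified inside that proof.
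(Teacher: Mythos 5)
Your proposal is correct and follows the same route as the paper, whose entire proof is the one-line citation ``Follows immediately from Proposition \ref{prop:Lis}''; you simply supply the fiberwise application and the smoothness/patching details that the paper leaves implicit. (Your interleaved ordering $e_1\wedge_\R Je_1\wedge_\R\cdots$ differs from the paper's block ordering by the fixed sign $(-1)^{r(r-1)/2}$, which does not affect relation \eqref{fhrel} or any of the conclusions.)
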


\begin{proof} Follows immediately from Proposition \ref{prop:Lis}. 
	
\end{proof}

\begin{remark}
	Another way to see that $S$ must be orientable in case 2. of the
	proposition is to recall that $\TU(r)$ (which is homotopy-equivalent
	with $\rGamma(r)$) is a subgroup of $\SO(2r,\R)$ when $r$ is even (see
	Subsection \ref{subsec:semilinear_vs}).
\end{remark}

\subsection{Classification of semilinear line bundles}

Let $(S,s)$ be a semilinear line bundle on $M$ and let $h$ be an
$s$-Hermitian metric on $S$. Since $\rk_{\mathbb{R}}(S)=2$, the line bundle $L_s\subset End_\R(S)$
generated by $s$ (which is associated to $P_s(S)$ through the sign
representation of $\Z_2$) is isomorphic with the real determinant line
bundle $\det_\R S$.
 
\begin{prop}
	Any rank two Euclidean vector bundle $(S,g)$ over $M$ admits a
	canonical semilinear structure. Moreover, there exist bijections
	between the following sets:
	\begin{enumerate}[(a)]
		\itemsep 0.0em
		\item The set of isomorphism classes of rank two vector bundles $S$
		over $M$.
		\item The set of isomorphism classes of rank two Euclidean vector
		bundles $(S,g)$ over $M$, where an isomorphism of Euclidean vector
		bundles is defined to be an invertible isometry.
		\item The set of isomorphism classes of principal $\O(2)$-bundles $P$
		over $M$.
		\item The set of isomorphism classes of semilinear line bundles
		$(S,s)$ over $M$.
	\end{enumerate}
	Moreover, first Stiefel-Whitney classes correspond to each other under
	these bijections and we have $\w_1(S)=\w_1(P_s(S))$.
\end{prop}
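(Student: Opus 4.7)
The plan is to identify all four classifying sets with $H^{1}(M,\O(2))$ by exhibiting $\O(2)$ as a maximal compact subgroup of both $\GL(2,\R)$ and the semilinear group $\rGamma(1)$. To produce the canonical semilinear structure on a rank-two Euclidean vector bundle $(S,g)$, I would note that for each $p\in M$ the fiber $(S_{p},g_{p})$ admits exactly two $g_{p}$-orthogonal complex structures $J_{p}$ and $-J_{p}$ (given in any $g_{p}$-orthonormal basis by rotation by $\pm \pi/2$), so the unordered pair $s^{\mathrm{can}}_{p}\eqdef\{J_{p},-J_{p}\}\in\Tw_{0}(S_{p})$ varies smoothly with $p$ and defines a global section $s^{\mathrm{can}}\in\Gamma(M,\Tw_{0}(S))$. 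This produces the assignment (b)$\to$(d).

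For the chain of bijections, (b)$\leftrightarrow$(c) is the tautological associated-bundle construction via the defining representation of $\O(2)$ on $\R^{2}$. The bijection (a)$\leftrightarrow$(b) is a standard consequence of $\O(2)$ being a maximal compact subgroup of $\GL(2,\R)$: every rank-two real vector bundle on the paracompact manifold $M$ admits a Euclidean metric (via partition of unity) and any two such metrics are related by an isomorphism of Euclidean bundles through convex interpolation in the contractible space of metrics. The remaining equivalence (c)$\leftrightarrow$(d) rests on the key observation that $\TU(1)=\U(1)\rtimes\Z_{2}\simeq\O(2)$, where the $\Z_{2}$ acts on $\U(1)$ by complex conjugation $e^{i\theta}\mapsto e^{-i\theta}$ (which is identified with the reflection generating $\O(2)/\SO(2)$). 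Since $\TU(1)$ is a maximal compact subgroup of $\rGamma(1)\simeq\C^{\times}\rtimes\Z_{2}$, the inclusion $\TU(1)\hookrightarrow\rGamma(1)$ is a homotopy equivalence, so reductions of structure group to $\rGamma(1)$ and to $\TU(1)=\O(2)$ classify the same isomorphism classes of bundles on $M$.

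For the correspondence of first Stiefel--Whitney classes, I would use $\w_{1}(S)=\w_{1}(S,g)=\w_{1}(P)$ for (a)$\leftrightarrow$(b)$\leftrightarrow$(c), each equality being immediate from the definition of $\w_{1}$ as the pullback of the generator of $H^{1}(B\O(2),\Z_{2})$. For the final identity $\w_{1}(S)=\w_{1}(P_{s}(S))$, I would invoke Proposition~\ref{prop:Lisom} with $r=1$ (odd) to obtain an isomorphism of principal $\Z_{2}$-bundles $f_{h}:P_{s}(S)\stackrel{\sim}{\to}P_{\Z_{2}}(S,h)$ between $P_{s}(S)$ and the normalized orientation bundle associated to any $s$-Hermitian metric $h$ on $S$. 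The real line bundle $L_{s}=P_{s}(S)\times_{\sigma_{\R}}\R$ generated by $s$ is then isomorphic to $P_{\Z_{2}}(S,h)\times_{\sigma_{\R}}\R=\det_{\R}S$, and combining this with the standard identity $\w_{1}(S)=\w_{1}(\det_{\R}S)$ for a rank-two real vector bundle gives the desired equality.

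The main obstacle I anticipate is verifying that the constructions in each direction are inverse on isomorphism classes, and in particular that the reconstruction $(S,s)\rightsquigarrow(S,h_{R})$ via $s$-Hermitian metrics is well-defined. This requires showing that the space of $s$-Hermitian metrics on a semilinear line bundle $(S,s)$ is path-connected (indeed convex), so that any two choices of $h$ produce isomorphic Euclidean bundles $(S,h_{R})$. The argument reduces to a fiberwise convexity statement analogous to the ordinary complex-Hermitian case, with the additional subtlety that the complex structure $J\in s$ is only determined up to sign; this is precisely what makes the $s$-sesquilinearity condition of Definition~\ref{def:s-Hermitian} well-defined independently of the choice of $J\in s$, and hence the convex combination of two $s$-Hermitian metrics is again $s$-Hermitian.
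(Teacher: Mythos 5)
Your proposal is correct and follows essentially the same route as the paper: the canonical semilinear structure is the unordered pair of metric-compatible complex structures on each fiber (your rotation by $\pm\pi/2$ is the paper's contraction with the two normalized volume forms), the bijections (a)$\leftrightarrow$(b)$\leftrightarrow$(c) come from maximal compactness of $\O(2)$ in $\GL(2,\R)$ and the associated-bundle construction, (c)$\leftrightarrow$(d) from $\TU(1)=\O(2)\simeq\U(1)\rtimes\Z_2$, and $\w_1(S)=\w_1(P_s(S))$ from Proposition \ref{prop:Lisom} with $r=1$. Your closing remark on convexity of the space of $s$-Hermitian metrics is a point the paper leaves implicit, but it is a welcome (and correct) addition rather than a divergence.
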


\begin{proof} 
	Any Euclidean rank two vector bundle $(S,g)$ on $M$ admits a canonical
	semilinear structure structure $s\eqdef P_{\Z_2}(\det_\R S,g)$.
	Explicitly, let us fix any point $p\in M$. Then any of the two
	elements $\nu_p\in (\det_\R S)_p=\wedge^2_\R S_p$ which has unit norm
	with respect to $g_p$ (i.e. any of the two elements of the set
	$\rS^0(\det_\R S_p, g_p)$) determines a complex structure $J_p\in
	\Tw(S_p)$ given by the cross product of the two-dimensional Euclidean
	vector space $(S_p,h_p)$ taken with respect to the orientation induced
	by $\nu_p$. Explicitly, $J_p$ is given by: \be \sharp(J_p
	x)=\iota_{x}\nu^\sharp_p~~,~~\forall x\in S_p~~, \ee where
	$\nu^\sharp_p\eqdef \sharp (\nu_p) \in \wedge^2 S_p^\vee$ is the
	normalized volume form of $S_p$ determined by $\nu_p$ and $\iota$
	denotes the contraction of forms on $S_p$ with vectors of $S_p$. Here,
	$\sharp_p:\wedge_\R^k S_p\stackrel{\sim}{\rightarrow} \wedge_\R^k
	S_p^\vee$ is the musical isomorphism. The canonical semilinear
	structure of the fiber $S_p$ is then given by $s_p=\{J_p,-J_p\}$; it
	is clear that this semilinear structure does not depend on the choice
	of $\nu_p$. As $p$ varies in $M$, $s_p$ determines the canonical
	semilinear structure $s$ of $(S,g)$. It is clear that $L_s\simeq
	\det_\R S$.
	
	This construction gives a natural map from the set of ismorphism
	classes of ran two Euclidean vector bundles and the set of isomorphism
	classes of seminilear line bundles. An inverse map is obtained by
	choosing an $s$-Hermitian metric on a semilinear line bundle $(S,s)$. This establishes the bijection between the sets at points (b)
	and (d) of the proposition and also shows that $\w_1(S)=\w_1(P_s(S))$.
	
	The bijections between (a), (b) and (c) are well-known and follow from
	the associated bundle construction and from the fact that the group
	$\O(2)$ is a maximal compact form of $\GL(2,\R)$ (which implies that
	any vector bundle admits a Euclidean metric). The fact that any rank
	two real vector bundle admits a semilinear structure as well as the
	bijection between (b) and (c) also follow by noticing that
	$\TU(1)=\O(2,\R)\simeq \U(1)\rtimes \Z_2$.
\end{proof}

\paragraph{The characteristic classes of a semilinear line bundle}

For each $w\in H^1(M,\Z_2)$, let $Z_w$ denote the
unique ismorphism class of principal $\Z$-bundles $Z$ such that
$\w_1(Z)=w$.

To any semilinear line bundle $(S,s)$, we associate the corresponding
$\C$-bundle $\Sigma:=\Sigma_s$ and its integer coefficient bundle
$Z_\Sigma=P_{\Z_2}(\Sigma)\times_{\sigma_\Z} \Z$. Then $(S,s)$ is a bundle of rank one
modules over $\Sigma$, i.e. a ``$\Sigma$-line bundle'' in the sense
of\footnote{ As opposed to the present paper (where we use the
	notation $\Sigma$), reference \cite{Froyshov} uses the notation $K$
	for a $\C$-bundle.  What op. cit. calls a ``$K$-line bundle'' is what
	we call a semilinear line bundle.} \cite[Section
2]{Froyshov}. Semilinear line bundles with {\em fixed} $\C$-bundle
$\Sigma$ form a symmetric groupoid $\cPic_\Sigma(M)$ with symmetric
monoidal product given by the tensor product of bundles of rank one
modules over $\Sigma$.  Let $\Pic_\Sigma(M)$ be the Abelian group of
isomorphism classes of this groupoid.  Then it was shown in
op. cit. that the so-called {\em twisted first Chern class} ${\tilde
	c}_1^\Sigma$ gives an isomorphism of groups:
\be
{\tilde c}_1^\Sigma:\Pic_\Sigma(M)\stackrel{\sim}{\rightarrow} H^2(M,Z_\Sigma)~~,
\ee
where $H^\ast(M,Z_\Sigma)$ denotes the singular cohomology of $M$ with
coefficients in $Z_\Sigma$. Moreover, we have $\w_1(S)=\w_1(Z_\Sigma)$ and 
the natural morphism
$H^2(M,Z_\Sigma)\rightarrow H^2(M,\Z_2)$ sends ${\tilde
	c_1}^\Sigma(S,s)$ to the second Stiefel-Whithey class $\w_2(S)$. On
the other hand, isomorphism classes of principal $\Z$-bundles $Z$ over
$M$ form an Abelian grop $\Prin_\Z(M)$ under the (fiber) product and
the first Stiefel-Whitney class gives an isomorphism of groups
$\w_1:\Prin_\Z(M)\stackrel{\sim}{\rightarrow} H^1(M,\Z_2)$.  If
$(S,s)$ is a semilinear line bundle with $Z_{\Sigma_s}=Z$, then we
have $\w_1(S)=\w_1(Z)$. We also have:
\be
\w_1(S)=\w_1({\det}_{\R}S)=\w_1(L)~~,
\ee
where $L=\R_M\otimes_\Z Z$. 

\paragraph{Relation to characteristic classses of principal $\O(2)$-bundles}

For each $w\in H^1(M,\Z_2)$, let $Z_w$ denote the unique ismorphism
class of principal $\Z$-bundles $Z$ such that $\w_1(Z)=w$.

Recall that principal $\O(2)$-bundles $P$ on $M$ are classified by
their first Stiefel-Whitney class $\w_1(P)\in H^1(M,\Z_2)$ and by
their twisted Euler class $\w_2(P)\in H^1(M,Z_{w_1(P)})$.

If $P$ is a principal $\O(2)$-bundle whose isomorphism class
corresponds to that of $(S,s)$, then we set ${\tilde c}_1(P)\eqdef
{\tilde c}_1(S,s)$. The set:
\be
\cE(M)\eqdef \sqcup_{w\in H^1(M,\Z_2)} H^2(M,Z_w)=\{(w,c)|w\in H^1(M,\Z_2), c\in H^2(M,Z_w)\} 
\ee
fibers (in the category of sets) over $H^1(M,\Z_2)$. Any semilinear
line bundle $(S,s)$ over $M$ determines an element
$w_1(Z_{\Sigma_s})=\w_1(S)\in H^1(M,\mathbb{Z}_{2})$ and an element ${\tilde
	c}_1(S,s)\in H^2(M,Z_{\Sigma_s})\simeq H^2(M,Z_{w_1(S)})$. Hence
$(w_1(S),{\tilde c}_1(S))\in \cE(M)$ and the map $(w_1,{\tilde c}_1)$
is a bijection between the set of isomorphism classes of semilinear
line bundles and the set $\cE(M)$. Composing this with the bijection
which takes isomorphism classes of principal $\O(2)$-bundles to
semilinear line bundles gives the classification of principal
$\O(2)$-bundles over $M$.

\paragraph{Relation to principal $\O_2(\alpha)$-bundles}

Recall that the group $\O_{2}(\alpha)$ introduced in Section
\ref{sec:O} fits into the following short exact sequence:
\begin{equation*}
	1 \to \U(1) \to \O_{2}(\alpha) \xrightarrow{\eta_{\alpha}} \mG_2 \to 1\, .
\end{equation*}
This induces a long exact sequence of pointed sets in
$\mathrm{\check{C}ech}$-cohomology, of which we are interested in the
following piece:
\begin{equation*}
	\mG_2 \xrightarrow{\delta} \check{H}^1(M,\U(1))\to \check{H}^1(M,\O_{2}(\alpha)) \xrightarrow{\eta_{\alpha \ast}} \check{H}^1(M,\mG_{2})\, ,
\end{equation*}
where $\delta$ denotes the connecting map. The pointed set
$\check{H}^1(M,\U(1))$ can be endowed with the structure of an Abelian
group, whereas the pointed set
$\check{H}^1(M,\O_{2}(\alpha))$ is in bijection with isomorphism
classes of principal $\O_{2}(\alpha)$-bundles over $M$.


\end{document}